\newcommand{\ignore}[1]{}
\newtheorem{definition}{Definition}
\newtheorem{proposition}{Proposition}
\newtheorem{theorem}{Theorem}
\newtheorem{remark}{Remark}
\newtheorem{lemma}{Lemma}
\newtheorem{corollary}{Corollary}
\newcommand{\N}{\mathbb{N}}
\newcommand{\R}{\mathbb{R}}
\newcommand{\pua}{\partial_1^2\mathsf{u}}
\newcommand{\pva}{\partial_1^2 \va}
\newcommand{\vva}{\va \diam \partial_1^2 \va}
\newcommand{\pwa}{\partial_1^2 \wa}
\newcommand{\xva}{\xa \diam\partial_1^2 \va}
\newcommand{\wva}{\wa \diam \partial_1^2 \va}
\newcommand{\vwa}{\va \diam \partial_1^2 \wa}
\newcommand{\na}{\mathsf{1}}
\newcommand{\va}{\mathsf{v}_{\alpha}}
\newcommand{\xa}{\mathsf{v}_{1}}
\newcommand{\wa}{\mathsf{w}_{2\alpha}}
\newcommand{\ta}{\mathsf{v}}
\newcommand{\tta}{\ta}
\newcommand{\auf}{a \diam \puf}
\newcommand{\aufi}{a \diamond \puf}
\newcommand{\puf}{\partial_1^2 u}
\newcommand{\pvf}{\partial_1^2 \vf}
\newcommand{\vvf}{\vf \diam \partial_1^2 \vf}
\newcommand{\pwf}{\partial_1^2 \wwf}
\newcommand{\xvf}{\xf \partial_1^2 \vf}
\newcommand{\wvf}{\wwf \diam \partial_1^2 \vf}
\newcommand{\vwf}{\vf \diam \partial_1^2 \wwf}
\newcommand{\vf}{v_{\alpha}}
\newcommand{\wwf}{w_{2\alpha}}
\newcommand{\of}{\omega}
\newcommand{\xf}{v_{1}}
\newcommand{\nf}{1}
\newcommand{\tf}{v}
\newcommand{\cut}[1]{C_{#1}}
\newcommand{\Ta}[1]{\mathsf{T}_{#1}}
\newcommand{\oTa}[1]{\overline{\mathsf{T}}_{#1}}
\newcommand{\tTa}[1]{{\mathsf{T}}_{#1}}
\newcommand{\Tplus}{\mathsf{T}_{+}}
\newcommand{\Tminus}{\mathsf{T}_{-}}
\newcommand{\Tplust}{\mathsf{T}_{+}}
\newcommand{\Tminust}{\mathsf{T}_{-}}
\newcommand{\Aa}[1]{\mathsf{A}_{#1}}
\newcommand{\oAa}[1]{\overline{\mathsf{A}}_{#1}}
\newcommand{\tAa}[1]{\mathsf{A}_{#1}}
\newcommand{\Ga}[2]{\Gamma_{#1}^{#2}}
\newcommand{\tGa}[2]{\Gamma_{#1}^{#2}}
\newcommand{\oGa}[2]{\overline\Gamma_{#1}^{#2}}
\newcommand{\tbeta}{\beta}
\newcommand{\tgamma}{\gamma}
\newcommand{\betap}{\beta_{+}}
\newcommand{\betam}{\beta_{-}}
\newcommand{\gammap}{\gamma_{+}}
\newcommand{\gammam}{\gamma_{-}}
\newcommand{\epsp}{\eps_{+}}
\newcommand{\epsm}{\eps_{-}}
\newcommand{\betau}{\underline{\beta}}
\newcommand{\betao}{\overline{\beta}}
\newcommand{\betapu}{\underline{\beta}_{+}}
\newcommand{\betamu}{{\underline{\beta}_{-}}}
\newcommand{\betamo}{\overline{\beta}_{-}}
\newcommand{\alphao}{\overline{\alpha}}
\newcommand{\alphapo}{\overline{\alpha}_{+}}
\newcommand{\alphamo}{\overline{\alpha}_{-}}
\newcommand{\aop}{{A_+}}
\newcommand{\bop}{{A_-}}
\newcommand{\taop}{{B_+}}
\newcommand{\tbop}{{B_-}}
\newcommand{\vp}[1]{\ensuremath{\left(#1\right)}}
\newcommand{\ap}[1]{\ensuremath{\left\langle #1\right\rangle}}
\newcommand{\cald}{{\mathcal D}}
\newcommand{\call}{{\mathcal L}}
\newcommand{\calr}{{\mathcal R}}
\newcommand{\cals}{{\mathcal S}}
\newcommand{\set}[1]{\ensuremath{\{#1\}}}
\newcommand{\setc}[2]{\ensuremath{\{#1 :\ #2\}}}
\newcommand{\interval}[1]{\ensuremath{I^{#1}}}
\newcommand{\dd}{\,\text{d}}
\newcommand{\eps}{\varepsilon}
\newcommand{\ti}{\tilde}
\newcommand{\mix}{\otimes}
\newcommand{\id}{{\rm id}}
\newcommand{\thresh}{\eta}
\newcommand{\Tc}[1]{(T^{\frac{1}{4}})^{#1}}
\newcommand{\bd}[3]{\|#1\|_{#2;#3}}
\newcommand{\Mpb}{M_{+}^{b}}
\newcommand{\Mpc}{M_{+}^{c}}
\newcommand{\Mmb}{M_{-}^{b}}
\newcommand{\Mmc}{M_{-}^{c}}
\definecolor{darkred}{rgb}{0.9,0.1,0.1}
\definecolor{darkblue}{rgb}{0,0,0.7}
\definecolor{darkgreen}{rgb}{0,0.5,0}
\newcommand{\diam}{\hspace{-0.4ex}\diamond\hspace{-0.4ex}}
\title[Parabolic equations with rough coefficients]{Parabolic equations with rough coefficients and singular forcing} 
\author{Felix Otto, Jonas Sauer, Scott Smith and Hendrik Weber
}
\begin{document}

\begin{abstract}
This article focuses on parabolic equations with rough diffusion coefficients which are ill-posed in the classical sense of distributions due to the presence of a singular forcing.  Inspired by the philosophy of rough paths and regularity structures, we introduce a notion of modelled distribution which is suitable in this context.  We prove two general tools for reconstruction and integration, as well as a product lemma which is tailor made for the reconstruction of the rough diffusion operator.  This yields a partially automated deterministic theory, which we apply to obtain an existence and uniqueness theory for parabolic equations with rough diffusion coefficients and a singular forcing in the negative parabolic H\"{o}lder space of order larger than $-\frac{3}{2}$.\\
\end{abstract}

\maketitle

\tableofcontents

\section{Introduction}
The present article is devoted to linear parabolic partial differential equations of the form
\begin{equation}\label{eq:PDEintro}
\partial_{2}u-P(a\partial_{1}^{2}u)=Pf,
\end{equation}
where we seek to build a $[0,1)^{2}$-periodic solution $u=u(x)$, where $x=(x_{1},x_{2}) \in \R^{2}$ and $P$ denotes the projection onto mean-free functions. The coefficient field is a periodic uniformly elliptic function $a \in C^{\alpha}$, while the forcing is merely a distribution $f \in C^{\alpha-2}$ for some $\alpha \in (0,1)$; see Section \ref{S:pre} for the definition of the H\"older spaces.  Here we restrict our attention to scalar equations and a single space dimension for notational convenience. The framework of solutions that are periodic not just in the space-like variable $x_1$, but also the time-like variable $x_2$ is a more substantial modification: this more ``elliptic'' treatment of a parabolic equation necessitates the projection $P$ and motivates the non-standard labeling of the variables. The initial-value problem will be treated in subsequent work. Solutions to \eqref{eq:PDEintro} are expected to be generically at most $C^{\alpha}$ when measured on the H\"older scale, so the product $a\partial_{1}^{2}u$ does not carry a canonical meaning in the sense of distributions, since $\alpha+(\alpha-2)<0$.  Note that \eqref{eq:PDEintro} arises naturally as a linear counterpart to non-linear stochastic PDEs where $a$ itself depends on the solution $u$, and $f$ is a generic realization of a random forcing, the most canonical example being the case of space-time white noise, which however requires $\alpha=\frac{1}{2}-$, while in this paper we treat the range of $\alpha\in(\frac12,\frac23)$, the range of $(\frac23,1)$ having dealt with in this framework in \cite{OtW16}.

\medskip

This work is part of an increased effort to understand singular stochastic PDE
following Hairer's discovery of regularity structures \cite{hairer2014theory} 
and the work by Gubinelli-Imkeller-Perkowski on paracontrolled distributions 
\cite{gubinelli2015paracontrolled}. Both of these theories gave a way to interpret 
and construct solutions to a large class of semi-linear equations
\begin{equation}\label{e:intro:2}
\mathcal{L}u = F(u, f),
\end{equation}
where $\mathcal{L}$ is a linear operator  (often the diffusion operator $\partial_2 - \partial_1^2$, also in multiple space dimensions)
and the right hand side involves an irregular noise term $f$ (e.g. space-time white noise).
 Among the problems which were treated were  the KPZ equation \cite{hairer2011solving,friz2014course,gubinelli2017kpz}, the parabolic Anderson model and more general multiplicative stochastic heat equations \cite{hairer2014theory,gubinelli2015paracontrolled,hairer2015wong},  as well as  the dynamic $\Phi^4_3$ model \cite{hairer2011solving,catellier2013paracontrolled,mourrat2017dynamic}.
 
 \medskip

The main issue addressed in all of these works is that certain non-linear operations which are part of 
the right hand side $F$ are not defined in the regularity class one expects the solution $u$ to live in. 
Because of this lack of regularity, the definition of these operations  involves a renormalization 
procedure which often amounts to adding formally infinite counter-terms. 
Let us stress that the definition of the leading order operator $\mathcal{L}$
does not pose a problem in any of these works.  In both \cite{hairer2014theory}  and 
\cite{gubinelli2015paracontrolled} the semi-linear structure is used to formulate \eqref{e:intro:2}  in the 
so-called mild form using the variation of constants formula. In both approaches explicit kernel representations 
of the operator $\mathcal{L}^{-1}$ are used to infer the regularity for $u$.

\medskip

The rigorous implementation of the renormalization procedure for \eqref{e:intro:2}
is split into two steps: a stochastic step and an analytic step.  In the stochastic step, explicit 
approximate solutions to \eqref{e:intro:2} are constructed off-line, as we like to say, the structure of which is given by a formal perturbation expansion of \eqref{e:intro:2} in small $f$.
These terms possess the 
same low regularity as one expects for $u$, but nonetheless the renormalization procedure
can be performed  more easily for these objects than for $u$, because they are known explicitly, typically 
as polynomials in the underlying random noise $f$. In the analytic step, going back to the original
problem  \eqref{e:intro:2}, the non-linear operations for $u$ are defined based on the assumption
that $u$ is well-approximated by the explicit terms in the perturbation expansion, and \eqref{e:intro:2} is solved in this framework. In this paper, we only focus on the second, analytic step in case of \eqref{eq:PDEintro}.

\medskip

The analytic part of this approach works for a very large class of equations which 
satisfy a certain scaling property, called sub-criticality in \cite{hairer2014theory}. In the
context of equation \eqref{eq:PDEintro} this condition translates to the assumption  $\alpha>0$. 
However, when approaching the threshold of criticality,
more and more terms are required in the perturbation expansion with more and more complicated 
relations among themselves. In the framework of regularity structures these relations have now 
been completely analyzed \cite{bruned2016algebraic,chandra2016analytic} systematically. Such higher-order expansions have yet to be analyzed in the framework of paracontrolled distributions; see 
however \cite{bailleul2016higher} for recent progress in this direction.

\medskip

In \cite{OtW16}, the first and last authors studied the variable coefficient equation \eqref{eq:PDEintro} in the regime $\alpha \in (\frac{2}{3},1)$ en route to a small data theory for the corresponding quasilinear equation (for suitably randomized $f$), where $a=A(u)$ for a sufficiently smooth, uniformly elliptic function $A$ (see 
also \cite{furlan2016paracontrolled}, \cite{bailleul2016quasilinear} for related work in the
context of paracontrolled distributions).  They introduce a parametric model based on a family $\{ v_{\alpha}(\cdot,a_{0}) \}_{a_0 \in I} \subset C^{\alpha}$ of periodic solutions to the linear equation
\begin{equation}
	(\partial_{2}-a_{0}\partial_{1}^{2})v_{\alpha}(\cdot,a_{0})=Pf,
\end{equation}
where $I=[\lambda,\lambda^{-1}]$ denotes the ellipticity interval.  They consider a sub-class of functions $u$ which are ``modelled'' after $v_{\alpha}$ in the sense that for some $\nu \in C^{2\alpha-1}$ it holds
\begin{align}\label{eq:MROW}
&\big |u(y)-u(x)-\big (v_{\alpha}(y,a(x))-v_{\alpha}(x,a(x)) \big )-\nu(x)(y-x)_{1} \big | \\
& \quad \leq M d^{2\alpha}(x,y), \nonumber
\end{align}
where $M$ is a suitable constant and $d(x,y)$ is the parabolic distance (see \eqref{1.11}). We remark that this approach was also adopted by \cite{furlan2016paracontrolled}. Relation \eqref{eq:MROW} is a higher-dimensional version of Gubinelli's \cite{GUBINELLI200486} notion of a controlled rough path and also closely related to Hairer's notion of a modelled distribution.
However \eqref{eq:MROW}  is nonlinear, since the function $a$ is required to guide the ellipticity parameter. 

\medskip

The present work builds on the ideas developed in \cite{OtW16}, with the broader goal of reaching a wider class of equations, including more singular noise and more sophisticated non-linearities.  To build a solution theory for \eqref{eq:PDEintro} with more singular forcing than the regime $\alpha \in (\frac{2}{3},1)$, one must build a larger model, involving several  functions which are higher-order analogues of $v_{\alpha}$, having the same regularity but a higher homogeneity in $f$.  As a result, writing an explicit local description of $u$ analogous to \eqref{eq:MROW} becomes increasingly tedious, since the role of increments of $v_{\alpha}$ in the relation \eqref{eq:MROW} is replaced by more complicated expressions reflecting their own modelledness.  Moreover, these objects naturally depend on several ellipticity arguments $(a_{0},a_{0}',\ldots)$, so in the analysis of the linear problem \eqref{eq:PDEintro} one requires additional rough functions playing the role of $a$ in \eqref{eq:MROW} above.  In fact, these various instances of $a$ must satisfy their own analogues of \eqref{eq:MROW}, leading to a hierarchy of controlled rough path relations.

\medskip

Due to this complexity, it is natural to seek a more abstract framework that keeps track of these intricacies through an efficient book keeping.  In the spirit of regularity structures \cite{hairer2014theory}, we introduce abstract spaces the elements of which act as placeholders for the various rough functions and singular distributions arising in the analysis of \eqref{eq:PDEintro}.  To incorporate the parametric dependence of the model, we use abstract spaces which are infinite dimensional, consisting of functions of various instances $(a_0,a_0',\ldots)$ of the ellipticity parameter.  This should be contrasted with the abstract spaces encountered in the semi-linear context, where finite dimensional-spaces suffice for the local description of the solution.  Moreover, in order to bundle the hierarchy of controlled rough path relations into a single condition, we switch to the perspective that the local description of $u$ is given by a family of linear forms on the direct sum of the abstract spaces modulated by $x$ (thus an element of the cotangent-bundle) satisfying a simple continuity condition (see \eqref{prod5}).

\medskip


\subsection{Structure of the Paper}
The first part of the paper, namely Sections \ref{S:MD} and \ref{S:int}, provides abstract tools which are vital in the treatment of \eqref{eq:PDEintro}. The main results in this abstract part are Propositions \ref{rec_lem} and \ref{int lem}, which we refer to as reconstruction and integration respectively.  These can be viewed as generalizations of two of the core ideas in \cite{OtW16}, the first being a wavelet-free approach to the reconstruction theorem of Hairer, cf.\@ Theorem 3.10 in  \cite{hairer2014theory}, and the second being a kernel-free approach to Schauder estimates.   In comparison to \cite{OtW16}, their novelty is more in their statement and greater generality.  In particular, they are freed from the particular details of the underlying model, so they may eventually be applied to analyze \eqref{eq:PDEintro} within the full regime $\alpha \in (0,1)$.   We also note that these results apply equally well in semi-linear contexts (\emph{i.e.}, without parametric dependence), and could be useful for the reader looking for additional clarity on the reconstruction and integration results in Sections 3 and 5 of Hairer \cite{hairer2014theory}.

\medskip

Proposition \ref{rec_lem} is well aligned with the more abstract perspective taken in the present article.  Namely, we view the off-line inputs (a \emph{model}) as a distribution with values in an abstract space and use a family of $x$ dependent semi-norms given in terms of a \emph{skeleton} to quantify the required bounds (see \eqref{cw04}).  In turn, these semi-norms determine a certain continuity and boundedness condition (see \eqref{prod4} and \eqref{prod5}) required for a family of linear forms (a \emph{modelled distribution}) to have a unique reconstruction relative to the given off-line inputs.  This result is certainly very close to that of Hairer \cite{hairer2014theory}, but is packaged in a different way and relies on a different set of tools in the proof.

\medskip

Our key notions to formulate these results, namely model, skeleton and modelled distribution, are closely
related to Hairer's notion of model and modelled distribution \cite{hairer2014theory}, but we prefer to
present them with a slightly different focus. For Hairer a model consists of families of distributions
$\Pi_x \tau$ indexed by the base-point $x$ and the symbol $\tau$
as well as operators $\Gamma_{xy}$ that translate these distributions from one base point to another
(see \cite[Definition 2.17]{hairer2014theory}). In concrete situations (see e.g. \cite[Remark 2.25, Section 8.2]{hairer2014theory} or
\cite[Section 3.3]{hairer2015wong}), such a model is typically constructed from  distributions $\mathbf{\Pi} \tau$
which do not depend on the base point $x$ and operators $F_x$  that
``recenter" the distributions $\mathbf{\Pi} \tau$ around a specific point $x$ and yield the $\Pi_x \tau$.
We view the ``uncentered" distributions (\emph{i.e.}, Hairer's $\mathbf{\Pi}$) as the more fundamental object
and this is what we call model. Our skeleton $\Ga{}{}$ then corresponds exactly to the family of $F_x$, cf.\@ Remark \ref{wg01},
and we stress that our $x$-dependent continuity assumption \eqref{cw04}
corresponds exactly to Hairer's ``order condition"  \cite[Equation (2.15)]{hairer2014theory}.
Both, the model and the skeleton are part of the off-line data and in applications to stochastic
PDE we expect to construct them together. However, they play a somewhat different role in the analysis
and we prefer  to separate them to emphasize their difference.
Similarly, we define modelled distribution in terms of the skeleton; note that our ``form-continuity"
condition  \eqref{prod5} corresponds exactly to Hairer's continuity condition $\|\Gamma_{xy}f(y) - f(x) \|_{\beta}
\lesssim d^{\alphao-\beta}(y,x)$, see Remark \ref{wg01}.

\medskip

Proposition \ref{int lem} can be viewed as a splitting method, where in contrast to a typical decomposition into a rough piece and a more regular piece, the remainder does not need to satisfy an explicit equation in order to be shown of higher order, only to satisfy a certain bound \eqref{KS1}, referred to as the local splitting condition.  This idea is certainly implicit in the theory of regularity structures and paracontrolled calculus, but the formulation here seems new.  It extends Safonov's kernel-free approach to Schauder theory to singular equations.

\medskip

Another important result in Section \ref{S:MD} is Lemma \ref{lem:Multiplication}, which together with Corollary \ref{lem:ExMult} yields an automated procedure for defining the rough diffusion operator $a \diamond \partial_{1}^{2}u$ as a distribution, given as inputs modelled distributions $a$ and $\partial_{1}^{2}u$ and assuming the necessary off-line products have already been defined.  This is ultimately a corollary of the abstract reconstruction result Proposition \ref{rec_lem}, but we must first build a suitable family of linear forms to represent $a \diamond \partial_{1}^{2}u$ at the abstract level.  This turns out to almost be the form given by a naive tensorization, but it must be extended further to include an additional abstract placeholder for the distribution $\partial_{1}^{2}u$ (see \eqref{MS1}).  We remark that since our main objective is to give an enhanced description of the commutator $(\auf)_{T}- a(\puf)_{T}$, the power counting for the order of this form is slightly better than in the typical multiplication theorem of Hairer, cf.\@ Theorem 4.7 in \cite{hairer2014theory}, in that one can neglect the contribution of the lowest homogeneity in the description of $a$.

\medskip

Our larger goal, for which the present work gives partial progress, is to develop a complete well-posedness theory for \eqref{eq:PDEintro} under the hypothesis that the rough diffusion coefficient $a$ has a suitable description as a modelled distribution.  Given such a description, specified in terms of appropriate abstract spaces and a corresponding model, one then attempts to build the abstract framework sufficient to describe $u$ together with the description of $a$. The construction of such a joint framework has to anticipate the nonlinear relationship $a\mapsto u$ given through \eqref{eq:PDEintro}. Finally, one seeks an estimate for the norm of $u$ as a modelled distribution in terms of the corresponding norm for $a$.
We do not put forth a general theory of this type in the present article.  However, as a proof of concept, illustrating how to apply the results in Sections \ref{S:MD} and \ref{S:int}, we carry out this approach to \eqref{eq:PDEintro} in a concrete setting motivated by quasi-linear SPDEs with additive forcing in the regime $\alpha \in (\frac{1}{2},\frac{2}{3})$ corresponding to a marginally smoother covariance structure than space-time white noise.

\medskip

Namely, in Section \ref{S:App} we study \eqref{eq:PDEintro} under the simplifying hypothesis that the modelled distribution describing $a$ resembles a linear transformation of the solution $u$.  In Sections \ref{fspc} and \ref{subsec:theModel}, we specify the abstract spaces (functions of several parameters) along with the functions and distributions required in an off-line step for the local description of $a, u$, and $a \diamond \partial_{1}^{2}u$.  In Section \ref{SS:algo}, we present the ``algebraic'' algorithm for constructing the modelled distribution describing $u$ from the modelled distribution describing $a$.  The main result, Theorem \ref{Prop:aPriori}, establishes existence and uniqueness including an \emph{a priori} bound for the norm of $u$ as a modelled distribution in terms of the norm of $a$ as a modelled distribution, under a smallness condition on the H\"{o}lder norm of $a$
.  For concreteness, we carry this analysis out on a very particular example of a model for $a$ (and $u$) suitable for the quasilinear problem $a=a(u)=u+1$ and additive noise. In Corollary \ref{cor:wg}, we point out that our \emph{a priori} estimate is sufficient for the self-mapping property in a fixed point argument in the quasilinear case. The complementing contraction property would follow from a stability analysis for \eqref{eq:PDEintro} including Lipschitz dependence on the modelled distribution describing $a$, which is currently in preparation. Moreover, we expect that our result remains true with roughly the same proof if the model describing $a$ were larger in order to be suitable for a general (smooth) $a(u)$ or multiplicative noise $\sigma(u)f$, as long as the relevant stochastic off-line inputs can be constructed, and $a$ is described to a sufficiently high order.  In contrast, carrying out this procedure with rougher noise would require additional ideas.  Indeed, in Section \ref{S:concr} we find that in contrast to the semi-linear setting, the algebraic and the analytic aspects of the problem are more intertwined.  In particular, the differential structure in the parameter plays a crucial role in the proof of Lemma \ref{int_lem_concr}, the application of our general integration lemma, Proposition \ref{int lem}. This is the main analytic ingredient which must still be done by hand, cf.\@ the proof of Lemma \ref{cor:prod1} and Step \ref{Intprf_000} in the proof of Lemma \ref{int_lem_concr}, and would need to be automated in order to provide a black box of analytic tools for solving \eqref{eq:PDEintro} with arbitrary $\alpha \in (0,1)$.

\medskip

While completing this article, we became aware of a recent work by Hairer and Gerencs{\'e}r \cite{gerencser2017quasi}
on a closely related subject. These authors developed a short-time solution theory for the initial value problem of 
 stochastic PDEs of the type
\begin{equation}\label{e:Intro:4}
(\partial_2- a(u) \partial_1^2 )u = b(u) (\partial_1 u)^2 + \sigma(u) f.
\end{equation}
Here $f$ is a suitable random distribution of low regularity. Their results are optimal in the sense that the analytic part of their construction 
works up to the threshold of sub-criticality, \emph{i.e.}, for all $f$ of regularity $\alpha-2$ for $\alpha>0$. In order to construct the 
terms in the stochastic expansion they impose a stronger regularity assumption which corresponds to $\alpha>\frac14$
and this condition may also be close to optimal. However, in order to analyze the renormalization procedure they  
impose an even stronger regularity assumption on $f$ which corresponds exactly to our condition $\alpha>\frac12$.
Their analysis is based on a transformation which turns  \eqref{e:Intro:4} into a rather complicated non-local integral equation.
Despite its complexity this equation does fall into the class of problems that can be treated within the sophisticated regularity 
structure machinery as developed in \cite{bruned2016algebraic,chandra2016analytic,hairer2014theory}. 
The reason for their restriction $\alpha >\frac12$ stems from the fact that in their framework it is not obvious that the
terms produced in the renormalization depend locally on the solution $u$ and they only checked this explicitly 
in the case $\alpha>\frac12$. 

\medskip
In the present work we do not discuss the construction of the off-line stochastic terms or their effect on renormalization.
However, we do not expect any problem connected to non-locality as this 
only arises in the transformation to an integral equation and can probably be avoided by dealing directly
with the more natural equation \eqref{e:Intro:4}, as we do.
\subsection{Semigroup Convolution and H\"older Scale}\label{S:pre}
We will use similar notation as in \cite{OtW16} whenever possible. Let us recall the basic notations used there, in particular the parabolic metric and the convolution kernel.

The parabolic operator $\partial_2-a_0\partial_1^2$ and its
mapping properties on the scale of H\"older spaces (\emph{i.e.}, Schauder theory) imposes its intrinsic 
(Carnot-Carath\'eodory) metric, which is given by
\begin{align}\label{1.11}
d(x,y)=|x_1-y_1|+\sqrt{|x_2-y_2|}.
\end{align}
In order to define negative norms of distributions in an intrinsic way, 
it is convenient to have a family $\{(\cdot)_T\}_{T>0}$ of mollification operators $(\cdot)_T$ consistent 
with the relative scaling $(x_1,x_2)=(\ell\hat x_1,\ell^2\hat x_2)$ of the two variables
dictated by \eqref{1.11}.  
It will turn out to be extremely convenient to have in addition the semi-group property
\begin{align}\label{1.10}
(\cdot)_T(\cdot)_t=(\cdot)_{T+t}.
\end{align}
All the above is achieved by convolution with the kernel of the semigroup $\exp(-T(\partial_1^4-\partial_2^2))$
of the elliptic operator $\partial_1^4-\partial_2^2$, which is the simplest positive operator
displaying the same relative scaling between the variables as $\partial_2-\partial_1^2$ and being symmetric
in $x_2$ next to $x_1$. We note that the corresponding convolution kernel $\psi_T$ is easily
characterized by its Fourier transform $\hat\psi_T(k)=\exp(-T(k_1^4+k_2^2))$; since
the latter is a Schwartz function, also $\psi_T$ is a Schwartz function.  For later reference we note that
\begin{align}\label{wg50}
\partial_T\psi_T=-(\partial_1^4-\partial_2^2)\psi_T.
\end{align} 
The only three (minor) inconveniences
are that 1) the $x_1$-scale is played by $T^\frac{1}{4}$ (in line with \eqref{1.11} the $x_2$-scale
is played by $T^\frac{1}{2}$) since we have
$\psi_T(x_1,x_2)=\frac{1}{T^\frac{3}{4}}\psi_1(\frac{x_1}{T^\frac{1}{4}},\frac{x_2}{T^\frac{1}{2}})$, 
that 2) $\psi_1$ (and thus $\psi_T$) does not have a sign, and that 3) $\psi_1$ is not compactly supported. The only properties of the kernel we need
are moments of derivatives:
\begin{align}\label{1.13}
\begin{array}{rcl}
\int |\partial_1^k\psi_T(x-y)|d^\alpha(x,y) \dd y&\lesssim&(T^\frac{1}{4})^{-k+\alpha}\quad\mbox{and}\quad\\
\int |\partial_2^k\psi_T(x-y)|d^\alpha(x,y) \dd y&\lesssim&(T^\frac{1}{4})^{-2k+\alpha}
\end{array}
\end{align}
for all orders of derivative $k=0,1,\cdots$ and moment exponents $\alpha\ge 0$. Here, the symbol $\lesssim$ is used whenever the inequality holds up to a constant that depends only on $\alpha$ and $k$. Estimates \eqref{1.13} follow immediately
from the scaling and the fact that $\psi_1$ is a Schwartz function.  In addition, we use that
\begin{align}\label{he1}
\int \psi x_{1}\dd x = 0.
\end{align}

Given a Banach space $\Ta{}$, we define $c\Ta{}$ for any $c>0$ to be $\Ta{}$ endowed with the norm
 \begin{align}\label{jj4}
  \|\cdot\|_{c\Ta{}}:=c\|\cdot\|_{\Ta{}}.
 \end{align}
 Moreover, we denote by $C^{0}(\R^{2}; \Ta{})$ the space of strongly continuous functions $v: \R^{2} \to \mathsf{T}$.  We will also use the parabolic H\"{o}lder space $C^{\alpha}(\R^{2}; \mathsf{T})$, defined for $\alpha \in (0,1)$ via the semi-norm
\begin{align}\label{j1101}
[v]_{C^{\alpha}(\R^{2}; \mathsf{T})}:=\sup_{x \neq y}d^{-\alpha}(y,x)\|v(y)-v(x)\|_{\mathsf{T}},
\end{align}
where $v \in C^{0}(\R^{2}; \mathsf{T})$. If $\Ta{}$ happens to be the scalar field, we will simply write 
\begin{align*}
 [\cdot]_{\alpha}:=[v]_{C^{\alpha}(\R^{2}; \mathsf{T})}. 
\end{align*}
We will also need a space of tempered distributions $C^{\alpha-2}(\R^{2}; \mathsf{T})$, which carries the norm
\begin{align}\label{j1001}
\|g\|_{C^{\alpha-2}(\R; \mathsf{T})}:=\sup_{T\le 1}\Tc{2-\alpha}\sup_{x \in \R^{2}}\|g_{T}(x)\|_{\mathsf{T}},
\end{align}
where $g \in \cals'(\R^{2}; \mathsf{T})$ and for $x \in \R^{2}$ we have $g_{T}(x)=\langle g, \psi_{T}(x- \cdot) \rangle_{\R^{2}} \in \mathsf{T}$.


\section{Modelled Distributions}\label{S:MD}

In this section, we introduce our notion of a modelled distribution. As in Chapter 3 of \cite{hairer2014theory}, this abstract object should be thought of as a local description of a function or a distribution to a given order.  Modelled distributions are defined relative to an abstract space and a skeleton.  Elements of the abstract spaces should be thought of as placeholders for functions and distributions, while the skeleton should be thought of as the primitive object from which modelled distributions are built.
\begin{definition}
An abstract space is a pair $(\Aa{},\Ta{})$, where $\Aa{} \subset \R$ is finite and $\Ta{}$ is a Banach space graded according to $\Aa{}$, meaning that
\begin{align}\label{j1002}
\Ta{}=\bigoplus_{\beta \in \Aa{}} \Ta{\beta},
\end{align}
for some collection of Banach spaces $\{\Ta{\beta}\}_{\beta \in \Aa{}}$.  The smallest and largest elements of $\Aa{}$ are denoted $\betau$ and $\betao$ respectively, that is
\begin{align}\label{j1000}
 \betau:=\min\Aa{}, \qquad \betao:=\max\Aa{}.
\end{align}
\end{definition}
Given an abstract space $(\Aa{},\Ta{})$, real numbers in $\Aa{}$ are referred to as homogeneities and typically denoted by $\beta$, $\gamma$, $\eps$ etc., while elements of $\Ta{}$ are denoted by $\ta=(\ta_\beta)_{\beta\in\Aa{}}$.

\medskip

Given two Banach spaces $\mathsf{T}_{1},\mathsf{T}_{2}$, we denote by $\call(\mathsf{T}_{1},\mathsf{T}_{2})$ the space of bounded linear operators from $\mathsf{T}_{1}$ to $\mathsf{T}_{2}$. In particular, for a Banach space $\mathsf{T}$ with a typical element $\ta$ and $V \in C^{0}(\R^{2}; \Ta{}^*)$ with $\Ta{}^*:=\call(\mathsf{T}, \R)$, we denote by $V(x).\ta$ the application of $V(x) \in \Ta{}^*$ to an element $\ta\in \mathsf{T}$.

\begin{definition}\label{def:Skel}
Given an abstract space $(\Aa{},\Ta{})$, an operator-valued function $\Ga{}{} \in C^{0}(\R^{2}; \mathcal{L}(\Ta{}, \Ta{}))$ is called a skeleton provided that 
\begin{enumerate}

\item Triangular representation: for each $x\in\R^2$ and $\ta=(\ta_\gamma)_{\gamma\in\Aa{}} \in \Ta{}$, we write $\Ga{}{}(x)\ta= \vp{\sum_{\gamma\in\Aa{}}\Ga{\beta}{\gamma}(x)\ta_{\gamma}}_{\beta \in \Aa{}}$, where in particular $\Ga{\beta}{\gamma} \in C^{0}(\R^{2};\mathcal{L}(\Ta{\gamma},\Ta{\beta}))$, and assume
\begin{align}\label{j551}
 \Ga{\beta}{}(x)\ta:=(\Ga{}{}(x)\ta)_\beta =\ta_\beta + \sum_{\gamma < \beta}\Ga{\beta}{\gamma}(x)\ta_{\gamma}.
\end{align}
In other words, \eqref{j551} means that for all $\beta \in \Aa{}$ and $x \in \R^{2}$, we impose $\Ga{\beta}{\gamma}(x)=0$ for $\gamma>\beta$ and $\Ga{\beta}{\beta}(x)=\id_{\Ta{\beta}}$.  

\item Continuity property: for some constant $C\ge 0$ and all $\beta\in\Aa{}$, $\ta\in\Ta{}$, and $x,y \in \R^{2}$ with $d(y,x)\le 1$, it holds
\begin{align}
\|(\Ga{\beta}{}(y)-\Ga{\beta}{}(x))\ta\|_{\Ta{\beta}}  \leq C \sum_{\gamma<\beta}d^{\beta-\gamma}(y,x)\|\Ga{\gamma}{}(x)\ta\|_{\Ta{\gamma}}.\label{ssss2}
\end{align}
The minimal constant $C$ is denoted by $\|\Gamma\|_{sk}$.
\end{enumerate}
\end{definition}

\medskip

\begin{definition}\label{prodDef1}
Given an abstract space $(\Aa{},\Ta{})$ and a skeleton $\Ga{}{}$, we call $V \in C^{0}(\R^{2}; \Ta{}^*)$ a modelled distribution of order $\overline{\alpha}$, if there is a family of non-negative constants $\{M^b_\beta\}_{\beta\in\mathsf{A}}$ and a constant $M^c\ge 0$ such that the following hold:
\begin{enumerate}
\item Form boundedness: for every $\ta\in\Ta{}$ and $x\in \R^2$
\begin{equation}
|V(x).\ta| \leq \sum_{\beta\in\Aa{}}M_\beta^b \|\Ga{\beta}{}(x)\ta\|_{\Ta{\beta}}.\label{prod4}
\end{equation}
\item Form continuity: for all $\ta\in\Ta{}$ and $x, y \in \R^{2}$ with $d(y,x)\le 1$,
\begin{equation}
|(V(y)-V(x)).\ta| \leq M^c\sum_{\beta \in \Aa{}} d^{(\overline{\alpha}-\beta)\vee 0 }(y,x)\|\Ga{\beta}{}(x)\ta\|_{\Ta{\beta}}.\label{prod5}
\end{equation}
\end{enumerate}
The space of modelled distributions is denoted $\cald^{\alphao}(\Ta{}; \Ga{}{})$.  The smallest constant $M^c$ in \eqref{prod5} is denoted by 
$[V]_{{\mathcal D}^{\overline{\alpha}}(\mathsf{T};\Gamma)}$, defining a semi-norm.
Given a parameter $N\in(0,1]$ and a family of non-negative exponents 
$\{\langle\beta\rangle\}_{\beta\in\mathsf{A}}$, we denote the smallest value of 
$\max_{\beta\in\mathsf{A}}N^{-\langle\beta\rangle}M_\beta^b$
over all $\{M^b_\beta\}_{\beta\in\mathsf{A}}\in[0,\infty)$ with \eqref{prod4}
by $\|V\|_{\mathsf{T};\Gamma}$; this defines a norm.
\end{definition}
A remark on the definition of (semi-)norms $[\cdot]_{{\mathcal D}^{\overline{\alpha}}(\mathsf{T};\Gamma)}$
and $\|\cdot\|_{\mathsf{T};\Gamma}$ is in place. 
In principle, both \eqref{prod4} and \eqref{prod5} implicitly define two families of semi-norms
indexed by $\mathsf{A}$. In order to present our results in a compact form, we consider
weighted sums of these semi-norms.
In our application, for the more important \eqref{prod5}, we absorb the suitable weights by
adjusting the norms $\|\cdot\|_{\mathsf{T}_\beta}$, cf.\@ \eqref{S8030}, which also has the advantage of making all contributions to
$\|\Gamma\|_{sk}$ to be of the same order unity. Hence in this abstract section,
we are acquitted of introducing
$\{M_\beta^c\}_{\beta\in\mathsf{A}}$. However, we have to allow for weights when aggregating
$\{M_\beta^b\}_{\beta\in\mathsf{A}}$; without loss of generality, we parameterize these weights through 
$\{N^{-\langle\beta\rangle}\}_{\beta\in\mathsf{A}}$. 
This specific parameterization of weights is motivated
by the application, where $N\le 1$ is a scale parameter measuring the size of the r.~h.~s.~$f$ 
and $\langle\beta\rangle$ is {\it another} (integer-valued) homogeneity parameter, 
essentially indicating the homogeneity of the model on level $\beta$ in the r.~h.~s. $f$, cf.~Subsections
\ref{fspc} and \ref{subsec:theModel}. 
For later reference we note that \eqref{prod4} takes the form
\begin{align}\label{wg16}
|V(x).\mathsf{v}|\le\|V\|_{\mathsf{T};\Gamma}
\sum_{\beta\in\mathsf{A}}N^{\langle\beta\rangle}\|\Gamma_\beta(x)\mathsf{v}\|_{\mathsf{T}_\beta}.
\end{align}
\begin{remark}\label{wg01}
The form boundedness \eqref{prod4} highlights that there exists $\tilde V\in C^0(\mathbb{R}^2;\mathsf{T}^*)$ such that
$V$ is of the form 
\begin{align}\label{cw92}
V.\mathsf{v}=\tilde V.\Gamma\mathsf{v},
\end{align}
and it follows easily (see Subsection \ref{subsec_remarkPf} for a short proof) that form continuity \eqref{prod5} is then equivalent to
\begin{align}\label{cw90}
\big\|\big(\Gamma^{-*}(x)\Gamma^*(y)\tilde V(y)-\tilde V(x)\big)_\beta\big\|_{\mathsf{T}_\beta^*}
\le [V]_{{\mathcal D}^{\overline{\alpha}}(\mathsf{T};\Gamma)}d^{(\overline{\alpha}-\beta)\vee 0}(y,x)
\end{align}
for all $\beta\in\mathsf{A}$ and $x,y\in\mathbb{R}^2$ with $d(y,x)\le 1$. Here $\Gamma^{-1}$
$\in C^0(\mathbb{R}^2;{\mathcal L}(\mathsf{T},\mathsf{T}))$
denotes the (pointwise in $x$) inverse of $\Gamma$, which exists thanks to the triangular structure \eqref{j551};
and $\Gamma^*,\Gamma^{-*}\in C^0(\mathbb{R}^2;{\mathcal L}(\mathsf{T}^*,\mathsf{T}^*))$ stand for
the (pointwise) adjoints
of $\Gamma,\Gamma^{-1}$. This equivalent formulation \eqref{cw90} coincides 
with Hairer's notion of continuity
\cite[Definition 3.1]{hairer2014theory}, with $\Gamma^{-*}(x)\Gamma^*(y)$ playing the role of $\Gamma_{xy}$
\cite[Definition 2.17]{hairer2014theory}.
\end{remark}
\subsection{Reconstruction}\label{S:rec}
Before discussing the next result, a word on the notation is in order. Throughout the paper we will denote typical elements in $\Ta{}$ by a bold face $\ta$, while $\Ta{}$-valued functions or distributions are typically denoted by a symbol $\tf$.  The following proposition states under which assumptions on a local description $V$ one can define a distribution $\calr V$ that satisfies the local description. In analogy to Theorem 3.10 in \cite{hairer2014theory}, we call this procedure \emph{reconstruction}. 
We observe that $\|\Ga{\beta}{}(x)\ta\|_{\Ta{\beta}}$ acts as a (semi)-norm on $\Ta{\beta}$
and thus puts a topology on the tangent bundle $\R^2\times \Ta{}$.  The idea is that if the topology on the one hand is weak enough that a given $\Ta{}$-valued distribution $\tf$ (a model) satisfies a continuity condition \eqref{cw04} reminiscent of a negative H\"older condition $\tf_{\beta}\in C^\beta$, cf.\@ \eqref{j1001}, and on the other hand is strong enough that a given $\Ta{}^*$-valued function $V$ (a modelled distribution) satisfies a continuity condition \eqref{prod5} reminiscent of a positive H\"older condition, cf.\@ \eqref{j1101}, then this balance causes that $V.\tf$ defines a distribution $\calr V$. The boundedness \eqref{prod4} of $V$ is only needed in a qualitative way.  

\begin{proposition}\label{rec_lem} (Reconstruction)
Let $(\mathsf{A},\mathsf{T})$ be an abstract space with $\underline{\beta}\in(-4,0)$, cf.~\eqref{j1000},
and skeleton $\Gamma$. Assume we are given $v\in\cals'(\mathbb{R}^2;\Ta{})$ such that for all $\beta\in \Aa{}$, $x\in\mathbb{R}^2$, $T\le 1$ it holds
\begin{align}
\|\Gamma_\beta(x) v_T(x)\|_{\mathsf{T}_\beta}&\le (T^\frac{1}{4})^\beta.\label{cw04}
\end{align}
%
%
%
Then for each $V \in \mathcal{D}^{\overline{\alpha}}(\Ta{};\Ga{}{})$ with $\overline{\alpha}>0$, there exists a unique ${\mathcal R}V\in\cals'(\mathbb{R}^2)$, the ``reconstruction'',  such that
for all $T\le 1$
\begin{align}\label{cw40}
\|({\mathcal R}V)_T-V.v_T\|\lesssim (1+\|\Gamma\|_{sk})^{|\Aa{}|-1} [V]_{{\mathcal D}^{\overline{\alpha}}(\mathsf{T};\Gamma)}
(T^\frac{1}{4})^{\overline{\alpha}},
\end{align}
where $|\Aa{}|$ denotes the number of elements in $\Aa{}$ 
and $\lesssim$ denotes $\le C$ up to a constant only depending on $\Aa{}$
and $\overline{\alpha}$.
\end{proposition}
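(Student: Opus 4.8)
The plan is to reconstruct $\mathcal{R}V$ as a limit of the mollifications $V.v_T$ as $T\to 0$, in the spirit of Safonov's wavelet-free argument in \cite{OtW16}. The natural object to control is the difference between mollifications at two nearby scales, $(V.v_T)_t - V.v_{T}$ for $t\le T\le 1$ (using the semigroup property \eqref{1.10} one has $(V.v_T)_t = V.v_{T+t}$ only up to a commutator, so the actual quantity to study is $V.v_{T+t} - (V.v_T)_t$ together with $V.v_{T+t}-V.v_T$). First I would fix a base point, say the running point $x$, and write, using the form structure \eqref{cw92} and the fact that $\Gamma(x)$ is invertible by the triangular representation \eqref{j551},
\begin{align*}
(V.v_{T})(x') = \sum_{\beta\in\Aa{}} \tilde V(x')_\beta . (\Gamma(x') v_T(x'))_\beta,
\end{align*}
and then swap the base point of $\tilde V$ from $x'$ to $x$ at the expense of the form-continuity error \eqref{prod5}. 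The point is that \eqref{cw04} controls $\|\Gamma_\beta(x)v_T(x)\|_{\Ta{\beta}} \le (T^{1/4})^\beta$ while \eqref{prod5} controls $|(V(x')-V(x)).\ta| \lesssim [V] \sum_\beta d^{(\alphao-\beta)\vee 0}(x',x)\|\Gamma_\beta(x)\ta\|$, and since $v_T(x'-\cdot)$ is concentrated on the ball $d(x',\cdot)\lesssim T^{1/4}$, pairing these two estimates yields a contribution of order $\sum_\beta (T^{1/4})^{(\alphao-\beta)\vee 0}(T^{1/4})^\beta$, which for $\beta\le\alphao$ (and in particular for all $\beta$ when $\alphao$ is small, but in general after noting $\beta\ge\betau>-4$ keeps the $x_1$-moment integrals \eqref{1.13} convergent) is $\lesssim (T^{1/4})^{\alphao}$, exactly the target rate in \eqref{cw40}.

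Next I would turn this per-scale estimate into a telescoping/Cauchy argument. The key inequality is a bound of the form $\|V.v_{2T} - (V.v_T)_T\|_{L^\infty} \lesssim (1+\|\Gamma\|_{sk})^{?}[V](T^{1/4})^{\alphao}$ for each dyadic scale, obtained by inserting the base-point change as above and using \eqref{cw04} both for $v_T$ and for the commutator term. Since $\alphao>0$, summing the geometric series $\sum_{k} (2^{-k/4})^{\alphao}$ shows $\{(V.v_{2^{-k}})\}_k$ is Cauchy in the relevant (negative Hölder / distributional) topology, hence converges to some $\mathcal{R}V\in\cals'(\R^2)$; the telescoped sum also gives the quantitative bound \eqref{cw40} with the stated rate. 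The factor $(1+\|\Gamma\|_{sk})^{|\Aa{}|-1}$ I expect to enter because changing base point and re-expressing in the skeleton basis at level $\beta$ requires composing the continuity estimate \eqref{ssss2} across the (at most $|\Aa{}|-1$) homogeneities strictly below $\beta$: each such step costs a factor $(1+\|\Gamma\|_{sk})$, and there are at most $|\Aa{}|-1$ of them in the longest chain. Uniqueness is the easy part: if $\mathcal{R}V$ and $\mathcal{R}'V$ both satisfy \eqref{cw40}, then $\|(\mathcal{R}V - \mathcal{R}'V)_T\|\lesssim (T^{1/4})^{\alphao}\to 0$ as $T\to0$, and a distribution all of whose mollifications at small scale vanish in the limit (uniformly) must be zero — here one uses $\alphao>0$ so that the bound genuinely decays, together with the fact that $\psi_T\to\delta_0$ and the semigroup property to test against arbitrary Schwartz functions.

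The main obstacle, I expect, is the bookkeeping in the per-scale estimate: one must carefully separate (i) the base-point change in $\tilde V$, handled by \eqref{prod5}/\eqref{cw90}, from (ii) the commutator $v_{T+t}-(v_T)_t$ coming from the fact that $\Gamma(x')v_T(x')$ is not itself a "model" at a shifted scale, and (iii) the contributions of homogeneities $\beta$ with $\beta > \alphao$, for which $(\alphao-\beta)\vee 0 = 0$ and so \eqref{prod5} alone gives no smallness — these must be absorbed by pairing against $\|\Gamma_\beta(x)v_T(x)\|_{\Ta{\beta}}\le (T^{1/4})^\beta$ with $\beta$ possibly large and positive, so that the product still carries a positive power of $T^{1/4}$, while for $\beta<0$ one needs $\betau>-4$ to keep the kernel moment bounds \eqref{1.13} under control. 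A secondary subtlety is that the reconstruction is only a distribution (since $\betau$ may be negative), so "convergence" must be interpreted in $\cals'$ via \eqref{j1001}, i.e. one really proves $\sup_{T\le1}(T^{1/4})^{-\betau}\sup_x|(V.v_T)_T(x) - (V.v_{2T})_{?}(x)|$-type bounds and passes to the limit there; getting the test-function pairing and the semigroup identity \eqref{1.10} to cooperate cleanly at the distributional level is where the argument needs the most care. Everything else — the geometric summation, the uniqueness, the extraction of the power $(1+\|\Gamma\|_{sk})^{|\Aa{}|-1}$ — then follows routinely.
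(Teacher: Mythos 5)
Your overall plan — represent the target as the $\tau\downarrow 0$ limit of $V.v_\tau$, estimate the single-scale commutator by pairing form-continuity \eqref{prod5} against the model bound \eqref{cw04} under the kernel, telescope dyadically using the semigroup property, and conclude uniqueness from the $(T^{1/4})^{\overline\alpha}$ decay — is exactly the spirit of the paper's proof. You also correctly identify the key arithmetic $((\overline\alpha-\beta)\vee 0)+\beta\ge\overline\alpha$ as the mechanism absorbing the high-homogeneity contributions. Two points, however, are not right.

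\textbf{The origin of the factor $(1+\|\Gamma\|_{sk})^{|\Aa{}|-1}$.} Your explanation — that changing base point within a single estimate "requires composing \eqref{ssss2} across the homogeneities strictly below $\beta$," each costing a factor — does not produce this exponent. The skeleton continuity \eqref{ssss2} is a \emph{one-shot} estimate directly relating $\Gamma_\beta(y)$ and $\Gamma_\beta(x)$; nothing needs to be iterated across homogeneities in the base-point swap, and under the assumption that \eqref{prod5} holds for \emph{all} pairs $x,y$ (not only $d(y,x)\le 1$) the single-scale commutator bound \eqref{p04} holds with \emph{no} $\|\Gamma\|_{sk}$ factor at all. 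The factor enters entirely through the long-range tails of $\psi_T$, which force you to control $|(V(y)-V(x)).v_t(y)|$ for $d(y,x)\ge 1$, where \eqref{prod5} is not directly available. The paper chains \eqref{prod5} and \eqref{ssss2} along $\sim d^2(y,x)$ unit-distance segments (the square coming from the $\sqrt{|x_2-y_2|}$ in $d$); the resulting nilpotent iteration of \eqref{ssss2} is parameterized by increasing sequences in $\Aa{}$ with at most $|\Aa{}|-1$ jumps distributed among $\sim d^2(y,x)$ slots, yielding the bound $\big((1+\|\Gamma\|_{sk})d^2(y,x)\big)^{|\Aa{}|-1}$ of \eqref{cw02a}, and then the polynomial-in-$d$ growth is killed by the rapid decay \eqref{1.13} of $\psi_s$ (here $\underline\beta>-4$ is used to keep the moment integrals finite). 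Without recognizing this you have no mechanism producing the combinatorial exponent.

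\textbf{Passing to the limit.} Your suggestion to deduce convergence from "Cauchyness" of $V.v_\tau$ in a negative-Hölder topology is not a clean argument as stated: the commutator bound controls $(\mathcal{R}V^{\tau_1})_{T-\tau_1}-(\mathcal{R}V^{\tau_2})_{T-\tau_2}$ with mismatched convolution parameters, and the seminormed space defined by \eqref{j1001} is not one in which Cauchy estimates translate straightforwardly into convergence. The paper instead proves the $\tau$-uniform \emph{lower-order} bound $\|(\mathcal{R}V^\tau)_T\|\lesssim \|V\|_{\mathsf{T};\Gamma}(T^{1/4})^{\underline\beta}$ — this is where the form-boundedness \eqref{prod4} is used qualitatively, which your sketch never invokes — and then extracts a weak-$*$ convergent subsequence via the compactness estimate $\|(1+\partial_1^4-\partial_2^2)^{-1}f\|\lesssim\sup_{T\le1}(T^{1/4})^{-\underline\beta}\|f_T\|$ (again using $\underline\beta>-4$). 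Passing to the limit in the quantitative per-scale bound then yields \eqref{cw40}. This compactness step is genuinely needed; it is not a routine "details to be filled in" item, and your plan as written does not contain a substitute for it.
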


\subsection{Reduction of Modelled Distributions}\label{S:red}
In our analysis, it is often convenient to pass from a modelled distribution $V$ on an abstract space $(\Aa{},\Ta{})$ to a reduced modelled distribution (effectively) acting on a smaller abstract space.  This is made precise through the cutting operation introduced below.

\medskip

Let $(\Aa{},\Ta{})$ be an abstract space.  For each $\eta$ satisfying $\underline{\beta}< \eta \leq \overline{\beta}$, cf.\@ \eqref{j1000}, we may define a new abstract space $(\Aa{<\eta},\Ta{<\eta})$ where the objects of homogeneity $\eta$ and higher have been removed:
\begin{align*}
\Aa{<\eta}:=\{\beta \in \Aa{} : \beta<\eta \}, \qquad \quad \Ta{<\eta}:=\bigoplus_{\beta<\eta}\Ta{\beta}.
\end{align*}
Moreover, given a skeleton $\Gamma$ on $(\Aa{},\Ta{})$, the skeleton $\Gamma_{<\eta}$ on $(\Aa{<\eta},\Ta{<\eta})$ is defined by restriction
\begin{align}\label{j961}
\vp{\Ga{<\eta}{}\ta'}_{\beta}:=\ta_{\beta}'+\sum_{\gamma<\beta} \Ga{\beta}{\gamma}\ta_{\gamma}' \quad \text{for } \beta<\eta.
\end{align}
It is straightforward to see that $\Ga{<\eta}{}$ constitutes a skeleton with respect to the abstract space $(\Aa{<\eta},\Ta{<\eta})$ and that 
\begin{equation}\label{skel1}
\|\Ga{<\eta}{}\|_{sk} \leq \|\Gamma \|_{sk}.
\end{equation}
We will now define a reduction operator $C_\eta$ on modelled distributions $V$ over $(\mathsf{A},\mathsf{T})$.
It is convenient to think of the reduced modelled distribution to be both an element of 
$C^0(\mathbb{R}^2;\mathsf{T}_{<\eta}^*)$
and of the original $C^0(\mathbb{R}^2;\mathsf{T}^*)$. Since the base point $x\in\mathbb{R}^2$ is a mere
parameter in this operation, we suppress it in the notation.
The definition on the level of $V$
proceeds recursively, starting from cutting at the highest homogeneity $\overline{\beta}$ via
\begin{align}\label{cut2}
C_{\overline{\beta}}V.\mathsf{v}:=V.({\rm id}_{\mathsf{T}}-\Gamma_{\overline{\beta}})\mathsf{v}.
\end{align}
By the triangular structure \eqref{j551} of $\Gamma$, $C_{\overline{\beta}}V.\mathsf{v}$ does not depend on 
$\mathsf{v}_{\overline{\beta}}$, so that \eqref{cut2} indeed defines an element of $C^0(\mathbb{R}^2;\mathsf{T}_{<\overline{\beta}}^*)$.
The definition of the reduction operator is then extended to all $\eta$ satisfying $\underline{\beta}< \eta \leq \overline{\beta}$ by successively cutting each of the homogeneities in $\Aa{}$ larger than $\eta$ (see Section \ref{S:cutPf} for more details).  The following lemma establishes that the reduced form is again a modelled distribution. Since the only non-trivial reduction procedure occurs in $V$, we will suppress the subscripts in $\Aa{<\eta}$, $\Ta{<\eta}$ and $\Ga{<\eta}{}$ for notational clarity.
\begin{lemma}\label{cutting_lemma}
Let $(\Aa{},\Ta{})$ be an abstract space with skeleton $\Ga{}{}$ and $V\in \cald^{\alphao}(\Ta{};\Ga{}{})$.  For each $\eta$ satisfying $\underline{\beta}<\eta \leq \overline{\beta}$, we have $\cut{\eta}V\in \cald^{\alphao \wedge \eta}(\Ta{};\Ga{}{})$ and
\begin{align}
\|\cut{\eta}V\|_{\Ta{};\Ga{}{}}&\le \|V\|_{\Ta{};\Ga{}{}}, \label{S70a}\\
[\cut{\eta}V]_{\mathcal{D}^{\overline{\alpha} \wedge\eta}(\Ta{}; \Ga{}{})} &\leq [V]_{\mathcal{D}^{\overline{\alpha}}(\Ta{}; \Ga{}{})}+(|\Aa{}|-1)N^{\min_{\beta\ge \eta}\ap{\beta}}\|\Ga{}{}\|_{sk}\|V\|_{\Ta{};\Ga{}{}},\label{eq:cutLemmaBd1}
\end{align}  
where we recall that $|\Aa{}|$ denotes the number of elements in $\Aa{}$. In addition, for each $\kappa> \eta$, $\ta\in\Ta{}$ and $x\in\R^2$ it holds
\begin{align}
\left |(C_{\kappa}-C_{\eta})V(x).\mathsf{v} \right | &\leq \|V\|_{\Ta{};\Ga{}{}} \sum_{\beta \in [\eta,\kappa) \cap \mathsf{A}} N^{\ap{\beta}}\|\Ga{\beta}{}(x)\mathsf{v}\|_{\Ta{\beta}}, \label{S8300}
\end{align}
in particular
\begin{align}
 |(\id-C_\eta) V(x).\mathsf{v}|&\le \|V\|_{\Ta{};\Ga{}{}}\sum_{\beta\ge\eta} N^{\ap{\beta}}\|\Gamma_\beta(x)\mathsf{v}\|_{\mathsf{T}_\beta}.\label{new}
\end{align}
\end{lemma}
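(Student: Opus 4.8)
The plan is to proceed by backward induction on $\eta$ through the finitely many homogeneities of $\mathsf{A}$, since the reduction operator $C_\eta$ is defined recursively by peeling off one homogeneity at a time. The base case is $\eta=\overline{\beta}$, where $C_{\overline{\beta}}V.\mathsf{v}=V.(\mathrm{id}_{\mathsf{T}}-\Gamma_{\overline{\beta}})\mathsf{v}$. First I would verify directly from the triangular structure \eqref{j551} that $(\mathrm{id}_{\mathsf{T}}-\Gamma_{\overline\beta})\mathsf{v}$ has vanishing $\overline\beta$-component, so $C_{\overline\beta}V$ genuinely lives on $(\mathsf{A}_{<\overline\beta},\mathsf{T}_{<\overline\beta})$, and that for $\mathsf{v}\in\mathsf{T}_{<\overline\beta}$ one has $\Gamma_{\overline\beta}\mathsf{v}=\sum_{\gamma<\overline\beta}\Gamma_{\overline\beta}^\gamma\mathsf{v}_\gamma$, an element whose homogeneity-$\gamma$ skeleton norms are already controlled by the $\|\Gamma_\gamma(x)\mathsf{v}\|_{\mathsf{T}_\gamma}$ via \eqref{ssss2}. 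The key algebraic identity to establish — and the one that makes the whole induction work — is that the skeleton on the reduced space is exactly the restriction \eqref{j961}, so that $\Gamma_\beta^{<\eta}(x)\mathsf{v}=\Gamma_\beta(x)\mathsf{v}$ for $\beta<\eta$ when $\mathsf{v}\in\mathsf{T}_{<\eta}$; this lets every bound phrased in reduced-skeleton norms be read off from the original \eqref{prod4}–\eqref{prod5}.

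For the inductive step, write $C_\eta V = C_{\overline\beta} V - \sum$ of successive cuts, or more cleanly observe $(\mathrm{id}-C_\eta)V.\mathsf{v}=V.\big(\sum_{\beta\ge\eta}\Gamma_\beta\mathsf{v}\big)$ up to the bookkeeping that the cuts are applied one after another to already-cut forms. The estimate \eqref{S8300} is then the heart of the matter: applying the form-boundedness \eqref{wg16} to $V$ evaluated on $\Gamma_\kappa\mathsf{v}-\Gamma_\eta\mathsf{v}$ (the difference telescopes to the homogeneities in $[\eta,\kappa)$), and then invoking the continuity property \eqref{ssss2} of the skeleton to bound each $\|\Gamma_\gamma(x)(\Gamma_\beta(x)\mathsf{v})\|_{\mathsf{T}_\gamma}$ for $\gamma<\beta$ in $[\eta,\kappa)$ by the quantities $N^{\ap\beta}\|\Gamma_\beta(x)\mathsf{v}\|_{\mathsf{T}_\beta}$. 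Here one uses $d\le 1$-free estimates at the single base point — the delicate point is that the "off-diagonal" contributions $\Gamma^\gamma_\beta(x)$ must be absorbed into $\|\Gamma\|_{sk}$ without generating extra factors of $N^{-\ap\beta}$, which forces the $N^{\ap\beta}$ (with $\ap\beta\ge 0$) weights to sit on the larger homogeneity; this is why the statement carries $N^{\ap\beta}$ rather than $N^{\ap\gamma}$. Specializing $\kappa=\overline\beta$ (or rather taking $\kappa$ above all of $\mathsf{A}$) gives \eqref{new}.

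The norm bound \eqref{S70a} is then immediate: reading the form-boundedness of $C_\eta V$ off \eqref{prod4} applied to $V$ on $\Gamma\mathsf{v}$, one sees the admissible constants $M_\beta^b$ for $C_\eta V$ are bounded by those of $V$, since passing to $\mathsf{T}_{<\eta}$ only drops terms. For the continuity semi-norm \eqref{eq:cutLemmaBd1}, I would split $(C_\eta V(y)-C_\eta V(x)).\mathsf{v}$ into the piece coming from $(V(y)-V(x))$ — controlled by $[V]_{\mathcal D^{\overline\alpha}}$ via \eqref{prod5}, with the new order being $\overline\alpha\wedge\eta$ because homogeneities $\ge\eta$ no longer appear — plus a commutator piece involving $(\Gamma_{<\eta}(y)-\Gamma_{<\eta}(x))$ minus $(\Gamma(y)-\Gamma(x))$ applied under $V$; the latter is estimated by $\|V\|_{\mathsf{T};\Gamma}$ times $\|\Gamma\|_{sk}$ times $d^{\beta-\gamma}$, and summing over the $|\mathsf{A}|-1$ cuts and using $d\le 1$ with $N^{\min_{\beta\ge\eta}\ap\beta}$ as the worst weight produces the stated bound. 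The main obstacle I anticipate is keeping the recursion honest: each cut changes the form on which the next cut acts, so one must check that the skeleton used at each stage is still the restriction of the original — i.e., that cutting commutes appropriately — and that the constants do not compound multiplicatively beyond the linear-in-$(|\mathsf{A}|-1)$ factor claimed. I expect this is handled by treating $(\mathrm{id}-C_\eta)$ as a single operation $V\mapsto V.\big(\sum_{\beta\ge\eta}\Gamma_\beta(\cdot)\big)$ rather than literally iterating, which sidesteps the compounding.
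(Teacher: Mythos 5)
Your plan for \eqref{S70a}, the base case, and the split used for \eqref{eq:cutLemmaBd1} is essentially the paper's; the trouble is in your treatment of \eqref{S8300}/\eqref{new}, and it is not a matter of bookkeeping.

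The identity you propose to ``sidestep the compounding,''
\[
(\id-C_\eta)V.\mathsf{v}\;=\;V.\Big(\textstyle\sum_{\beta\ge\eta}\Gamma_\beta\mathsf{v}\Big),
\]
is false once more than one homogeneity is cut. From the recursion \eqref{cutind} one gets
$C_{\beta_j}V.\mathsf{v}=V.\big[(\id-\Gamma_{\beta_J})\cdots(\id-\Gamma_{\beta_j})\big]\mathsf{v}$,
and the cross terms $\Gamma_{\beta_k}\Gamma_{\beta_l}$ with $\beta_k>\beta_l$ do \emph{not} vanish: $\Gamma_{\beta_l}\mathsf{v}$ sits in the $\beta_l$-slot, and $\Gamma_{\beta_k}^{\beta_l}$ is a genuine entry of the skeleton for $\beta_k>\beta_l$. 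Expressed in the $\tilde V$-language of Remark~\ref{wg01}/\ref{wg04}, the correct formula is
$(\id-C_\eta)V.\mathsf{v}=\sum_{\beta\ge\eta}\tilde V_\beta.\Gamma_\beta\mathsf{v}$,
with $\tilde V$ and not $V$ on the right — precisely the compounding you hoped to avoid. Your ``single-operation'' reformulation would only be correct if all off-diagonal skeleton blocks above $\eta$ vanished.

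Second, the estimate itself is misdiagnosed. You propose to bound $\|\Gamma_\gamma(x)\Gamma_\beta(x)\mathsf{v}\|$ for $\gamma<\beta$ by invoking the skeleton continuity \eqref{ssss2}, and you worry about the resulting $\|\Gamma\|_{sk}$-factors and weight placement. But \eqref{ssss2} compares $\Gamma(y)$ with $\Gamma(x)$; it says nothing about the composition $\Gamma_\gamma(x)\Gamma_\beta(x)$ at a fixed point. The relevant fact is purely algebraic and \emph{exact}: since $\Gamma_\beta\mathsf{v}$ has only a $\beta$-component and \eqref{j551} forces $\Gamma_\gamma^{\beta}=0$ for $\gamma<\beta$ and $\Gamma_\beta^\beta=\id$, one has $\Gamma_\gamma(x)\Gamma_\beta(x)=0$ for $\gamma<\beta$ and $\Gamma_\beta(x)\Gamma_\beta(x)=\Gamma_\beta(x)$. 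This explains why the constant in \eqref{S8300} is exactly $\|V\|_{\mathsf{T};\Gamma}$ with \emph{no} $\|\Gamma\|_{sk}$ in it — a feature your method would not reproduce.

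The correct route (as in the paper) is to telescope one homogeneity at a time: $(C_{\beta_{j+1}}-C_{\beta_j})V.\mathsf{v}=C_{\beta_{j+1}}V.\Gamma_{\beta_j}(x)\mathsf{v}$, apply the already-known form boundedness \eqref{S70a} of $C_{\beta_{j+1}}V$ to the element $\Gamma_{\beta_j}(x)\mathsf{v}$, and use the cancellation above to collapse the sum to the single term $\|V\|\,N^{\ap{\beta_j}}\|\Gamma_{\beta_j}(x)\mathsf{v}\|_{\mathsf{T}_{\beta_j}}$; then sum over $j$. This also keeps the iteration honest with constants linear in $|\mathsf{A}|-1$ for \eqref{eq:cutLemmaBd1}, via the one-step bounds \eqref{S77a}--\eqref{eq:cpit}, which is the piece of ``bookkeeping'' you would otherwise have to work hard to replace.
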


\begin{remark}\label{wg04}
While the continuity condition \eqref{prod5} is more natural in terms of $V$ than $\tilde V$, cf.~\eqref{cw90},
the reduction procedure is more natural in terms of $\tilde V$, cf.~\eqref{cw92}, in which it becomes a simple restriction:
\begin{align}\label{cw94}
\widetilde{C_\eta V}.\mathsf{v}=\sum_{\beta<\eta}\tilde V_\beta.\mathsf{v}_\beta.
\end{align}
In particular we learn that $\{C_\eta\}_{\eta\in(\underline{\beta},\overline{\beta}]}$ acts
as an increasing family of projections on ${\mathcal D}^{\overline{\alpha}}(\mathsf{T};\Gamma)$ 
in the sense that for all levels $\underline{\beta}<\eta\le\eta'\le\overline{\beta}$
\begin{align}\label{wg06}
C_\eta C_{\eta'}=C_{\eta'}C_\eta=C_\eta.
\end{align}
\end{remark}

We display the simple computation for \eqref{cw94} in Step \ref{Cutprf_v} of 
the proof of Lemma \ref{cutting_lemma}.

\medskip

For Corollary \ref{lem:ExMult}, and ultimately for our main result Theorem \ref{Prop:aPriori},
we need the following more precise version of Proposition \ref{rec_lem},
where one of the homogeneities plays a special role. The proof of Remark \ref{lem:ff}
is relegated to Step \ref{Cutprf_vi} in the proof of Lemma \ref{cutting_lemma}.
%
\begin{remark}\label{lem:ff}
Suppose that there is a $\beta_*\in \Aa{}$ such that
%
\begin{align}
|V(x).\mathsf{v}|
&\le M_*^bN^{\ap{\beta_*}}\|\Gamma_{\beta_*}(x)\mathsf{v}\|_{\Ta{\beta_*}}
+M^b\sum_{\beta\not=\beta_*}N^{\ap{\beta}}\|\Gamma_\beta(x)\mathsf{v}\|_{\Ta{\beta}}\label{cw550}
\end{align}
for some constants $M_*^b$ and $M^b$ and for all $\mathsf{v}\in\mathsf{T}$ and $x\in\mathbb{R}^2$.
Then provided $\eta>\beta_*$ the outcome \eqref{eq:cutLemmaBd1} takes the more specific form
\begin{align}\label{cw51}
[\cut{\eta}V]_{\mathcal{D}^{\overline{\alpha} \wedge\eta}(\Ta{}; \Ga{}{})} &\leq [V]_{\mathcal{D}^{\overline{\alpha}}(\Ta{}; \Ga{}{})}+(|\Aa{}|-1)N^{\min_{\beta\ge \eta}\ap{\beta}}\|\Ga{}{}\|_{sk}M^b.
\end{align}
\end{remark}
\subsection{Multiplication}\label{S:mult}
In this subsection, we develop an abstract multiplication result which is tailored to the product $a \,\diam \, \partial_{1}^{2}u$.  At the abstract level, we think of $a$ as being described by a modelled distribution acting only on placeholders for functions, while $\partial_{1}^{2}u$ is described by a modelled distribution acting only on placeholders for distributions.  We work in slightly more generality, and consider a modelled distribution $V_{u_{+}}$ describing a function $u_{+}$ together with a modelled distribution $V_{u_{-}}$ describing a distribution $u_{-}$.

\medskip

We are motivated by the following result concerning classical multiplication of a function and a distribution.  If $u_{+} \in C^{\alphapo}$ and $u_{-} \in C^{ \alphamo}$, where $\alphapo \in (0,1)$, $\alphamo \in (-1,0)$ and $\alphapo+\alphamo>0$, then there exists a unique distribution $u_{+}u_{-}\in C^{ \alphamo}$ such that for all $T \leq 1$ it holds
\begin{align*}
\|(u_{+}u_{-})_{T}-u_{+}(u_{-})_{T}\| \lesssim \Tc{\alphapo+\alphamo} [u_{+}]_{{\alphapo}}\|u_{-}\|_{C^{\alphamo}}.
\end{align*}
In this section, we generalize this result, cf.\@ \eqref{S870}, by building an appropriate linear form $V_{u_{+} \diamond u_{-}}$ and applying the abstract reconstruction result Proposition \ref{rec_lem}.  

\medskip

The first step is to build an appropriate abstract space on which the form $V_{u_{+} \diamond u_{-}}$ acts. That is, given two abstract spaces $(\Aa{+},\Ta{+})$ and $(\Aa{-},\Ta{-})$ with corresponding skeletons $\Ga{+}{}$ and $\Ga{-}{}$, respectively, we construct an abstract product space $(\Aa{},\Ta{})$ and a product skeleton $\Ga{}{}$ in the following way.
The set of homogeneities $\Aa{}$ is defined as the algebraic sum
\begin{equation}
{\tAa{}}:=\Aa{+}+\Aa{-}=\{\betap+\betam : \, \, \betap \in \Aa{+}, \betam \in \Aa{-} \}\label{Prod6}.
\end{equation}
The Banach space $\tTa{}$ is defined to be the topological tensor space
\begin{align*}
 \tTa{}=\Tplus\mix\Tminus,
\end{align*}
equipped with an appropriate crossnorm $\|\cdot\|_{\tTa{}}$ (we refer to the monographs \cite{Hac12} and\cite{Rya02} for details on topological tensor spaces). 
Observe that in view of \eqref{j1002}, $\Ta{}$ is graded with respect to $\Aa{}$ via
\begin{align}
\tTa{}=\bigoplus_{\tbeta\in\tAa{}} \tTa{\tbeta}=\bigoplus_{\tbeta\in\tAa{}}\bigoplus_{\betap+ \betam=\tbeta} \Ta{\betap}\mix \Ta{\betam}, \label{Prod8}
\end{align}
where by a slight abuse of notation $\Ta{\betap}:=(\Tplus)_{\betap}$ and $\Ta{\betam}:=(\Tminus)_{\betam}$. Hence, a typical element $\ta\in\tTa{}$ decomposes into
\begin{align}\label{j1009}
  \ta= (\ta_\tbeta)_{\tbeta\in\Aa{}}=\Big(\bigoplus_{\betap+\betam=\beta} \ta_{\betap,\betam}\Big)_{\tbeta\in\Aa{}}.
 \end{align}
We assume that the crossnorm $\|\cdot\|_{\tTa{}}$ has good properties which we will verify in Lemma \ref{tensor_lemma} in the case of spaces of continuously differentiable functions. Namely, we assume that it is a \emph{uniform crossnorm}, in particular that for any indices $\betap,\gammap,\epsp,\betam,\gammam,\epsm$ there is a bounded bilinear map 
\begin{align*}
 \call(\Ta{\betap},\Ta{\gammap})\times \call(\Ta{\betam},\Ta{\gammam})&\to \call(\Ta{\betap}\mix\Ta{\betam},\Ta{\gammap}\mix\Ta{\gammam}), \\
 (\aop,\bop)&\mapsto \aop\mix\bop,
\end{align*}
characterized by
\begin{align}\label{j509}
 (\aop\otimes \bop)(\ta_{+}\otimes \ta_{-}):=(\aop \ta_{+})\otimes (\bop \ta_{-}), \ \ta_{+}\in \Ta{\betap}, \ta_{-}\in \Ta{\betam},
\end{align}
and that for linear operators $\taop\in\call(\Ta{\betap}, \Ta{\epsp})$ and $\tbop\in\call(\Ta{\betam}, \Ta{\epsm})$ with
  \begin{align}
   \|\aop \ta_{+}\|_{\Ta{\gammap}} &\leq \|\taop \ta_{+}\|_{\Ta{\epsp}}, \quad \ta_{+}\in \Ta{\betap}, \label{j525}\\
   \|\bop \ta_{-}\|_{\Ta{\gammam}} &\leq \|\tbop \ta_{-}\|_{\Ta{\epsm}}, \quad \ta_{-}\in \Ta{\betam}, \label{j526}
  \end{align}
one has 
  \begin{align}\label{j522}
   \|(\aop\otimes \bop)\tta\|_{\Ta{\gammap}\mix\Ta{\gammam}} \leq \|(\taop\otimes \tbop)\tta\|_{\Ta{\epsp}\mix\Ta{\epsm}}, \quad \tta\in \Ta{\betap}\mix\Ta{\betam}.
  \end{align}
We will also use this for $\Ta{\epsp}$ or $\Ta{\epsm}$ replaced by a finite-dimensional Euclidean space.
In addition to the abstract product space $(\Aa{},\tTa{})$, we construct an associated product skeleton in the following lemma. In order to establish that the tensorization $V_+\otimes V_-\in C^0(\mathbb{R}^2;\mathsf{T})$
of two forms $V_+\in C^0(\mathbb{R}^2;\mathsf{T}_+)$ and $V_-\in C^0(\mathbb{R}^2;\mathsf{T}_-)$
is well-behaved in terms of the boundedness norms, cf.~Definition \ref{prodDef1} and \eqref{wg16}, 
we need the compatibility of $\langle\cdot\rangle$ that associates exponents to homogeneities 
in the sense of 
\begin{align}\label{wg10}
\langle\beta_+\rangle_++\langle\beta_-\rangle_- \ge \langle\beta_++\beta_-\rangle\quad\mbox{for all}\;\beta_\pm\in\mathsf{A}_\pm,
\end{align}
where we think of $\ap{\cdot}_+$ given on $\Aa{+}$, $\ap{\cdot}_-$ given on $\Aa{-}$ and $\ap{\cdot}$ given on $\Aa{}$. If there is no danger of confusion, we will write $\ap{\betap}$ instead of $\ap{\betap}_+$ and similarly for negative homogeneities.
\begin{lemma}\label{lem:prod_skel}
For $x\in\R^2$ we define $\Ga{}{}(x):= \Ga{+}{}(x)\otimes \Ga{-}{}(x)$. Then $\Ga{}{}$ is a skeleton on $(\Aa{},\Ta{})$ and we have 
\begin{align}\label{j1200}
  \|\Ga{}{}\|_{sk}\le \|\Ga{+}{}\|_{sk}+\|\Ga{-}{}\|_{sk} + \|\Ga{+}{}\|_{sk}\|\Ga{-}{}\|_{sk}.
 \end{align}
 Moreover, for all $x\in \R^2$, $\tbeta,\tgamma\in\Aa{}$ and $\ta_{\tgamma} \in \tTa{\tgamma}$ we have
\begin{align}
&\Ga{\tbeta}{\tgamma}(x)\ta_\tgamma=\bigoplus_{\betap+\betam=\beta}  \sum_{\gammap+\gammam=\gamma} \Ga{\betap}{\gammap}(x)\otimes\Ga{\betam}{\gammam}(x)\ta_{\gammap,\gammam},
\label{Prod9}
\end{align}
cf.\@ \eqref{j1009}. In addition, for $V_+\in C^0(\mathbb{R}^2;\mathsf{T}_+)$ and $V_-\in C^0(\mathbb{R}^2;\mathsf{T}_-)$,
we have
\begin{align}
\|V_+\otimes V_-\|_{\mathsf{T};\Gamma}&\le\|V_+\|_{\mathsf{T}_+;\Gamma_+}\|V_-\|_{\mathsf{T}_-;\Gamma_-},
\label{neededlater1}
\end{align}
and for any pair of exponents $\overline{\alpha}_\pm$
\begin{align}
\lefteqn{[V_+\otimes V_-]_{{\mathcal D}^{(\alphapo+\betamu)
\wedge(\alphamo+\betapu)}(\mathsf{T};\Gamma)}
\le[V_+]_{{\mathcal D}^{\overline{\alpha}_-}(\mathsf{T}_+;\Gamma_+)}
[V_-]_{{\mathcal D}^{\overline{\alpha}_+}(\mathsf{T}_-;\Gamma_-)}}\nonumber\\
&+
[V_+]_{{\mathcal D}^{\overline{\alpha}_-}(\mathsf{T}_+;\Gamma_+)}
\|V_-\|_{\mathsf{T}_-;\Gamma_-}
+\|V_+\|_{\mathsf{T}_+;\Gamma_+}
[V_-]_{{\mathcal D}^{\overline{\alpha}_+}(\mathsf{T}_-;\Gamma_-)}.\label{wg09}
\end{align}
\end{lemma}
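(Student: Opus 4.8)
The plan is to verify first that $\Gamma:=\Gamma_+\otimes\Gamma_-$ is a skeleton satisfying \eqref{j1200} and the explicit formula \eqref{Prod9}, and then to read off the two norm bounds from the ``recentered'' description of Remark~\ref{wg01}. Throughout, the uniform crossnorm is what makes the tensorizations behave: $\Gamma_\pm^{\pm1}(x)$ tensorize to bounded operators with $\|\Gamma(x)\|\le\|\Gamma_+(x)\|\,\|\Gamma_-(x)\|$; inversion and adjunction distribute, so that $\Gamma^{-1}=\Gamma_+^{-1}\otimes\Gamma_-^{-1}$ (the pointwise inverses existing by triangularity), $\Gamma^{-*}\Gamma^{*}=(\Gamma_+^{-*}\Gamma_+^{*})\otimes(\Gamma_-^{-*}\Gamma_-^{*})$ after the obvious rearrangement, and $\widetilde{V_+\otimes V_-}=\tilde V_+\otimes\tilde V_-$, with $\tilde V_\pm$ the representative of $V_\pm$ from \eqref{cw92}. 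Two reformulations are convenient: testing \eqref{wg16} on vectors concentrated in a single homogeneity shows that form boundedness of $V_\pm$ is equivalent to the grade-wise bounds $\|(\tilde V_\pm)_{\beta_\pm}(x)\|_{\mathsf{T}_{\beta_\pm}^*}\le\|V_\pm\|_{\mathsf{T}_\pm;\Gamma_\pm}N^{\langle\beta_\pm\rangle}$, and by Remark~\ref{wg01} form continuity of $V_\pm$, of order $\overline\alpha_\pm$, is equivalent to $\|(\Gamma_\pm^{-*}(x)\Gamma_\pm^{*}(y)\tilde V_\pm(y)-\tilde V_\pm(x))_{\beta_\pm}\|_{\mathsf{T}_{\beta_\pm}^*}\le[V_\pm]\,d^{(\overline\alpha_\pm-\beta_\pm)\vee0}(y,x)$, where $[V_\pm]$ abbreviates the corresponding $\mathcal{D}$-seminorm.

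Verifying the skeleton axioms: \eqref{Prod9} follows by evaluating $\Gamma(x)=\Gamma_+(x)\otimes\Gamma_-(x)$ on an elementary tensor $\mathsf{v}_{\gamma_+}\otimes\mathsf{v}_{\gamma_-}\in\mathsf{T}_{\gamma_+}\otimes\mathsf{T}_{\gamma_-}$, using \eqref{j509} to identify its homogeneity-$\beta$ part as $\bigoplus_{\beta_++\beta_-=\beta}(\Gamma_{\beta_+}^{\gamma_+}(x)\otimes\Gamma_{\beta_-}^{\gamma_-}(x))(\mathsf{v}_{\gamma_+}\otimes\mathsf{v}_{\gamma_-})$, and extending linearly and continuously along \eqref{Prod8}. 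From \eqref{Prod9} the triangular structure \eqref{j551} of $\Gamma$ is immediate: a summand $\Gamma_{\beta_+}^{\gamma_+}(x)\otimes\Gamma_{\beta_-}^{\gamma_-}(x)$ vanishes unless $\gamma_+\le\beta_+$ and $\gamma_-\le\beta_-$, hence unless $\gamma\le\beta$, with $\gamma=\beta$ forcing $(\gamma_+,\gamma_-)=(\beta_+,\beta_-)$ and the operator equal to $\id\otimes\id=\id_{\mathsf{T}_\beta}$; strong continuity of $\Gamma$ follows from that of $\Gamma_\pm$, local operator-norm boundedness and density of elementary tensors. For \eqref{ssss2}, substituting $\mathsf{w}=\Gamma(x)\mathsf{v}$ and testing on vectors concentrated in one homogeneity shows that \eqref{ssss2} with optimal constant $\|\Gamma\|_{sk}$ is equivalent to the block bounds $\|(\Gamma(y)\Gamma^{-1}(x)-\id)_\beta^\gamma\|_{\mathcal{L}(\mathsf{T}_\gamma,\mathsf{T}_\beta)}\le\|\Gamma\|_{sk}\,d^{\beta-\gamma}(y,x)$ for $\gamma<\beta$ and $d(y,x)\le1$, and likewise for $\Gamma_\pm$. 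Since $\Gamma(y)\Gamma^{-1}(x)=(\Gamma_+(y)\Gamma_+^{-1}(x))\otimes(\Gamma_-(y)\Gamma_-^{-1}(x))=:A'\otimes B'$, the Leibniz-type splitting $A'\otimes B'-\id=(A'-\id)\otimes B'+\id\otimes(B'-\id)$ gives, for a block from $(\gamma_+,\gamma_-)$ to $(\beta_+,\beta_-)$: a first contribution $(A'-\id)_{\beta_+}^{\gamma_+}\otimes(B')_{\beta_-}^{\gamma_-}$ (with $\gamma_+<\beta_+$) of norm $\le\|\Gamma_+\|_{sk}d^{\beta_+-\gamma_+}\cdot\max(1,\|\Gamma_-\|_{sk})\,d^{\beta_--\gamma_-}\le\|\Gamma_+\|_{sk}(1+\|\Gamma_-\|_{sk})\,d^{\beta-\gamma}$ (the $B'$-block is the identity when $\gamma_-=\beta_-$ and bounded by $\|\Gamma_-\|_{sk}d^{\beta_--\gamma_-}$ otherwise, using $d(y,x)\le1$), and a second contribution, present only for $\gamma_+=\beta_+$, of norm $\le\|\Gamma_-\|_{sk}d^{\beta_--\gamma_-}=\|\Gamma_-\|_{sk}d^{\beta-\gamma}$. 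Summing, $\|\Gamma\|_{sk}\le\|\Gamma_+\|_{sk}+\|\Gamma_-\|_{sk}+\|\Gamma_+\|_{sk}\|\Gamma_-\|_{sk}$, which is \eqref{j1200}.

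The norm bounds then follow. For \eqref{neededlater1}: the homogeneity-$\beta$ part of $\widetilde{V_+\otimes V_-}=\tilde V_+\otimes\tilde V_-$ is $\sum_{\beta_++\beta_-=\beta}(\tilde V_+)_{\beta_+}\otimes(\tilde V_-)_{\beta_-}$, whose $\mathsf{T}_\beta^*$-norm is controlled, via the crossnorm and the grade-wise bounds above, by $\|V_+\|_{\mathsf{T}_+;\Gamma_+}\|V_-\|_{\mathsf{T}_-;\Gamma_-}N^{\langle\beta_+\rangle+\langle\beta_-\rangle}\le\|V_+\|_{\mathsf{T}_+;\Gamma_+}\|V_-\|_{\mathsf{T}_-;\Gamma_-}N^{\langle\beta\rangle}$ by \eqref{wg10} and $N\le1$; reading this back through \eqref{wg16} gives \eqref{neededlater1}. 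For \eqref{wg09}, using $\widetilde{V_+\otimes V_-}=\tilde V_+\otimes\tilde V_-$ once more,
\begin{equation*}
\Gamma^{-*}(x)\Gamma^{*}(y)\widetilde{V_+\otimes V_-}(y)-\widetilde{V_+\otimes V_-}(x)=\tilde V_+^{\,y\to x}\otimes\tilde V_-^{\,y\to x}-\tilde V_+(x)\otimes\tilde V_-(x),
\end{equation*}
where $\tilde V_\pm^{\,y\to x}:=\Gamma_\pm^{-*}(x)\Gamma_\pm^{*}(y)\tilde V_\pm(y)$; I split the right-hand side as $(\tilde V_+^{\,y\to x}-\tilde V_+(x))\otimes\tilde V_-(x)+\tilde V_+^{\,y\to x}\otimes(\tilde V_-^{\,y\to x}-\tilde V_-(x))$, and further decompose $\tilde V_+^{\,y\to x}=\tilde V_+(x)+(\tilde V_+^{\,y\to x}-\tilde V_+(x))$ in the last factor. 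This produces exactly the three contributions $[V_+]\|V_-\|$, $\|V_+\|[V_-]$ and $[V_+][V_-]$ of \eqref{wg09}. In homogeneity $\beta$ one is left, after the crossnorm step, with a sum over $\beta_++\beta_-=\beta$ of products of at most two powers of $d(y,x)$ and at most one factor $N^{\langle\cdot\rangle}$; bounding every $N^{\langle\cdot\rangle}\le1$ and, in each product of two $d$-powers, keeping only the one with the larger exponent (legitimate as $d(y,x)\le1$) reduces the claim to $\overline\alpha_++\beta_-\ge(\overline\alpha_+ + \betamu)\wedge(\overline\alpha_- + \betapu)$ and $\overline\alpha_-+\beta_+\ge(\overline\alpha_+ + \betamu)\wedge(\overline\alpha_- + \betapu)$, which hold because $\beta_-\ge\betamu$ and $\beta_+\ge\betapu$. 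By the equivalent form \eqref{cw90} of form continuity on the product skeleton this is \eqref{wg09}.

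The conceptual steps --- tensorizing triangular continuous operators and a Leibniz splitting --- are routine; the main obstacle is the bookkeeping imposed by the graded topological tensor structure. Every estimate has to be organized homogeneity by homogeneity so that the uniform-crossnorm axioms \eqref{j509}--\eqref{j522} apply, and so that the constants in \eqref{j1200}, \eqref{neededlater1} and \eqref{wg09} come out with no combinatorial dependence on $|\Aa{+}|$ or $|\Aa{-}|$; this is exactly what the precise crossnorm assumptions are designed to guarantee. The second delicate point, specific to \eqref{wg09}, is the power-counting that extracts the single exponent $(\overline\alpha_+ + \betamu)\wedge(\overline\alpha_- + \betapu)$ from the various products of powers of $d(y,x)$ produced by the splitting; it is here that the minimum, and the role of the lowest homogeneities $\betapu$ and $\betamu$, is forced.
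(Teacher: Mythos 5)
Your proof is correct in substance but departs from the paper's in two ways, one of which is a genuine difference of method. For the skeleton estimate \eqref{j1200}, the paper works directly with the increment $\Gamma(y)-\Gamma(x)$ and expands it via a three-term discrete product rule, $[\Gamma_+]\otimes[\Gamma_-]+[\Gamma_+]\otimes\Gamma_-(x)+\Gamma_+(x)\otimes[\Gamma_-]$, estimating each tensor factor through the uniform-crossnorm property \eqref{j522} with carefully chosen comparison operators $\taop,\tbop$. You instead reduce the continuity condition \eqref{ssss2} to block bounds on the ``recentered'' operators $\Gamma(y)\Gamma^{-1}(x)-\id$, use $\Gamma^{-1}=\Gamma_+^{-1}\otimes\Gamma_-^{-1}$, and apply a two-term Leibniz split $A'\otimes B'-\id=(A'-\id)\otimes B'+\id\otimes(B'-\id)$. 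This is a different decomposition; it is conceptually aligned with the recentered viewpoint of Remark~\ref{wg01} that you then exploit again for \eqref{neededlater1} and \eqref{wg09}, whereas the paper stays entirely in the raw-operator picture for this lemma. What the paper's route buys is that \eqref{j522} is applied directly to the aggregated block (with $\oplus_{\gamma_\pm<\beta_\pm}\Ta{\gamma_\pm}$ playing the role of $\Ta{\epsp},\Ta{\epsm}$), which makes the absence of combinatorial factors manifest; in your block-by-block argument this is asserted to follow from the crossnorm axioms but not displayed --- this is a bookkeeping point, not a logical gap, and your verification of the triangular structure and \eqref{Prod9} is the same as the paper's. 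For \eqref{wg09}, the paper does not give a proof at all (it notes the argument is contained in that for \eqref{MS3}), so your recentered three-term split $\tilde V_+^{\,y\to x}\otimes\tilde V_-^{\,y\to x}-\tilde V_+(x)\otimes\tilde V_-(x)$ fills a gap the paper leaves to the reader.

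One discrepancy worth flagging: you read $[V_\pm]$ as $[V_\pm]_{{\mathcal D}^{\overline\alpha_\pm}(\mathsf{T}_\pm;\Gamma_\pm)}$, i.e.\ each $V_\pm$ of order $\overline\alpha_\pm$, whereas the paper's display \eqref{wg09} literally has $[V_+]_{{\mathcal D}^{\overline\alpha_-}(\mathsf{T}_+;\Gamma_+)}$ and $[V_-]_{{\mathcal D}^{\overline\alpha_+}(\mathsf{T}_-;\Gamma_-)}$. Your power-counting ($\overline\alpha_++\beta_-\ge\kappa$ from $\beta_-\ge\betamu$, and $\overline\alpha_-+\beta_+\ge\kappa$ from $\beta_+\ge\betapu$, with $\kappa=(\overline\alpha_++\betamu)\wedge(\overline\alpha_-+\betapu)$) goes through only with your reading; the literal pairing would require $\overline\alpha_-+\betamu\ge\kappa$, which fails since $\betamu<\betapu$. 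Your reading is the one consistent with \eqref{MS3} in Lemma~\ref{lem:Multiplication}, where $V_{u_+}$ has order $\overline\alpha_+$; so you have effectively corrected what appears to be a typo, and it would be worth saying so explicitly rather than silently.
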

We will not give a proof for \eqref{wg09} since it is not needed and the argument is contained
in the one for \eqref{MS3} in Lemma \ref{lem:Multiplication}.
In fact, we now want to build a form $V_{u_{+} \diamond u_{-}}$ using two modelled distributions $V_{u_{+}} \in \cald^{\alphapo}(\Tplus; \Gamma_{+})$ and $V_{u_{-}}\in \cald^{\alphamo}(\Tminus; \Gamma_{-})$ for some $\alphapo>0$ and $\alphamo<0$. In view of \eqref{neededlater1} and \eqref{wg09}, one option would be to consider the tensored form $V_{u_{+}}\otimes V_{u_{-}}$ on the product space $(\Aa{},\Ta{})$. However, it turns out that we can increase the order (as a modelled distribution) of $V_{u_{+} \diamond u_{-}}$ when the object described by the $V_{u_{+}}$ is an honest function $u_{+}$ rather than a distribution. Towards this end, we further assume that $\Aa{+}$ is of the form
\begin{align}\label{Prod1}
\Aa{+}=\{0\} \cup \Aa{+}', \quad \alpha:=\min\Aa{+}'>0.
\end{align}
The special role of the homogeneity $0$ is underlined by demanding
\begin{align}\label{cw97}
 \Ta{0}:=(\Tplus)_0=\R, \quad \na:=(\ta_{+})_{0},
\end{align}
\emph{i.e.}, by writing $\na$ instead of $(\ta_{+})_{0}$ for a typical element in $\Ta{0}=\R$. We also assume that $\Aa{-}$ consists solely of negative homogeneities, that is
\begin{align*}
 \betamo\stackrel{\eqref{j1000}}{=}\max \Aa{-}<0.
\end{align*}
Note that by \eqref{Prod1}, the reduction $C_\alpha V_{u_+}$ of a form $V_{u_+}$ on $(\mathsf{A}_+,\mathsf{T}_+)$
is an element of $C^0(\mathbb{R}^2;\mathsf{T}_0^*)$ 
and thus by \eqref{cw97} can canonically be identified with
an element of $C^0(\mathbb{R}^2)$, which we denote by $u_+$:
\begin{align}\label{cw98}
u_+ \mathsf{1}=C_\alpha V_{u_+}.\mathsf{1}.
\end{align}
Also the complementary part of this projection will play an important role, motivating the notation
\begin{align}\label{j535}
\overline{V}_{u_+}:=V_{u_+}-C_\alpha V_{u_+}.
\end{align}
 
\begin{remark}\label{wg03}
Let $v_+\in C^0(\mathbb{R}^2;\mathsf{T}_+)$ be characterized by
\begin{align}\label{j700}
\Gamma_\beta v_+\equiv0\;\;\mbox{for}\;\;\beta\in \Aa{+}'\quad\mbox{and}\quad v_0:=(v_+)_0\equiv 1.
\end{align}
Then $u_+$, cf.~\eqref{cw98}, may also be recovered as
\begin{align}\label{j1003}
u_+=C_\eta V_{u_+}.v_+=V_{u_+}.v_+
\end{align}
for all $\eta\in(\underline{\beta}_+,\overline{\beta}_+]$.
Moreover, \eqref{prod5} for $\beta=0$ turns into 
\begin{align}\label{cw91}
\lefteqn{\big|(u_+(y)-u_+(x))}\nonumber\\
&-\overline{V}_{u_+}(x).(v_+(y)-v_+(x))\big|
\le [V]_{{\mathcal D}^{\overline{\alpha}_+}(\mathsf{T}_+;\Gamma_+)}d^{\overline{\alpha}_{+}}(y,x)
\end{align}
for all $x,y\in\mathbb{R}^2$ with $d(y,x)\le 1$. Inequality \eqref{cw91} states that the increments of 
$u_+\in C^0(\mathbb{R}^2)$ follow the increments of $v_+\in C^0(\mathbb{R}^2;\mathsf{T}_+)$, 
modulated by $\overline{V}_{u_+}$. Therefore, $v_+$ should be interpreted as a model, and then \eqref{cw91}
is an instance of Gubinelli's controlled rough path condition \cite{GUBINELLI200486}.
For later purpose, we note that we obtain immediately from Definition \ref{prodDef1} and
\eqref{cw98}
\begin{align}\label{wg31}
\|u_+\|= N^{\langle 0\rangle}\|C_\alpha V_{u_+}\|_{\mathsf{T}_+;\Gamma_+},\quad
[u_+]_\alpha=[C_\alpha V_{u_+}]_{{\mathcal D}^{\alpha}(\mathsf{T}_+;\Gamma_+)}.
\end{align}
\end{remark}
We display the simple computation that derives \eqref{cw91} from \eqref{cw90} at the beginning of Section \ref{6}.

\medskip

%
The projections \eqref{cw98} and \eqref{j535} will prove to be crucial in order to obtain a higher order for the modelled distribution $V_{u_{+}\diamond u_{-}}$ in Lemma \ref{lem:Multiplication}, and suggests to use the original modelled distribution $V_{u_{-}}$ to construct an extended space $(\oAa{},\oTa{})$ together with an extended skeleton $\oGa{}{}$: 
Let
\begin{align}\label{j982}
\kappa :=(\alphapo+\betamu) \wedge (\alpha+\alphamo),
\end{align}
and assume
\begin{align}\label{jj3}
\kappa-\alpha\notin \Aa{} \quad \mathrm{ and } \quad 
\kappa-\alpha>\betamo. 
\end{align}
Define the extended product space $(\oAa{},\oTa{})$ via
\begin{align}\label{j1102}
 \oAa{}&:=\set{\kappa-\alpha}\cup \Aa{}, \quad \oTa{}:= \oTa{\kappa-\alpha}\oplus \Ta{} \quad \text{with } \oTa{\kappa-\alpha}:=\frac{1}{N_*}\R,
\end{align}
where it is convenient to have a parameter $N_*>0$ in the norm of $\oTa{\kappa-\alpha}$, cf.\@ \eqref{jj4}. Moreover, we extend the skeleton $\oGa{}{}$ by
\begin{align}\label{j980}
\begin{split}
 \oGa{\beta}{}\begin{pmatrix}\mathsf{u}_{-} \\ \tta \end{pmatrix}&:=\Ga{\beta}{}\tta,  \quad \beta\neq \kappa-\alpha, \\
 \oGa{\kappa-\alpha}{}\begin{pmatrix}\mathsf{u}_{-} \\ \tta \end{pmatrix}&:=\mathsf{u}_{-}- V_{u_{-}}.\ta_{-},
\end{split}
\end{align}
where $\ta_{-}:=(\tta_{0,\betam})_{\betam\in \Aa{-}}$, cf.\@ \eqref{j1009}. We will show in Lemma \ref{lem:Multiplication} that this does indeed define a skeleton on $(\oAa{},\oTa{})$.

Now we define $V_{u_{+} \, \diam \, u_{-}} \in C^{0}(\R^{2};\oTa{}^*)$ by
\begin{align}
V_{u_{+} \, \diam \, u_{-}}.
\begin{pmatrix}
\mathsf{u}_{-}\\
\tta
\end{pmatrix}
&:=u_{+}\mathsf{u}_{-}+\overline{V}_{u_{+}}\otimes V_{u_{-}}.\tta,\label{MS1}
\end{align}
where $u_+$ and $\overline{V}_{u_{+}}$ are defined by \eqref{cw98} and \eqref{j535}, so that we have the explicit formula
\begin{align*}
 V_{u_{+} \, \diam \, u_{-}}.
\begin{pmatrix}
\mathsf{u}_{-}\\
\tta
\end{pmatrix}
&:=(C_\alpha V_{u_+}.\nf)\mathsf{u}_{-}+(V_{u_+}-C_\alpha V_{u_+})\otimes V_{u_{-}}.\tta.
\end{align*}
%

We are now in the position to state the abstract multiplication lemma.
\begin{lemma}\label{lem:Multiplication}
Let $V_{u_{+}}\in \cald^{\alphapo}(\Tplus; \Gamma_{+})$, $V_{u_{-}}\in \cald^{\alphamo}(\Tminus; \Gamma_{-})$, $N_*>0$ and assume $\kappa$ defined by \eqref{j982} satisfies \eqref{jj3}. Then $\oGa{}{}$ defined via \eqref{j980} is a skeleton on $(\oAa{},\oTa{})$ with
\begin{align}\label{j1004}
 \|\oGa{}{}\|_{sk}\le \|\Ga{}{}\|_{sk}+ \frac{1}{N_*}[V_{u_{-}}]_{\cald^{\alphamo}(\Tminus; \Gamma_{-})}.
\end{align}
In addition, $V_{u_{+}\, \diam \, u_{-}}$ defined by \eqref{MS1} belongs to $\cald^{\kappa}(\oTa{};\oGa{}{})$ and more precisely, 
%
the following hold
\begin{enumerate}
\item form boundedness: for all $x \in \R^{2}$ and $(\mathsf{u}_{-},\tta) \in \oTa{}$ it holds
\begin{align}
\quad \Big| &V_{u_{+} \diamond u_{-}}(x) .
\begin{pmatrix}
\mathsf{u}_{-} \\
\tta\\ 
\end{pmatrix}
\Big| \nonumber \\
& \leq N_*\|u_{+}\| \Big\|\oGa{\kappa-\alpha}{}(x)
\begin{pmatrix}
\mathsf{u}_{-} \\
\tta\\ 
\end{pmatrix}\Big\|_{\oTa{\kappa-\alpha}}
+ M_{u_{+} \diamond u_{-}}^b \sum_{\tbeta\in\Aa{}} N^{\ap{\beta}}\|\tGa{\tbeta}{}(x)\tta \|_{\Ta{\beta}}\label{MS2}.
\end{align}
\item form continuity: for all $x,y \in \R^{2}$ with $d(y,x) \leq 1$ and all $(\mathsf{u}_{-}, \tta)\in \oTa{}$ it holds
\begin{align}
\Big |(&V_{u_{+} \, \diam \, u_{-}}(y) - V_{u_{+} \, \diam \, u_{-}}(x)).
\begin{pmatrix}
\mathsf{u}_{-} \\
\tta
\end{pmatrix}
\Big |\nonumber\\
&\leq N_*[u_{+}]_{\alpha}d^{\alpha}(y,x) \Big\|\oGa{\kappa-\alpha}{}(x)
\begin{pmatrix}
\mathsf{u}_{-} \\
\tta\\ 
\end{pmatrix}\Big\|_{\Ta{\kappa-\alpha}}
\nonumber\\
&\quad 
+M_{u_{+} \diamond u_{-}}^c\sum_{\tbeta\in\Aa{}}d^{(\kappa-\tbeta)\vee 0}(y,x)\|\tGa{\tbeta}{}(x)\tta\|_{\tTa{\tbeta} },\label{MS3}
\end{align}
\end{enumerate}

where the constants $M_{u_{+} \diamond u_{-}}^b$ and $M_{u_{+} \diamond u_{-}}^c$ are given by
\begin{align}
 M_{u_{+}\, \diam \, u_{-}}^b&:=
 \bd{V_{u_{+}}}{\Tplus}{\Ga{+}{}} \bd{V_{u_{-}}}{\Tminus}{\Ga{-}{}}, \label{S300} \\
 M_{u_{+}\, \diam \, u_{-}}^c&:= 
 \big(N^{\min_{\beta \in \Aa{+}'}\ap{\beta}}\|\overline{V}_{u_+}\|_{\mathsf{T}_+;\Gamma_+}+[u_+]_\alpha+
[V_{u_+}]_{{\mathcal D}^{\overline{\alpha}_+}(\mathsf{T}_+;\Gamma_+)}\big)
[V_{u_-}]_{{\mathcal D}^{\overline{\alpha}_-}(\mathsf{T}_-;\Gamma_-)}\nonumber\\
&\quad +N^{\min_{\beta \in \Aa{-}}\ap{\beta}}[V_{u_+}]_{{\mathcal D}^{\overline{\alpha}_+}(\mathsf{T}_+;\Gamma_+)}
\|V_{u_-}\|_{\mathsf{T}_-;\Gamma_-}. \label{S301}
\end{align}
\end{lemma}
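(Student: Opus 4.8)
The plan is (i) to verify that $\oGa{}{}$ is a skeleton, (ii) to record an algebraic identity for $V_{u_{+}\diamond u_{-}}$ isolating the new slot of homogeneity $\kappa-\alpha$, and (iii) to read off \eqref{MS2} and \eqref{MS3} from that identity together with Lemmas~\ref{lem:prod_skel} and \ref{cutting_lemma}. For (i): the triangular representation \eqref{j551} is inherited from $\Ga{}{}$ at the homogeneities $\beta\in\Aa{}$, where \eqref{j980} changes nothing, and at $\beta=\kappa-\alpha$ it holds because the second part of \eqref{jj3} forces $\kappa-\alpha>\betamo=\max\Aa{-}$, so $V_{u_{-}}.\ta_{-}$ in \eqref{j980} involves only the components $\tta_{0,\betam}$, which lie at homogeneity $\betam<\kappa-\alpha$. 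Likewise the continuity \eqref{ssss2} at $\beta\in\Aa{}$ is inherited from $\Ga{}{}$ (omitting the now-permitted $\kappa-\alpha$ summand on the right-hand side), and at $\beta=\kappa-\alpha$ the increment equals $\tfrac1{N_*}|(V_{u_{-}}(y)-V_{u_{-}}(x)).\ta_{-}|$, which form continuity \eqref{prod5} of $V_{u_{-}}$ bounds by $\tfrac1{N_*}[V_{u_{-}}]_{\cald^{\alphamo}(\Tminus;\Ga{-}{})}\sum_{\betam}d^{(\alphamo-\betam)\vee0}(y,x)\|\Ga{\betam}{}(x)\ta_{-}\|$; the exponent here is at least $(\kappa-\alpha)-\betam$, as \eqref{ssss2} requires at level $\kappa-\alpha$, since $\kappa\le\alpha+\alphamo$. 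This yields the skeleton property and \eqref{j1004}.

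For (ii): by \eqref{cw98} the operator $C_{\alpha}V_{u_{+}}(x)$ (identified with $u_{+}(x)$) annihilates $(\Tplus)_{\betap}$ for $\betap>0$ and acts on $\Ta{0}=\R$ by multiplication by $u_{+}(x)$, whence $(C_{\alpha}V_{u_{+}}\otimes V_{u_{-}})(x).\tta=u_{+}(x)\,V_{u_{-}}(x).\ta_{-}$; using $V_{u_{+}}=C_{\alpha}V_{u_{+}}+\overline{V}_{u_{+}}$, cf.\ \eqref{j535}, the definition \eqref{MS1} becomes
\begin{align*}
V_{u_{+}\diamond u_{-}}(x).\begin{pmatrix}\mathsf{u}_{-}\\ \tta\end{pmatrix}
=u_{+}(x)\bigl(\mathsf{u}_{-}-V_{u_{-}}(x).\ta_{-}\bigr)+(V_{u_{+}}\otimes V_{u_{-}})(x).\tta .
\end{align*}
As $\|\oGa{\kappa-\alpha}{}(x)(\mathsf{u}_{-},\tta)\|_{\oTa{\kappa-\alpha}}=\tfrac1{N_*}|\mathsf{u}_{-}-V_{u_{-}}(x).\ta_{-}|$ by \eqref{j1102} and \eqref{j980}, form boundedness \eqref{MS2} is immediate: the first summand is $\le N_*\|u_{+}\|\,\|\oGa{\kappa-\alpha}{}(x)(\mathsf{u}_{-},\tta)\|_{\oTa{\kappa-\alpha}}$ since $|u_{+}(x)|\le\|u_{+}\|$, cf.\ \eqref{wg31}, and the second is $\le M^{b}_{u_{+}\diamond u_{-}}\sum_{\tbeta\in\Aa{}}N^{\ap{\tbeta}}\|\Ga{\tbeta}{}(x)\tta\|_{\Ta{\tbeta}}$ by \eqref{neededlater1} and \eqref{wg16}, with $M^{b}_{u_{+}\diamond u_{-}}$ as in \eqref{S300}.

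For (iii), take $d(y,x)\le1$, subtract the identity above at $y$ and $x$, and regroup the result as $(u_{+}(y)-u_{+}(x))\mathsf{u}_{-}+\bigl[(\overline{V}_{u_{+}}\otimes V_{u_{-}})(y)-(\overline{V}_{u_{+}}\otimes V_{u_{-}})(x)\bigr].\tta$. Peeling $(u_{+}(y)-u_{+}(x))(\mathsf{u}_{-}-V_{u_{-}}(x).\ta_{-})$ off the first term produces exactly $N_*[u_{+}]_{\alpha}d^{\alpha}(y,x)\,\|\oGa{\kappa-\alpha}{}(x)(\mathsf{u}_{-},\tta)\|_{\oTa{\kappa-\alpha}}$, the first term on the right of \eqref{MS3} (using $[u_{+}]_{\alpha}=[C_{\alpha}V_{u_{+}}]_{\cald^{\alpha}(\Tplus;\Ga{+}{})}$, cf.\ \eqref{wg31}); the leftover $(u_{+}(y)-u_{+}(x))V_{u_{-}}(x).\ta_{-}$ is absorbed into the increment of $\overline{V}_{u_{+}}\otimes V_{u_{-}}$, since $(C_{\alpha}V_{u_{+}}(y)-C_{\alpha}V_{u_{+}}(x))\otimes V_{u_{-}}(x)$ applied to $\tta$ equals it. What remains, after this cancellation, is essentially the form-continuity bound for $\overline{V}_{u_{+}}\otimes V_{u_{-}}$, i.e.\ a tensorized continuity estimate of the type \eqref{wg09}, carried out with $\overline{V}_{u_{+}}$ in place of $V_{+}$ and $V_{u_{-}}$ in place of $V_{-}$. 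The key gain is that, by \eqref{new} with $\eta=\alpha$, the form boundedness of $\overline{V}_{u_{+}}=(\id-C_{\alpha})V_{u_{+}}$ involves only homogeneities $\betap\ge\alpha=\min\Aa{+}'$, so the $\betapu=0$ contribution in the naive tensor order $(\alphapo+\betamu)\wedge(\alphamo+\betapu)$ is upgraded to $\alpha$, producing exactly the exponent $\kappa$ of \eqref{j982}; matching exponents at each step relies on $\kappa\le\alphapo+\betamu$, $\kappa\le\alpha+\alphamo$, $\kappa\le\alphapo+\alphamo$ and the compatibility \eqref{wg10}. Accordingly, the three clusters in $M^{c}_{u_{+}\diamond u_{-}}$ of \eqref{S301} arise as: continuity of $\overline{V}_{u_{+}}$ against continuity of $V_{u_{-}}$, which (via $\overline{V}_{u_{+}}=V_{u_{+}}-C_{\alpha}V_{u_{+}}$) splits into a $[V_{u_{+}}]_{\cald^{\alphapo}(\Tplus;\Ga{+}{})}[V_{u_{-}}]_{\cald^{\alphamo}(\Tminus;\Ga{-}{})}$ part and a $[u_{+}]_{\alpha}[V_{u_{-}}]_{\cald^{\alphamo}(\Tminus;\Ga{-}{})}$ part; boundedness of $\overline{V}_{u_{+}}$ (homogeneities $\ge\alpha$, hence the weight $N^{\min_{\beta\in\Aa{+}'}\ap{\beta}}$) against continuity of $V_{u_{-}}$, giving the $\|\overline{V}_{u_{+}}\|_{\Tplus;\Ga{+}{}}[V_{u_{-}}]_{\cald^{\alphamo}(\Tminus;\Ga{-}{})}$ part; and continuity of $V_{u_{+}}$ against boundedness of $V_{u_{-}}$ (weight $N^{\min_{\beta\in\Aa{-}}\ap{\beta}}$), giving the $[V_{u_{+}}]_{\cald^{\alphapo}(\Tplus;\Ga{+}{})}\|V_{u_{-}}\|_{\Tminus;\Ga{-}{}}$ part. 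No $\|V_{u_{+}}\|\,\|V_{u_{-}}\|$ or $\|V_{u_{+}}\|[V_{u_{-}}]$ term appears, precisely because the lowest homogeneity $0$ in the description of $u_{+}$ has been moved into the new slot $\kappa-\alpha$.

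I expect the main obstacle to be this last tensorized continuity estimate, i.e.\ the proof of \eqref{wg09} adapted to $\overline{V}_{u_{+}}$: one must carry the off-diagonal blocks $\Ga{\tbeta}{\tgamma}(x)$ of the product skeleton $\Ga{}{}=\Ga{+}{}\otimes\Ga{-}{}$, cf.\ \eqref{Prod9}, through the whole computation so that every contribution reassembles into $\sum_{\tbeta\in\Aa{}}d^{(\kappa-\tbeta)\vee0}(y,x)\|\Ga{\tbeta}{}(x)\tta\|_{\Ta{\tbeta}}$ with the weights of \eqref{S301}, checking step by step that \eqref{jj3} and \eqref{wg10} genuinely deliver $\kappa$ rather than the naive tensor order. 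It is cleanest to run this bookkeeping in the equivalent ``$\widetilde{V}$'' form of \eqref{cw90}, where $\widetilde{V_{u_{+}\diamond u_{-}}}(x)$ is just $u_{+}(x)$ on the slot $\kappa-\alpha$ and $\widetilde{V_{u_{+}}}(x)\otimes\widetilde{V_{u_{-}}}(x)$ on $\Ta{}$, and $\widetilde{\overline{V}_{u_{+}}}(x)$ is supported on homogeneities $\ge\alpha$, so the skeleton acts diagonally and the role of \eqref{jj3} becomes transparent.
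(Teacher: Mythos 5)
Your proposal follows the paper's route at each stage, and the broad strokes are correct: the skeleton verification (triangularity from $\kappa-\alpha>\betamo$, continuity at level $\kappa-\alpha$ from the form continuity of $V_{u_-}$ after multiplying by $1/N_*$) is the paper's argument; the algebraic identity $V_{u_+\diamond u_-}.(\mathsf{u}_-,\tta)=u_+(\mathsf{u}_--V_{u_-}.\ta_-)+(V_{u_+}\otimes V_{u_-}).\tta$, the deduction of \eqref{MS2} from \eqref{neededlater1} and \eqref{wg16}, and the plan of peeling off the new slot for \eqref{MS3} while exploiting that $\overline{V}_{u_+}$ lives at homogeneities $\ge\alpha$ are all the paper's. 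Your account of the three clusters in $M^c_{u_+\diamond u_-}$ and of the exponent constraints pegged to $\kappa=(\alphapo+\betamu)\wedge(\alpha+\alphamo)$ is accurate.

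The gap is precisely the one you flag as the ``main obstacle,'' and it is genuinely the bulk of the proof in the paper. After the leading term $N_*[u_+]_\alpha d^\alpha(y,x)\|\oGa{\kappa-\alpha}{}(x)(\mathsf{u}_-,\tta)\|$ is extracted, the remainder is not simply the increment of $\overline{V}_{u_+}\otimes V_{u_-}$: the paper records the precise four-term discrete product rule identity \eqref{M5}, whose last three terms must each be estimated separately. The second term, $(u_+(y)-u_+(x))(V_{u_-}(y)-V_{u_-}(x)).\ta_-$, is handled directly via $\alpha+(\alphamo-\betam)\vee0\ge(\kappa-\betam)\vee0$. The third, $\overline{V}_{u_+}(x)\otimes(V_{u_-}(y)-V_{u_-}(x)).\tta$, and fourth, $(V_{u_+}(y)-V_{u_+}(x))\otimes V_{u_-}(y).\tta$, each require a bespoke application of the uniform crossnorm property \eqref{j522} with an explicit choice of $\aop,\bop,\taop,\tbop$, the reduced boundedness \eqref{new} of $\overline{V}_{u_+}$ to justify the weight $N^{\min_{\beta\in\Aa{+}'}\ap{\beta}}$, and a separate exponent verification ($-\betam\ge\alpha-\tbeta$ for the third, $-\betap\ge\betamu-\tbeta$ for the fourth, both feeding back into $\kappa$ via \eqref{j982}). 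None of this is carried out in your proposal. Your alternative suggestion to run the bookkeeping in the $\widetilde V$ frame of \eqref{cw90}, where $\widetilde{\overline{V}_{u_+}}$ is supported at $\ge\alpha$ and the skeleton acts by conjugation, is plausible and could make the role of \eqref{jj3} more transparent, but it is not what the paper does and you do not execute it either; one would still have to understand how $\Gamma^{-*}(x)\Gamma^*(y)$ factors over the tensor product and interacts with the extra slot at $\kappa-\alpha$.
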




Combining the reconstruction result in Proposition \ref{rec_lem} and the multiplication result in Lemma \ref{lem:Multiplication}, we obtain the following corollary.
\begin{corollary}\label{lem:ExMult}
 Under the hypotheses and with the notation of Lemma \ref{lem:Multiplication}, assume $\kappa>0$. Suppose there exists $\tf_{+} \diamond \tf_{-} \in \cals'(\R^{2}; \Ta{<\kappa})$ such that for each $\beta<\kappa$, $x \in \R^{2}$ and $T \leq 1$
\begin{align}\label{j770}
\left\|\Ga{\beta}{}(x)  (\tf_{+} \diamond \tf_{-})_{T}(x) \right\|_{\Ta{\beta}} \leq \Tc{\beta}.
\end{align}
Following \eqref{j1009}, we denote $((\tf_{+} \diamond \tf_{-})_{0,\betam})_{\betam\in\Aa{-}}$ by $\tf_{-}\in \cals'(\R^2; \Tminus)$, which is consistent notation in view of $0\in \Aa{+}$ and $(\tf_{+})_0 \equiv \nf$.
Suppose there is a distribution $u_{-}\in \cals'(\R^2)$ such that for all $x \in \R^{2}$ and $T \leq 1$
\begin{align}\label{j771}
&\left\| (u_{-})_{T}(x)-V_{u_{-}}(x).(\tf_{-})_{T}(x) \right \|_{\oTa{\kappa-\alpha}}  \leq (T^{\frac{1}{4}})^{\kappa-\alpha}.
\end{align}

Then there exists a unique distribution $u_{+} \diamond u_{-}\in \cals'(\R^2)$ such that for all $T\leq 1$
\begin{align}\label{j772}
\Big \| (u_{+}\diam u_{-})_T- \cut{\kappa}V_{u_{+}\diamond u_{-}}.
 \begin{pmatrix}
  (u_{-})_{T} \\
  (\tf_{+} \diamond \tf_{-})_{T}
 \end{pmatrix}
 \Big\| \lesssim M_{u_+\diamond u_-, \kappa}\Tc{\kappa},
\end{align}
where
\begin{align*}
 &M_{u_+\diamond u_-, \kappa} := \\
 &(1+\|\oGa{}{}\|_{sk})^{|\Aa{}|-1} \big(N_*[u_+]_{\alpha}+M_{u_{+}\diamond u_{-}}^c   + N^{\min_{\beta\ge \kappa}\ap{\beta}}\|\oGa{}{}\|_{sk}M_{u_{+}\diamond u_{-}}^b\big).
\end{align*}
Moreover, for each $\eta<\kappa$ the following suboptimal bound holds
\begin{align}\label{S870}
\Big \| &(u_{+}\diam u_{-})_T- u_+(u_-)_{T} - \cut{\eta}(\overline{V}_{u_+}\otimes V_{u_{-}}).
  (\tf_{+} \diamond \tf_{-})_{T}
 \Big\| \nonumber \\
 & \lesssim M_{u_+\diamond u_-, \eta}\Tc{\eta},
\end{align}
where
\begin{align}\label{S452}
 M_{u_+\diamond u_-, \eta}:= M_{u_+\diamond u_-, \kappa} + N^{\min_{\beta\ge \eta}\ap{\beta}} \|\overline{V}_{u_{+}}\|_{\Ta{+};\Ga{+}{}}\|V_{u_{-}}\|_{\Ta{-};\Ga{-}{}}.
\end{align}
\end{corollary}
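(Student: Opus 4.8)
The plan is to obtain Corollary \ref{lem:ExMult} as an application of the abstract reconstruction result Proposition \ref{rec_lem} to the modelled distribution $\cut{\kappa}V_{u_{+}\diamond u_{-}}$ on the extended space $(\oAa{},\oTa{})$, after first assembling the required $\oTa{}$-valued model. First I would build the model $\overline{v}\in\cals'(\R^2;\oTa{})$ whose component of homogeneity $\kappa-\alpha$ is the scalar distribution $u_{-}$, and whose remaining components (those indexed by $\tbeta\in\Aa{}$) are the entries of $\tf_{+}\diamond\tf_{-}$. The point is that the hypotheses \eqref{j770} and \eqref{j771} are exactly the statement that this $\overline{v}$ satisfies the skeleton-weighted bound \eqref{cw04} for $\oGa{}{}$: for $\beta\neq\kappa-\alpha$ this is \eqref{j770} composed with the definition \eqref{j980} of $\oGa{\beta}{}$, while for $\beta=\kappa-\alpha$ the definition $\oGa{\kappa-\alpha}{}\binom{\mathsf{u}_{-}}{\tta}=\mathsf{u}_{-}-V_{u_{-}}.\ta_{-}$ evaluated on $\overline{v}_T(x)$ produces precisely $(u_{-})_T(x)-V_{u_{-}}(x).(\tf_{-})_T(x)$, whose norm is controlled by \eqref{j771}. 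One must also check the standing hypothesis $\underline{\beta}\in(-4,0)$ of Proposition \ref{rec_lem}; here $\min\oAa{}=\min(\betapu+\betamu,\kappa-\alpha)$, and since $\kappa-\alpha>\betamo$ by \eqref{jj3} and the negative homogeneities are bounded below by parabolic scaling considerations, this holds in the regime of interest — I would state the needed lower bound as a hypothesis if it is not automatic.

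Next I would invoke Lemma \ref{lem:Multiplication}, which tells us $\oGa{}{}$ is a genuine skeleton with $\|\oGa{}{}\|_{sk}$ bounded as in \eqref{j1004}, and that $V_{u_{+}\diamond u_{-}}\in\cald^{\kappa}(\oTa{};\oGa{}{})$ with the refined form boundedness \eqref{MS2} and form continuity \eqref{MS3}. Crucially, \eqref{MS2} is of the special type \eqref{cw550} with $\beta_*=\kappa-\alpha$ and $M_*^b=N_*\|u_+\|$, so by Remark \ref{lem:ff}, since $\kappa>\kappa-\alpha=\beta_*$, the reduction $\cut{\kappa}V_{u_{+}\diamond u_{-}}$ lies in $\cald^{\kappa}(\oTa{};\oGa{}{})$ (note $\overline{\alpha}\wedge\kappa=\kappa$ since $V_{u_{+}\diamond u_{-}}$ already has order $\kappa$) with the sharper seminorm bound \eqref{cw51}, i.e.\ $[\cut{\kappa}V_{u_{+}\diamond u_{-}}]_{\cald^{\kappa}}\le M_{u_{+}\diamond u_{-}}^c+(|\Aa{}|-1)N^{\min_{\beta\ge\kappa}\ap{\beta}}\|\oGa{}{}\|_{sk}M_{u_{+}\diamond u_{-}}^b+N_*[u_+]_\alpha$ — the last term coming from the $M_*^b$-contribution to the continuity estimate \eqref{MS3}, which is why $N_*[u_+]_\alpha$ appears in $M_{u_+\diamond u_-,\kappa}$. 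Applying Proposition \ref{rec_lem} to this reduced modelled distribution against the model $\overline{v}$ yields a unique $\calr(\cut{\kappa}V_{u_{+}\diamond u_{-}})\in\cals'(\R^2)$ satisfying \eqref{cw40}; we set $u_{+}\diamond u_{-}$ to be this reconstruction, and \eqref{cw40} is precisely \eqref{j772} once one observes that $\cut{\kappa}V_{u_{+}\diamond u_{-}}.\overline{v}_T=\cut{\kappa}V_{u_{+}\diamond u_{-}}.\binom{(u_{-})_T}{(\tf_{+}\diamond\tf_{-})_T}$ and collects the factor $(1+\|\oGa{}{}\|_{sk})^{|\Aa{}|-1}$ (here $|\oAa{}|-1=|\Aa{}|$ since $\oAa{}$ has one extra element, but $|\oAa{}|-1=|\Aa{}|$; I would double-check the exponent bookkeeping against $|\Aa{}|-1$ as written). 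Uniqueness is inherited from the uniqueness clause of Proposition \ref{rec_lem}.

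For the suboptimal bound \eqref{S870}, the idea is to trade $\cut{\kappa}$ for a coarser reduction $\cut{\eta}$ at a lower level $\eta<\kappa$ and pay the difference. Concretely I would write
\begin{align*}
\cut{\eta}V_{u_{+}\diamond u_{-}}.\overline{v}_T
&=\cut{\kappa}V_{u_{+}\diamond u_{-}}.\overline{v}_T
-(\cut{\kappa}-\cut{\eta})V_{u_{+}\diamond u_{-}}.\overline{v}_T,
\end{align*}
bound the last term using \eqref{S8300} applied on $(\oAa{},\oTa{})$ together with the model bound \eqref{cw04}$=$\eqref{j770}\eqref{j771}, giving a contribution $\lesssim\|V_{u_{+}\diamond u_{-}}\|_{\oTa{};\oGa{}{}}\sum_{\beta\in[\eta,\kappa)\cap\oAa{}}(T^{1/4})^{\beta}\lesssim N^{\min_{\beta\ge\eta}\ap{\beta}}\|\overline{V}_{u_{+}}\|\,\|V_{u_{-}}\|\,(T^{1/4})^{\eta}$ after using \eqref{neededlater1}; and then identify $\cut{\eta}V_{u_{+}\diamond u_{-}}.\binom{(u_{-})_T}{(\tf_{+}\diamond\tf_{-})_T}$ with $u_+(u_-)_T+\cut{\eta}(\overline{V}_{u_+}\otimes V_{u_{-}}).(\tf_{+}\diamond\tf_{-})_T$ using the explicit formula \eqref{MS1} for $V_{u_{+}\diamond u_{-}}$ and the fact that the $u_+\mathsf{u}_-$ part has homogeneity $0\le\eta$ so it survives the cut unchanged while the tensored part reduces on the $\Aa{}$-block. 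Combining the two displays with \eqref{j772} and \eqref{cw40} gives \eqref{S870} with the stated constant \eqref{S452}.

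The main obstacle is the careful bookkeeping in the reduction step: one must verify that \eqref{MS2} really falls under the hypothesis \eqref{cw550} of Remark \ref{lem:ff} with the correct distinguished homogeneity $\kappa-\alpha$ and the correct exponents $\ap{\cdot}$ on $\oAa{}$, that the condition $\eta>\beta_*$ of Remark \ref{lem:ff} is met (it is, since $\kappa>\kappa-\alpha$), and that the $N_*$-weighted term in \eqref{MS3} — which is not of the form covered by the generic seminorm — is propagated correctly through the cutting lemma into the $N_*[u_+]_\alpha$ summand of $M_{u_+\diamond u_-,\kappa}$. A secondary subtlety is making sure the model bound \eqref{cw04} is verified at \emph{all} homogeneities of $\oAa{}$, including the extra one $\kappa-\alpha$, which is exactly where hypothesis \eqref{j771} enters and why $u_-$ cannot simply be an arbitrary distribution but must be tied to $V_{u_-}$ and $\tf_-$; everything else is a routine matter of unwinding definitions.
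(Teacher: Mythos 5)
Your proposal is correct and follows essentially the same route as the paper: hypotheses \eqref{j770}--\eqref{j771} verify \eqref{cw04} for the extended model, Lemma \ref{lem:Multiplication} plus Remark \ref{lem:ff} control $\cut{\kappa}V_{u_+\diamond u_-}$, Proposition \ref{rec_lem} gives \eqref{j772}, and \eqref{S8300} yields the suboptimal bound. One small correction: the placeholder $\mathsf{u}_-$ carries homogeneity $\kappa-\alpha$ (not $0$), which is why the paper cuts the \emph{extended} form only at level $\kappa$ (where $\kappa>\kappa-\alpha$ guarantees the identity \eqref{S1102}) and then applies $C_\eta$, $\eta<\kappa$, solely to the tensor factor $\overline{V}_{u_+}\otimes V_{u_-}$ on $(\Aa{},\Ta{})$ rather than to $V_{u_+\diamond u_-}$ on $(\oAa{},\oTa{})$.
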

\begin{remark}
In \cite{OtW16}, it was emphasized that the quantitative output of the reconstruction theorem, cf.\@ \eqref{cw40} could be formulated in terms of commutators of mollification by $(\cdot)_{T}$ and multiplication with a function in the positive model.  It should be noted that under the assumptions of Remark \ref{wg03}, it is still true that the output \eqref{S870} of Corollary \ref{lem:ExMult} can be interpreted as a commutator estimate. Note that
\begin{align*}
\overline{V}_{u_{+}} \otimes V_{u_{-}}.(v_{+} \diamond v_{-})_{T}&\stackrel{\eqref{j535}}{=}V_{u_{+}} \otimes V_{u_{-}}.(v_{+} \diamond v_{-})_{T}-u_{+}V_{u_{-}}.(v_{-})_{T}\\
&\stackrel{\eqref{j1003}}{=}V_{u_{+}} \otimes V_{u_{-}}.\big ( (v_{+} \diamond v_{-})_{T}-v_{+} \otimes (v_{-})_{T} \big).
\end{align*}
Thus, \eqref{S870} may be re-cast as 
\begin{align*}
\Big \| &(u_{+}\diam u_{-})_T- u_+(u_-)_{T} - \cut{\eta}\big ( V_{u_{+}} \otimes V_{u_{-}} \big ).\big ( (v_{+} \diamond v_{-})_{T}-v_{+} \otimes (v_{-})_{T} \big)
 \Big\| \nonumber \\
 & \lesssim M_{u_+\diamond u_-, \eta}\Tc{\eta},
 \end{align*}
 which emphasizes that the commutator $(u_{+}\diam u_{-})_T- u_+(u_-)_{T}$ is locally described by the commutator $(v_{+} \diamond v_{-})_{T}-v_{+} \otimes (v_{-})_{T}$, upon modulation by the modelled distribution $V_{u_{+}} \otimes V_{u_{-}}$.
\end{remark}
\section{Integration}\label{S:int}
We now state a general tool for obtaining \emph{a priori} estimates on the solution of a rough PDE.  Recall that we write $\interval{}=[\lambda,\lambda^{-1}]$ for the interval of ellipticity.
\begin{proposition}[Integration]\label{int lem}
Let $\kappa\in (1,2)$ and $\Aa{}\subset [0,\kappa)$ finite. Let $U$ be a bounded function of $(x,y)\in\mathbb{R}^2\times\mathbb{R}^2$ that is periodic and continuous
in the $y$-variable.
Assume that there exists an $I$-valued function $a$ of $x\in\mathbb{R}^2$ and a constant $\overline{M}$ such that for all base points $x$ and length scales $L\le 1$, $T^{\frac14}\le 1$ it holds that
\begin{equation}\label{KS1}
\Tc{2}\inf_{c\in\R}\|(\partial_{2}-a(x)\partial_{1}^{2})U_{T}(x,\cdot)-c\|_{B_{L}(x)}\le \overline{M}\sum_{\beta\in\Aa{}}\Tc{\kappa}L^{\kappa-\beta},
\end{equation} 
where $\|\cdot\|_{B_{L}(x)}$ denotes the supremum norm on the ball $B_L(x)$ with radius $L$ around the point $x$ with respect to the metric $d$.

\medskip

Assume there also exists $\gamma \in C^{0}(\R^{2}\times \R^{2})$ and a constant $\overline{\overline{M}}$ such that for any $x,y,z \in \R^{2}$ there holds the ``three-point continuity''
\begin{align}\label{KS2}
\big |U&(x,z)-U(x,y)-U(y,z)+U(y,y)-\gamma(x,y)(z-y)_{1} \big | \nonumber\\
&\le \overline{\overline{M}}\sum_{\beta \in \Aa{}} d^{\beta}(y,x)d^{\kappa-\beta}(z,y).
\end{align}

\medskip

Then there exists a function $\nu$ of $x\in\mathbb{R}^2$ such that
\begin{align}\label{KS3}
M_U&:=\sup_{x\not=y}d^{-\kappa}(y,x)|U(x,y)-U(x,x)-\nu(x)(y-x)_1|\nonumber\\
&\lesssim \overline{M}+\overline{\overline{M}}.
\end{align}
Moreover, we have
\begin{align}
\|\nu\|&\le M_U\stackrel{\eqref{KS3}}{\lesssim}\overline{M}+\overline{\overline{M}},\label{wg52}\\
\sup_{x\not=y}d^{-(\kappa-1)}(y,x)|\nu(y)-\nu(x)+\gamma(x,y)|&\lesssim M_U+\overline{\overline{M}}
\stackrel{\eqref{KS3}}{\lesssim}\overline{M}+\overline{\overline{M}}.\label{KS4}
\end{align}
Here, $\lesssim$ means up to a multiplicative constant that only
depends on $\lambda$, $\kappa$ and $\Aa{}$.
\end{proposition}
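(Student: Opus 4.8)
The strategy is the kernel-free (Safonov-type) approach to Schauder estimates, adapted to the singular setting: I will show that the rescaled increments $U(x,y)-U(x,x)$ can be approximated, on dyadically shrinking balls around $x$, by an affine function $\nu(x)(y-x)_1$ with an error that is $O(d^\kappa(y,x))$. The key point is to construct $\nu(x)$ as a limit of ``best affine approximations at scale $2^{-n}$'' and to quantify the Cauchy property of this sequence.

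\emph{Step 1 (Setting up the iteration).} Fix a base point $x$. For a length scale $L\le 1$, let $\nu_L(x)$ be chosen so that $c\mapsto U(x,\cdot)-U(x,x)-c(\cdot-x)_1$ is (nearly) the infimum in \eqref{KS1}, but taken on the ball $B_L(x)$ and without mollification; more precisely I would first use \eqref{KS1} together with the moment bounds \eqref{1.13} on $\psi_T$ to replace the mollified object $U_T(x,\cdot)$ by $U(x,\cdot)$ at the cost of an error controlled by the three-point continuity \eqref{KS2} (choosing $T^{1/4}\sim L$ so that the mollification scale matches the ball). The output of this step is that for each dyadic scale $L=2^{-n}\le 1$ there is an affine function, with slope $\nu_n(x)$ and appropriate constant, approximating $U(x,\cdot)-U(x,x)$ on $B_L(x)$ up to $O(\overline M+\overline{\overline M})\,L^\kappa$ in sup norm. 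Here I use that the left-hand side of \eqref{KS1} controls $(\partial_2-a(x)\partial_1^2)$ applied to $U_T(x,\cdot)$, and that a function whose parabolic operator is small in a ball is, by interior Schauder / a simple harmonic-approximation lemma for the operator $\partial_2-a(x)\partial_1^2$ with \emph{constant} coefficient $a(x)\in I$, close to an affine function in $x_1$ (constant in $x_2$) on a slightly smaller ball, with constants depending only on $\lambda,\kappa$.

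\emph{Step 2 (Telescoping and convergence of $\nu$).} Comparing the affine approximants at scales $2^{-n}$ and $2^{-n-1}$ on the smaller ball $B_{2^{-n-1}}(x)$: both approximate the same function $U(x,\cdot)-U(x,x)$ to order $2^{-n\kappa}$ there, so their difference, which is affine, is $O(2^{-n\kappa})$ in sup norm on a ball of radius $2^{-n-1}$; since an affine function in $x_1$ of size $\varepsilon$ on a ball of radius $r$ has slope $O(\varepsilon/r)$, we get $|\nu_{n+1}(x)-\nu_n(x)|\lesssim (\overline M+\overline{\overline M})\,2^{-n(\kappa-1)}$. Since $\kappa>1$ this is summable, so $\nu_n(x)\to\nu(x)$ with $|\nu(x)-\nu_0(x)|\lesssim \overline M+\overline{\overline M}$, and summing the telescoping errors in sup norm gives \eqref{KS3}. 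Boundedness of $\nu$, \eqref{wg52}, follows since $U$ is bounded (so $\nu_0$ is bounded) and $\nu-\nu_0$ is bounded; alternatively it is immediate from \eqref{KS3} by taking $d(y,x)$ of unit order.

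\emph{Step 3 (Regularity of $\nu$, i.e.\ \eqref{KS4}).} To estimate $\nu(y)-\nu(x)$ for $d(y,x)=:r\le 1$, I would compare the two affine approximants centered at $x$ and at $y$ on the overlapping ball $B_r(x)\cap B_r(y)$ (both of radius $\sim r$). On this common ball, $U(x,\cdot)-U(x,x)$ is $\nu(x)(\cdot-x)_1+O(M_U r^\kappa)$ by \eqref{KS3}, and similarly with $y$; subtracting and using the three-point continuity \eqref{KS2} with the three points $x,y,z$ to relate $U(x,z)-U(x,x)$ and $U(y,z)-U(y,y)$ — the cross term $-\gamma(x,y)(z-y)_1$ being precisely the affine correction — one finds that $(\nu(y)-\nu(x)+\gamma(x,y))(z-y)_1$ is $O((M_U+\overline{\overline M})r^\kappa)$ uniformly for $z$ in a ball of radius $\sim r$, hence $|\nu(y)-\nu(x)+\gamma(x,y)|\lesssim (M_U+\overline{\overline M})r^{\kappa-1}$, which is \eqref{KS4}.

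\emph{Main obstacle.} The delicate point is Step~1: extracting from the \emph{one-parameter family of mollified} bounds \eqref{KS1} a genuine ``affine approximation on a ball'' statement for $U(x,\cdot)$ itself. This requires (a) a clean harmonic/affine-approximation lemma for the constant-coefficient parabolic operator $\partial_2-a_0\partial_1^2$ uniformly over $a_0\in I$ — essentially that smallness of the operator applied to a bounded function forces closeness to a function affine in $x_1$ and constant in $x_2$ — and (b) carefully matching the mollification scale $T$ to the ball radius $L$ so that the passage between $U_T$ and $U$ is controlled by the regularity encoded in \eqref{KS2} and the kernel moment bounds \eqref{1.13}, without losing the sharp exponent $\kappa$. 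Keeping the dependence of all constants on only $\lambda,\kappa,\Aa{}$ (and not on $\overline M,\overline{\overline M}$) throughout the telescoping is the bookkeeping one must be careful about, but it is routine once Step~1 is in place.
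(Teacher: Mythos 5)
Your overall strategy---a Campanato-type iteration, dyadic best-affine approximations whose slopes form a Cauchy sequence because $\kappa>1$, and the three-point continuity to get the H\"older estimate \eqref{KS4} from \eqref{KS3}---is exactly the one the paper uses, and your Steps 2 and 3 would go through essentially verbatim. The gap is in your Step 1, and you have correctly located it as the ``main obstacle'' but proposed a fix that does not close.

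You claim that the passage from $U_T(x,\cdot)$ to $U(x,\cdot)$ on a ball $B_L(x)$ (with $T^{1/4}\sim L$) is ``controlled by the three-point continuity \eqref{KS2}.'' It is not, and the reason is that \eqref{KS2} only controls the variation of $U(\cdot,z)-U(\cdot,y)$ in the \emph{base point}, not the variation of $U(x,\cdot)$ in its second slot at a fixed base point. Writing $U_T(x,y)-U(x,y)=\int\psi_T(y-z)\bigl(U(x,z)-U(x,y)\bigr)\dd z$, the only way \eqref{KS2} lets you estimate the integrand is by trading $U(x,z)-U(x,y)$ for $U(y,z)-U(y,y)+\gamma(x,y)(z-y)_1+O(\overline{\overline M}\,\cdots)$; but the term $U(y,z)-U(y,y)$ is precisely of the type the quantity $M_U$ is supposed to bound, now at base point $y$. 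So the estimate for $\|U_T(x,\cdot)-U(x,\cdot)\|_{B_L(x)}$ necessarily carries a term of the form $M_U\,(T^{1/4})^\kappa$ --- this is what the paper proves (its Step~2, inequality \eqref{p49}) --- and your argument is a priori circular. Boundedness of $U$ alone does not give the needed $L^\kappa$-decay.

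The paper's way out of the circularity, which your plan lacks, is twofold. First, a buckling argument: because the scale-to-scale Campanato improvement factor is $(R/L)^2$ and $\kappa<2$, the extra term $M_U (T^{1/4})^\kappa$ can be absorbed after linking $L=\eps^{-1}R$, $T^{1/4}=\eps R$ and choosing $\eps$ small, \emph{provided} one already knows $M_U<\infty$. Second, the qualitative hypothesis $M_U<\infty$ has to be earned: the paper first replaces $U$ by $U_\tau$ (mollified in $y$), for which $M_{U_\tau}<\infty$ is automatic since $U_\tau(x,\cdot)$ is smooth and bounded; it runs the buckling uniformly in $\tau$ to get $M_{U_\tau}+\|\nu^\tau\|\lesssim\overline M+\overline{\overline M}$, and then passes to the limit $\tau\downarrow0$ pointwise in $x$ to recover $\nu$ and \eqref{KS3}. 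Without these two steps, your Step~1 cannot produce the claimed affine approximation of $U(x,\cdot)$ with constants depending only on $\lambda,\kappa,\Aa{}$.
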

\medskip

A couple of useful observations on the function $\nu$ are collected in the next remark,
the proof of which is the last step in the proof of Proposition \ref{int lem}.

\begin{remark}\label{rem-wg53}
Under the assumptions and with the notation of Proposition \ref{int lem}, the function $\nu$ is uniquely
determined through $U$. Moreover, if $U$ is also periodic in the $x$-variable, so is $\nu$. Finally,
if $0\notin \Aa{}$, then $\nu$ is H\"older continuous with exponent $(\kappa-1)\wedge\min \Aa{}$.  

\medskip

 In the proof of Proposition \ref{int lem}, cf. Step \ref{intLemQH}, we show that defining $\nu^{\tau}(x):=\frac{\partial}{\partial y_2}U_\tau(x,x)$, where $U_{\tau}$ denotes convolution in the second argument only, \eqref{KS3} and \eqref{wg52} hold uniformly in $\tau$ in the sense that
\begin{align}
&\sup_{\tau \leq \frac{1}{2}}\sup_{x\not=y}d^{-\kappa}(y,x)|U_{\tau}(x,y)-U_{\tau}(x,x)-\nu^{\tau}(x)(y-x)_1|\nonumber\\
&\quad \lesssim \overline{M}+\overline{\overline{M}},\label{KS6}\\
&\sup_{\tau \leq \frac{1}{2}}\|\nu^{\tau}\|\lesssim \overline{M}+\overline{\overline{M}}.\label{KS7}
\end{align} 
\end{remark}

Let us comment a bit more on Proposition \ref{int lem} and its application. For every $x$, we are given
a function $U(x,\cdot)\colon y\mapsto U(x,y)$, which one should think of as the remainder of a generalized Taylor's expansion
to order $\kappa$. On the one hand, we assume that $U(x,\cdot)$ satisfies $(\partial_2-a(x)\partial_1^2)U(x,\cdot)$
$\approx 0$, cf.~\eqref{KS1}. On the other hand, up to ``affine functions'' (\emph{i.e.}, functions of the form 
$y\mapsto c+\nu y_1$ which are exactly the ``caloric'' polynomials of order less than $\kappa\in(1,2)$ in our
parabolic metric), $U(x,\cdot)$
depends continuously on $x$, cf.~\eqref{KS2}. As a result, again up to affine functions, $U(x,y)$ is indeed small
of order $\kappa$ in $d(y,x)$, cf.\@ \eqref{KS3}. Moreover, the two affine functions, namely $\gamma$ appearing in
\eqref{KS2} and $\nu$ appearing in \eqref{KS3} are related to order $\kappa-1$, cf.~\eqref{KS4}. 

\medskip

Notice that both assumptions express the order $\kappa$ in a very general way: Assumption \eqref{KS1}
measures $(\partial_2-a(x)\partial_1^2)U(x,\cdot)$ in a ``graded'' negative H\"older norm, cf.~\eqref{j1001}, 
where the order $\kappa-\beta$ of vanishing in the size $L$ of the neighborhood of $x$
has to compensate the (negative) exponent $\kappa-2$ of the H\"older norm.
Assumption \eqref{KS2} measures the increment $U(x,\cdot)-U(y,\cdot)$ in a graded positive H\"older norm,
where the order $\beta$ of vanishing in the distance $d(y,x)$ has to compensate the positive but possibly
low H\"older exponent $\kappa-\beta$. It will be important that \eqref{KS2} does not involve four points in general position,
but only three positions, thus we call it the ``three-point continuity''.

\medskip

Proposition \ref{int lem} is the only PDE ingredient of our main result, \emph{i.e.}, Theorem \ref{Prop:aPriori},
which estimates the form $V_u$ describing the solution $u$ in terms of the form $V_a$ describing the coefficients $a$.
It only enters in the proof of Lemma \ref{int_lem_concr} below in the following way: The form $V_u$ can be
``algebraically'' constructed from $V_a$ by a formula \eqref{s300bis}. However,
this formula for $V_u$ involves the functions $u$ and $\nu$ (the latter plays the role of $\partial_1 u$
in the regular setting) next to the $V_a$. Since these functions are \emph{a priori} uncontrolled, the form continuity
of $V_u$ (as opposed to $V_{\puf}$, cf.\@ \eqref{algebraicRel}) does not entirely follow from that of $V_a$ -- two ingredients are missing with respect to the hierarchy
of controlled rough path conditions encoded in \eqref{prod5}. In Lemma \ref{int_lem_concr}, 
these two ingredients are recovered from the two estimates \eqref{KS3} and \eqref{KS4}.
\section{Existence and Uniqueness}\label{S:App}
\subsection{Statements of the Concrete Results}
With Theorem \ref{Prop:aPriori} we now state the main concrete result of this paper.  
Its statement refers to definitions and assumptions of two later sections.
Namely, in Section \ref{fspc} we specify the abstract space $(\Aa{+}=\{0,\alpha,1,2\alpha\},\Ta{+})$ and its skeleton $\Ga{+}{}$,
from which $(\Aa{-}=\{\alpha-2,2\alpha-2\},\Ta{-},\Ga{-}{})$ will be obtained functorially (i.~e.~by formally applying $\partial_1^2$),
and from which $(\mathsf{A},\mathsf{T},\Ga{}{})$ will be obtained by multiplication, cf.~Section \ref{S:mult}.
In Section \ref{subsec:theModel}, we specify the appropriate models
$v_+\in C^0(\mathbb{R}^2,\mathsf{T}_+)$ and $v\in C^0(\mathbb{R},\mathsf{T}_{<0})$ and quantify 
their amplitude through a single parameter $N$, which is mostly hidden in the norm we endow $\mathsf{T}_+$ with.  

\medskip

Equipped with $(\Aa{+},\Ta{+},\Ga{+}{})$, we have at hand spaces of modelled distributions of varying order, 
along with associated norms and semi-norms, cf.\@ Definition \ref{prodDef1}, which will be used to formulate the \emph{a priori} bounds below.  
In the following, we will use a subscript to indicate the function or distribution the modelled distribution describes,
and a superscript to denote its order.  This allows us to ease notation by omitting the subscript in the corresponding norms 
and semi-norms, writing for instance $[V_{a}^{\eta}]$ and $\|V_{a}^{\eta}\|$ instead of 
$\|V_{a}^{\eta}\|_{\mathcal{D}^{\eta}(\Ta{+};\Ga{+}{})}$ and $\|V_{a}^{\eta}\|_{\Ta{+};\Ga{+}{}}$.  

\medskip

\begin{theorem}\label{Prop:aPriori}
Let 
\begin{align}\label{j1220}
 \alpha\in \big( \frac12,\frac23 \big), \quad \mathrm{} \quad \thresh\in \vp{2-\alpha,\alpha+1},
\end{align}
and $\lambda >0$.  Define $I:=[\lambda,\lambda^{-1}]$ and assume we are given periodic model inputs 
\begin{align*}
f &\in \cals'(\R^2), &&\vvf \in \cals'(\R^2, C^{2,2}(\interval{2})), \\
\wvf &\in \cals'(\R^2, C^{2,1,2}(\interval{3})), &&\vwf\in \cals'(\R^2, C^{2,2,1}(\interval{3}))
\end{align*}
satisfying the off-line assumptions \eqref{f29}, \eqref{off1}, and \eqref{f101} with respect to a parameter $N \ll 1$, where here and
in the sequel $\ll$ means $\le c$ for some sufficiently small positive constant $c$ only depending on $\alpha$, $\eta$, and $\lambda$.  

\medskip

Let $V_{a}^{\thresh}\in \mathcal{D}^{\thresh}(\Tplust; \Ga{+}{})$ be periodic in the sense of \eqref{cw14} and assume that $a:=C_{\alpha}V_{a}^{\eta}$ is $I$-valued with $[a]_{\alpha} \ll 1$.  Then there exists a unique $V_{u}^{\eta} \in \mathcal{D}^{\eta}(\Tplust; \Ga{+}{})$ with the following three properties.
\begin{enumerate}
\item The modelled distribution $V_{\partial_{1}^{2}u}^{\eta-2} \in C^0(\mathbb{R}^2;\mathsf{T}_-^*)$ defined functorially from $V_{u}^{\eta}$ by
\begin{equation}\label{functDef}
V_{\partial_1^2u}^{\eta-2}.\ta_{-}=V_{u}^\thresh.(0,\pva,0,\pwa),
\end{equation}
for $\ta_{-}=(\pva,\pwa) \in \Ta{-}$, is algebraically determined by $V_{a}^{\eta}$ via
\begin{equation}
V_{\partial_{1}^{2}u}^{\eta-2}.\mathsf{v}_-
=\delta_{a}.\partial_1^2\mathsf{v}_{\alpha}+\overline{V}_{a}^{\eta-\alpha} \otimes \delta_{a}.
\begin{pmatrix}
\partial_{a_0}\partial_1^2\mathsf{v}_{\alpha} \\
\partial_1^2\mathsf{w}_{2\alpha}
\end{pmatrix}
,\label{algebraicRel}
\end{equation}
where $V_{a}^{\eta-\alpha}:=C_{\eta-\alpha}V_a^\eta$ and $\overline{V}_a^{\eta-\alpha}:=(\id - C_{\alpha})V_{a}^{\eta-\alpha}$.
\item There exists a (periodic) distribution $a \diamond \partial_{1}^{2}u \in \mathcal{S}'(\R^{2})$ such that for some $\delta>0$,
\begin{align}
\lim_{T \to 0}(T^{\frac{1}{4}})^{-\delta}\|(\auf)_T -a(\partial_{1}^{2}u)_{T}-C_{\eta+\alpha-2}\big ( \overline{V}_{a}^{\eta} \otimes V_{\partial_{1}^{2}u}^{\eta-2} \big ).v_{T}\|=0,\label{MT34}
\end{align}
where $u:=C_{\alpha}V_{u}^{\eta}$, $\overline{V}_{a}^{\eta}:=(\id - C_{\alpha})V_{a}^{\eta}$, and $v$ is the abbreviation
for the vector appearing in \eqref{j531}.
\item The function $u$ is mean free and satisfies
\begin{equation}
\partial_2 u- P(a\diam\partial_1^2 u)=Pf \quad \mathrm{distributionally}.\label{MT33}
\end{equation}
\end{enumerate}

\medskip

Moreover, the (non-linear) mapping from $V_{a}^{\eta}$ to $V_{u}^{\eta}$ is bounded in the sense that the following estimates hold:
\begin{align}
[V_{u}^{\eta}]&\lesssim N\big ([a]_{\alpha}([a]_{\alpha}+N\|\overline{V}_{a}^{\eta-\alpha}\|)+[V_{a}^{\eta-\alpha}] \big) \big (1+N\|\overline{V}_{a}^{\eta}\| \big)\nonumber\\
&+N[V_{a}^{\eta}]\big(1 + \|\overline{V}_{a}^{\eta-\alpha}\|+[V_{a}^{\eta-\alpha}] \big )\nonumber\\
&+N^{2}\left ([a]_{\alpha}+N^{}\|\overline{V}_{a}^{\eta}\|+N^{2}\|V_{a}^{\eta}\|\right)(1 + \|\overline{V}_{a}^{\eta-\alpha}\|),\label{S306}\\
\|V_{u}^{\eta} \|&\lesssim N^{-1}\big ([a]_{\alpha}([a]_{\alpha}+N\|\overline{V}_{a}^{\eta-\alpha}\|)+[V_{a}^{\eta-\alpha}] \big) \big (1+N\|\overline{V}_{a}^{\eta}\| \big)\nonumber\\
&+N^{-1}[V_{a}^{\eta}]\big(1 + \|\overline{V}_{a}^{\eta-\alpha}\|+[V_{a}^{\eta-\alpha}] \big )\nonumber\\
&+\left (1+N^{}\|\overline{V}_{a}^{\eta}\|+N^{2}\|V_{a}^{\eta}\|\right)(1 + \|\overline{V}_{a}^{\eta-\alpha}\|),\label{SFin4}
\end{align}
where here and in the sequel $\lesssim$ denotes inequality up to a universal constant depending only on $\alpha$, $\eta$, and $\lambda$.
\end{theorem}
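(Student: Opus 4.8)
The strategy is to realize $u$ as the fixed point of the affine map that sends a candidate modelled distribution for the solution to the modelled distribution obtained by solving the constant-coefficient problem to which \eqref{eq:PDEintro} reduces locally. Concretely, I would proceed as follows. First, in the off-line step (Sections \ref{fspc}, \ref{subsec:theModel}), I take as given the model $v_+$ and the abstract space $(\Aa{+},\Ta{+})$ together with $(\Aa{-},\Ta{-})$ obtained functorially and the product space $(\Aa{},\Ta{})$; I record the bounds on $\|\Ga{+}{}\|_{sk}$ and the off-line products coming from assumptions \eqref{f29}, \eqref{off1}, \eqref{f101}. Second, given a periodic $V_a^\eta\in\mathcal D^\eta(\Tplust;\Ga{+}{})$ with $a=C_\alpha V_a^\eta$ being $I$-valued and $[a]_\alpha\ll1$, I define $V_{\partial_1^2u}^{\eta-2}$ by the algebraic formula \eqref{algebraicRel}; one must check this is a genuine modelled distribution on $(\Aa{-},\Ta{-})$ of order $\eta-2$, which is where Lemma \ref{lem:Multiplication} (for the $\overline V_a^{\eta-\alpha}\otimes\delta_a$ piece) enters, together with the bounds \eqref{MS2}, \eqref{MS3}. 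Third, using Corollary \ref{lem:ExMult} with $V_{u_+}=V_a^\eta$ and $V_{u_-}=V_{\partial_1^2u}^{\eta-2}$ and the off-line product inputs \eqref{j770}, I obtain the reconstruction $a\diamond\partial_1^2u$ as a distribution satisfying \eqref{MT34}, \emph{as soon as} $u$ and $\nu$ (the objects entering \eqref{algebraicRel} via $\delta_a$) are known to exist with the requisite regularity.

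The heart of the matter, and the source of the non-linearity, is that the formula \eqref{s300bis} for $V_u^\eta$ involves the yet-unknown scalar functions $u=C_\alpha V_u^\eta$ and $\nu$ (the latter playing the role of $\partial_1 u$). So the plan is to set up a contraction. Fix the rough data $V_a^\eta$; for a trial pair $(u,\nu)$ (in an appropriate ball of $C^\alpha\times C^{(\eta-1)\wedge\alpha}$, say), form $V_{\partial_1^2u}^{\eta-2}$ via \eqref{algebraicRel}, reconstruct $a\diamond\partial_1^2u$ via Corollary \ref{lem:ExMult}, then solve the \emph{linear} constant-coefficient equation $\partial_2\tilde u-P(a(x)\partial_1^2\cdot)=\cdots$ — more precisely apply the integration Proposition \ref{int lem} to the remainder $U(x,y)$ of the generalized Taylor expansion of the candidate solution, which is built so that $(\partial_2-a(x)\partial_1^2)U(x,\cdot)\approx0$ holds in the graded sense \eqref{KS1} (this uses that $(\auf)_T-a(\puf)_T$ is controlled, i.e.\ the reconstruction output) and so that the three-point continuity \eqref{KS2} holds (this is where form-continuity \eqref{prod5} of $V_a^\eta$, transported through the algebra, is used, plus the structure of $\Ga{+}{}$). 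Proposition \ref{int lem} then returns a new $\nu$ together with the key estimates \eqref{KS3}, \eqref{KS4}; feeding these back into \eqref{algebraicRel}/\eqref{s300bis} closes the loop and upgrades $V_u^\eta$ to a bona fide element of $\mathcal D^\eta$. The smallness $N\ll1$ and $[a]_\alpha\ll1$ make this map a contraction, hence existence and uniqueness.

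For the \emph{a priori} bounds \eqref{S306}, \eqref{SFin4}, I would track constants through each of the three tools: the multiplication constants $M_{u_+\diamond u_-}^b$, $M_{u_+\diamond u_-}^c$ from \eqref{S300}, \eqref{S301} expressed via $\|\overline V_a^{\eta-\alpha}\|$, $[V_a^{\eta-\alpha}]$, $[a]_\alpha$, etc.; the reconstruction loss $(1+\|\oGa{}{}\|_{sk})^{|\Aa{}|-1}$ and $\|\oGa{}{}\|_{sk}\le\|\Ga{}{}\|_{sk}+N_*^{-1}[V_{\partial_1^2u}^{\eta-2}]$ from \eqref{j1004}, \eqref{cw40}, \eqref{j772}; and the integration output \eqref{KS3}–\eqref{KS7}, with $\overline M$ coming from the commutator/reconstruction estimate and $\overline{\overline M}$ from the transported form-continuity. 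Collecting these and using the reduction Lemma \ref{cutting_lemma} (in the sharpened form of Remark \ref{lem:ff}, which lets one neglect the lowest homogeneity of $V_a$) gives exactly the three-line right-hand sides in \eqref{S306}, \eqref{SFin4}; the powers of $N$ there are bookkeeping of $\langle\beta\rangle$ through \eqref{wg16} and \eqref{wg10}. Finally Remark \ref{rem-wg53} gives periodicity and Hölder continuity of $\nu$, and the distributional equation \eqref{MT33} follows by testing \eqref{MT34} against $\psi_T$ and letting $T\to0$, using that $\partial_2 u-P(a\diamond\partial_1^2u)-Pf$ mollified at scale $T$ tends to $0$.

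\textbf{Main obstacle.} The genuinely hard step is the application of Proposition \ref{int lem}, i.e.\ Lemma \ref{int_lem_concr}: verifying the three-point continuity \eqref{KS2} for the remainder $U$. The subtlety, flagged in the discussion after Proposition \ref{int lem}, is that $V_u^\eta$ is \emph{not} \emph{a priori} a modelled distribution — its form-continuity is missing precisely the two ingredients supplied by \eqref{KS3} and \eqref{KS4} — so one cannot simply quote \eqref{prod5}; one must instead exploit the differential structure in the ellipticity parameter $a_0$ and argue by hand, as in Lemma \ref{cor:prod1}. Getting the two worlds (the algebraic construction of $V_u^\eta$ and the analytic input from integration) to meet consistently, with matching constants, is the crux.
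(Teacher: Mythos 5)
Your outline correctly identifies the three abstract tools (reconstruction via Corollary~\ref{lem:ExMult}, integration via Proposition~\ref{int lem} as implemented in Lemma~\ref{int_lem_concr}, reduction via Lemma~\ref{cutting_lemma}/Remark~\ref{lem:ff}) and, crucially, you correctly pinpoint the three-point continuity of the remainder $U(x,y)$ as the genuinely hard step: as the paper emphasizes after Proposition~\ref{int lem}, the form continuity of $V_u^\eta$ is \emph{a priori} missing the two pieces supplied by \eqref{KS3} and \eqref{KS4}. The bookkeeping of $N$-powers through $\langle\beta\rangle$ and the buckling structure in the estimates is also what the paper does. So on the estimate side, your plan is sound.

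The gap is in the existence mechanism. You propose a contraction mapping directly on a trial pair $(u,\nu)$: reconstruct $a\diamond\partial_1^2 u$ from the trial, solve a linear equation, get a new $(\tilde u,\tilde\nu)$ from the integration lemma. But this iteration is not well-defined as stated for the unmollified data $f\in C^{\alpha-2}$. The reconstruction Corollary~\ref{lem:ExMult} needs the hypothesis \eqref{j771}, i.e.\ that $\partial_1^2 u$ (for the trial $u$) is locally described by $V_{\partial_1^2 u}^{\eta-2}$ to order $\eta-2$; this holds when $u=C_\alpha V_u^\eta$ for a genuine $V_u^\eta\in\mathcal D^\eta$ (this is essentially \eqref{cw60}), but one cannot just take an arbitrary trial pair. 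Conversely, to show that the output of your map is again such a modelled distribution one needs the integration lemma, whose hypothesis \eqref{KS1} in turn depends on having the reconstruction. The paper breaks this chicken-and-egg by \emph{regularizing}: in Step~\ref{MTprf_2} it solves the regularized, \emph{renormalized} PDE \eqref{MT17}
\[
\partial_2 u^\tau-P(a\partial_1^2 u^\tau)=Pf_\tau+P\,C_{\eta+\alpha-2}\big(\overline V_a^\eta\otimes V_{\partial_1^2 u}^{\eta-2}\big).v_\tau,
\]
classically (via constant-coefficient Schauder with $\|a-a_0\|\ll1$), then verifies the qualitative hypothesis $V_{u^\tau}^\eta\in\mathcal D^\eta(\Ta{+};\Gamma_+^\tau)$ for a $\tau$-regularized skeleton (Step~\ref{MTprf_3}), applies reconstruction and integration to get \emph{a priori} bounds uniform in $\tau$ via buckling (Steps~\ref{MTprf_4}--\ref{MTprf_6}), and finally passes to the limit by Arzel\`a--Ascoli (Steps~\ref{MTprf_7}--\ref{MTprfNew_2}). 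Your proposal omits the counterterm in the regularized equation entirely; without it the sequence $u^\tau$ does not converge, since the naive product $a\,\partial_1^2 u^\tau$ diverges as $\tau\downarrow0$ --- this is precisely the renormalization the paper alludes to after the theorem statement.

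Two smaller points. First, the derivation of the distributional equation \eqref{MT33} is not by ``testing \eqref{MT34} against $\psi_T$ and sending $T\to0$''; the paper obtains \eqref{MT33} as the $\tau\downarrow0$ limit of the regularized equation \eqref{MT23}, using the identification \eqref{MT13} and the distributional convergence of $a\diamond\partial_1^2 u^\tau$. Second, uniqueness requires a separate argument (Step~\ref{MTprf_9}): since the algebraic relation \eqref{algebraicRel} pins down $V_{\partial_1^2 u}^{\eta-2}$ from $V_a^\eta$ alone, the difference of two solutions $u=u_1-u_2$ satisfies a freezing-of-coefficients estimate whose right-hand side $R^T$ is shown to be $o(1)$ in $C^\gamma$ by interpolation; this is not automatic from a contraction in a small ball, because the theorem asserts uniqueness among \emph{all} $V_u^\eta\in\mathcal D^\eta$ with properties (1)--(3), not merely among small ones.
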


Item (1) in the above definition of a solution $V_u^\eta$ might appear strange: The form
$V_{\partial_1^2u}^{\eta-2}$ on $\mathsf{T}_-$ describing the distribution $\partial_1^2u$,
which is naturally (or ``functorially'') derived from $V_u^\eta$ through \eqref{functDef},
is completely determined by $V_a^\eta$ through identity \eqref{algebraicRel}, which we call
algebraic since it is pointwise in $x$. We motivate in Subsection \eqref{subsec:Heuristics} why this is natural.

\medskip

We also note that compared to a classical reading, equation \eqref{MT33} involves a {\it renormalization}, which becomes apparent by considering
what \eqref{MT33} turns into under the approximation argument for existence, cf.~Step \ref{MTprf_2} of the Theorem \ref{Prop:aPriori}. 
The approximation argument proceeds via (semi-group) convolution $(\cdot)_\tau$ of $f$ and the model $v$. The counter term 
$C_{\eta+\alpha-2}(\overline{V}_a^\eta\otimes V_{\partial_1^2u}^{\eta-2}).v_\tau$ in \eqref{MT17}
depends algebraically, and thus in particular locally, on $V_a^\eta$ through \eqref{algebraicRel}.

\medskip

The following corollary shows that the detailed \emph{a priori} estimates \eqref{S306} and \eqref{SFin4} of Theorem \ref{Prop:aPriori} are sufficient to
establish the self-mapping property in a fixed-point argument for a quasi-linear setting. Here we have in mind the simplest
non-trivial situation of $a=u+1$, which is consistent with choosing $V_a^\eta.\mathsf{v}=V_u^\eta.\mathsf{v}+\mathsf{1}$ so that
$\overline{V}_a^\eta=\overline{V}_u^\eta$ and $[V_{a}^{\eta}]=[V_{u}^{\eta}]$.

\begin{corollary}\label{cor:wg}
Suppose that for some $\epsilon\leq 1$ we have
\begin{align}\label{wg32}
N\le\epsilon^{3},\;[V_a^\eta]\le \epsilon^{-4}N^3,\;\|V_a^\eta\|\le\epsilon^{-1},\; \|\overline{V}_{a}^{\eta-\alpha}\|\le 1.
\end{align}
Then under the assumptions of Theorem \ref{Prop:aPriori}, we have
\begin{align}\label{wg34}
[V_u^\eta]\lesssim \epsilon^{-2} N^3,\;\|V_u^\eta\|\lesssim 1, \|\overline{V}_u^{\eta-\alpha}\|\leq 1.
\end{align}
\end{corollary}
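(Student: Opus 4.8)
The plan is to substitute the structural bounds \eqref{wg32} into the three-term \emph{a priori} estimates \eqref{S306} and \eqref{SFin4} of Theorem \ref{Prop:aPriori} and track powers of $\epsilon$ and $N$. First I would record what the hypotheses buy us: from $N\le\epsilon^3$ and $\epsilon\le 1$ we have in particular $N\le 1$, and every factor of the form $(1+N\|\overline{V}_a^\eta\|)$ or $(1+\|\overline{V}_a^{\eta-\alpha}\|+[V_a^{\eta-\alpha}])$ is controlled: $N\|\overline{V}_a^\eta\|\le N\,\epsilon^{-1}\cdot(\text{something})$ — actually here one must be slightly careful, since \eqref{wg32} does not directly bound $\|\overline{V}_a^\eta\|$, only $\|V_a^\eta\|\le\epsilon^{-1}$, and $\|\overline{V}_a^\eta\|=\|(\id-C_\alpha)V_a^\eta\|\lesssim\|V_a^\eta\|$ by \eqref{new} of Lemma \ref{cutting_lemma} (up to the $N$-weights and $\|\Gamma\|_{sk}$, which are order one by the normalization discussed after Definition \ref{prodDef1}); similarly $[a]_\alpha\ll 1$ is given. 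So $N\|\overline{V}_a^\eta\|\lesssim N\epsilon^{-1}\le\epsilon^2\le 1$, $N^2\|V_a^\eta\|\lesssim N^2\epsilon^{-1}\le\epsilon^5\le 1$, and $\|\overline{V}_a^{\eta-\alpha}\|\le 1$ is assumed outright.

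Next I would feed these into \eqref{S306}. The three lines contribute, respectively: line one is $N\big([a]_\alpha([a]_\alpha+N\|\overline{V}_a^{\eta-\alpha}\|)+[V_a^{\eta-\alpha}]\big)(1+N\|\overline{V}_a^\eta\|)$; since $[a]_\alpha\ll1$ and $N\|\overline{V}_a^{\eta-\alpha}\|\le N\le\epsilon^3$, the first bracket is $\lesssim [a]_\alpha^2+N+[V_a^{\eta-\alpha}]$, and $[V_a^{\eta-\alpha}]=[C_{\eta-\alpha}V_a^\eta]\lesssim[V_a^\eta]+(\cdots)\|V_a^\eta\|\lesssim\epsilon^{-4}N^3+\epsilon^{-1}$ by \eqref{eq:cutLemmaBd1}; multiplying by $N\le\epsilon^3$ and using $(1+N\|\overline{V}_a^\eta\|)\lesssim 1$ gives $\lesssim N\epsilon^{-1}+\epsilon^{-1}N^4+\cdots\lesssim \epsilon^{2}N^2 +\cdots$, which is $\lesssim\epsilon^{-2}N^3$ provided one checks the dominant term is the one coming from $N[V_a^{\eta-\alpha}]\lesssim N\epsilon^{-4}N^3=\epsilon^{-4}N^4\le\epsilon^{-4}\epsilon^3 N^3=\epsilon^{-1}N^3\le\epsilon^{-2}N^3$. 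Line two is $N[V_a^\eta](1+\|\overline{V}_a^{\eta-\alpha}\|+[V_a^{\eta-\alpha}])\lesssim N[V_a^\eta](1+[V_a^{\eta-\alpha}])$; with $[V_a^\eta]\le\epsilon^{-4}N^3$ and $[V_a^{\eta-\alpha}]\lesssim\epsilon^{-4}N^3+\epsilon^{-1}$, the worst term is $N\epsilon^{-4}N^3\cdot\epsilon^{-1}=\epsilon^{-5}N^4\le\epsilon^{-5}\epsilon^3N^3=\epsilon^{-2}N^3$. Line three is $N^2([a]_\alpha+N\|\overline{V}_a^\eta\|+N^2\|V_a^\eta\|)(1+\|\overline{V}_a^{\eta-\alpha}\|)\lesssim N^2([a]_\alpha+\epsilon^2+\epsilon^5)\lesssim N^2\le\epsilon^3 N\le\epsilon^{-2}N^3$ since $N\le\epsilon^3$ forces $N\le\epsilon^{-2}N^3\iff\epsilon^2\le N^2/N\cdot\epsilon^{-2}$... more simply $N^2=N\cdot N\le\epsilon^3\cdot\epsilon^{-2}N^3\cdot\epsilon^{?}$; cleanly: $N^2\le\epsilon^{-2}N^3\iff\epsilon^2\le N$, which is false, so instead one observes $N^2\cdot 1\le N^3\cdot N^{-1}$ and $N^{-1}\ge\epsilon^{-3}$, hmm — this is exactly the place I expect the bookkeeping to be delicate: one must verify $N^2\lesssim\epsilon^{-2}N^3$, i.e.\ $1\lesssim\epsilon^{-2}N$, i.e.\ $\epsilon^2\lesssim N$, which does \emph{not} follow from $N\le\epsilon^3$. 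The resolution must be that line three actually carries more $\epsilon$-smallness (e.g.\ the $[a]_\alpha\ll1$ factor or that $\|\overline V_a^{\eta-\alpha}\|\le 1$ is used multiplicatively elsewhere), or that the intended reading of $\lesssim$ in \eqref{wg34} absorbs a bounded constant; I would pin this down by re-deriving line three's contribution carefully — this is the main obstacle.

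For \eqref{wg34}'s second claim $\|V_u^\eta\|\lesssim 1$, I would do the same substitution into \eqref{SFin4}: line one is $N^{-1}(\cdots)(1+N\|\overline{V}_a^\eta\|)$, where the middle bracket is again $\lesssim [a]_\alpha^2+N+[V_a^{\eta-\alpha}]\lesssim N+\epsilon^{-4}N^3+\epsilon^{-1}$; times $N^{-1}$ this is $\lesssim 1+\epsilon^{-4}N^2+\epsilon^{-1}N^{-1}$ — and $\epsilon^{-1}N^{-1}\ge\epsilon^{-1}\epsilon^{-3}=\epsilon^{-4}$, which is \emph{not} $\lesssim 1$, so again the $\|V_a^\eta\|\le\epsilon^{-1}$ term must be what dominates $[V_a^{\eta-\alpha}]$ rather than $\|V_a^\eta\|$ itself; more carefully $[V_a^{\eta-\alpha}]\lesssim [V_a^\eta]+(|\mathsf A|-1)N^{\min}\|\Gamma\|_{sk}\|V_a^\eta\|$ and the $N^{\min_{\beta\ge\eta-\alpha}\langle\beta\rangle}$ weight (a positive power of $N$) kills the $\|V_a^\eta\|\le\epsilon^{-1}$ contribution, so actually $[V_a^{\eta-\alpha}]\lesssim\epsilon^{-4}N^3+N^{c}\epsilon^{-1}$ for some integer $c\ge1$, and then $N^{-1}[V_a^{\eta-\alpha}]\lesssim\epsilon^{-4}N^2+N^{c-1}\epsilon^{-1}\lesssim 1$ using $N\le\epsilon^3$ (so $\epsilon^{-4}N^2\le\epsilon^{-4}\epsilon^6=\epsilon^2\le1$ and $N^{c-1}\epsilon^{-1}\le\epsilon^{3(c-1)-1}\le1$ once $c\ge1$). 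Line two of \eqref{SFin4}, $N^{-1}[V_a^\eta](1+\cdots)\lesssim N^{-1}\epsilon^{-4}N^3=\epsilon^{-4}N^2\le\epsilon^2\lesssim1$. Line three, $(1+N\|\overline{V}_a^\eta\|+N^2\|V_a^\eta\|)(1+\|\overline{V}_a^{\eta-\alpha}\|)\lesssim(1+\epsilon^2+\epsilon^5)\cdot 2\lesssim1$. Finally $\|\overline{V}_u^{\eta-\alpha}\|\le 1$ follows from $\|\overline V_u^{\eta-\alpha}\|\lesssim\|V_u^\eta\|$ via \eqref{new}/\eqref{eq:cutLemmaBd1} together with the $N$-weight smallness (the $\overline{V}$-projection lives on homogeneities $\ge\alpha$, carrying a factor $N^{\langle\alpha\rangle}$ with $\langle\alpha\rangle\ge1$), so $\|\overline V_u^{\eta-\alpha}\|\lesssim N\|V_u^\eta\|\lesssim N\lesssim1$, indeed $\le1$ after absorbing the universal constant into the smallness of $N\ll1$. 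The essential mechanism throughout — and the thing I would state explicitly as a lemma before the computation — is that each of the ``bad'' large factors $\epsilon^{-1},\epsilon^{-4}$ is always accompanied by enough powers of $N$ (via $N\le\epsilon^3$ and via the positive-power $N$-weights $N^{\langle\beta\rangle}$ attached to higher homogeneities in the norms) to render the product $\lesssim\epsilon^{-2}N^3$ or $\lesssim1$ as required; the one genuine risk is a line (I flagged line three of \eqref{S306}) where the $N$-power accounting is too tight, which would force invoking the additional smallness $[a]_\alpha\ll1$ or re-examining whether $\|\overline V_a^{\eta-\alpha}\|\le1$ should have been $\ll1$.
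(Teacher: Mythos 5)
Your overall plan---substitute \eqref{wg32} into \eqref{S306} and \eqref{SFin4} and track powers of $N$ and $\epsilon$---is the right one, and you correctly anticipate that the $N$-weights in the boundedness norm (via Lemma \ref{cutting_lemma}) are what keep the accounting tight. But there are two genuine gaps, and you yourself flag the first without resolving it.

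\medskip

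\emph{Gap 1: you must first upgrade the qualitative $[a]_\alpha\ll1$ to a quantitative bound.} The resolution to the bookkeeping trouble in line three (and, though you do not notice it, line one) of \eqref{S306} is the intermediate estimate
\[
[a]_\alpha\;\lesssim\;\epsilon^{-1}N,\qquad [V_a^{\eta-\alpha}]\;\lesssim\;\epsilon^{-1}N^2,\qquad \|\overline V_a^\eta\|\;\le\;\epsilon^{-1},
\]
obtained by applying Lemma \ref{cutting_lemma} (in the form of \eqref{eq:cutLemmaBd1}, \eqref{wg31}, and \eqref{new}) to $a=C_\alpha V_a^\eta$ and $V_a^{\eta-\alpha}=C_{\eta-\alpha}V_a^\eta$. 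Concretely, $[a]_\alpha\lesssim[V_a^\eta]+N^{\langle\alpha\rangle}\|V_a^\eta\|\le\epsilon^{-4}N^3+\epsilon^{-1}N\lesssim\epsilon^{-1}N$ (using $N\le\epsilon^3$), and $[V_a^{\eta-\alpha}]\lesssim[V_a^\eta]+N^{2}\|V_a^\eta\|\lesssim\epsilon^{-1}N^2$ (here $\min_{\beta\ge\eta-\alpha}\langle\beta\rangle=2$ because $\eta-\alpha>\alpha$). Without the bound $[a]_\alpha\lesssim\epsilon^{-1}N$, line three of \eqref{S306} contributes $N^2\cdot[a]_\alpha\lesssim N^2$, and as you correctly computed $N^2\lesssim\epsilon^{-2}N^3$ is \emph{false} under $N\le\epsilon^3$; with the quantitative bound it is $N^2\cdot\epsilon^{-1}N=\epsilon^{-1}N^3\le\epsilon^{-2}N^3$. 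The same issue hides in line one: the term $N[a]_\alpha^2$ is only $\lesssim N$ if you know only $[a]_\alpha\ll1$, which again is not $\lesssim\epsilon^{-2}N^3$; with $[a]_\alpha\lesssim\epsilon^{-1}N$ it becomes $\epsilon^{-2}N^3$ exactly. Your $[V_a^{\eta-\alpha}]\lesssim\epsilon^{-4}N^3+\epsilon^{-1}$ (missing the $N^2$ weight) has the same flavor, though there the excess happens not to break line two. So the fix is not ``invoking $[a]_\alpha\ll1$'' or weakening $\|\overline V_a^{\eta-\alpha}\|\le1$ to $\ll1$ as you speculated, but rather establishing the quantitative $[a]_\alpha\lesssim\epsilon^{-1}N$ upfront and then substituting it everywhere.

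\medskip

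\emph{Gap 2: the exact bound $\|\overline V_u^{\eta-\alpha}\|\le1$ does not follow from the reduction lemma.} Your chain $\|\overline V_u^{\eta-\alpha}\|\le\|V_u^{\eta-\alpha}\|\le\|V_u^\eta\|\lesssim1$ yields $\lesssim1$, not $\le1$, and the extra factor of $N$ you posit (from ``the $\overline V$-projection carrying a factor $N^{\langle\alpha\rangle}$'') does not exist: the $N^{\langle\beta\rangle}$ in \eqref{new} is exactly cancelled by the $N^{-\langle\beta\rangle}$ built into the definition of $\|\cdot\|_{\mathsf T;\Gamma}$. The distinction $\le1$ vs.\ $\lesssim1$ matters, since this is the invariance of the ball in the fixed-point setup. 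The paper proves $\le1$ by a different route: from $u=C_\alpha V_u^\eta$ and \eqref{j41}, one has the explicit identity $\overline V_u^{\eta-\alpha}.\mathsf{v}_+=\delta_a.(\mathsf{v}_\alpha-v_\alpha\mathsf 1)$, and then
\[
|\delta_{a(x)}.(\mathsf v_\alpha-v_\alpha(x)\mathsf 1)|\le\|\mathsf v_\alpha-v_\alpha(x)\mathsf 1\|_{C^0(I)}\le N^{\langle\alpha\rangle}\,\|\Gamma_\alpha(x)\mathsf v_+\|_{\mathsf T_\alpha},
\]
where the $C^0(I)\subset C^2(I)$ inequality is an honest $\le$ with constant $1$ (cf.\ \eqref{S8030}, \eqref{wg12}, \eqref{cw12}). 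Hence $\|\overline V_u^{\eta-\alpha}\|\le1$ with no hidden constant to absorb.
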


Theorem \ref{Prop:aPriori} essentially relies
\begin{itemize}
\item on Lemma \ref{cor:prod1}, which provides \emph{a priori} bounds on $V_{\partial_1^2u}^{\eta-2}$
in terms of $V_a^\eta$, based on the algebraic relation \eqref{algebraicRel}, 
\item on the concrete reconstruction result, Lemma \ref{rec_lem_concr}, which provides a characterization of 
$a\diamond\partial_1^2u$ in terms of the product form $V_{a\diamond\partial_1^2u}^{\eta+\alpha-2}$
of $V_a^\eta$ and $V_{\partial_1^2u}^{\eta-2}$ in the sense of \eqref{MS1}, with estimates in terms of
$V_{\partial_1^2u}^{\eta-2}$, of $V_a^\eta$, and a bit of $V_u^\eta$,
\item on the concrete integration result, Lemma \ref{int_lem_concr}, which provides the \emph{a priori} estimates
on $V_{u}^\eta$ in terms of $V_{\partial_1^2u}^{\eta-2}$ and of (a suboptimal version of)
the estimate on $a\diamond\partial_1^2u$ from Lemma \ref{rec_lem_concr}.
\end{itemize}
This loop in the two last items reflects that the proof of the \emph{a priori} estimates in Theorem \ref{Prop:aPriori}
requires a {\it buckling} argument, which relies on $[a]_\alpha\ll1$.

\medskip

The following lemma states the \emph{a priori} bound \eqref{S565} on $V_{\partial_1^2u}^{\eta-2}$, on the level
of both form boundedness and form continuity, in terms of $V_a^\eta$, or rather $V_a^{\eta-\alpha}$. It relies
on a bound on $V_{a\diamond\partial_1^2u}^{\eta-2}$ following from the abstract multiplication result Lemma 
\ref{lem:Multiplication}.

\begin{lemma}\label{cor:prod1}
With the above notation and under the off-line assumptions of Theorem \ref{Prop:aPriori}, we are given
$V_a^\eta\in{\mathcal D}^\eta(\mathsf{T}_+;\Ga{+}{})$. Then
$V_{\partial_1^2u}^{\eta-2}$ defined through \eqref{algebraicRel} satisfies
\begin{equation}
\|V_{\partial_{1}^{2}u}^{\eta-2}\| \leq 1 + \|\overline{V}_{a}^{\eta-\alpha}\|, \quad 
[V_{\partial_{1}^{2}u}^{\eta-2}] \leq N[a]_{\alpha}^{2}+M_{a,\eta-\alpha},\label{S565}
\end{equation}
where
\begin{equation}\label{IOR30}
M_{a,\eta-\alpha}:=\big( [a]_{\alpha}+N\|\overline{V}_{a}^{\eta-\alpha}\|
+[V_{a}^{\eta-\alpha}] \big )[V_{\partial_{1}^{2}u}^{\eta-\alpha-2}] 
+N[V_{a}^{\eta-\alpha}]\|V_{\partial_{1}^{2}u}^{\eta-\alpha-2}\|,
\end{equation}
and where $V_{\partial_1^2u}^{\eta-\alpha-2}$ $:=C_{\eta-\alpha-2}V_{\partial_1^2u}^{\eta-2}$ satisfies
\begin{equation}
\|V_{\partial_{1}^{2}u}^{\eta-\alpha-2}\| \leq 1, \quad [V_{\partial_{1}^{2}u}^{\eta-\alpha-2}] \leq N[a]_{\alpha} . \label{SFin1}
\end{equation}  
%
%
%
%
%
%
%
%
%
\end{lemma}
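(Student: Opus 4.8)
The plan is to prove the two pairs of bounds by estimating the explicit algebraic identity \eqref{algebraicRel} term by term, using the reduction and multiplication results of Section~\ref{S:MD}. Write the right-hand side of \eqref{algebraicRel} as $\mathrm{I}+\mathrm{II}$, with the leading term $\mathrm{I}=\delta_a.\partial_1^2\mathsf{v}_\alpha$ and the product term $\mathrm{II}=\overline{V}_a^{\eta-\alpha}\otimes\delta_a.(\partial_{a_0}\partial_1^2\mathsf{v}_\alpha,\partial_1^2\mathsf{w}_{2\alpha})$. The key observation is that $\mathrm{II}$ is, up to the identification of its input, the product form in the sense of \eqref{MS1} built from the function $u_+=a$ described by $V_a^{\eta-\alpha}:=C_{\eta-\alpha}V_a^\eta$ and the distribution $u_-=\partial_1^2u$ described by $V_{\partial_1^2u}^{\eta-\alpha-2}:=C_{\eta-\alpha-2}V_{\partial_1^2u}^{\eta-2}$, so that Lemma~\ref{lem:Multiplication} applies and its boundedness and continuity are governed by \eqref{MS2}--\eqref{MS3} with the constants \eqref{S300}--\eqref{S301}. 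Two elementary facts are used throughout. First, \eqref{j1220} gives the orderings $\alpha-2<\eta-\alpha-2<2\alpha-2$ and $\alpha<\eta-\alpha<1$; hence $V_a^{\eta-\alpha}$ only sees the homogeneities $\{0,\alpha\}$, $\overline{V}_a^{\eta-\alpha}:=(\id-C_\alpha)V_a^{\eta-\alpha}$ only sees $\alpha$, the term $\mathrm{I}$ only sees the homogeneity $\alpha-2$ of $\mathsf{T}_-$, and the term $\mathrm{II}$ sits at homogeneities $\ge\alpha+(\alpha-2)=2\alpha-2$. Second, by the off-line normalization and $\|a\|\lesssim1$ the model-localization form $\delta_a$ has form boundedness $\lesssim1$ and form continuity $\lesssim[a]_\alpha$, and its first-order correction in the ellipticity argument has already been peeled off into the $\partial_{a_0}\partial_1^2\mathsf{v}_\alpha$-slot of $\mathrm{II}$, so that the residual increment carried by $\mathrm{I}$ is of second order, $\lesssim[a]_\alpha^2$.

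I would establish \eqref{SFin1} first, since the quantity $M_{a,\eta-\alpha}$ of \eqref{IOR30} is phrased in terms of $V_{\partial_1^2u}^{\eta-\alpha-2}$. Applying the reduction $C_{\eta-\alpha-2}$ to \eqref{algebraicRel}, the term $\mathrm{II}$ is annihilated, because all its homogeneities are $\ge2\alpha-2\ge\eta-\alpha-2$, whereas $\mathrm{I}$ (at $\alpha-2<\eta-\alpha-2$) is left unchanged; thus $V_{\partial_1^2u}^{\eta-\alpha-2}=\mathrm{I}$, a single-homogeneity form. The bound $\|V_{\partial_1^2u}^{\eta-\alpha-2}\|\le1$ is then just the normalized size of $\delta_a$ at the homogeneity $\alpha-2$, and $[V_{\partial_1^2u}^{\eta-\alpha-2}]\le N[a]_\alpha$ follows from the first-order continuity of $\delta_a$ and the weight $N$ attached to that homogeneity; here the declared order $\eta-\alpha-2$ requires at homogeneity $\alpha-2$ only the exponent $(\eta-\alpha-2)-(\alpha-2)=\eta-2\alpha\le\alpha$, so the proven $d^\alpha$-decay downgrades to the one needed.

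I would then establish \eqref{S565}, feeding in \eqref{SFin1}. For the boundedness bound, Lemma~\ref{lem:Multiplication} applied to $\mathrm{II}$ yields, via \eqref{MS2} and \eqref{S300}, $\|\mathrm{II}\|\le\|V_a^{\eta-\alpha}\|\,\|V_{\partial_1^2u}^{\eta-\alpha-2}\|\lesssim(1+\|\overline{V}_a^{\eta-\alpha}\|)$, using that the homogeneity-$0$ part of $\|V_a^{\eta-\alpha}\|$ is $\|a\|\lesssim1$ and that $\|V_{\partial_1^2u}^{\eta-\alpha-2}\|\le1$ by \eqref{SFin1}; since $\|\mathrm{I}\|\le1$ and $\mathrm{I}$, $\mathrm{II}$ occupy disjoint homogeneities of $\mathsf{T}_-$ (namely $\alpha-2$ and $2\alpha-2$), the maximum defining $\|\cdot\|_{\mathsf{T};\Gamma}$ splits and one obtains $\|V_{\partial_1^2u}^{\eta-2}\|\le1+\|\overline{V}_a^{\eta-\alpha}\|$. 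For the continuity bound, Lemma~\ref{lem:Multiplication} applied to $\mathrm{II}$ produces, via \eqref{MS3} and \eqref{S301}, a continuity contribution on the artificial homogeneity $\kappa-\alpha=\eta-\alpha-2$ -- which does not belong to $\{\alpha-2,2\alpha-2\}$ and is thus discarded upon restricting the form to $\mathsf{T}_-$ -- together with the constant \eqref{S301} on the homogeneities of $\mathsf{T}_-$; with the $\langle\cdot\rangle$-values of Section~\ref{fspc} the prefactors in \eqref{S301} reduce to $N$, so this constant is exactly $M_{a,\eta-\alpha}$. The term $\mathrm{I}$ contributes only its second-order residual $\lesssim[a]_\alpha^2$ at homogeneity $\alpha-2$; since order $\eta-2$ asks there only for the exponent $(\eta-2)-(\alpha-2)=\eta-\alpha\le2\alpha$, the $d^{2\alpha}$-decay downgrades to $d^{\eta-\alpha}$ and yields the summand $N[a]_\alpha^2$. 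Adding the two contributions gives $[V_{\partial_1^2u}^{\eta-2}]\le N[a]_\alpha^2+M_{a,\eta-\alpha}$.

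Beyond the essentially routine tracking of homogeneities and $N$-weights, the one genuinely delicate point is the identification of $\mathrm{II}$ with a product form in the sense of \eqref{MS1}: one has to verify that the input map encoded in \eqref{algebraicRel} -- which carries the $\partial_{a_0}$-derivative of the model and the complementary projection $\overline{V}_a^{\eta-\alpha}=(\id-C_\alpha)V_a^{\eta-\alpha}$ -- is compatible with the tensor skeleton $\Gamma=\Gamma_+\otimes\Gamma_-$ of Lemma~\ref{lem:prod_skel} and its extension \eqref{j980}, so that \eqref{MS2}--\eqref{MS3} may be invoked. This is exactly where the off-line structure -- smoothness of the model in the ellipticity parameter -- and the distinguished role of the homogeneity $0$, which makes $\overline{V}_a$ rather than the full $V_a$ the relevant object and buys the extra order $\alpha$, enter the argument.
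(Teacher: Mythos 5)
Your Step 1 (the reduction argument for $V_{\partial_1^2 u}^{\eta-\alpha-2}$ and the bounds \eqref{SFin1}) and your treatment of the boundedness half of \eqref{S565} are both fine and match the paper.

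The continuity half of \eqref{S565}, however, has a gap. You assert that the increment carried by $\mathrm{I}=\delta_a.\partial_1^2\mathsf{v}_\alpha$ is ``of second order, $\lesssim[a]_\alpha^2$,'' because its first-order correction in the ellipticity argument has been peeled off into the $\partial_{a_0}$-slot of $\mathrm{II}$. But $\mathrm{I}$ and $\mathrm{II}$ act on disjoint slots of $\mathsf{T}_-$ (levels $\alpha-2$ and $2\alpha-2$ respectively), so on the test input $(\partial_1^2\mathsf{v}_\alpha,0)$ the increment of $V_{\partial_1^2 u}^{\eta-2}$ is exactly $(\delta_{a(y)}-\delta_{a(x)}).\partial_1^2\mathsf{v}_\alpha$, with no compensating contribution from $\mathrm{II}$. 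This quantity is generically only first order: it decays like $[a]_\alpha\,d^\alpha(y,x)$, whereas the required exponent at homogeneity $\alpha-2$ is $(\eta-2)-(\alpha-2)=\eta-\alpha$, and $\eta-\alpha>\alpha$ throughout the regime \eqref{j1220} (since $\eta>2-\alpha>2\alpha$). So a term-by-term estimate of $\mathrm{I}$ cannot deliver the needed decay. Likewise, the $\kappa-\alpha$ contribution of \eqref{MS3} cannot simply be ``discarded upon restricting to $\mathsf{T}_-$'': $\mathrm{II}$ on its own is not a product form in the sense of \eqref{MS1}, and the discrete Leibniz rule for the increment of $\overline{V}_a^{\eta-\alpha}\otimes\delta_a$ reproduces exactly the dangerous term $(a(y)-a(x))\delta_{a(y)}(\cdot)$, which is first order.

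The mechanism you are gesturing at is realized in the paper through a different device: one does \emph{not} estimate $\mathrm{I}$ and $\mathrm{II}$ separately, but instead exploits the identity, valid for \emph{every} choice of the extra scalar placeholder $\partial_1^2\mathsf{u}$,
\begin{align*}
V_{\partial_1^2 u}^{\eta-2}.(\partial_1^2\mathsf{v}_\alpha,\partial_1^2\mathsf{w}_{2\alpha})
=\delta_a.\partial_1^2\mathsf{v}_\alpha - a\,\partial_1^2\mathsf{u}
+V_{a\diamond\partial_1^2 u}^{\eta-2}.(\partial_1^2\mathsf{u},\partial_{a_0}\partial_1^2\mathsf{v}_\alpha,\partial_1^2\mathsf{w}_{2\alpha}),
\end{align*}
which is \eqref{s116} rewritten. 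Taking increments and then choosing $\partial_1^2\mathsf{u}=\partial_{a_0}\delta_{a(x)}.\partial_1^2\mathsf{v}_\alpha$ achieves two things simultaneously: it turns the $\delta_a$-increment into a genuine Taylor remainder $\lesssim[a]_\alpha^2 d^{2\alpha}$, and it annihilates the first (extended-homogeneity) term of the form-continuity estimate \eqref{IOR1} for $V_{a\diamond\partial_1^2 u}^{\eta-2}$, whose constant $M_{a,\eta-\alpha}$ then covers the remaining contributions. Without this intermediate identity and the free placeholder, the first-order piece of the $\delta_a$-increment has nowhere to go, and the argument does not close.
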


The primary ingredient for the following lemma is the abstract integration result Proposition \ref{int lem}. 
Note that it is obvious from \eqref{algebraicRel} and \eqref{s300bis} that $V_{\partial_1^2u}^{\eta-2}$
is functorially related to $V_u^\eta$, cf.~\eqref{functDef}.

\begin{lemma}\label{int_lem_concr}
With the above notation and under the off-line assumptions of Theorem \ref{Prop:aPriori} and those related to $V_a^\eta$,
let $u\in C^{0}(\R^{2})$, $a\,\diam\,\partial_1^2 u \in \cals'(\R^{2})$ be periodic and related by
\begin{align} \label{ss11}
\partial_2 u- P(a\diam\partial_1^2 u)=Pf \quad \mathrm{distributionally.}
\end{align}
Define the product form $V_{a \diamond \partial_{1}^{2}u}^{\eta-2}$ from $V_{a}^{\eta-\alpha}$ and $V_{\partial_{1}^{2}u}^{\eta-\alpha-2}$ via \eqref{MS1} and assume there exists a constant $M_{\aufi,\eta-2}$ such that for all $T \leq 1$ 
\begin{align}
\left\| (\auf)_T -V_{\aufi}^{\thresh-2}.
\begin{pmatrix}
(\puf)_{T}\\
(\pvf)_{T}\\
(\vvf)_{T}
\end{pmatrix} \right\|
&\lesssim M_{\aufi,\eta-2}\Tc{\eta-2}. \label{s30}
\end{align}
Then there exists a periodic function $\nu \in C^{0}(\R^{2})$ such that $V_{u}^{\thresh}$ defined 
for $\mathsf{v}_+=(\mathsf{1},\mathsf{v}_\alpha,\mathsf{v}_1,\mathsf{w}_{2\alpha})\in\mathsf{T}_+$ by
\begin{align}\label{s300bis}
V_u^\eta.\mathsf{v}_+&:=u\mathsf{1}+\delta_a.(\mathsf{v}_\alpha-v_{\alpha}\mathsf{1}) +\nu(\mathsf{v}_1-v_1\mathsf{1}) \nonumber\\
&\quad +\overline{V}_a^{\eta-\alpha}\otimes\delta_a.{\partial_{a_0}\mathsf{v}_\alpha-\partial_{a_0}v_\alpha\mathsf{1}\choose
\mathsf{w}_{2\alpha}-w_{2\alpha}\mathsf{1}}
\end{align}
belongs to $\mathcal{D}^{\thresh}(\Tplust; \Ga{+}{})$ and the following estimates hold
\begin{align}
[V_{u}^{\thresh}] &\lesssim M_{\aufi,\eta-2} +[a]_{\alpha}[V_{u}^{\eta-\alpha}]+M_{a,\eta-\alpha}+[V_{\partial_{1}^{2}u}^{\eta-2}], \label{s16}\\ 
\|\nu \| &\lesssim [V_{u}^{\thresh}], \label{SFin3}\\
\|V_{u}^{\eta}\| &\lesssim \|u \|+N^{-2}\|\nu\|+\|V_{\partial_{1}^{2}u}^{\eta-2}\|,\label{SFin5}
\end{align}
where $M_{a,\eta-\alpha}$ is defined by \eqref{IOR30} and $V_{u}^{\eta-\alpha}:=C_{\eta-\alpha}V_u^\eta$.
\end{lemma}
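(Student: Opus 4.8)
The strategy is to verify the hypotheses of the abstract integration result Proposition \ref{int lem}, applied to a suitable two-point function $U(x,y)$, and then translate its output \eqref{KS3}--\eqref{KS4} into the form-continuity of $V_u^\eta$ defined by \eqref{s300bis}. First I would introduce $U(x,y)$ as the ``remainder'' in the modelled description: namely $U(x,y):=u(y)-V_u^\eta(x).(\text{translate of the model } v_+\text{ from }x\text{ to }y)$, or more concretely $U(x,y)=u(y)-\big(u(x)+\delta_a(x)(v_\alpha(y,a(x))-v_\alpha(x,a(x)))+\ldots\big)$ built from the right-hand side of \eqref{s300bis} evaluated on the model. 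The point is that \eqref{KS3} for this $U$, with exponent $\kappa=\eta$, is \emph{exactly} the $\beta=0$ instance of form-continuity \eqref{prod5} for $V_u^\eta$, via Remark \ref{wg03} (specifically \eqref{cw91}), while the remaining instances $\beta=\alpha,1,2\alpha$ of \eqref{prod5} encode the modelledness of $\delta_a$, $\nu$ and $\overline V_a^{\eta-\alpha}\otimes\delta_a$, which are inherited from $V_a^\eta$ and from \eqref{KS4}. The function $\nu$ produced by Proposition \ref{int lem} is taken as the coefficient of $\mathsf{v}_1$ in \eqref{s300bis}; uniqueness of $\nu$ comes from Remark \ref{rem-wg53}.

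Next I would check the two hypotheses of Proposition \ref{int lem}. For \eqref{KS1}, the ``local splitting condition'', one applies $\partial_2-a(x)\partial_1^2$ to $U(x,\cdot)$: the PDE \eqref{ss11} gives $\partial_2 u-a(x)\partial_1^2 u=(a\diamond\partial_1^2 u-a(x)\partial_1^2 u)+Pf$, and the model terms are chosen precisely so that $\partial_2-a(x)\partial_1^2$ annihilates the leading parts (since $(\partial_2-a_0\partial_1^2)v_\alpha(\cdot,a_0)=Pf$, and $w_{2\alpha}$, the $\partial_{a_0}$-derivatives etc.\ solve the corresponding hierarchy of equations from Section \ref{subsec:theModel}). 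What is left is a commutator-type expression whose size is controlled by the reconstruction bound on $a\diamond\partial_1^2 u$, i.e.\ by hypothesis \eqref{s30} together with the concrete multiplication estimates, giving the $\overline M\sim M_{\aufi,\eta-2}+\ldots$ appearing on the right of \eqref{s16}. The mollification-at-scale-$T$ and restriction-to-ball-$B_L(x)$ structure of \eqref{KS1} is then matched against the negative-Hölder norm \eqref{j1001} in which $a\diamond\partial_1^2 u$ is controlled, using the moment bounds \eqref{1.13} on $\psi_T$. For \eqref{KS2}, the ``three-point continuity'', one expands $U(x,z)-U(x,y)-U(y,z)+U(y,y)$: all the genuinely ``uncontrolled'' functions $u,\nu$ cancel in this combination (this is the whole reason the four-point quantity is replaced by three points), leaving differences of the model increments modulated by $\delta_a$, $\overline V_a^{\eta-\alpha}\otimes\delta_a$, whose $x\mapsto y$ continuity is exactly the form-continuity of the \emph{already-controlled} pieces $V_a^{\eta-\alpha}$, $\overline V_a^{\eta-\alpha}$, i.e.\ bounded by $[a]_\alpha[V_u^{\eta-\alpha}]+M_{a,\eta-\alpha}+[V_{\partial_1^2 u}^{\eta-2}]$ using Lemma \ref{cor:prod1}; the function $\gamma(x,y)$ in \eqref{KS2} is the $\partial_1$-coefficient that drops out, and it is identified so that \eqref{KS4} then yields the $\mathsf{v}_1$-instance of \eqref{prod5}.

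Finally, from the conclusions of Proposition \ref{int lem}: \eqref{KS3} gives \eqref{s16} (after collecting the bounds on $\overline M,\overline{\overline M}$ just described), \eqref{wg52} gives $\|\nu\|\lesssim M_U\lesssim[V_u^\eta]$, which is \eqref{SFin3}, and \eqref{KS4} supplies the missing continuity of $\nu$ needed to confirm $V_u^\eta\in\mathcal D^\eta(\mathsf{T}_+;\Gamma_+)$; the boundedness estimate \eqref{SFin5} follows by reading off the form-boundedness \eqref{prod4} of $V_u^\eta$ directly from \eqref{s300bis}, since the coefficients are $u$, $\delta_a$ (controlled by $V_{\partial_1^2 u}^{\eta-2}$ via the functorial relation), $\nu$, and $\overline V_a^{\eta-\alpha}\otimes\delta_a$, with the scale weights $N^{\langle\beta\rangle}$ accounting for the $N^{-2}$ in front of $\|\nu\|$. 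I expect the \textbf{main obstacle} to be the verification of \eqref{KS1}: one must carefully track which parts of the model are annihilated by $\partial_2-a(x)\partial_1^2$ and show that the residual is precisely a reconstruction-error of $a\diamond\partial_1^2 u$ measured in the graded negative-Hölder norm of \eqref{KS1} — this is where ``the differential structure in the parameter plays a crucial role'' (as flagged in the introduction), since the $\partial_{a_0}$-derivative terms in \eqref{s300bis} are exactly what is needed to cancel the error coming from replacing $a(\cdot)$ by the frozen $a(x)$, and getting the $L$-dependence and the $\Tc{\eta-2}$-scaling simultaneously correct is the delicate book-keeping step. The buckling alluded to after Corollary \ref{cor:wg} — the fact that $M_{\aufi,\eta-2}$ itself depends on a bit of $V_u^\eta$ — is not resolved within this lemma but in the proof of Theorem \ref{Prop:aPriori}, using $[a]_\alpha\ll 1$.
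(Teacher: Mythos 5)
Your proposal matches the paper's proof in structure and substance: the paper defines $U(x,y):=u(y)-V_{\partial_1^2u}^{\eta-2}(x).(v_\alpha(y),w_{2\alpha}(y))$ (no $\nu$-term, since $\nu$ is the output of Proposition \ref{int lem}), verifies \eqref{KS1} by applying $\partial_2-a(x)\partial_1^2$ and invoking the PDE plus the reconstruction hypothesis \eqref{s30}, verifies \eqref{KS2} by the cancellation you describe, and then reads off form continuity and boundedness from \eqref{KS3}--\eqref{KS4} exactly as you propose. The one point worth spelling out more than you do is the ``structural property'' (Step \ref{Intprf_000} of the paper): the vanishing of $\overline{V}_a^{\eta-\alpha}\otimes V_{\partial_1^2u}^{\eta-\alpha-2}(x)$ on abstract vectors that vanish at $a_0=a(x)$ to the right order is precisely what licenses the replacement of $\partial_2-a(x)\partial_1^2$ by $\partial_2-a_0\partial_1^2$ inside the model, which you gesture at when invoking the $\partial_{a_0}$-structure but do not make concrete.
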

 
The following lemma is a consequence of Corollary \ref{lem:ExMult} to the abstract reconstruction result Proposition \ref{rec_lem}.
 
\begin{lemma}\label{rec_lem_concr}
With the above notations and under the off-line assumptions of Theorem \ref{Prop:aPriori}, 
let $V_a^\eta, V_u^\eta\in{\mathcal D}^\eta(\mathsf{T}_+;\Ga{+}{})$ be given, and assume both are periodic in the sense of \eqref{cw14}.  Consider also the product form $V_{\aufi}^{\thresh+\alpha-2}$, cf.~\eqref{MS1}, of $V_a^\eta$ and $V_{\partial_1^2u}^{\eta-2}$.
Then there exists a unique periodic $a \diamond \puf \in \cals'(\R^2)$ such that for all $T \leq 1$ it holds
\begin{align}
&\left\|(\auf)_T -C_{\eta+\alpha-2}V_{\aufi}^{\thresh+\alpha-2}.
\begin{pmatrix}
(\puf)_{T}\\
(\pvf)_{T}\\
(\pwf)_{T}\\
(\vvf)_{T}\\
(\xvf)_{T}\\
(\wvf)_{T}\\
(\vwf)_{T}
\end{pmatrix} \right\| \nonumber\\
&\quad\lesssim M_{a \diamond \partial_{1}^{2}u,\eta+\alpha-2}\Tc{\eta+\alpha-2},
\label{eq:QualRO}
\end{align}
where
\begin{align}
M_{a \diamond \partial_{1}^{2}u,\eta+\alpha-2}&:=[a]_{\alpha}[V_{u}^{\eta}]+M_{a,\eta}+N^{4}\|V_{a}^{\eta}\| \|V_{\partial_{1}^{2}u}^{\eta-2}\|\label{S600},\\
M_{a,\eta}&:= \big( [a]_{\alpha}+N\|\overline{V}_{a}^{\eta}\|+[V_{a}^{\eta}] \big ) [V_{\partial_{1}^{2}u}^{\eta-2}]  +N[V_{a}^{\eta}]\|V_{\partial_{1}^{2}u}^{\eta-2}\|
.\label{SFin35}
\end{align}
Moreover, the sub-optimal estimate \eqref{s30} holds with
\begin{align}
M_{a \diamond \partial_{1}^{2}u,\eta-2}&:=M_{a \diamond \partial_{1}^{2}u,\eta+\alpha-2}+N^{3}\|\overline{V}_{a}^{\eta} \| \|V_{\partial_{1}^{2}u}^{\eta-2}\|\label{SFin6}.
\end{align}
\end{lemma}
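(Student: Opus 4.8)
The plan is to obtain Lemma~\ref{rec_lem_concr} as a specialization of Corollary~\ref{lem:ExMult} to the concrete model of Sections~\ref{fspc}--\ref{subsec:theModel}, applied with $u_+:=a=C_\alpha V_a^\eta$ in the role of the function and $u_-:=\partial_1^2u$ (with $u:=C_\alpha V_u^\eta$) in the role of the distribution, and with $V_{u_-}:=V_{\partial_1^2u}^{\eta-2}$ the form describing $\partial_1^2u$ obtained functorially from $V_u^\eta$ through \eqref{functDef}, whose norms are controlled via \eqref{S565}. First I would pin down the parameters of Lemma~\ref{lem:Multiplication}: with $\overline{\alpha}_+=\eta$, $\overline{\alpha}_-=\eta-2$, $\alpha=\min\mathsf{A}_+'$ and $\betamu=\min\mathsf{A}_-=\alpha-2$, formula \eqref{j982} gives $\kappa=(\eta+\alpha-2)\wedge(\alpha+\eta-2)=\eta+\alpha-2$, which by \eqref{j1220} satisfies $0<\kappa<2\alpha-1$; moreover $\kappa-\alpha=\eta-2$ obeys \eqref{jj3}, since $\eta-2>2\alpha-2=\betamo$ (as $\alpha<\tfrac23$) and $\eta-2\notin\mathsf{A}=\mathsf{A}_++\mathsf{A}_-=\{\alpha-2,2\alpha-2,3\alpha-2,\alpha-1,2\alpha-1,4\alpha-2\}$ (a finite check under \eqref{j1220}). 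I would also record that $\|\oGa{}{}\|_{sk}\lesssim 1$, which follows from \eqref{j1004}, \eqref{j1200}, the off-line bounds on $\Gamma_+$, $\Gamma_-$ and the bound \eqref{S565} on $[V_{\partial_1^2u}^{\eta-2}]$, upon choosing the free parameter $N_*$ of \eqref{j1102} of order $1+[V_{\partial_1^2u}^{\eta-2}]$.

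Second I would verify the two hypotheses \eqref{j770} and \eqref{j771} of Corollary~\ref{lem:ExMult}. For \eqref{j770}: by the grading \eqref{Prod8} and the concrete homogeneities, the components of $\tf_+\diamond\tf_-$ of homogeneity below $\kappa=\eta+\alpha-2$ are exactly $\pvf$ at level $\alpha-2$, the pair $(\pwf,\,\vvf)$ at level $2\alpha-2$, $\xvf$ at level $\alpha-1$, and the pair $(\wvf,\,\vwf)$ at level $3\alpha-2$; their bounds \eqref{j770} are precisely the off-line assumptions \eqref{f29}, \eqref{off1} and \eqref{f101}, the classical product $\xvf$ being controlled via \eqref{1.13}. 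For \eqref{j771}: since the negative model is, by its construction in Section~\ref{subsec:theModel}, the image of the positive model under $\partial_1^2$, and $V_{\partial_1^2u}^{\eta-2}$ is the functorial image \eqref{functDef} of $V_u^\eta$, the reconstruction of $V_{\partial_1^2u}^{\eta-2}$ delivered by Proposition~\ref{rec_lem} equals $\partial_1^2({\mathcal R}V_u^\eta)=\puf$ --- using ${\mathcal R}V_u^\eta=u=C_\alpha V_u^\eta$ for the positive modelled distribution $V_u^\eta$ --- and the quantitative estimate \eqref{cw40} for $V_{\partial_1^2u}^{\eta-2}$, rescaled by the parameter $N_*$ built into $\oTa{\kappa-\alpha}$ in \eqref{j1102}, is exactly \eqref{j771} with $u_-=\partial_1^2u$.

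Corollary~\ref{lem:ExMult} then produces the unique distribution $a\diamond\partial_1^2u:=u_+\diamond u_-$, and \eqref{j772} --- after identifying $\cut{\kappa}V_{u_+\diamond u_-}=C_{\eta+\alpha-2}V_{\aufi}^{\eta+\alpha-2}$ and unpacking $((u_-)_T,(\tf_+\diamond\tf_-)_T)$ into the vector appearing in \eqref{eq:QualRO} --- is precisely \eqref{eq:QualRO} with constant $M_{u_+\diamond u_-,\kappa}$. To reach \eqref{S600} I would substitute $[u_+]_\alpha=[a]_\alpha$, $M^b_{u_+\diam u_-}=\|V_a^\eta\|\,\|V_{\partial_1^2u}^{\eta-2}\|$ from \eqref{S300} and $M^c_{u_+\diam u_-}$ from \eqref{S301}; using \eqref{wg31}, $\|\overline{V}_a^\eta\|\le\|V_a^\eta\|$, the reductions $V_a^{\eta-\alpha}=C_{\eta-\alpha}V_a^\eta$, $\overline{V}_a^\eta=(\id-C_\alpha)V_a^\eta$ and \eqref{S565}, the term $M^c_{u_+\diam u_-}$ collapses to $[a]_\alpha[V_u^\eta]+M_{a,\eta}$ with $M_{a,\eta}$ as in \eqref{SFin35}, while $N^{\min_{\beta\ge\kappa}\ap{\beta}}$ evaluated with the concrete exponents $\ap{\cdot}$ of Section~\ref{fspc} --- for which $\min_{\beta\in\mathsf{A},\,\beta\ge\kappa}\ap{\beta}=4$ --- produces the $N^4\|V_a^\eta\|\,\|V_{\partial_1^2u}^{\eta-2}\|$ term; with $\|\oGa{}{}\|_{sk}\lesssim1$ this is \eqref{S600}. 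The sub-optimal bound \eqref{s30}--\eqref{SFin6} is the second assertion \eqref{S870} of Corollary~\ref{lem:ExMult} at the level $\eta-2<\kappa$: it replaces $C_{\eta+\alpha-2}V_{\aufi}^{\eta+\alpha-2}$ by $\cut{\eta-2}V_{\aufi}^{\eta+\alpha-2}=V_{\aufi}^{\eta-2}$ acting on the reduced vector $((\puf)_T,(\pvf)_T,(\vvf)_T)$ --- the identification of $V_{\aufi}^{\eta-2}$ applied to this vector with $a(\puf)_T+\cut{\eta-2}(\overline{V}_a^\eta\otimes V_{\partial_1^2u}^{\eta-2}).v_T$ being the commutator rewriting of the remark following Corollary~\ref{lem:ExMult} --- and enlarges the constant by $N^{\min_{\beta\ge\eta-2}\ap{\beta}}\|\overline{V}_a^\eta\|\,\|V_{\partial_1^2u}^{\eta-2}\|$, which with $\min_{\beta\in\mathsf{A},\,\beta\ge\eta-2}\ap{\beta}=3$ is the $N^3\|\overline{V}_a^\eta\|\,\|V_{\partial_1^2u}^{\eta-2}\|$ contribution to \eqref{SFin6}. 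Uniqueness and periodicity of $a\diamond\partial_1^2u$ follow from Corollary~\ref{lem:ExMult} together with the periodicity of $V_a^\eta$, $V_u^\eta$ and of the models, in the sense of \eqref{cw14}.

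Apart from the essentially mechanical invocation of Corollary~\ref{lem:ExMult}, the two delicate points are the following. First, justifying \eqref{j771}: that $V_{\partial_1^2u}^{\eta-2}$ actually reconstructs $\partial_1^2u$ is not an abstract statement --- it relies on the concrete construction of the model (the negative model being $\partial_1^2$ applied to the positive one) and on the functorial definition \eqref{functDef}, and one must check in particular that reconstruction commutes with $\partial_1^2$ in this setting. Second, matching $M_{u_+\diamond u_-,\kappa}$ to the explicit right-hand sides \eqref{S600} and \eqref{SFin6}: this requires carefully tracking how the weights $N^{\ap{\beta}}$ on the product space combine across $\mathsf{A}_+$ and $\mathsf{A}_-$ and interact with the range \eqref{j1220}, so as to produce the precise powers $N^4$ and $N^3$ rather than merely $N^{\ge1}$, and verifying that the reductions $C_{\eta-\alpha}$ and $C_\alpha$ of $V_a^\eta$, together with the bound \eqref{S565}, recover exactly $M_{a,\eta}$ of \eqref{SFin35} and no more.
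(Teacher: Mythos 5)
Your overall architecture — reduce everything to Corollary \ref{lem:ExMult} with $V_a^\eta$ in the role of $V_{u_+}$, $V_{\partial_1^2u}^{\eta-2}$ in the role of $V_{u_-}$, $\kappa=\eta+\alpha-2$, and then match constants — is exactly the paper's Step 3, and your bookkeeping of the exponents ($\min_{\beta\ge\kappa}\langle\beta\rangle=4$, $\min_{\beta\ge\eta-2}\langle\beta\rangle=3$, $\langle\alpha\rangle=\langle\alpha-2\rangle=1$) is correct. But there is a genuine gap in how you verify the key hypothesis \eqref{j771}. You propose to obtain the local description of $\partial_1^2u$ as ``the reconstruction of $V_{\partial_1^2u}^{\eta-2}$ delivered by Proposition \ref{rec_lem}, rescaled by $N_*$.'' Proposition \ref{rec_lem} requires the modelled distribution to have \emph{positive} order $\overline{\alpha}>0$, whereas $V_{\partial_1^2u}^{\eta-2}$ has order $\eta-2<0$; there is no reconstruction statement to invoke, and no estimate \eqref{cw40} for this form. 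The paper instead proves the needed bound \eqref{cw60}, namely $\|\partial_1^2u_T-V_{\partial_1^2u}^{\eta-2}.(v_-)_T\|\lesssim [V_u^\eta](T^{1/4})^{\eta-2}$, \emph{by hand}: one writes the left-hand side as $\int (V_u^\eta(y)-V_u^\eta(x)).v_+(y)\,\partial_1^2\psi_T(x-y)\,\mathrm{d}y$ using the functorial relation \eqref{functDef} and $u=V_u^\eta.v_+$, controls the increment by $[V_u^\eta]d^\eta(x,y)$ for \emph{all} pairs of points — the extension beyond $d(x,y)\le1$ requires the periodicity \eqref{cw14} of $V_u^\eta$ and the action \eqref{cw10}, because $\psi_T$ has infinite tails — and concludes with the moment bound \eqref{1.13} for $\partial_1^2\psi_T$. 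This argument, not an appeal to reconstruction, is the content you are missing.

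A second, related problem is your choice of $N_*\sim 1+[V_{\partial_1^2u}^{\eta-2}]$. Since the constant in \eqref{cw60} is $[V_u^\eta]$ and only the inequality $[V_{\partial_1^2u}^{\eta-2}]\le[V_u^\eta]$ holds (cf.\ \eqref{fnctIneq}), hypothesis \eqref{j771} with your $N_*$ need not be satisfied. The paper sets $N_*=[V_u^\eta]$, which simultaneously validates \eqref{j771}, keeps $\|\overline{\Gamma}\|_{sk}\lesssim1$ via \eqref{j1004} and \eqref{fnctIneq}, and produces the term $[a]_\alpha[V_u^\eta]$ in \eqref{S600} through $N_*[u_+]_\alpha$ — note that this term does \emph{not} come from $M^c_{u_+\diamond u_-}$ ``collapsing'' as you write; \eqref{S301} evaluates directly to $M_{a,\eta}$ of \eqref{SFin35}. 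Finally, the identification $C_{\eta-2}V_{a\diamond\partial_1^2u}^{\eta+\alpha-2}=V_{a\diamond\partial_1^2u}^{\eta-2}$, i.e.\ that reducing the product of $V_a^\eta$ and $V_{\partial_1^2u}^{\eta-2}$ to level $\eta-2$ agrees with the product of the reductions $V_a^{\eta-\alpha}$ and $V_{\partial_1^2u}^{\eta-\alpha-2}$, is asserted in your last paragraph but is not a formal consequence of the remark after Corollary \ref{lem:ExMult}; it requires the explicit computation with the single-level cutting operators $C_{\alpha-1},C_{3\alpha-2},C_{2\alpha-1},C_{4\alpha-2}$ carried out in Step 2 of the paper's proof (cf.\ \eqref{cw76}--\eqref{cw77}).
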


\subsection{Heuristics for the Concrete Results}\label{subsec:Heuristics}

One crucial aspect of our result is that, given a point $x\in\mathbb{R}^2$, 
we may construct the form $V_{\partial_1^2u}^{\eta-2}(x)\in \mathsf{T}_-^*$, 
supposed to describe $\partial_1^2u$ near $x$ to order $\eta-2$,
purely ``algebraically'' on the basis of the form $V_a^\eta(x)\in \mathsf{T}_+^*$, 
which describes the coefficient $a$ near a point $x$ to order $\eta$, see \eqref{algebraicRel}.
Here $\eta$ is such that $\eta+\alpha-2$ $>0$, cf.~\eqref{j1200}, so that our multiplication result 
Lemma \ref{lem:Multiplication} yields a form $V_{\aufi}^{\eta+\alpha-2}$ of positive order
and thus determines a distribution $a\diam\partial_1^2u$.
We now explain, at first on a general level, why this algebraic relation holds.
We make the assumptions of the abstract subsection \ref{S:mult} on $\mathsf{A}_+$ and $\mathsf{T}_+$; 
we are given a model $v_+\in C^0(\mathbb{R}^2;\mathsf{T}_+)$. In addition, we assume that
also homogeneity $1\in\mathsf{A}_+$ plays a special role in the sense that 
$\mathsf{T}_1=\mathbb{R}$ and that $v_1=x_1$. In line with this,
we functorially impose that $\mathsf{A}_-= (\Aa{+} \setminus \{0,1\} )-2$ and $\mathsf{T}_-\supset\mathsf{T}_{\beta}$
$\cong\mathsf{T}_{\beta+2}\subset\mathsf{T}_+$ for $\beta\in\mathsf{A}_-$. 
Finally, we are given a model $v\in\cals'(\mathbb{R}^2;\mathsf{T})$ where $\mathsf{T}=\mathsf{T}_+\otimes\mathsf{T}_-$.

\medskip

In view of our abstract integration result Proposition \ref{int lem},
$V_{\partial_1^2u}^\eta(x)$ should be such that $U(x,\cdot):=u-V_{\partial_1^2u}^{\eta-2}(x).v_+$ is a good approximate solution to
$(\partial_2-a(x)\partial_1^2)U(x,\cdot)=0$, cf.~\eqref{KS1}, where we recall $a=C_\alpha V_a^\eta$. 
(Note that because of the canonical identification of $\mathsf{T}_-$ with a subspace
of $\mathsf{T}_+$, $V_{\partial_1^2u}^{\eta-2}(x)$ can be identified with an element of $\mathsf{T}_+^*$). 
More precisely, in the notation of our abstract multiplication result, Lemma \ref{lem:Multiplication},
we seek the identity
\begin{align}\label{cw74a}
(\partial_2-a(x)\partial_1^2)U_T(x,\cdot)=(a\diam\partial_1^2u)_T-V_{\aufi}^{\eta-2}(x).{\partial_1^2u_T\choose v_T},
\end{align}
which in view of $\partial_2u-a\diam\partial_1^2u=f$ is an immediate consequence of
\begin{align}\label{cw75}
(\partial_2-a(x)\partial_1^2)V_{\partial_1^2u}^{\eta-2}(x).v_+
=(\overline{V}_a^{\eta-\alpha}\otimes V_{\partial_1^2 u}^{\eta-\alpha-2})(x).v-f,
\end{align}
where we think of $\partial_2-a(x)\partial_1^2$ as acting on the implicit $y$-variable, and interpret \eqref{cw75}
in a distributional way.
Note that the r.~h.~s.~of \eqref{cw74a} is precisely such that we expect to control it (after possible reduction) thanks to Corollary \ref{lem:ExMult}, the outcome of our abstract reconstruction result, 
and thanks to the form continuity of $V_{a\diamond\partial_1^2u}^{\eta-2}$.
We now note that by telescoping, \eqref{cw75} can be separated into the identities for $n=1,2,\cdots$
\begin{align}\label{wg76}
\lefteqn{(\partial_2-a(x)\partial_1^2)(V_{\partial_1^2u}^{\eta-(n-1)\alpha-2}-V_{\partial_1^2u}^{\eta-n\alpha-2})(x).v_+}\nonumber\\
&=(\overline{V}_a^{\eta-n\alpha}\otimes V_{\partial_1^2 u}^{\eta-n\alpha-2}
-\overline{V}_a^{\eta-(n+1)\alpha}\otimes V_{\partial_1^2 u}^{\eta-(n+1)\alpha-2})(x).v,
\end{align}
with the understanding that for $\eta-(n+1)\alpha-2<\alpha<\eta-n\alpha-2$, \eqref{wg76} is to be read as
\begin{align}\label{wg77}
(\partial_2-a(x)\partial_1^2)&(V_{\partial_1^2u}^{\eta-(n-1)\alpha-2}-V_{\partial_1^2u}^{\eta-n\alpha-2})(x).v_+\
\nonumber\\
&=(\overline{V}_a^{\eta-n\alpha}\otimes V_{\partial_1^2 u}^{\eta-n\alpha-2})(x).v,
\end{align}
and for $\eta-n\alpha-2<\alpha$, \eqref{wg76} is replaced by
\begin{align}\label{wg78}
(\partial_2-a(x)\partial_1^2)V_{\partial_1^2u}^{\eta-(n-1)\alpha-2}(x).v_+=f.
\end{align}
Here, we think of $V_a^{\eta-n\alpha}$ as the reduction $C_{\eta-n\alpha}V_a$. If it is at all possible to construct
these forms, and provided $v_+$ is -- loosely speaking -- non-degenerate, 
$V_{\partial_1^2u}^\eta(x)$ must be a multi-linear expression in $V_a^\eta(x)$.

\medskip

The more delicate issue is the construction of $\mathsf{T_+}$ and $v_+$ (alongside with conditions on $v$) that make
\eqref{wg78}, \eqref{wg77}, and \eqref{wg76} solvable. In this paper, we restrict ourselves to the range of $\alpha\in(\frac{1}{2},\frac{2}{3})$,
cf.~\eqref{j1220}, in which case this set of linear equations in forms (substantially) reduces to
\begin{align}
(\partial_2-a(x)\partial_1^2)(V_{\partial_1^2u}^{\eta-2}-V_{\partial_1^2u}^{\eta-\alpha-2})(x).v_+
&=(\overline{V}_a^{\eta-\alpha}\otimes V_{\partial_1^2 u}^{\eta-\alpha-2})(x).v,\label{cw79}\\
(\partial_2-a(x)\partial_1^2)V_{\partial_1^2u}^{\eta-\alpha-2}(x).v_+&=f.\label{cw80a}
\end{align}
Equation \eqref{cw80a} suggests to define $\mathsf{T}_\alpha\subset\mathsf{T}_+$ as the space of functions $\mathsf{v}_\alpha$
of a placeholder $a_0\in I$ (we postpone the definition of
the norm to Subsection \ref{fspc}) and to characterize $v_\alpha(\cdot,a_0)$ by
\begin{align}\label{wg82}
(\partial_2-a_0\partial_1^2)v_\alpha(\cdot,a_0)=f,
\end{align}
where we ignore the issue of (periodic) boundary conditions in this heuristic discussion. 
Then \eqref{cw80a} is solved by
\begin{align}\label{wg83}
V_{\partial_1^2u}^{\eta-\alpha-2}(x).\mathsf{v}_-=\delta_{a(x)}.\partial_1^2\mathsf{v}_\alpha,
\end{align}
where we denote by $\partial_1^2\mathsf{v}_\alpha$ the generic element of 
$\mathsf{T}_-\supset\mathsf{T}_{\alpha-2}$ $\cong\mathsf{T}_{\alpha}$,
a function of $a_0\in I$.

\medskip

Under the additional assumption of $\eta<\alpha+1$, cf.~\eqref{j1220}, 
we may assume that the reduction $V_a^{\eta-\alpha}(x).\mathsf{v}_+$ depends 
on $\mathsf{v}_+$ only through $(\mathsf{1},\mathsf{v}_\alpha)$ (and not on $\mathsf{v}_1$). 
We note that by definition of the tensor product, the space 
$\mathsf{T}_\alpha\otimes \mathsf{T}_{\alpha-2}$ $\subset\mathsf{T}$ consists of functions 
of $(a_0',a_0)\in I^2$, and which we give the generic (but suggestive) name of
$\mathsf{v}_\alpha\diam\partial_1^2\mathsf{v}_\alpha$.
This suggests the choice of $\mathsf{T}_+\supset
\mathsf{T}_{2\alpha}\cong\mathsf{T}_\alpha\otimes \mathsf{T}_{\alpha-2}$, thus also consisting of functions
of $(a_0',a_0)$ which we give the generic name of $\mathsf{w}_{2\alpha}$.
Moreover, for $(a_0',a_0)$ we characterize $w_{2\alpha}(\cdot,a_0',a_0)$ by
\begin{align*}
(\partial_2-a_0\partial_1^2)w_{2\alpha}(\cdot,a_0',a_0)=(v_\alpha\diam\partial_1^2v_\alpha)(\cdot,a_0',a_0),
\end{align*}
where $v_\alpha\diam\partial_1^2v_\alpha$ (suggestively) denotes the $(\mathsf{T}_\alpha\otimes \mathsf{T}_{\alpha-2})$-component of 
$v\in\cals'(\mathbb{R}^2,\mathsf{T})$.
Since \eqref{wg82} implies $(\partial_2-a_0\partial_1^2)\partial_{a_0}v_\alpha$ $=\partial_1^2v_\alpha$, \eqref{cw79} is solved by
\begin{align}\label{wg83bis}
(V_{\partial_1^2u}^{\eta-2}-V_{\partial_1^2u}^{\eta-\alpha-2})(x).\mathsf{v}_-
&=(\overline{V}_a^{\eta-\alpha}\otimes V_{\partial_1^2 u}^{\eta-\alpha-2})(x).
{\partial_{a_0}\partial_1^2\mathsf{v}_\alpha\choose\mathsf{v}_\alpha\diam\partial_1^2\mathsf{v}_\alpha},
\end{align}
where we used that thanks to \eqref{wg83} we have the commutation relation
$(\partial_2-a(x)\partial_1^2)(\overline{V}_a^{\eta-\alpha}\otimes V_{\partial_1^2 u}^{\eta-\alpha-2})(x)$
$=(\overline{V}_a^{\eta-\alpha}\otimes V_{\partial_1^2 u}^{\eta-\alpha-2})(x)(\partial_2-a_0\partial_1^2)$.
The combination of \eqref{wg83} and \eqref{wg83bis} yields \eqref{algebraicRel}.
\subsection{Function Spaces and Skeletons}\label{fspc}

In our application, the roles of the abstract spaces $\Ta{\beta}$ are played by the spaces $C^{k}(\interval{})$ of $k$ times continuously differentiable functions equipped with the usual $C^k(\interval{})$ norm, and their tensor products  $C^{k,l}(\interval{2})$ and $C^{k,l,m}(\interval{3})$ of two or three arguments and mixed order of differentiability,
cf.\@ Lemma \ref{tensor_lemma}.  We recall that $\interval{}=[\lambda,\lambda^{-1}]$ denotes the ellipticity interval for some fixed $\lambda\in (0,1)$.
More precisely, we define the norms
\begin{align}\label{j552}
\begin{split}
 \|u\|_{C^{k',k}}&:=\max_{\substack{(a_0',a_0)\in\interval{2} \\ i'\le k', i\le k}}
 \left|\partial^{i'}_{a_0'}\partial^{i}_{a_0}u(a_0',a_0)\right|, \\
 \|u\|_{C^{k'',k',k}}&:=\max_{\substack{(a_0'',a_0'a_0)\in\interval{3} \\ i''\le k'', i'\le k', i\le k}}
 \left|\partial^{i''}_{a_0''}\partial^{i'}_{a_0'}\partial^{i}_{a_0}u(a_0'',a_0',a_0)\right|,
\end{split}
\end{align}
and note that for $g\in C^{k'}(\interval{})$ and $h\in C^{k}(\interval{})$, the tensor product can be identified with the $2$-variate function $g\otimes h\in C^{k',k}(\interval{2})$ via
\begin{align*}
 (g\otimes h)(a_0',a_0):=g(a_0')h(a_0), \quad (a_0',a_0)\in \interval{2}.
\end{align*}
%
A similar identification holds in the case of $C^{k'',k',k}$.
In the following lemma we observe that these tensor spaces fulfill the properties assumed in Subsection \ref{S:mult}. 
\begin{lemma}\label{tensor_lemma} 
  The norm $\|\cdot\|_{C^{k',k}}$ defined in \eqref{j552} is a crossnorm in the sense
\begin{align}\label{j501a}
   \|g\otimes h\|_{C^{k',k}} = \|g\|_{C^{k'}}\|h\|_{C^{k}}, \quad g\in C^{k'}(\interval{}), h\in C^{k}(\interval{}).
  \end{align}
  The completion of $C^{k'}(\interval{})\mix C^{k}(\interval{})$ under this norm can be identified with $C^{k',k}(\interval{2})$.
Moreover, $\|\cdot\|_{C^{k',k}}$ is a uniform crossnorm with respect to $\|\cdot\|_{C^{k'}}$ 
and $\|\cdot\|_{C^k}$ in the sense \eqref{j509}-\eqref{j522}. Similarly, $\|\cdot\|_{C^{k'',k',k}}$ 
is a uniform crossnorm with respect to $\|\cdot\|_{C^{k'',k'}}$ and $\|\cdot\|_{C^k}$.
\end{lemma}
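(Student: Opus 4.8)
The plan is to verify each claimed property by a direct computation based on the explicit formula \eqref{j552} for the norms, exploiting the fact that derivatives of a tensor product factorize. First I would prove the crossnorm identity \eqref{j501a}. For the inequality $\|g\otimes h\|_{C^{k',k}}\le\|g\|_{C^{k'}}\|h\|_{C^k}$, observe that $\partial_{a_0'}^{i'}\partial_{a_0}^{i}(g\otimes h)(a_0',a_0)=(\partial^{i'}g)(a_0')\,(\partial^i h)(a_0)$, so the supremum over $(a_0',a_0)\in I^2$ of the absolute value of this product is bounded by $\sup_{a_0'}|\partial^{i'}g(a_0')|\cdot\sup_{a_0}|\partial^i h(a_0)|$, and taking the maximum over $i'\le k'$, $i\le k$ gives the bound. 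For the reverse inequality, pick $i_*'\le k'$, $i_*\le k$ and $a_*'$, $a_*$ realizing $\|g\|_{C^{k'}}$ and $\|h\|_{C^k}$ respectively; then $|\partial^{i_*'}\partial^{i_*}(g\otimes h)(a_*',a_*)|=\|g\|_{C^{k'}}\|h\|_{C^k}$, so the left-hand side is at least this product. This establishes \eqref{j501a}.

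Next I would identify the completion of $C^{k'}(I)\otimes C^k(I)$ under $\|\cdot\|_{C^{k',k}}$ with $C^{k',k}(I^2)$. One inclusion is clear since finite sums $\sum g_j\otimes h_j$ lie in $C^{k',k}(I^2)$ and this space is complete in its norm (which coincides with $\|\cdot\|_{C^{k',k}}$ on such sums). For density, I would approximate an arbitrary $u\in C^{k',k}(I^2)$ by its Bernstein polynomials (or any tensor-product polynomial approximation) in both variables simultaneously; since $I$ is compact, the relevant mixed partials of the Bernstein approximants converge uniformly to the corresponding mixed partials of $u$, which is exactly convergence in $\|\cdot\|_{C^{k',k}}$, and each Bernstein approximant is a finite sum of tensor products. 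The three-variate case $C^{k'',k',k}(I^3)$ is handled identically with an extra index.

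Finally I would verify that $\|\cdot\|_{C^{k',k}}$ is a \emph{uniform} crossnorm in the sense \eqref{j509}--\eqref{j522}. For \eqref{j509}, the operator $A_+\otimes B_-$ acting on elementary tensors by $(A_+\otimes B_-)(\ta_+\otimes\ta_-)=(A_+\ta_+)\otimes(B_-\ta_-)$ extends to a bounded operator on the tensor space precisely because the target is again a space of the same type and the crossnorm identity controls its action on finite sums; boundedness of this extension follows from \eqref{j522} applied with $B_\pm=A_\pm$. The key estimate \eqref{j522} itself reduces, after writing $\tta$ as a (limit of) finite sum of elementary tensors, to the following statement about scalars: for each pair of multi-indices, the mixed partial $\partial^{i'}_{a_0'}\partial^{i}_{a_0}$ of $(A_+\otimes B_-)\tta$ at a point $(a_0',a_0)$ equals $(\partial^{i'}(A_+(\cdot)) \otimes \partial^i(B_-(\cdot)))$ evaluated there, and the hypotheses \eqref{j525}--\eqref{j526} (which say $A_+$ is dominated by $B_+$ and $A_-$ by $B_-$ in the appropriate norms) transfer to the tensorized operators because the $C^{k',k}$ norm is computed as a supremum of products of the separate one-variable quantities — each of which is individually dominated. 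I expect the main obstacle to be the bookkeeping in \eqref{j522}: one must be careful that the domination hypotheses \eqref{j525}, \eqref{j526} are stated with possibly different source and target homogeneities $(\betap,\gammap,\epsp)$ etc., so the argument is not a literal factorization but rather a pointwise-in-$(a_0',a_0)$ comparison of the two sides, using that the $C^{k',k}$ norm is the supremum over base points of a quantity that factors as a product of two one-variable suprema and hence is monotone under the coordinatewise domination provided by the hypotheses. The extension to replacing $\Ta{\epsp}$ or $\Ta{\epsm}$ by a finite-dimensional Euclidean space is immediate since such a space is itself (isometrically, in each coordinate) a $C^0$ space over a finite set, and the same computation applies.
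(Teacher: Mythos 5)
Your verification of the crossnorm identity \eqref{j501a} and the completion/density claim is correct and essentially matches the paper (the paper simply observes that the maximum of a product over independent index sets factorizes, rather than splitting into two inequalities, but the content is the same).

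The argument you give for the uniform crossnorm estimate \eqref{j522}, however, has a genuine gap. You assert that ``the $C^{k',k}$ norm is computed as a supremum of products of the separate one-variable quantities'' and again that it ``factors as a product of two one-variable suprema''. This is false for a general element $u=\sum_n g_n\otimes h_n$ of the algebraic tensor space: there
\begin{equation*}
\partial_{a_0'}^{i'}\partial_{a_0}^{i}u(a_0',a_0)=\sum_n\partial^{i'}g_n(a_0')\,\partial^{i}h_n(a_0)
\end{equation*}
is a sum of products, not a product, and $\|u\|_{C^{k',k}}$ — a supremum of the absolute values of such sums — does not factor. Factorization holds only for elementary tensors, which is precisely why \eqref{j501a} is the easy part and \eqref{j522} is the substantive one. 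Relatedly, the hypotheses \eqref{j525}--\eqref{j526} are \emph{norm} comparisons, not pointwise ones, so ``coordinatewise domination'' is not what they provide.

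The mechanism that actually works — and the one the paper uses — is a \emph{sequential}, one-variable-at-a-time reduction exploiting the nested structure of the mixed supremum. For $u=\sum_n g_n\otimes h_n$ one has, with $\aop$ acting in the $a_0'$-variable,
\begin{equation*}
\partial_{a_0'}^{i'}\partial_{a_0}^{i}(\aop\otimes\bop)u(a_0',a_0)
=\partial_{a_0'}^{i'}\aop\Big(\sum_n g_n(\cdot)\,\partial_{a_0}^{i}\bop h_n(a_0)\Big)(a_0').
\end{equation*}
For \emph{fixed} $a_0$ and $i$, the expression in parentheses is a single one-variable function in $C^{k'}(I)$; taking the max over $i'$ and $a_0'$ yields the $C^{l'}$-norm of $\aop$ applied to it, which \eqref{j525} bounds by the $C^{m'}$-norm of $\taop$ applied to the \emph{same} function. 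Taking the remaining max over $(a_0,i)$ gives $\|(\aop\otimes\bop)u\|_{C^{l',l}}\le\|(\taop\otimes\bop)u\|_{C^{m',l}}$, and a second pass in the $a_0$-variable using \eqref{j526} completes \eqref{j522}. Boundedness of $\aop\otimes\bop$ (i.e.\ \eqref{j509}) then follows by choosing $\taop=\|\aop\|\,{\rm id}$ and $\tbop=\|\bop\|\,{\rm id}$ — not, as you write, ``$B_\pm=A_\pm$'', which yields only a tautology — followed by the density argument to extend to $C^{k',k}(I^2)$.
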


We begin by defining an abstract space $(\Aa{+},\Ta{+})$ describing the relevant objects of non-negative homogeneity.  
The index set is defined by
\begin{equation}\label{S1}
  \Aa{+}:=\set{0,\alpha,1,2\alpha
  },
\end{equation}
(note that this notation is consistent with \eqref{Prod1})
and to each homogeneity in $\Aa{+}$ we associate a Banach space via
 \begin{align}\label{S8030}
\Ta{\alpha}:
=\frac{1}{N}C^{2} (\interval{} ),
\quad \Ta{0}:=\Ta{1}:=\R, \quad \Ta{2\alpha}:=\frac{1}{N^2}C^{2,1} (\interval{2}),
 \end{align}
 where it will turn out to be convenient to scale the norms of $\Ta{\alpha}$ and $\Ta{2\alpha}$ with $N$ and $N^2$ respectively, cf.\@ \eqref{jj4}, in order to ensure that the skeleton $\Ga{}{}$ defined below has a norm of order one, cf.\@ \eqref{j790}. 
The Banach space $\Tplust$ consists of
\begin{equation}\label{S2}
\ta_+ \in \Ta{+}:=\Ta{0}\oplus \Ta{\alpha}\oplus \Ta{1}\oplus \Ta{2\alpha} \ni (\na,\va,\xa,\wa),
 \end{equation}
where as before, we write $\na$ for $\mathsf{v}_{0}$.  As an input, we will associate to $\Tplust$ a $\Tplust$-valued function
\begin{equation}\label{j532}
\tf_+:=(1,\vf,\xf,\wwf
) \in C^0(\R^{2};\Tplust),
\end{equation}
where the functions $v_{0}(x):=1$ and 
\begin{equation}
v_{1}(x):=x_{1} \label{SFin23}
\end{equation}
are the ones that span the kernel of $\partial_{1}^{2}$, while $\vf$ and $\wwf$ will be defined in Section \ref{subsec:theModel}. In order to work with the boundedness norm of a modelled distribution, cf.~Definition \ref{prodDef1}
and \eqref{wg16}, we need to specify $\langle\cdot\rangle$ for $\mathsf{A}_+$:
\begin{align}\label{wg12}
\langle 0\rangle:=0,\;\langle\alpha\rangle:=1,\;\langle 1\rangle:=2,\;\langle 2\alpha\rangle:=2.
\end{align}
Note that with the exception of $\ap{1}$, the value of $\ap{\beta}$ coincides with the homogeneity of 
$\tf_{\beta}$ in the right-hand side $f$.  The information on $(\Aa{+},\Tplust)$ is summarized in the following table.  
 \begin{center}
   \begin{tabular}{| l | | l | l | l | l | l|}
    \hline
    $\beta$ & $\ta_{\beta}$ & $\Ta{\beta}$ & $\tf_{\beta}$ & $\ap{\beta}$ \\ \hline
    $0$ & $\na$ & $\R$ & $\nf$ & $0$\\ 
    $\alpha$ & $\va$ & $\frac{1}{N}C^2(\interval{})$ & $\vf$ & $1$\\ 
    $1$ & $\xa$ & $\R$ & $\xf$ & $2$\\ 
    $2\alpha$ & $\wa$ & $\frac{1}{N^2}C^{2,1}(\interval{2})$ & $\wwf$ & $2$ \\ \hline
   \end{tabular}
 \end{center}
We will also build a skeleton $\Ga{+}{}$ for $(\Aa{+},\Tplust)$ by defining $\Ga{\beta_{+}}{\gamma_{+}}(x)$
$\in{\mathcal L}(\Ta{\gamma_{+}},\Ta{\beta_{+}})$ for all $x\in \R^2$ and $\gamma_{+},\beta_{+}\in \Aa{+}$ 
with $\gamma_{+}<\beta_{+}$. The discussion in Subsection \ref{S:mult} on abstract multiplication and 
in particular Remark \ref{wg03} motivates to postulate that $\Ga{+}{}$ satisfies relation \eqref{j700}, 
which implies that $\Ga{\beta_{+}}{0}(x)$ will be determined as soon as we have fixed $\Ga{\beta_{+}}{\gamma_{+}}(x)$ 
for all positive $\gamma_{+}$. Thus, it suffices to make the choices
\begin{align*}
 \Ga{1}{\alpha}(x)\va:=0, \, \Ga{2\alpha}{\alpha}(x)\va:=-\vf(x)\otimes\partial_{a_0}\va, \,  \Ga{2\alpha}{1}(x)\xa:=-\of(x)\xa,
\end{align*}
where $\omega \in C^{0}(\R^{2};\Ta{2\alpha})$ will be constructed in Lemma \ref{est_wwf_lemma}, 
which also motivates this definition of $\Ga{2\alpha}{\alpha}$ and $\Ga{2\alpha}{1}$, whereas $\Ga{1}{\alpha}=0$ 
is motivated by the explanation for level $\alpha-1$ in Section \ref{subsec:theModel} below.  This choice
determines the operator $\Ga{+}{} \in C^{0}(\R^{2}; \mathcal{L}(\Tplust,\Tplust))$ via the prescription 
\eqref{j551}, which can be summarized in the block form
\begin{align}\label{cw12}
\Ga{+}{}=\left(\begin{array}{lccc}
{\rm id}_{\R}                                                   &0                             &0             &0\\
-v_\alpha                                                       &{\rm id}_{C^2}   &0             &0\\
-v_1                                                            &0                             &{\rm id}_{\R} &0\\
-w_{2\alpha}+v_\alpha\otimes\partial_{a_0}v_\alpha+\omega v_1   &-v_\alpha\otimes\partial_{a_0}&-\omega       &{\rm id}_{C^{2,1}}
\end{array}\right).
\end{align}
We note that it is the presence of the parameter derivative $\partial_{a_0}$ that requires us to reduce the differentiability with respect to the second argument for functions in $\Ta{2\alpha}$.

\medskip

Next we define an abstract space $(\Aa{-},\Ta{-})$ and its skeleton $\Ga{-}{}$ describing the relevant objects 
of negative homogeneity. Loosely speaking, it naturally or ``functorially'' arises from $(\Aa{+},\Ta{+})$ and 
$\Ga{+}{}$ by applying the operator $\partial_1^2$. More precisely, the index set is defined by
  \begin{equation}\label{S4}
  \Aa{-}:=\big (\Aa{+} \setminus \{0,1\} \big )-2=\{\alpha-2,2\alpha-2\},
 \end{equation}
and to each homogeneity in $\Aa{-}$ we associate a Banach space via
\begin{align}\label{S8001}
\Ta{\alpha-2}:=\Ta{\alpha}=\frac{1}{N}C^2(\interval{}), 
\quad \Ta{2\alpha-2}:= \Ta{2\alpha} = \frac{1}{N^2}C^{2,1}(\interval{2}).
\end{align}
The Banach space $\Tminust$ consists of
\begin{equation}\label{S5}
\ta_{-} \in \Tminust:= \Ta{\alpha-2}\oplus\Ta{2\alpha-2} \ni (\pva,\pwa),
\end{equation}
which act as placeholders for the $\Ta{-}$-valued distribution
\begin{equation}\label{S6}
\tf_{-}:=(\pvf,\pwf) \in {\mathcal S}'(\R^{2}; \Tminust)
\end{equation}
obtained from applying $\partial_{1}^{2}$ to $v_{+}$. We specify $\ap{\cdot}$ on $\Aa{-}$ through
\begin{equation}
\langle\alpha-2\rangle:=\langle\alpha\rangle=1,\;\langle 2\alpha-2\rangle:=\langle 2\alpha\rangle=2.\label{wg90}
\end{equation}
This information is summarized in the following table.  
 \begin{center}
   \begin{tabular}{| l | | l | l | l | l | l |}
    \hline
    $\beta$ & $\ta_{\beta}$ & $\Ta{\beta}$ & $\tf_{\beta}$ & $\ap{\beta}$ \\ \hline
    $\alpha-2$ & $\pva$ & $\frac{1}{N}C^2(\interval{})$ & $\partial_{1}^{2}\vf$ & $1$\\ 
    $2\alpha-2$ & $\partial_{1}^{2}\wa$ & $\frac{1}{N^2}C^{2,1}(\interval{2})$ & $\partial_{1}^{2}\wwf$ & $2$ \\ \hline
   \end{tabular}
 \end{center}

In line with these definitions we define the associated skeleton $\Ga{-}{}$ as resulting from $\Ga{+}{}$ via
%
$\Ga{\beta}{\gamma}:=\Ga{\beta+2}{\gamma+2}$ for $\beta,\gamma\in\Aa{-}$,
%
so that the triangular structure \eqref{j551} of $\Ga{+}{}$ is passed to $\Ga{-}{}$. For later reference, we record
\begin{align}\label{cw12a}
\Gamma_-=\left(\begin{array}{lccc}
{\rm id}_{C^2}                      &0\\
-v_\alpha\otimes\partial_{a_0}&{\rm id}_{C^{2,1}}
\end{array}\right).
\end{align}
%

\medskip

With these objects defined, we can define the product space $(\tAa{},\tTa{})$ and the associated skeleton $\tGa{}{}$ in virtue of Section \ref{S:mult}. The elements of $\Aa{}$ are ordered according to $\alpha-2<2\alpha-2<\alpha-1<3\alpha-2<0<2\alpha-1<4\alpha-2$, and we define $\ap{\cdot}$ via
\begin{align}\label{wg13}
\begin{array}{c}
\langle\alpha-1\rangle:=3,\;
\langle 3\alpha-2\rangle:=3,\;
\langle2\alpha-1\rangle:=4,\;\langle 4\alpha-2\rangle:=4.
\end{array}
\end{align}
Is is easily checked by inspection that $\langle\cdot\rangle$ satisfies the consistency
condition \eqref{wg10}, which we recall was crucial for multiplication. Following \eqref{Prod8} 
and removing objects of nonnegative homogeneity in virtue of Section \ref{S:red}, we obtain the following tensor spaces.  
Homogeneities $\alpha-2$ and $\alpha-1$ are each attached to a single space via
\begin{align*}
\Ta{0}\otimes\Ta{\alpha-2}\cong \Ta{\alpha-2}, \quad \Ta{1}\otimes\Ta{\alpha-2}=:\Ta{\alpha-1}.
\end{align*}
Homogeneity $2\alpha-2$ is attached to two spaces denoted by
\begin{align*}
\quad \Ta{0}\otimes\Ta{2\alpha-2}=:\Ta{2\alpha-2,1}, \quad \Ta{\alpha}\otimes\Ta{\alpha-2}=:\Ta{2\alpha-2,2}.
\end{align*}
Homogeneity $3\alpha-2$ is also attached to two spaces denoted by
\begin{align*}
\Ta{2\alpha}\otimes\Ta{\alpha-2}=:\Ta{3\alpha-2,1}, \quad \Ta{\alpha}\otimes \Ta{2\alpha-2}=:\Ta{3\alpha-2,2}.
\end{align*}
By Lemma \ref{tensor_lemma}, these spaces are identified with function spaces of mixed order of 
differentiability the norms of which carry certain powers of $N$ as stated in the table below.
Hence, we obtain the direct sum decomposition
\begin{equation}\label{S7}
\Ta{<0}=\Ta{\alpha-2}\oplus \Ta{2\alpha-2,1} \oplus \Ta{2\alpha-2,2} \oplus \Ta{\alpha-1} \oplus \Ta{3\alpha-2,1} \oplus \Ta{3\alpha-2,2}
\end{equation}
and write a generic element in $\tTa{<0}$ as
\begin{equation}
\ta=(\pva,\pwa,\vva,\xva,\wva, \vwa),
\end{equation}
which acts as a placeholder for the $\Ta{<0}$-valued distribution
\begin{equation}\label{j531}
\tf:=(\pvf,\pwf,\vvf,\xvf,\wvf, \vwf)
\end{equation}
belonging to ${\mathcal S}'(\R^{2}; \Ta{<0})$. This information is summarized in the following table.
 \begin{center}
   \begin{tabular}{| l | | l | l | l | l | l |}
    \hline 
    $\beta$ & $\ta_{\beta}$ & $\Ta{\beta}$ & $\tf_{\beta}$ & $\ap{\beta}$\\ \hline 
    $\alpha-2$ & $\pva$ & $\frac{1}{N}C^2(\interval{})$ & $\pvf$ & $1$\\ 
    $2\alpha-2$ & $\pwa$ & $\frac{1}{N^2}C^{2,1}(\interval{2})$ & $\pwf$ & $2$ \\
    $2\alpha-2$ & $\vva$ & $\frac{1}{N^2}C^{2,2}(\interval{2})$ & $\vvf$ & $2$ \\ 
    $\alpha-1$ & $\xva$ & $\frac{1}{N}C^2(\interval{})$ & $\xvf$  & $3$ \\
    $3\alpha-2$ & $\wva$ & $\frac{1}{N^3}C^{2,1,2}(\interval{3})$ & $\wvf$ & $3$ \\
    $3\alpha-2$ & $\vwa$ & $\frac{1}{N^3}C^{2,2,1}(\interval{3})$ & $\vwf$ & $3$ \\ \hline
  \end{tabular}
 \end{center}

\subsection{The Model}\label{subsec:theModel} 
We now define the concrete objects $v_\alpha$, $w_{2\alpha}$, $v_\alpha\diamond\partial_1^2v_\alpha$, 
$v_1\partial_1^2v_\alpha$, $w_{2\alpha}\diamond\partial_1^2v_\alpha$, and $v_\alpha\diamond\partial_1^2w_{2\alpha}$. 
We also construct $\omega$ and thus complete the construction of the skeletons. 
In doing so, we make sure (either by an argument or by an ``off-line'' assumption) that the so defined models
$v_+\in C^0(\mathbb{R}^2;\mathsf{T}_+)$ and $v\in {\mathcal S}'(\mathbb{R}^2;\mathsf{T}_{<0})$ satisfy the conditions
\begin{align}
\|\Gamma_\beta(x)v_+(y)\|_{\mathsf{T}_\beta}&\lesssim d^\beta(y,x)\quad\mbox{for all}\;\beta\in\mathsf{A}_+
,\;x,y\in\mathbb{R}^2,\label{lo02}\\
\|\Gamma_\beta(x)v_T(x)\|_{\mathsf{T}_\beta}&\lesssim (T^\frac{1}{4})^\beta\quad\mbox{for all}\;\beta\in\mathsf{A}_{<0}
,\;x\in\mathbb{R}^2,T\in(0,1],\label{lo01}
\end{align}
recalling that \eqref{lo01} corresponds to the assumption \eqref{j770} of the abstract Corollary \ref{lem:ExMult},
and noting that \eqref{lo02} contains the consistency condition \eqref{j700} (for $y=x$).
We also make sure that the skeletons satisfy the continuity condition \eqref{ssss2}. We proceed homogeneity level by level,
noting that \eqref{lo02} is trivially satisfied for $\beta=0,1$.

\medskip

\underline{Level $\alpha-2$}:
Our standing assumption is that there exists an $\R$-valued periodic distribution $f$ such that
\begin{align}\label{f29}
\|f\|_{C^{\alpha-2}(\R^{2})}\le N,
\end{align}
which sets both $\alpha$ and $N$.

\medskip

\underline{Level $\alpha$}:
We now define $\vf \in C^{\alpha}(\R^{2}; \Ta{\alpha})$ by the condition that for each fixed $a_{0} \in \interval{}$, $\vf(\cdot,a_{0})$
is the mean-free periodic solution to
\begin{align}\label{f21}
(\partial_2-a_0\partial_1^2)\vf(\cdot,a_0)=Pf.
\end{align}
In addition, it will be useful to note that for all $k \in \N$
\begin{align}\label{se40}
(\partial_2-a_0\partial_1^2)\partial_{a_0}^{k}\vf(\cdot,a_0)=k\partial_1^2\partial_{a_0}^{k-1}\vf(\cdot,a_0).
\end{align}
Assumption \eqref{f29} together with the classical Schauder estimates therefore imply \eqref{lo02} for $\beta=\alpha$,
that is, for all $x,y\in \R^2$
\begin{align}
\|\Ga{\alpha}{}(x)\tf_{+}(y)\|_{\Ta{\alpha}}&\stackrel{\eqref{j532},\eqref{cw12},\eqref{S8030}}{=}
\|\vf(y)-\vf(x)\|_{\frac{1}{N}C^2}\nonumber\\
&\le \|\vf(y)-\vf(x)\|_{\frac{1}{N}C^3}\lesssim d^{\alpha}(y,x).\label{s203}
\end{align}
%
%
%
This in turn yields \eqref{lo01} for $\beta=\alpha-2$, that is, for all $T\in(0,1]$
%
\begin{align}
\|\Gamma_{\alpha-2}v_{T}\|_{C^0(\mathbb{R}^2;\Ta{\alpha-2})}&\stackrel{\eqref{j531},\eqref{cw12a},\eqref{S8001}}{=}
\|(\partial_{1}^{2}v_{\alpha})_{T}\|_{C^0(\mathbb{R}^2;\frac{1}{N}C^2)}\nonumber\\
&\stackrel{\mathclap{\eqref{1.13}}}{\lesssim}
\|v_{\alpha}\|_{C^\alpha(\mathbb{R}^2;\frac{1}{N}C^2)}(T^{\frac{1}{4}})^{\alpha-2} 
\stackrel{\eqref{s203}}{\lesssim} (T^{\frac{1}{4}})^{\alpha-2}\label{p13}.
\end{align}

\medskip

\underline{Level $2\alpha-2,2$}:
We now make our first off-line assumption and postulate that there exists a distribution $v_{\alpha}\diamond \partial_{1}^{2}v_{\alpha} \in C^{\alpha-2}(\R^{2};C^{2,2}(\interval{2}))$ satisfying \eqref{j770}. We observe that
$
\Ga{2\alpha-2,2}{\alpha-2}\stackrel{\eqref{Prod9} }{=}\Ga{\alpha}{0}\otimes \Ga{\alpha-2}{\alpha-2}\stackrel{\eqref{cw12}, \eqref{cw12a}}{=}-\vf \otimes \id,
$
so that
\begin{align*}
\Ga{2\alpha-2,2}{}v_{T}&\stackrel{\eqref{j551}}{=}\,(v_{2\alpha-2,2})_{T}+\Ga{2\alpha-2,2}{\alpha-2}(v_{\alpha-2})_{T}\\
&\stackrel{\eqref{j531}}{=}(v_{\alpha} \diamond \partial_{1}^{2}v_{\alpha})_{T}+\Gamma_{2\alpha-2,2}^{\alpha-2}(\partial_{1}^{2}v_{\alpha})_{T}\\
&\, \, =\, \, \,(v_{\alpha}\diamond \partial_{1}^{2}v_{\alpha})_{T}-v_{\alpha}\otimes(\partial_{1}^{2}v_{\alpha})_{T}.
\end{align*}
Hence, our assumption \eqref{lo01} takes the natural form
\begin{align}
\|&\Ga{2\alpha-2,2}{}\tf_T \|_{C^0(\R^2;\Ta{2\alpha-2,2})} \nonumber \\
&=\left\|(\vvf)_T-\vf\otimes(\pvf)_T\right \|_{C^0(\R^2;\frac{1}{N^2}C^{2,2})}\nonumber\\ 
&\le\left\|(\vvf)_T-\vf\otimes(\pvf)_T\right \|_{C^0(\R^2;\frac{1}{N^2}C^{3,2})}\leq (T^{\frac{1}{4}})^{2\alpha-2}.\label{off1}
\end{align}
We note that the factor $N^{2}$ in the definition of the space $\Ta{2\alpha-2,2}$ is justified, since $(v_{\alpha}\diamond \partial_{1}^{2}v_{\alpha})_{T}-v_{\alpha}\otimes(\partial_{1}^{2}v_{\alpha})_{T}$ is (formally) a quadratic expression in $v_{\alpha}$, and hence \eqref{off1} would follow from \eqref{s203} if $\alpha$ were larger than $1$.

\medskip

\underline{Level $\alpha-1$}:
The product $v_{1}\partial_{1}^{2}v_{\alpha}$ of a smooth function and a distribution is classically defined.  
By the triangular structure of $\Gamma$, we have $\Ga{\alpha-1}{}v_{T}$ $=(v_{\alpha-1})_{T}$ 
$+\Ga{\alpha-1}{\alpha-2}(v_{\alpha-2})_{T}$ $+\Ga{\alpha-1}{2\alpha-2}(v_{2\alpha-1})_{T}$.
We further note that by \eqref{Prod9} and the definitions \eqref{cw12} of $\Ga{+}{}$ and \eqref{cw12a} of $\Ga{-}{}$, it holds $\Ga{\alpha-1}{\alpha-2}=\Ga{1}{0}\otimes\Ga{\alpha-2}{\alpha-2}=-\xf\otimes \id_{\Ta{\alpha-2}}$, while $\Ga{\alpha-1}{2\alpha-2}=\Ga{1}{\alpha}\otimes\Ga{\alpha-2}{\alpha-2}=0$ vanishes.
Together with \eqref{j531}, these observations combine to
$\Ga{\alpha-1}{}v_{T}=(v_{1}\partial_{1}^{2}v_{\alpha})_{T}-v_{1}\otimes (\partial_{1}^{2}v_{\alpha})_{T}$.
Hence \eqref{lo01} is satisfied:
\begin{align}\label{off20}
\|&\Gamma_{\alpha-1}v_{T}\|_{C^{0}(\R^{2};\Ta{\alpha-1})} \nonumber\\
&=\|(v_{1}\partial_{1}^{2}v_{\alpha})_{T}
-v_{1}\otimes (\partial_{1}^{2}v_{\alpha})_{T}\|_{C^{0}(\R^{2};\frac{1}{N}C^2)}
\lesssim ( T^{\frac{1}{4}})^{\alpha-1},
\end{align}
which follows from \eqref{p13} by Lemma 10 in \cite{OtW16}.
%


%
%

\medskip 

\underline{Level $2\alpha$}:
%
%
%
%
%
Next we define $\wwf \in C^{\alpha}(\R^{2}; \Ta{2\alpha})$ to be the mean-free periodic solution to
\begin{align}\label{f22}
(\partial_2-a_0\partial_1^2)\wwf(a_0',a_0)=P(\vvf)(a_0',a_0).
\end{align}
Since by assumption $\vvf\in C^{\alpha-2}(\R^{2};C^{3,2}(\interval{2}))$, for each $j\le 3$, $k \leq 2$
\begin{align}
(\partial_2-a_0\partial_1^2)\partial_{a_0}^{k}\partial_{a_0'}^{j}\wwf(a_0',a_0)&=\partial_{a_0}^{k}\partial_{a_0'}^{j}P(\vvf)(a_0',a_0) \nonumber \\
&+k\partial_{a_0}^{k-1}\partial_{a_0'}^{j}\partial_{1}^{2}\wwf(a_0',a_0), \label{jj2}
\end{align}
which implies $\wwf\in C^{\alpha}(\R^{2};C^{3,2}(\interval{2}))$ by repeatedly applying standard Schauder theory.  
Moreover, $\wwf$ behaves as an object of order $2\alpha$ in the sense that \eqref{lo01} holds for $\beta=2\alpha$,
as made precise by the following lemma, which is another  consequence of our abstract integration result, 
Proposition \ref{int lem}.
\begin{lemma}\label{est_wwf_lemma}
There exists a periodic function $\of \in C^{2\alpha-1}(\R^{2}; \Ta{2\alpha})$ with
\begin{align}\label{j781}
\|\omega\|_{C^{2\alpha-1}(\R^{2};\Ta{2\alpha})} \lesssim 1
\end{align}
and such that for all $x,y \in \R^{2}$ 
\begin{align}
\big\|\Ga{2\alpha}{}(x)\tf_{+}(y)\big\|_{\Ta{2\alpha}} 
&\stackrel{\mathclap{\eqref{j532}, \eqref{cw12}}}{=}\quad \big\|\wwf(y)-\wwf(x) \nonumber \\ 
&\qquad \qquad -\vf(x)\otimes(\partial_{a_0}\vf(y)-\partial_{a_0}\vf(x)) \nonumber \\
&\qquad \qquad - \of(x)(\xf(y)-\xf(x))\big\|_{\frac{1}{N^2}C^{2,1}} \nonumber \\
& \lesssim d^{2\alpha}(y,x) \label{s2011}.
\end{align}
Moreover, $\Ga{+}{}$ and $\Ga{}{}$ satisfy
\begin{align}\label{j790}
 \|\Ga{+}{}\|_{sk}\lesssim 1 \quad \text{ and } \quad \|\Ga{}{}\|_{sk}\lesssim 1.
\end{align}
\end{lemma}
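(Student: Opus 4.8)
The plan is to construct $\omega$ and verify \eqref{j781}, \eqref{s2011} by a direct application of the abstract integration result, Proposition \ref{int lem}, with $\kappa=2\alpha\in(1,2)$ (recall $\alpha\in(\frac12,\frac23)$) and $\Aa{}=\{0,\alpha\}\subset[0,2\alpha)$. First I would set up the two-point function $U(x,y):=\wwf(y,a_0',a_0)-\wwf(x,a_0',a_0)-\vf(x,a_0')\bigl(\partial_{a_0}\vf(y,a_0)-\partial_{a_0}\vf(x,a_0)\bigr)$, viewed as a $C^{2,1}(\interval{2})$-valued function of $(x,y)$ (more precisely, one runs the scalar argument for each fixed $(a_0',a_0)$ and for each $C^{2,1}$-derivative, tracking uniformity in $(a_0',a_0)$ via the crossnorm properties of Lemma \ref{tensor_lemma}). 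The coefficient field is $a(x):=a_0$, the constant appearing in the PDE solved by $\wwf$. To check the local splitting condition \eqref{KS1}, observe that by \eqref{f22} and the commutation relation $(\partial_2-a_0\partial_1^2)\partial_{a_0}\vf=\partial_1^2\vf$ (the case $k=1$ of \eqref{se40}), one has
\begin{align*}
(\partial_2-a_0\partial_1^2)U(x,\cdot)=P(\vvf)-\vf(x)\diam\partial_1^2\vf+\vf(x)\bigl(\partial_1^2\vf-\partial_1^2\vf(x)\bigr),
\end{align*}
up to the mean-value constant $c$ absorbed by the projection $P$; the first two terms combine, using the off-line assumption \eqref{off1}, into something controlled in $C^{\alpha-2}$ with the right $L^{2\alpha-2}$ gain on balls $B_L(x)$, while the third term is $\vf(x)$ times an increment of $\partial_1^2\vf$, which by the $C^{\alpha-2}$-bound \eqref{p13} on $\partial_1^2\vf$ and boundedness of $\vf$ contributes the $L^{2\alpha-2}\cdot L^{\alpha}$, i.e. $\beta=\alpha$ term in \eqref{KS1}. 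For the three-point continuity \eqref{KS2}, expand $U(x,z)-U(x,y)-U(y,z)+U(y,y)$; the $\wwf$-differences cancel identically, and the remaining terms organize into $-\bigl(\vf(x)-\vf(y)\bigr)\bigl(\partial_{a_0}\vf(z)-\partial_{a_0}\vf(y)\bigr)$, which is of the form $d^\alpha(y,x)d^\alpha(z,y)$ times bounded data by Schauder (so $\beta=\alpha$, $\kappa-\beta=\alpha$), plus a term producing the $\gamma(x,y)(z-y)_1$ correction and a $d^0(y,x)d^{2\alpha}(z,y)$ remainder ($\beta=0$); here one uses the second-order parabolic Hölder regularity of $\partial_{a_0}\vf$, i.e. that $z\mapsto\partial_{a_0}\vf(z,a_0)$ has a spatial derivative, to extract the affine-in-$(z-y)_1$ part, with $\gamma(x,y)$ proportional to $(\vf(y)-\vf(x))\,\partial_1\partial_{a_0}\vf(y)$.

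Granting these two hypotheses, Proposition \ref{int lem} produces a function $\nu=\nu(x)$ (valued in $C^{2,1}(\interval{2})$) with $U(x,y)-U(x,x)-\nu(x)(y-x)_1$ bounded by $d^{2\alpha}(y,x)$ — note $U(x,x)=0$ — which is exactly the assertion \eqref{s2011} once we define $\of:=\nu$ (after dividing out the $\frac{1}{N^2}$ normalization of $\Ta{2\alpha}$; the first equality in \eqref{s2011} is just the block form \eqref{cw12} of $\Ga{+}{}$ unravelled, together with $\Ga{2\alpha}{\alpha}(x)=-\vf(x)\otimes\partial_{a_0}$ and $\Ga{2\alpha}{1}(x)=-\of(x)$). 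The bound \eqref{wg52} of Proposition \ref{int lem} gives $\|\of\|\lesssim\overline M+\overline{\overline M}\lesssim 1$, and since $0\notin\Aa{}$ — wait, $0\in\Aa{}$ here, so one instead uses \eqref{KS4}: $\nu$ is related to $\gamma$ to order $\kappa-1=2\alpha-1$, and since $\gamma$ is itself Hölder-$(2\alpha-1)$ in its first slot (being built from Hölder-$\alpha$ and bounded data, with $\alpha>2\alpha-1$ as $\alpha<1$... actually one needs $2\alpha-1\le\alpha$, true since $\alpha\le1$), we get $\nu=\of\in C^{2\alpha-1}(\R^2;\Ta{2\alpha})$, which is \eqref{j781}. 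Periodicity of $\of$ follows from periodicity of $U$ in the $x$-variable via Remark \ref{rem-wg53}.

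Finally, for \eqref{j790}: the bound $\|\Ga{+}{}\|_{sk}\lesssim1$ amounts to verifying the continuity property \eqref{ssss2} for each of the off-diagonal blocks of \eqref{cw12}. The blocks $\Ga{\alpha}{0}=-v_\alpha$ and $\Ga{1}{0}=-v_1$ are controlled by \eqref{s203} and $v_1(x)=x_1$ respectively; the block $\Ga{2\alpha}{\alpha}=-v_\alpha\otimes\partial_{a_0}$ and $\Ga{2\alpha}{1}=-\omega$ and $\Ga{2\alpha}{0}$ are controlled precisely by the increment estimate \eqref{s2011} just established (which is \eqref{ssss2} for $\beta=2\alpha$ after one recognizes that $\|\Ga{\gamma}{}(x)\tf_+\|$ for $\gamma<2\alpha$ are the lower-order increments already bounded) together with \eqref{j781} for the $\of$-contribution; and then $\|\Ga{}{}\|_{sk}\lesssim1$ follows from $\|\Ga{+}{}\|_{sk}\lesssim1$, the analogous (inherited) bound $\|\Ga{-}{}\|_{sk}\lesssim1$, and the product estimate \eqref{j1200} of Lemma \ref{lem:prod_skel}. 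The main obstacle is the bookkeeping in verifying \eqref{KS1} and \eqref{KS2} uniformly in the parameters $(a_0',a_0)\in\interval{2}$ and across all the $C^{2,1}$-derivatives — this is where the differentiated equations \eqref{se40}, \eqref{jj2} and the uniform-crossnorm structure of Lemma \ref{tensor_lemma} do the essential work, and where the off-line assumption \eqref{off1} must be invoked with one extra derivative ($C^{3,2}$ rather than $C^{2,2}$) precisely to afford the Schauder estimate that upgrades $\wwf$ to $C^{\alpha}(\R^2;C^{3,2})$ and hence makes $\partial_1\partial_{a_0}\vf$ and the relevant pieces of $U$ sufficiently regular.
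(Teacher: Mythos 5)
Your high-level strategy is the same as the paper's: apply the abstract integration result, Proposition~\ref{int lem}, with $\kappa=2\alpha$ to the function $U(x,y)=\wwf(y)-\vf(x)\otimes\partial_{a_0}\vf(y)$ (the paper omits your extra subtractions in $x$, which are harmless since only $U(x,y)-U(x,x)$ and the three-point differences enter), run the argument derivative-by-derivative in the parameters $(a_0',a_0)$, and identify $\omega$ with the resulting $\nu$. However, the execution has a genuine error and a gap.

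The error is in the three-point continuity. You correctly observe that the three-point quantity collapses (after cancellation of the $\wwf$-increments) to $-\bigl(\vf(x)-\vf(y)\bigr)\bigl(\partial_{a_0}\vf(z)-\partial_{a_0}\vf(y)\bigr)$, differentiated in the parameters. But then you try to split this further into a $\beta=\alpha$ piece, a $\gamma(x,y)(z-y)_1$ correction with $\gamma(x,y)\propto(\vf(y)-\vf(x))\,\partial_1\partial_{a_0}\vf(y)$, and a $\beta=0$ remainder $d^{0}(y,x)d^{2\alpha}(z,y)$. This does not work: $\partial_{a_0}\vf$ solves $(\partial_2-a_0\partial_1^2)\partial_{a_0}\vf=\partial_1^2\vf$ with a right-hand side in $C^{\alpha-2}$ (cf.~\eqref{se40}), so by Schauder $\partial_{a_0}\vf$ is only $C^\alpha$ in the space-time variable with $\alpha<1$, and $\partial_1\partial_{a_0}\vf$ does not exist as a continuous function. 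Fortunately the split is also unnecessary: the quantity is already of the form $N^2d^\alpha(y,x)\,d^\alpha(z,y)$ by~\eqref{s203}, which is exactly~\eqref{KS2} with $\Aa{}=\{\alpha\}$ and $\gamma\equiv 0$. That is what the paper does; it then gets~\eqref{j781} from~\eqref{KS4} with $\gamma\equiv 0$, i.e.\ from $\kappa-1=2\alpha-1$, not from any H\"older regularity of a $\gamma$. The same misconception leads you to the wrong explanation for why $C^{3,2}$-control is required on the off-line input $\vvf$: it has nothing to do with spatial regularity of $\partial_{a_0}\vf$, but is needed (a) to afford the additional parameter derivative absorbed by $\otimes\partial_{a_0}$ in $\Ga{2\alpha}{\alpha}$ (whence $\Ta{2\alpha}=\frac{1}{N^2}C^{2,1}$), and (b) to provide the compact embedding $C^{3,2}(I^2)\hookrightarrow C^{2,1}(I^2)$ used in the Arzel\`a--Ascoli compactness argument for the $\tau$-regularized $\omega^\tau$ in the identification step.

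The gap is in the local splitting condition. You mention~\eqref{jj2} and the ``bookkeeping'' of parameter derivatives, but the argument is not merely bookkeeping: differentiating the PDE~\eqref{f22} $k$ times in $a_0$ brings down the commutator term $k\,\partial_1^2\partial_{a_0}^{k-1}\wwf$, so the splitting bound~\eqref{SFin65} for $\partial_{a_0}^{k}U$ involves $\partial_1^2(\cdot)_T$ applied to $\partial_{a_0}^{k-1}U$, and this term is in turn estimated via Proposition~\ref{int lem} applied at level $k-1$. The paper thus establishes~\eqref{SFin65} by a non-trivial induction on $k$, in which each inductive step itself invokes the abstract integration result (using the already established three-point continuity to control $\partial_1^2(\cdot)_T\partial_{a_0}^{k-1}U$ near the diagonal). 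Your proposed computation of $(\partial_2-a_0\partial_1^2)U(x,\cdot)$ only addresses the anchoring $k=0$ (and as written involves the ill-posed objects $\vf(x)\diamond\partial_1^2\vf$ and $\partial_1^2\vf(x)$; the intended identity is $(\partial_2-a_0\partial_1^2)U(x,\cdot)=P(\vvf)-\vf(x)\,\partial_1^2\vf$, which one re-centers as $\bigl(P(\vvf)-\vf\,\partial_1^2\vf\bigr)+\bigl(\vf-\vf(x)\bigr)\partial_1^2\vf$, then mollifies and invokes~\eqref{off1} and~\eqref{p13}).

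The derivation of $\|\Ga{+}{}\|_{sk}\lesssim 1$ from~\eqref{s2011}, \eqref{j781}, \eqref{s203}, the block form~\eqref{cw12}, and the product estimate~\eqref{j1200} is correctly sketched and matches the paper.
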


\medskip

\underline{Level $2\alpha-2,1$}: 
We return to level $2\alpha-2$, this time to the contribution of the first component of the space.  We note that by \eqref{Prod9}, \eqref{cw12}, and  \eqref{cw12a}
$\Ga{2\alpha-2,1}{\alpha-2}=\Ga{0}{0}\otimes \Ga{2\alpha-2}{\alpha-2}
=-v_{\alpha}\otimes \partial_{a_0}$,
 so that 
$
\Ga{2\alpha-2,1}{}v_{T}\stackrel{\eqref{j551}}{=}(v_{2\alpha-2,1})_{T}+\Ga{2\alpha-2,1}{\alpha-2}(v_{\alpha-2})_{T}
\stackrel{\eqref{j531}}{=}(\partial_{1}^{2}\wwf)_{T}-\vf \otimes \partial_{a_0}(\partial_{1}^{2}v_{\alpha})_{T}
$.  
Hence  \eqref{lo01} is satisfied:
\begin{align}
\|&\Gamma_{2\alpha-2,1}\tf_{T}\|_{C^{0}(\R^{2};\Ta{2\alpha-2,1})} \nonumber \\
&= \|(\partial_{1}^{2}w_{2\alpha})_{T}-v_{\alpha}\otimes \partial_{a_0}(\partial_{1}^{2}v_{\alpha})_{T}
\|_{C^{0}(\R^{2};\frac{1}{N^2}C^{2,1})} \ \stackrel{\mathclap{\eqref{1.13},\eqref{s2011}}}{\lesssim} \ (T^{\frac{1}{4}})^{2\alpha-2}.\label{off2}
\end{align}

%
%
%
%
%
%
%
%

\medskip

\underline{Level $3\alpha-2$}:
The last off-line assumption concerns the distribution 
$$
(\vwf, \wvf) \in C^{\alpha-2}(\R^{2}; C^{2,2,1}(\interval{3})\oplus C^{2,1,2}(\interval{3})).
$$
Namely, we assume that
we have
\begin{equation}
\left\|\Ga{3\alpha-2}{}\tf_T\right \|_{C^0(\R^2;\Ta{3\alpha-2}) } \leq (T^{\frac{1}{4}})^{3\alpha-2}.\label{f101}
\end{equation}
We do not spell out the (rather lengthy) expression $\Ga{3\alpha-2}{}$ determined through \eqref{cw12}, \eqref{cw12a}, and \eqref{Prod9}, as it will be needed only as an input in \eqref{j770} when proving Lemma \ref{rec_lem_concr}.  Analogously to level $2\alpha-2,2$ we note that the factor $N^{3}$ in the definition of $\Ta{3\alpha-2}$ is consistent, since the involved distributions are formally cubic in $v_{\alpha}$, and the estimate would follow from our other assumptions if $\alpha$ were larger than $\frac{2}{3}$.
\subsection{Relation between the forms}\label{SS:algo}
We now collect further relations between the forms $V_a^{\eta-\alpha}$, $V_a^{\eta}$,
$V_u^{\eta-\alpha}$, $V_{u}^{\eta}$, $V_{\partial_1^2u}^{\eta-\alpha-2}$, $V_{\partial_1^2u}^{\eta-2}$, 
$V_{a\diamond\partial_1^2u}^{\eta-2}$, and $V_{a\diamond\partial_1^2u}^{\eta+\alpha-2}$.  
This also serves the purpose to reveal the iterative structure of the identity \eqref{s300bis}.

\medskip

\newcounter{FormConst} 
\refstepcounter{FormConst} 
{\sc Step} \arabic{FormConst}.\refstepcounter{FormConst}[Differentiation and Multiplication]
We note that \eqref{s300bis} implies for the reduction $V_{u}^{\eta-\alpha}=C_{\eta-\alpha}V_u^{\eta}$ that
\begin{align}
V_{u}^{\eta-\alpha}.
\begin{pmatrix}
\na \\
\va
\end{pmatrix}
= u\na+\delta_{a}.\big (\va-\vf\na \big ). \label{j41}
\end{align}
We also note that the functorial relation between the forms for $u$ and those for $\partial_1^2u$ commutes with reduction:
%
\begin{equation}
V_{\partial_{1}^{2}u}^{\eta-\alpha-2}.\partial_{1}^{2}\mathsf{v}_{\alpha}=V_{u}^{\eta-\alpha}.
\begin{pmatrix}
0\\
\partial_{1}^{2}\va
\end{pmatrix}\stackrel{\eqref{j41}}{=}\delta_a.\partial_1^2\mathsf{v}_\alpha.
\label{s115}
\end{equation}
Following \eqref{MS1}, the product of the forms $V_{a}^{\eta-\alpha}$ and 
$V_{\partial_{1}^{2}u}^{\eta-\alpha-2}$ assumes the form
\begin{equation}\label{s116}
V_{a \diamond \partial_{1}^{2}u}^{\eta-2}.
\begin{pmatrix}
\pua\\
\pva \\
\va \diamond \pva
\end{pmatrix}
=a\pua + \overline{V}_{a}^{\eta-\alpha}\otimes V_{\pua}^{\eta-\alpha-2}.
\begin{pmatrix}
\pva \\
\va \diamond \pva
\end{pmatrix}.
\end{equation}

\medskip

{\sc Step} \arabic{FormConst}.\refstepcounter{FormConst}[Integration]
From \eqref{s300bis}, \eqref{s115}, and \eqref{s116} we obtain
\begin{align}
V_{u}^{\thresh}.\begin{pmatrix}
\na \\
\va \\
\xa\\
\wa
\end{pmatrix}
&=
V_{u}^{\thresh-\alpha}.\begin{pmatrix}
\na \\
\va
\end{pmatrix}
+\nu(\xa-\xf\na)
\label{s300} \\
&-a\partial_1^2\mathsf{u}+V_{a\diamond\partial_1^2u}^{\eta-2}. 
\begin{pmatrix}
\partial_1^2\mathsf{u}\\
\partial_{a_0}\va-\partial_{a_0}\vf\na\\
\wa-\wwf\na
\end{pmatrix}. \nonumber
\end{align}
%

\medskip

{\sc Step} \arabic{FormConst}.\refstepcounter{FormConst}[Reconstruction]
Following \eqref{MS1}, the product of the forms $V_a^\eta$ and $V_{\partial_1^2u}^{\eta-2}$
assumes the form 
%
\begin{align}
&V_{a \, \diamond \, \puf}^{\thresh+\alpha-2}.
\begin{pmatrix}
& \, &\pva & \pwa\\
&\, &\vva & \vwa \\
&\pua \, &\xva & \xa \diam \pwa \\
& \, &\wva & \wa \diam \pwa
\end{pmatrix}
\nonumber\\
&=a\pua+
C_{\eta+\alpha-2}\big ( \overline{V}_{a}^{\thresh} \otimes V_{\puf}^{\thresh-2} \big ).
\begin{pmatrix}
&\pva & \pwa\\
&\vva & \vwa\\
&\xva & \xa \diam \pwa\\
&\wva & \wa \diam \pwa
\end{pmatrix}.
\label{S880}
\end{align}
\subsection{Periodicity}\label{ss:periodicity}
While Section \ref{S:MD} and in particular the reconstruction result Proposition \ref{rec_lem}
are oblivious to periodicity, it is convenient for the integration result Proposition \ref{int lem},
and boundary conditions are unavoidable for our uniqueness statement. On the level of a modelled
distribution $V$, the notion of periodicity is not the naive one, because the model $v$ itself is not periodic
due to the presence of $v_1(x)=x_1$ on the level of $\mathsf{T}_+$,
and of $v_1\partial_1^2v_\alpha$, $v_1\partial_1^2w_{2\alpha}$ on the level of $\mathsf{T}$. 
In this subsection, we introduce 
such a notion and argue that the forms constructed in Subsection \ref{SS:algo} are indeed
periodic in this sense, based on the assumption that $V_a$ is periodic in this sense and
that the functions $u$ and $\nu$ are plain periodic, and on the off-line assumption that the distributions
$f$ and $v_\alpha\diam\partial_1^2v_\alpha$ are (plain) periodic.
The so ensured (extended notion of) periodicity 
of $V_{a\diamond\partial_1^2u}^{\eta+\alpha-2}$ will be argued to imply 
the periodicity of the distribution $a\diam\partial_1^2u$ in Lemma \ref{rec_lem_concr},
which in turn is the input for Lemma \ref{int_lem_concr}.

\medskip

The additive group $\mathbb{Z}^2\ni k$ acts on space-time $\mathbb{R}^d\ni x$ via translation $k+x$.
Our model $v_+\in C^0(\mathbb{R}^2;\mathsf{T}_+)$, cf.~\eqref{j532}, 
is equi-variant under this action, provided
we let $\mathbb{Z}^2$ act on $\mathsf{T}_+$ via
\begin{align}\label{cw10}
(k+\mathsf{v}_+)_\beta=\left\{\begin{array}{rl}k_1\mathsf{v}_0+\mathsf{v}_\beta&\mbox{for}\;\beta=1\\
                                                 \mathsf{v}_\beta&\mbox{else}\end{array}\right\}.
\end{align}
Equi-variance means
\begin{align}\label{cw13}
v_+(k+x)=k+v_+(x),
\end{align}
and is a consequence of the (plain) periodicity of $v_\alpha$ and $w_{2\alpha}$, which in view of
their definitions \eqref{f21} and \eqref{f22} itself is a consequence of the periodicity of the distributions $f$ and 
$v_\alpha\diam\partial_1^2v_\alpha$, respectively. 

\medskip

We also note that our skeleton $\Ga{+}{}$, cf.~\eqref{cw12},
is covariant under the action \eqref{cw10}, meaning that
\begin{align}\label{cw11}
\Gamma_+(k+x)(k+\mathsf{v}_+)=\Gamma_+(x)\mathsf{v}_+,
\end{align}
a property we call periodicity.
Let us check \eqref{cw11} for the two interesting components $\beta=1,2\alpha$: For $\beta=1$
we have $\Gamma_1(k+x)(k+\mathsf{v})$ $\stackrel{\eqref{cw12}}{=}(k+\mathsf{v})_1-(k_1+v_1(x))(k+\mathsf{v})_0$
$\stackrel{\eqref{cw10}}{=}(k_1\mathsf{v}_0+\mathsf{v}_1)-(k_1+v_1(x))\mathsf{v}_0$ 
$\stackrel{\eqref{cw12}}{=}\Gamma_1(x)\mathsf{v}$. For $\beta=2\alpha$ we have
$\Gamma_{2\alpha}(k+x)(k+\mathsf{v})$ $\stackrel{\eqref{cw12},\eqref{cw10}}{=}
\mathsf{v}_{2\alpha}-\omega(x)(k_1\mathsf{v_0}+\mathsf{v}_1)
-v_\alpha(x)\partial_{a_0}\mathsf{v}_\alpha-(w_{2\alpha}(x)-v_\alpha(x)\otimes\partial_{a_0}v_\alpha(x)
-\omega(x)(k_1+v_1(x)))\mathsf{v}_0$
$\stackrel{\eqref{cw12}}{=}\Gamma_{2\alpha}(x)\mathsf{v}$. Here we used in addition the periodicity of $\omega$,
cf.~Lemma \ref{est_wwf_lemma}.

\medskip

We now turn to a modelled distribution $V_+$ on $(\mathsf{A}_+,\mathsf{T}_+)$, and 
call it periodic provided we have
\begin{align}\label{cw14}
V_+(k+x).(k+\mathsf{v}_+)=V_+(x).\mathsf{v}_+.
\end{align}
We claim that periodicity is preserved under {\it reduction}, cf.~Subsection \ref{S:red}.
Next to \eqref{cw11}, for preservation of periodicity under reduction we crucially need the
following property of the action \eqref{cw10}
\begin{align}\label{cw17}
(k+\mathsf{v}_+)+\mathsf{v}_+'=k+(\mathsf{v}_++\mathsf{v}'_+)\quad\mbox{provided}\;\mathsf{v}_0'=0.
\end{align}
By induction it is sufficient to check preservation of \eqref{cw14} under the one-step reduction 
\eqref{cut2} with $\overline{\beta}>0$.
Periodicity of $C_{\overline{\beta}}V_+$ is now easily seen:
$C_{\overline{\beta}}V_+(k+x).(k+\mathsf{v}_+)$
$\stackrel{\eqref{cut2}}{=}V_+(k+x).((k+\mathsf{v}_+)-\Gamma_{\overline{\beta}}(k+x)(k+\mathsf{v}_+))$ 
$\stackrel{\eqref{cw11}}{=}V_+(k+x).((k+\mathsf{v}_+)-\Gamma_{\overline{\beta}}(x)\mathsf{v}_+)$
$\stackrel{\eqref{cw17}}{=}V_+(k+x).(k+(\mathsf{v}_+-\Gamma_{\overline{\beta}}(x)\mathsf{v}_+))$
$\stackrel{\eqref{cw14}}{=}V_+(x).(\mathsf{v}_+-\Gamma_{\overline{\beta}}(x)\mathsf{v}_+)$
$\stackrel{\eqref{cut2}}{=}C_{\overline{\beta}}V_+(x).\mathsf{v}_+$.

\medskip

Applying this observation on periodicity and reduction
to the forms from Subsection \ref{SS:algo}, we obtain from our assumption of periodicity of
$V_a^\eta$ that also the two reductions $V_a^{\eta-\alpha}$ and $a=V_a^{\alpha}$, 
and thus also $\overline{V}_a^{\eta-\alpha}$, are periodic.
Hence with $u$ also the $V_u^{\eta-\alpha}$ defined in \eqref{j41} is periodic.
As a consequence, with $u$, $\nu$, $v_\alpha$, and $w_{2\alpha}$, also $V_u^\eta$ defined
in \eqref{s300bis} is periodic.
\medskip

Recall from Subsection \ref{fspc} that the structure $(\mathsf{A}_-,\mathsf{T}_-,\Gamma_-,v_-)$ arises from
$(\mathsf{A}_+,\mathsf{T}_+,\Gamma_+,v_+)$ functorially
by formal {\it differentiation} (i.~e.~applying $\partial_1^2$), which shifts homogeneities
by $-2$ while annihilating the homogeneities $0,1$.
In particular, $\Gamma_-$ is given by \eqref{cw12a},
and extending \eqref{cw10} to $\beta=\alpha-2,2\alpha-2$, we obtain \eqref{cw13} and \eqref{cw11} with
$v_+$ and $\Gamma_+$ replaced by $v_-$ and $\Gamma_-$, respectively. 
As a consequence, with $V_u^\eta$ also $V_{\partial_1^2u}^{\eta-2}$ is periodic, and then also
$V_{\partial_1^2u}^{\eta-\alpha-2}=C_{\eta-\alpha-2}V_{\partial_1^2u}^{\eta-2}$.

\medskip

We now turn to the {\it multiplication} of our structures $(\mathsf{A}_+,\mathsf{T}_+,\Gamma_+)$ and $(\mathsf{A}_-,\mathsf{T}_-,\Gamma_-)$
resulting in the product structure $(\mathsf{A},\mathsf{T},\Gamma)$,
cf.~Subsection \ref{S:mult}. The (continuous) action of $\mathbb{Z}^2$ on $\mathsf{T}_\pm$ naturally extends to
an action on the (topological) tensor product $\mathsf{T}=\mathsf{T}_+\otimes\mathsf{T}_-$, cf.~\eqref{j509}, 
and is easily seen to be given by
\begin{align}\label{cw18}
(k+\mathsf{v})_\beta=\left\{\begin{array}{rl}k_1\mathsf{v}_{ \alpha-2}+\mathsf{v}_\beta&\mbox{for}\;\beta= \alpha-1\\
                                             k_1\mathsf{v}_{2\alpha-2}+\mathsf{v}_\beta&\mbox{for}\;\beta=2\alpha-1\\
                                                \mathsf{v}_\beta&\mbox{else}\end{array}\right\}.
\end{align}
As it should, our model $v$, cf.~\eqref{j531}, is equi-variant:
\begin{align}\label{cw21}
v(k+\cdot)=k+v\quad\mbox{in the sense of distributions},
\end{align}
which is to be compared to \eqref{cw13}, and is
a consequence of the fact that the products $v_1\partial_1^2v_\alpha$,
$v_1\partial_1^2w_{2\alpha}$ of the smooth function $v_1$ and a distribution are
classically defined.
Since $\Gamma(x)=\Gamma_+(x)\otimes\Gamma_-(x)$, cf.~Lemma \ref{lem:prod_skel}, the periodicity
property \eqref{cw11} and the similar property for $\Gamma_-$ extends to $\Gamma$:
\begin{align}\label{cw15}
\Gamma(k+x)(k+\mathsf{v})=\Gamma(x)\mathsf{v}.
\end{align}
For the same reason, if a form $V_+$ on $(\mathsf{A}_+,\mathsf{T}_+)$ is periodic in the sense of \eqref{cw14}, 
and thus by the above remark on preservation of periodicity also 
$\overline{V}_+=V_+-C_\alpha V_+$, cf.~\eqref{j535}, 
and if a form $V_-$ on $(\mathsf{A}_-,\mathsf{T}_-)$ is periodic in the same sense, then 
$\overline{V}_+\otimes V_-$ is also periodic. 
%
%

\medskip

Considering now the {\it extended product structure} $(\oAa{},\oTa{},\oGa{}{})$, 
cf.~\eqref{j1102} and \eqref{j980}, and extending \eqref{cw18} also to $\kappa-\alpha$ $
=\eta-\alpha-2\not=\alpha-2,2\alpha-2$ as a definition, 
we learn from the periodicity of $V_-$ that also $\overline{\Gamma}$
is periodic. Moreover, $V.{\mathsf{u}_-\choose\mathsf{v}}$ 
$=(C_\alpha V).\mathsf{u}_-+\overline{V}_+\otimes V_-.\mathsf{v}$, cf.~\eqref{MS1}, is periodic. 

\medskip

Returning to {\it reduction}, this time for the extended product structure, we note that the analogue
of \eqref{cw17} reads  
\begin{align}\nonumber
(k+\mathsf{v})+\mathsf{v}'=k+(\mathsf{v}+\mathsf{v}')\quad\mbox{provided}\;\mathsf{v}_{\alpha-2}'=\mathsf{v}_{2\alpha-2}'=0.
\end{align}
Hence by the argument used above for reduction of a form $V_+$ on $(\mathsf{A}_+,\mathsf{T}_+)$, 
the reduction of a form $V$ on $(\oAa{},\oTa{})$ to levels above $2\alpha-2$ 
preserves periodicity. We will only use that for $\cut{\thresh-\alpha-2}$,
which is in this range.

\medskip

We finally argue that $\cut{\thresh-\alpha-2}V_{a\diamond\partial_1^2u}^{\eta+\alpha-2}$ functorially
defined in \eqref{S880} is periodic. As argued above in the context of differentiation, 
the above deduced periodicity of $V_u^{\eta}$ entails the one of  
$V_{\partial_1^2u}^{\eta-2}$ via \eqref{functDef}. As argued above in the context of the extended
product, the assumed periodicity of 
$V_a^\eta$ and the just established periodicity of $V_{\partial_1^2u}^{\eta-2}$ entails the one
of $V_{a\diamond \partial_1^2u}^{\eta+\alpha-2}$ defined in \eqref{S880}. As argued in
the context of reduction for $\oTa{}$, this 
in turn implies the periodicity of $\cut{\thresh-\alpha-2} V_{a\diamond\partial_1^2u}^{\eta+\alpha-2}$.

\medskip

Finally, we note that the function $\cut{\thresh-\alpha-2} V_{a\diamond\partial_1^2u}^{\eta+\alpha-2}.v_T$ appearing
in \eqref{eq:QualRO} in Lemma \ref{rec_lem_concr} is (plain) periodic. Indeed, \eqref{cw21}
implies $v_T(k+x)=k+v_T(x)$ in the classical sense (with values in $\mathsf{T}$), and 
which by the assumed periodicity of $u$ and thus $(\partial_1^2u)_T$ extends to $\oTa{}$.
Combining this with the periodicity of $\cut{\thresh-\alpha-2} V_{a\diamond\partial_1^2u}^{\eta+\alpha-2}$ in form of
$\cut{\thresh-\alpha-2} V_{a\diamond\partial_1^2u}^{\eta+\alpha-2}(k+x).(k+\mathsf{v})$ 
$=\cut{\thresh-\alpha-2} V_{a\diamond\partial_1^2u}^{\eta+\alpha-2}(x).\mathsf{v}$, we obtain the desired
$\cut{\thresh-\alpha-2} V_{a\diamond\partial_1^2u}^{\eta+\alpha-2}(k+x).v_T(k+x)$ 
$=\cut{\thresh-\alpha-2} V_{a\diamond\partial_1^2u}^{\eta+\alpha-2}(x).v_T(x)$. By \eqref{eq:QualRO} and $\eta+\alpha-2>0$,
this (plain) periodicity of $\cut{\thresh-\alpha-2} V_{a\diamond\partial_1^2u}^{\eta+\alpha-2}.v_T$ extends to $a\diam\partial_1^2u$.

\section{Proofs of Abstract Results}\label{6}
\subsection{Proof of Remarks \ref{wg01} and \ref{wg03}}\label{subsec_remarkPf}

\begin{proof}
We start with Remark \ref{wg01}. We fix $x,y$ and note that \eqref{prod5} is equivalent to
\begin{align*}
|(V(y)-V(x)).\Gamma^{-1}(x)\widetilde{\mathsf{v}}|\le [V]_{{\mathcal D}^{\overline{\alpha}}(\mathsf{T};\Gamma)}
d^{(\overline{\alpha}-\beta)\vee0}(y,x)\|\widetilde{\mathsf{v}}\|_{\mathsf{T}_\beta}
\end{align*}
for all $\beta\in\mathsf{A}$ and 
$\widetilde{\mathsf{v}}\in\mathsf{T}_\beta$. By \eqref{cw92}, the l.~h.~s.~may be rewritten as
$|\tilde V(y).\Gamma(y)\Gamma^{-1}(x)\widetilde{\mathsf{v}}-\tilde V(x).\widetilde{\mathsf{v}}|$.
Hence by duality, this is equivalent to \eqref{cw90}.

\medskip

We now turn to Remark \ref{wg03} and start by giving the argument for \eqref{j1003}.
%
%
By the second item in \eqref{j700}, the first identity for $\eta=\alpha$ follows from the definition
\eqref{cw98}. Hence by definition of $C_\eta$, cf.~Subsection \ref{S:red}, it is enough 
to check that for $\eta\in\mathsf{A}_+'$, the one-step-reduction \eqref{cut2} preserves 
$C_\eta V_+.v_+$. This in turn follows from $({\rm id}_{\mathsf{T}_+}-\Gamma_\eta)v_+=v_+$, which is a
consequence of the first item in \eqref{j700}. 

\medskip

By \eqref{j1003}, the l.~h.~s.~of \eqref{cw91} simplifies:
\begin{align*}
\lefteqn{(u_+(y)-u_+(x))-\overline{V}_+(x).(v_+(y)-v_+(x))}\nonumber\\
&\stackrel{\eqref{j535},\eqref{j1003}}{=}(V_+(y)-V_+(x)).v_+(y)-C_\alpha V_+(x).(v_+(y)-v_+(x))\nonumber\\
&\stackrel{\eqref{j700}}{=}(V_+(y)-V_+(x)).v_+(y),
\end{align*}
where the last identity uses the fact that $C_\alpha V_+.\mathsf{v}_+$ depends on $\mathsf{v}_+$
only through $\mathsf{1}$.
Hence by form continuity \eqref{prod5}, with the roles of $x$ and $y$ exchanged, we obtain
as desired
\begin{align*}
\lefteqn{|(u_+(y)-u_+(x))-\overline{V_+}(x).(v_+(y)-v_+(x))|}\nonumber\\
&\stackrel{\eqref{prod5}}{\le}[V_+]_{{\mathcal D}^{\overline{\alpha}}(\mathsf{T}_+;\Gamma_+)}
\sum_{\beta\in\mathsf{A}_+}d^{(\overline{\alpha}-\beta)\vee0}(x,y)
\|\Gamma_\beta(y)v_+(y)\|_{\mathsf{T}_\beta}\nonumber\\
&\stackrel{\eqref{j700}}{=}[V_+]_{{\mathcal D}^{\overline{\alpha}}(\mathsf{T}_+;\Gamma_+)}
d^{\overline{\alpha}}(x,y).
\end{align*}
\end{proof}
\subsection{Proposition \ref{rec_lem}}
\begin{proof}[Proof of Proposition \ref{rec_lem}]
\newcounter{RecP} 
\refstepcounter{RecP} 
 The strategy of the proof is to define $\mathcal{R}V$ as a limit in $C^{\betau}(\R^2)\subset \cals'(\R^2)$ via 
 \begin{align}\label{j1006}
 \mathcal{R}V:=\lim_{\tau \to 0}V.v_\tau.
 \end{align}
 The defining inequality \eqref{cw40} is obtained in the limit $\tau \to 0$ from an analogous inequality where $(\mathcal{R}V)_{T}$ is replaced by $(V.\tf_{\tau})_{T-\tau}$.  This leads us to analyze the quantity
 \begin{equation*}
 (V.v_{\tau})_{T-\tau}-V.\tf_{T},
 \end{equation*}
 which, by virtue of the semi-group property \eqref{1.10} may be written as a commutator 
 \begin{equation}
 [(\cdot)_{T-\tau},V].\tf_{\tau}. \label{se10}
 \end{equation}
 Here, and in the whole proof of Proposition \ref{rec_lem}, we use the commutator notation $[(\cdot),V]v:=(V.v)_T-V.v_T$ for a generic $v\in \mathcal{D}'(\R^{2};\Ta{})$. Steps \ref{rec_dya_St} to \ref{rec_fullest_St} are devoted to estimating \eqref{se10} with $\tau>0$ fixed under a slightly more restrictive assumption on the modelled distribution, while Step \ref{rec_conc_St} concludes with a compactness argument. This compactness result itself relies on an independent Step \ref{rec_comp_St}, while in step \ref{rec_large_dist}, we argue that the more restrictive assumption used in the first steps is justified.

 In terms of the properties of the modelled distribution $V$, Steps \ref{rec_dya_St} to \ref{rec_fullest_St} require only the form continuity condition \eqref{prod5}.  However, the boundedness condition \eqref{prod4} is required qualitatively for Step \ref{rec_conc_St}.

 \medskip
 
{\sc Step} \arabic{RecP}.\label{rec_dya_St}\refstepcounter{RecP} We claim that for all $\tau<T$, where $T$ is a dyadic multiple of $\tau$, the following identity holds
 \begin{align}\label{p05}
  [(\cdot)_{T-\tau},V].\tf_{\tau}=\sum_{\tau\le t< T}  \vp{[(\cdot)_t,V].\tf_{t}}_{T-2t},
 \end{align}
 where the sum runs over all dyadic multiples $t=\frac{T}{2}, \frac{T}{4},\cdots,\tau$.
 In fact, by the semi-group property \eqref{1.10} of the convolution kernel
 \begin{align*}
 \vp{[(\cdot)_t,V].\tf_{t}}_{T-2t}&=\vp{V.\tf_{t}}_{T-t}- \vp{V.v_{2t}}_{T-2t},
 \end{align*}
 which turns the right hand side of \eqref{p05} into a telescoping sum.
 
 \medskip
 
{\sc Step} \arabic{RecP}.\label{rec_est_St}\refstepcounter{RecP} We claim that for all $s\le t \le 1$ it holds
 \begin{align}\label{p04}
   \|[(\cdot)_s,V].\tf_{t}\|\lesssim [V]_{\cald^{\alphao}(\Ta{};\Ga{}{})} \sum_{\beta \in \Aa{}} (t^\frac14)^{\alphao},
 \end{align}
 where we recall that $\|\cdot\|$ denotes the supremum norm on $\R^2$.
 For clarity of exposition, we will give the proof of \eqref{p04} under the more restrictive assumption that the continuity property \eqref{prod5} holds for all $x,y \in \R^{2}$ (dropping the assumption that $d(x,y) \leq 1$).  In Step \ref{rec_large_dist}, we will give the (slightly more involved) argument without this additional hypothesis.
 
 \medskip
 
 Note that for each $x\in\R^2$
 \begin{align}\label{p03}
  [(\cdot)_s,V].\tf_{t}(x)=\int \psi_s(x-y) (V(y)-V(x)).\tf_{t}(y)  \dd y.
 \end{align}
Indeed, by the definition of the convolution $[(\cdot)_s,V].\tf_{t}(x)$ equals
 \begin{align*}
  \int \psi_s&(x-y) V(y).\tf_{t}(y)  \dd y - V(x).\big(\int \psi_s(x-y) v_{t}(y)  \dd y\big) \\
  &=\int \psi_s(x-y) V(y).\tf_{t}(y)  \dd y - \int \psi_s(x-y) V(x). \tf_{t}(y)  \dd y,
 \end{align*}
 which is \eqref{p03}.  Here, we used that for each $x$, the linear form $V(x)$ is continuous on $\Ta{}$ in order to interchange integration in $y$ with application of $V(x)$ in the above Bochner integral.
 
 \medskip
 
 To estimate \eqref{p03}, we will appeal to our simplifying assumption, that the form continuity condition \eqref{prod5} holds for all $x,y \in \R^{2}$.  We combine this with \eqref{cw04} to find
 \begin{align}
 |(&V(y)-V(x)).\tf_{t}(y)|\nonumber\\
 &\stackrel{\eqref{prod5}}{\lesssim} [V]_{\cald^{\alphao}(\Ta{};\Ga{}{})}\sum_{\beta\in \Aa{}} d^{\vp{\alphao-\beta} \vee 0}(y,x)\|\Ga{\beta}{}(y)\tf_{t}(y)\|_{\Ta{\beta}}\nonumber\\ 
 &\stackrel{\eqref{cw04}}{\le} [V]_{\cald^{\alphao}(\Ta{};\Ga{}{})} \sum_{\beta\in \Aa{}} d^{\vp{\alphao-\beta} \vee 0}(y,x)(t^{\frac14})^{\beta}\label{se2}.  
 \end{align}
 Indeed, note that the left hand side of \eqref{prod5} is symmetric in $x,y$, so applying \eqref{prod5} with $x,y$ interchanged and $\ta=\tf_{t}(y)$ gives the first inequality.
 Now we combine \eqref{p03}, \eqref{se2}, and the moment bounds \eqref{1.13} to the effect of
 \begin{align*}
  \int &|(V(y)-V(x)).\tf_{t}(y)| |\psi_s(x-y)| \dd y \\
   &\stackrel{\mathclap{\eqref{se2}}}{\le} \ [V]_{\cald^{\alphao}(\Ta{};\Ga{}{})} \sum_{\beta\in \Aa{}} \int d^{\vp{\alphao-\beta} \vee 0}(y,x)(t^{\frac14})^{\beta}  |\psi_s(x-y)|\dd y \\
   &\stackrel{\mathclap{\eqref{1.13}}}{\lesssim} [V]_{\cald^{\alphao}(\Ta{};\Ga{}{})} \sum_{\beta\in \Aa{}}  (s^\frac14)^{\vp{\alphao-\beta} \vee 0} (t^\frac14)^{\beta},
 \end{align*}
 which implies \eqref{p04} in light of $s \leq t\le 1$ and $\vp{\vp{\alphao-\beta} \vee 0}+\beta\ge \alphao$.
\medskip

{\sc Step} \arabic{RecP}.\label{rec_fullest_St}\refstepcounter{RecP}  We now claim that if $\tau<T$, then
 \begin{align}\label{p06}
  \|[(\cdot)_{T-\tau},V].\tf_{\tau}\|\lesssim [V]_{\cald^{\alphao}(\Ta{};\Ga{}{})} (T^\frac14)^{\alphao} .
 \end{align}
 Let us first assume that $T$ is a dyadic multiple of $\tau$.
 Observe that for a generic $g$
 \begin{align}\label{j760}
  \|g_{T}\|\lesssim \|g_t\| \quad \text{ for all } T\ge t.
 \end{align}
 Hence,
 \begin{align*}
  \|[(\cdot)_{T-\tau},V].\tf_{\tau}\|&\stackrel{\eqref{p05}}{\leq} \sum_{\tau\le t< T} \| \vp{[(\cdot)_t,V].\tf_{t}}_{T-2t} \| \\
  &\stackrel{\eqref{j760}}{\lesssim} \sum_{\tau\le t< T} \|[(\cdot)_t,V].\tf_{t} \| \stackrel{\eqref{p04}}{\lesssim} [V]_{\cald^{\alphao}(\Ta{};\Ga{}{})} \sum_{\tau\le t< T} (t^\frac14)^{\alphao},
 \end{align*}
 where in the last inequality, we have used \eqref{p04} applied with $s=t\le 1$.
 Since $\alphao>0$, the geometric sum converges and we obtain \eqref{p06}. If now $0<\tau<T$ is arbitrary, we find a $\ti T\in [\frac{T}{2},T)$ which is a dyadic multiple of $\tau$ (including the case $\tau=\ti T$). Since
 \begin{align*}
  [(\cdot)_{T-\tau},V].\tf_{\tau}=\big([(\cdot)_{\ti T-\tau},V].\tf_{\tau}\big)_{T-\ti T}+ [(\cdot)_{T-\ti T},V].\tf_{\ti T},
 \end{align*}
 we have by \eqref{j760}
 \begin{align*}
  \|[(\cdot)_{T-\tau},V].\tf_{\tau}\| \lesssim \|[(\cdot)_{\ti T-\tau},V].\tf_{\tau}\| + \|[(\cdot)_{T-\ti T},V].\tf_{\ti T}\|.
 \end{align*}
 Since $\ti T$ is a dyadic multiple of $\tau$, we can estimate the first contribution of the right-hand side by
 \begin{align*}
  \|[(\cdot)_{\ti T-\tau},V].\tf_{\tau}\|\lesssim [V]_{\cald^{\alphao}(\Ta{};\Ga{}{})} (\ti T^\frac14)^{\alphao} \lesssim [V]_{\cald^{\alphao}(\Ta{};\Ga{}{})} (T^\frac14)^{\alphao}.
 \end{align*}
 For the second contribution, we apply \eqref{p04} with $s=T-\ti T$ and $t=\ti T$, which is applicable since $T-\ti T\le \ti T$ in virtue of $\ti T\in [\frac{T}{2},T)$, to obtain
 \begin{align*}
   \|[(\cdot)_{T-\ti T},V].\tf_{\ti T}\|&\lesssim [V]_{\cald^{\alphao}(\Ta{};\Ga{}{})} (\ti T^\frac14)^{\alphao} 
   \lesssim [V]_{\cald^{\alphao}(\Ta{};\Ga{}{})} (T^\frac14)^{\alphao}.
 \end{align*}

 \medskip
 
{\sc Step} \arabic{RecP}.\label{rec_conc_St}\refstepcounter{RecP} In the final step, we carry out a compactness argument to complete the proof.  We begin by defining $\mathcal{R}V^\tau:= V.\tf_{\tau}$ and claim that for each $\tau>0$ and all $T \leq 1$

\begin{align}
\|(\mathcal{R}V^\tau)_{T}\|\lesssim \|V\|_{\cald^{\alphao}(\Ta{};\Ga{}{})}(T^\frac14)^{\betau} \label{se6}. 
\end{align}

To prove the claim, fix a $\tau$ and 
appeal to \eqref{p06}, which by the semi-group property takes the form
 \begin{align}\label{p07}
  \|(\mathcal{R}V^\tau)_{T-\tau}-V.\tf_{T}\|\lesssim [V]_{\cald^{\alphao}(\Ta{};\Ga{}{})} (T^\frac14)^{\alphao}.
 \end{align}
 Next we observe that using the form boundedness property \eqref{prod4}, $N\le 1$ and definition \eqref{cw04}, it follows that
 \begin{align}
 \|V.\tf_{T}\|&\stackrel{\eqref{prod4}}{\le} \bd{V}{\Ta{}}{\Ga{}{}}\sum_{\beta\in \Aa{}}\sup_{x}\|\Ga{\beta}{}(x) \tf_{T}(x)\|_{\Ta{\beta}} \nonumber\\
 &\stackrel{\eqref{cw04}}{\le}   \bd{V,\mu}{\Ta{}}{\Ga{}{}} \sum_{\beta \in \Aa{}} (T^{\frac{1}{4}})^{\beta}\lesssim \bd{V,\mu}{\Ta{}}{\Ga{}{}} (T^{\frac{1}{4}})^{\underline \beta} \label{se7},
 \end{align}  
 where we used that $T \leq 1$ and $\beta\ge \betau$ for all $\beta\in \Aa{}$ in the final step.  Hence, noting that $\|(\mathcal{R}V^\tau)_T\|\lesssim \|(\mathcal{R}V^\tau)_{T-\tau}\|$ in virtue of \eqref{j760}, the estimate \eqref{se6} now follows from \eqref{p07} and \eqref{se7} via the triangle inequality and the fact that $\alphao\ge \betau$.

 \medskip

 Hence, by relying on the independent Step \ref{rec_comp_St} on weak compactness below, we find a subsequence $\tau_n\to 0$ as $n\to\infty$ and a distribution $\mathcal{R}V \in C^{\betau}$  such that $\mathcal{R}V^{\tau_n} \rightharpoonup \mathcal{R}V$ in the sense of distributions.  Thus, it only remains to obtain the inequality \eqref{cw40}, which we claim follows from passing to the limit $\tau_n \to 0$ in \eqref{p07}.  Indeed, note that for $\tau\le \frac{T}{2}$ it holds
 \begin{align*}
 \| (\calr V^\tau)_{T-\tau}-(\calr V)_{T}\|&= \| \vp{\calr V^\tau-(\calr V)_{\tau}}_{T-\tau}\|\\
 &\stackrel{\mathclap{\eqref{j760}}}{\lesssim} \| \vp{\calr V^\tau-(\calr V)_{\tau}}_{\frac{T}{2}}\| \\
 &\le \| \vp{\calr V^\tau-\calr V}_{\frac{T}{2}}\| + \| ((\calr V)_{\frac{T}{2}})_\tau-(\calr V)_{\frac{T}{2}}\|,
 \end{align*}
 which tends to zero along the subsequence $\tau_n \to 0$.  By the triangle inequality, it follows that \eqref{p07} implies \eqref{cw40},
 which completes the proof.

\medskip
 
{\sc Step} \arabic{RecP}.\label{rec_comp_St}\refstepcounter{RecP}[Weak compactness] We claim that for any sequence $\{f_n\}_{n\uparrow\infty}
\subset{\mathcal S'}(\mathbb{R}^2)$ with bounded 
$\sup_{T\le 1}(T^\frac{1}{4})^{-\underline{\beta}}\|(f_n)_T\|$, there exists a subsequence
that converges in the sense of distributions. This will turn out to be a consequence of the estimate
\begin{align}\label{cw30}
\|(1+\partial_1^4-\partial_2^2)^{-1}f\|\lesssim \sup_{T\le 1}(T^\frac{1}{4})^{-\underline{\beta}}\|f_T\|.
\end{align}
Note that the Fourier symbol of $(1+\partial_1^4-\partial_2^2)^{-1}$ is $(1+k_1^4+k_2^2)^{-1}$
and thus has a well-decaying and moderately regular kernel; in particular, it is integrable,
so that it acts on Schwartz functions. As a consequence, $(1+\partial_1^4-\partial_2^2)^{-1}f$ makes sense
as a distribution, and the finiteness of the l.h.s. of \eqref{cw30} is to be understood in the sense
that this distribution is represented by an $L^\infty(\mathbb{R}^2)$ function.

\medskip

We first argue that \eqref{cw30} yields the desired weak compactness result.
Indeed, under our assumptions, \eqref{cw30} implies that $u_n:=(1+\partial_1^4-\partial_2^2)^{-1}f_n$ 
is bounded in $L^\infty(\mathbb{R}^2)$ so that there exists a subsequence $u_{n'}$ and a
$u\in L^\infty(\mathbb{R}^2)$ such that $u_{n'}$ converges to $u$ in the
weak-$*$ topology. Since $L^\infty(\mathbb{R}^2)\subset L_{loc}^1(\mathbb{R}^2)$, this implies
that $f_{n'}$ converges to $f:=(1+\partial_1^4-\partial_1^2)u$ in ${\mathcal D}(\mathbb{R}^2)$.

\medskip

We now turn to the proof of \eqref{cw30}. By definition of the convolution $(\cdot)_t$, cf.~Subsection \ref{S:pre},
we have $\partial_t f_t=-(\partial_1^4-\partial_2^2)f_t$, so that $\partial_t(1+\partial_1^4-\partial_2^2)^{-1}f_t$
$=-(1+\partial_1^4-\partial_2^2)^{-1}(\partial_1^4-\partial_2^2)f_t$. Since
$(1+\partial_1^4-\partial_2^2)^{-1}(\partial_1^4-\partial_2^2)$ 
$={\rm id}-(1+\partial_1^4-\partial_2^2)^{-1}$ and the operator $(1+\partial_1^4-\partial_2^2)^{-1}$
has integrable kernel (see above), this yields the estimate
\begin{align*}
\|\partial_t(1+\partial_1^4-\partial_2^2)^{-1}f_t\|\lesssim\|f_t\|.
\end{align*}
Since $\underline{\beta}>-4$, we obtain by integration over $t\in(\tau,1)$
\begin{align*}
\|(1+\partial_1^4-\partial_2^2)^{-1}f_\tau\|\lesssim\|(1+\partial_1^4-\partial_2^2)^{-1}f_1\|
+\sup_{t\le 1}(t^\frac{1}{4})^{-\underline{\beta}}\|f_t\|.
\end{align*}
Using once more the fact that $(1+\partial_1^4-\partial_2^2)^{-1}$ has integrable kernel to absorb
the first r.h.s.~term into the second,
this implies \eqref{cw30} in the limit $\tau\downarrow 0$, appealing to the lower semi-continuity of
the $\|\cdot\|$-norm, interpreted as an essential supremum, under distributional convergence.

\medskip

{\sc Step} \arabic{RecP}.\label{rec_large_dist}\refstepcounter{RecP}[Proof in the General Case] In this step, we address the modifications of Step \ref{rec_est_St} made necessary
by the fact that the continuity condition \eqref{prod5} on the modelled distribution
$V$ only holds for pairs of points $(x,y)$ with $d(y,x)\le 1$. In fact, we will establish that
for all $x,y\in\mathbb{R}^2$ and $\mathsf{v}\in\mathsf{T}$
\begin{align}\label{cw01}
|(V(y)-V(x)).\mathsf{v}|&\lesssim [V]_{{\mathcal D}^{\overline{\alpha}}(\mathsf{T};\Gamma)} (1+\|\Gamma\|_{sk})^{|\Aa{}|-1}
d^{2|\Aa{}|}(y,x)\sum_{\beta}\|\Gamma_\beta(y)\mathsf{v}\|_{\mathsf{T}_\beta}\\[-2ex]
&\mbox{provided}\;d(y,x)\ge 1,\nonumber
\end{align}
where we recall that $|\Aa{}|$ denotes the number of elements in $\Aa{}$
and $\lesssim$ stands for $\le C$ up to a constant only depending on $\Aa{}$ (and on $\overline{\alpha}$, which
is irrelevant here). 
In order to do so, we also
have to overcome the fact that the continuity condition \eqref{ssss2} on the skeleton $\Gamma$ only
holds for pairs of points $(x,y)$ with $d(y,x)\le 1$. In fact, we will establish that
for all $\beta\in \Aa{}$,  $x,y\in\mathbb{R}^2$, and $\mathsf{v}\in\mathsf{T}$
\begin{align}\label{cw02a}
\|\Gamma_\beta(x)\mathsf{v}\|_{\mathsf{T}_\beta}&\lesssim \big((1+\|\Gamma\|_{sk})d^2(y,x)\big)^{|\Aa{}|-1}
\sum_{\gamma\le\beta}\|\Gamma_\gamma(y)\mathsf{v}\|_{\mathsf{T}_\gamma}\\[-2ex]
&\mbox{provided}\;d(y,x)\ge 1.\nonumber
\end{align}
Before establishing \eqref{cw01} and \eqref{cw02a}, we first argue how \eqref{cw01} yields the outcome of Step \ref{rec_est_St},
which in this more general context takes the form of
\begin{align}\label{cw03}
\|[(\cdot)_s,V].v_t\|\lesssim (1+\|\Gamma\|_{sk})^{|\Aa{}|-1} [V]_{{\mathcal D}^{\overline{\alpha}}(\mathsf{T};\Gamma)}
(t^\frac{1}{4})^{\overline{\alpha}}.
\end{align}
Indeed, inserting our assumption \eqref{cw04} into \eqref{cw01} yields for all points $x$, $y$ with $d(y,x)\ge 1$
and all $t\le 1$
\begin{align}\label{cw46}
\lefteqn{|(V(y)-V(x)).v_t(y)|}\nonumber\\
&\lesssim (1+\|\Gamma\|_{sk})^{|\Aa{}|-1}  [V]_{{\mathcal D}^{\overline{\alpha}}(\mathsf{T};\Gamma)}
d^{2|\Aa{}|}(y,x)\sum_{\beta}(t^\frac{1}{4})^{\beta},
\end{align}
and thus trivially, recalling that $\underline{\beta}<0$ denotes the most negative exponent in $\Aa{}$,
\begin{align*}
\lefteqn{|(V(y)-V(x)).v_t(y)|}\nonumber\\
&\lesssim (1+\|\Gamma\|_{sk})^{|\Aa{}|-1}  [V]_{{\mathcal D}^{\overline{\alpha}}(\mathsf{T};\Gamma)}
d^{-\underline{\beta}+2|\Aa{}|+\overline{\alpha}}(y,x)(t^\frac{1}{4})^{\underline{\beta}-2|\Aa{}|},
\end{align*}
so that by \eqref{1.13}
\begin{align*}
\lefteqn{\int_{d(y,x)\ge 1}|(V(y)-V(x)).v_t(y)||\psi_s(x-y)|{\rm d}y}\\
&\lesssim (1+\|\Gamma\|_{sk})^{|\Aa{}|-1} [V]_{{\mathcal D}^{\overline{\alpha}}(\mathsf{T};\Gamma)}
(s^\frac{1}{4})^{-\underline{\beta}+2|\Aa{}|+\overline{\alpha}}
(t^\frac{1}{4})^{\underline{\beta}-2|\Aa{}|}\\
&\le(1+\|\Gamma\|_{sk})^{|\Aa{}|-1} [V]_{{\mathcal D}^{\overline{\alpha}}(\mathsf{T};\Gamma)}
(t^\frac{1}{4})^{\overline{\alpha}}.
\end{align*}
By the same argument as in \eqref{se2} of Step \ref{rec_est_St} we have
\begin{align*}
\int_{d(y,x)\le 1}|(V(y)-V(x)).v_t(y)||\psi_s(x-y)|{\rm d}y\lesssim [V]_{{\mathcal D}^{\overline{\alpha}}(\mathsf{T};\Gamma)}
(t^\frac{1}{4})^{\overline{\alpha}},
\end{align*}
so that by the representation \eqref{p03}, we obtain \eqref{cw03}.

\medskip

It is easy to see how \eqref{cw01} follows from \eqref{cw02a} and our assumption \eqref{prod5}:
Let $K\in\{2,3,\cdots\}$ be such that $K-1<d(y,x)\le K$ and 
divide the segment between $x$ and $y$ into $K^2$ intervals delimited by the
points $x=z_0,\cdots,z_{K^2}=y$. By definition, the $K^2+1$ adjacent points have distance $d(z_{k+1},z_k)\le 1$
(note that we need $K^2$ intervals instead of $K$ because of the square root in the definition of
the metric $d$, cf.~\eqref{1.11}), so that we may apply \eqref{prod5} on those. We have
\begin{align*}
\lefteqn{|(V(y)-V(x)).\mathsf{v}|\le\sum_{k=0}^{K^2-1}|(V(z_{k+1})-V(z_k)).\mathsf{v}|}\nonumber\\
&\stackrel{\eqref{prod5}}{\le}[V]_{{\mathcal D}^{\overline{\alpha}}(\mathsf{T};\Gamma)}
\sum_{k=0}^{K^2-1}\sum_\beta\|\Gamma_\beta(z_k)\mathsf{v}\|_{\mathsf{T}_\beta}\nonumber\\
&\stackrel{\eqref{cw02a}}{\lesssim}(1+\|\Gamma\|_{sk})^{|\Aa{}|-1} [V]_{{\mathcal D}^{\overline{\alpha}}(\mathsf{T};\Gamma)}
\sum_{k=0}^{K^2-1}d^{2(|\Aa{}|-1)}(y,z_k)\sum_{\beta}\sum_{\gamma\le\beta}
\|\Gamma_\gamma(y)\mathsf{v}\|_{\mathsf{T}_\gamma},
\end{align*}
which yields \eqref{cw01} because of $K^2\le (d(y,x)+1)^2\le 4d^2(y,x)$ and $d(y,z_k)\le d(y,x)$.

\medskip

We finally turn to the argument for \eqref{cw02a}. It is the nilpotent structure of the continuity condition
\eqref{ssss2} that saves the day. Appealing to the triangle inequality in $\mathsf{T}_\beta$, we use in the simpler form of
\begin{align*}
\|\Gamma_\beta(z_{k})\mathsf{v}\|_{\mathsf{T}_\beta}
\le\|\Gamma_\beta(z_{k+1})\mathsf{v}\|_{\mathsf{T}_\beta}+\|\Gamma\|_{sk}\sum_{\gamma<\beta}
\|\Gamma_\gamma(z_{k+1})\mathsf{v}\|_{\mathsf{T}_\gamma},
\end{align*}
applied to the previously constructed sequence $x=z_0,\cdots,z_{K^2}=y$ of $K^2+1$ points, the adjacent
of which have distance at most one. By iterating this inequality we obtain
\begin{align*}
\|\Gamma_\beta(x)\mathsf{v}\|_{\mathsf{T}_\beta}
\le\sum_{\gamma=\epsilon_0\le\cdots\le\epsilon_{K^2}=\beta}\|\Gamma\|_{sk}^{|\Aa{}(\gamma,\beta)|-1}
\|\Gamma_\gamma(y)\mathsf{v}\|_{\mathsf{T}_\gamma},
\end{align*}
where $|\Aa{}(\gamma,\beta)|$ denotes the number of elements of $\Aa{}\cap [\gamma,\beta]$. Note that an increasing sequence 
$\gamma=\epsilon_0\le\cdots\le\epsilon_{K^2}=\beta$ with values in $\Aa{}\cap[\gamma,\beta]$ 
is parameterized by the (not-strictly)
increasing sequence of length $|\Aa{}(\gamma,\beta)|-1$ of positions where it jumps (by one increment).
Since there are $K^2$ possible positions for these jumps (namely parameterized by the $K^2$ intervals), 
the number of such sequences is estimated by $(K^2)^{|\Aa{}(\gamma,\beta)|-1}$. Since trivially,
$|\Aa{}(\gamma,\beta)|\le|\Aa{}|$ and $K^2\le 4d^2(y,x)$, we obtain \eqref{cw02a}.
\end{proof}
\subsection{Lemma \ref{cutting_lemma}}\label{S:cutPf}
\begin{proof}[Proof of Lemma \ref{cutting_lemma}]
Before turning to the proof, we complete the definition of the reduction $C_\eta V$.
To this purpose, we enumerate the homogeneities $\{\beta_j\}_{j=1}^J$ in increasing order, so that $\beta_1=\underline{\beta}$
and $\beta_J=\overline{\beta}$. Note that $C_{\beta_J}V$ has already been defined in \eqref{cut2}. The reductions 
$C_{\beta_j}V$ for $j\in\{1,\cdots,J-1\}$ are recursively defined through
\begin{align}\label{cutind}
C_{\beta_j}V.\mathsf{v}:=C_{\beta_{j+1}}V.({\rm id}_\mathsf{T}-\Gamma_{\beta_{j}})\mathsf{v},
\end{align}
where we view the operator $\Gamma_{\beta_{j}}$ as an element of ${\mathcal L}(\mathsf{T},\mathsf{T})$ with the understanding that $(\Ga{\beta_{j}}{}(x)\ta)_{\gamma}=0$ for $\gamma \neq \beta_{j}$.  For intermediate values $\eta\in(\beta_j,\beta_{j+1}]$ we just set
\begin{align}\label{eq:cutExt}
C_\eta V:=C_{\beta_{j+1}}V.
\end{align}

\medskip

\newcounter{Cutprf} 
\refstepcounter{Cutprf} 
{\sc Step} \arabic{Cutprf}.\label{Cutprf_i}\refstepcounter{Cutprf} Form boundedness \eqref{S70a} in case of $\eta=\overline{\beta}$. We start by noting that
the triangular structure \eqref{j551} implies
\begin{align}\label{cw78}
\Gamma_\beta({\rm id}-\Gamma_\eta)=\left\{\begin{array}{cc}
\Gamma_\beta&\mbox{for}\;\beta<\eta\\
0&\mbox{for}\;\beta=\eta
\end{array}\right\},
\end{align}
where we write for abbreviation ${\rm id}={\rm id}_{\mathsf{T}}$.  We use this in the string of inequalities
\begin{align*}
|C_\eta V.\mathsf{v}|&\stackrel{\mathclap{\eqref{cut2}}}{=}|V.({\rm id}-\Gamma_\eta)\mathsf{v}| 
\stackrel{\mathclap{\eqref{wg16}}}{\le}
\|V\|_{\Ta{};\Ga{}{}}\sum_{\beta}N^{\ap{\beta}}\|\Gamma_\beta({\rm id}-\Gamma_\eta)\mathsf{v}\|_{\Ta{\beta}} \\ &\stackrel{\mathclap{\eqref{cw78}}}{=} \,\|V\|_{\mathsf{T};\Gamma}\sum_{\beta<\eta}N^{\ap{\beta}} \|\Gamma_\beta\mathsf{v}\|_{\Ta{\beta}}
\le\|V\|_{\Ta{};\Ga{}{}}\sum_{\beta}N^{\ap{\beta}}\|\Gamma_\beta\mathsf{v}\|_{\Ta{\beta}},
\end{align*}
which gives \eqref{S70a} in view of \eqref{wg16}.

\medskip

{\sc Step} \arabic{Cutprf}.\label{Cutprf_ii}\refstepcounter{Cutprf} Form continuity \eqref{eq:cutLemmaBd1} in case of $\eta=\overline{\beta}$. More precisely, we claim
\begin{align}\label{cw80}
[C_\eta V]_{{\mathcal D}^{\overline{\alpha}\wedge\eta}(\mathsf{T};\Gamma)}
\le[V]_{{\mathcal D}^{\overline{\alpha}}(\mathsf{T};\Gamma)}+N^{\ap{\eta}}\|\Gamma\|_{sk}
\|V\|_{\mathsf{T};\Gamma}.
\end{align}
To this purpose, we give ourselves two points $x,y$ with $d(y,x)\le 1$ and write
\begin{align}\label{cw81}
\lefteqn{(C_\eta V(y)-C_\eta V(x)).\mathsf{v}}\nonumber\\
&\stackrel{\eqref{cut2}}{=}(V(y)-V(x)).({\rm id}-\Gamma_\eta(x))\mathsf{v}\,-\,V(y).(\Gamma_\eta(y)-\Gamma_\eta(x))\mathsf{v},
\end{align}
so that
\begin{align*}
\lefteqn{|(C_\eta V(y)-C_\eta V(x)).\mathsf{v}|}\nonumber\\
&\stackrel{\eqref{prod5},\eqref{wg16}}{\le}
[V]_{{\mathcal D}^{\overline{\alpha}}(\mathsf{T};\Gamma)}\sum_{\beta}d^{(\overline{\alpha}-\beta)\vee0}(y,x)
\|\Gamma_\beta(x)({\rm id}-\Gamma_\eta(x))\mathsf{v}\|_{\mathsf{T}_\beta}\nonumber\\
&+\|V\|_{\mathsf{T};\Gamma}\sum_{\beta}N^{\ap{\beta}}\|\Gamma_\beta(y)(\Gamma_\eta(y)-\Gamma_\eta(x))\mathsf{v}\|_{\mathsf{T}_\beta}\\
&\stackrel{\eqref{cw78},\eqref{j551}}{=}
[V]_{{\mathcal D}^{\overline{\alpha}}(\mathsf{T};\Gamma)}\sum_{\beta<\eta}d^{(\overline{\alpha}-\beta)\vee0}(y,x)
\|\Gamma_\beta(x)\mathsf{v}\|_{\mathsf{T}_\beta}\nonumber\\
&+N^{\ap{\eta}}\|V\|_{\mathsf{T};\Gamma}\|(\Gamma_\eta(y)-\Gamma_\eta(x))\mathsf{v}\|_{\mathsf{T}_\eta}\\
&\stackrel{\eqref{ssss2}}{\le}
[V]_{{\mathcal D}^{\overline{\alpha}}(\mathsf{T};\Gamma)}\sum_{\beta<\eta}d^{(\overline{\alpha}-\beta)\vee0}(y,x)
\|\Gamma_\beta(x)\mathsf{v}\|_{\mathsf{T}_\beta}\nonumber\\
&+N^{\ap{\eta}}\|\Gamma\|_{sk}\|V\|_{\mathsf{T};\Gamma}
\sum_{\beta<\eta}d^{\eta-\beta}(y,x)\|\Gamma_\beta(x)\mathsf{v}\|_{\mathsf{T}_\beta}.
\end{align*}
Since $(\overline{\alpha}-\beta)\vee 0,\eta-\beta\ge(\overline{\alpha}\wedge\eta-\beta)\vee 0$ and $d(y,x)\le 1$
we obtain
\begin{align*}
\lefteqn{|(C_\eta V(y)-C_\eta V(x)).\mathsf{v}|}\nonumber\\
&{\le}
([V]_{{\mathcal D}^{\overline{\alpha}}(\mathsf{T};\Gamma)}+N^{\ap{\eta}}\|\Gamma\|_{sk}\|V\|_{\mathsf{T};\Gamma})
\sum_{\beta<\eta}d^{(\overline{\alpha}\wedge\eta-\beta)\vee 0}(y,x)\|\Gamma_\beta(x)\mathsf{v}\|_{\mathsf{T}_\beta},
\end{align*}
which is the detailed version of \eqref{cw80}.

\medskip

{\sc Step} \arabic{Cutprf}.\label{Cutprf_iii}\refstepcounter{Cutprf} General $\eta$. We first treat the case of $\eta=\beta_j$ for some $j\in\{1,\cdots,J-1\}$.
By Step \ref{Cutprf_i} and Step \ref{Cutprf_ii} applied with $V$ replaced by $C_{\beta_{j+1}}V$ (and $\overline{\alpha}$ replaced by
$\overline{\alpha}\wedge\beta_{j+1}$) we obtain by our inductive
definition \eqref{cutind}
\begin{align}
\|C_{\beta_j}V\|_{\Ta{};\Ga{}{}}&\le \|C_{\beta_{j+1}}V\|_{\Ta{};\Ga{}{}},\label{S77a}\\
[C_{\beta_j}V]_{{\mathcal D}^{\overline{\alpha}\wedge\beta_j}(\mathsf{T};\Gamma)}&\le
[C_{\beta_{j+1}}V]_{{\mathcal D}^{\overline{\alpha}\wedge\beta_{j+1}}(\mathsf{T};\Gamma)} \nonumber \\
&\quad +N^{\ap{\beta_j}}\|\Gamma\|_{sk}\|C_{\beta_{j+1}}V\|_{\mathsf{T};\Gamma}.\label{eq:cpit}
\end{align}
Iterating these two inequalities and using $N\le 1$ yields \eqref{S70a} \& \eqref{eq:cutLemmaBd1} for all $\eta\in \Aa{}$.
The extension to arbitrary $\eta\in[\underline{\beta},\overline{\beta}]$, say $\eta\in(\beta_j,\beta_{j+1}]$
for some $j\in\{1,\cdots,J-1\}$, uses definition \eqref{eq:cutExt}, with \eqref{S70a} being immediate and
\eqref{eq:cutLemmaBd1} following as a result of the monotonicity
of the semi-norm in the exponent, i.~e.
$[\cdot]_{{\mathcal D}^{\overline{\alpha}\wedge\eta}(\mathsf{T};\Gamma)}$
$\le [\cdot]_{{\mathcal D}^{\overline{\alpha}\wedge\beta_{j+1}}(\mathsf{T};\Gamma)}$.

\medskip

{\sc Step} \arabic{Cutprf}.\label{Cutprf_iv}\refstepcounter{Cutprf} Proof of \eqref{S8300} and \eqref{new}.
Recalling \eqref{cutind}, arguing by induction (the base case following as in Step \ref{Cutprf_i}), we find that for all $x \in \R^{2}$ and $\mathsf{v} \in \Ta{}$ it holds for each $\beta_{j} \in \mathsf{A}$
\begin{align*}
\left |C_{\beta_{j}}V(x).\mathsf{v} \right | \leq \|V\|_{\Ta{};\Ga{}{}}\sum_{\beta \in [\underline{\beta},\beta_{j}) \cap \mathsf{A}} N^{\ap{\beta}}\|\Ga{\beta}{}(x)\mathsf{v} \|_{\Ta{\beta}}.
\end{align*}
Re-writing \eqref{cutind} as
\begin{align*}
\left (C_{\beta_{j+1}}-C_{\beta_{j}} \right)V(x).\mathsf{v}&=C_{\beta_{j+1}}V(x).\Ga{\beta_{j}}{}(x)\mathsf{v} 
\end{align*}
we find that
\begin{align*}
| \left (C_{\beta_{j+1}}-C_{\beta_{j}} \right)V(x).\mathsf{v} | &\stackrel{\mathclap{\eqref{S70a}}}{\leq} \|V\|_{\Ta{};\Ga{}{}} \sum_{\beta <\beta_{j+1}} N^{\ap{\beta}} \|\Ga{\beta}{}(x)\Ga{\beta_{j}}{}(x)\mathsf{v} \|_{\Ta{\beta}}\\
&= \|V\|_{\Ta{};\Ga{}{}} N^{\ap{\beta_j}} \|\Ga{\beta_j}{}(x)\mathsf{v} \|_{\Ta{\beta_j}},
\end{align*}
since $\Ga{\beta}{}\Ga{\beta_{j}}{}$ vanishes for $\beta < \beta_{j}$.  Let $\eta< \kappa$ and choose natural numbers $k,m$ such that $\eta \in (\beta_{k-1},\beta_{k}]$, $\kappa \in (\beta_{k+m-1},\beta_{k+m}]$.  By telescoping and the triangle inequality we find 
\begin{align*}
 |&(C_{\kappa}-C_{\eta})V(x).\mathsf{v}  | =\left |(C_{\beta_{k+m}}-C_{\beta_{k}})V(x).\mathsf{v} \right | \\
&\le \sum_{j=k}^{k+m-1}\left |\left (C_{\beta_{j+1}}-C_{\beta_{j}} \right)V(x).\mathsf{v} \right |  \leq \|V\|_{\Ta{};\Ga{}{}}\sum_{j=k}^{k+m-1} N^{\ap{\beta_j}}\|\Ga{\beta_{j}}{}(x)\mathsf{v} \|_{\Ta{\beta_j}}  \\
&= \|V\|_{\Ta{};\Ga{}{}}\sum_{\beta \in [\eta,\kappa) \cap \mathsf{A}} N^{\ap{\beta}} \|\Ga{\beta}{}(x)\mathsf{v} \|_{\Ta{\beta}},
\end{align*}
which corresponds to \eqref{S8300}. 

\medskip

For the proof of \eqref{new} we observe that $(\id-\cut{\eta})V.\ta=(\id-\cut{\betao})V.\ta+(\cut{\betao}-\cut{\eta})V.\ta$. For the first term, we appeal to \eqref{cut2} to estimate
\begin{align*}
 |(\id-\cut{\betao})V(x).\ta|&=|V(x).\Ga{\betao}{}(x)\ta| \le \|V\|_{\Ta{};\Ga{}{}}\sum_{\beta\in\Aa{}} N^{\ap{\beta}}\|\Ga{\beta}{}(x)\Ga{\betao}{}(x)\ta\|_{\Ta{\beta}} \\
 &\stackrel{\mathclap{\eqref{j551}}}{=}\|V\|_{\Ta{};\Ga{}{}} N^{\ap{\betao}}\|\Ga{\betao}{}(x)\ta\|_{\Ta{\betao}}.
\end{align*}
Using \eqref{S8300} with $\betao$ playing the role of $\kappa$, we estimate the second term by
\begin{align*}
 |(\cut{\betao}-\cut{\eta})V(x).\ta|\le \|V\|_{\Ta{};\Ga{}{}} \sum_{\beta \in [\eta,\betao) \cap \mathsf{A}} N^{\ap{\beta}} \|\Ga{\beta}{}(x)\mathsf{v} \|_{\Ta{\beta}},
\end{align*}
and summing the two contributions gives \eqref{new}.

\medskip

{\sc Step} \arabic{Cutprf}.\label{Cutprf_vi}\refstepcounter{Cutprf} Proof of Remark \ref{lem:ff}. In this step, we indicate the changes that 
under the assumption \eqref{cw550} lead to the improved outcome \eqref{cw51}.
We first point out the modifications in Step \ref{Cutprf_i}. We claim that
\begin{align}\label{jj20}
|C_\eta V.\mathsf{v}|\le M_*^b N^{\ap{\beta_*}} \|\Ga{\beta_*}{}\ta\|_{\Ta{\beta_*}} + M^b\sum_{\beta<\eta} N^{\ap{\beta}}\|\Gamma_\beta\mathsf{v}\|_{\Ta{\beta}}.
\end{align}
Indeed, appealing to \eqref{cw78} and the assumption \eqref{cw550}, we obtain
\begin{align*}
|C_\eta V.\mathsf{v}|&\stackrel{\mathclap{\eqref{cut2}}}{=}|V.({\rm id}-\Gamma_\eta)\mathsf{v}| \\
&\stackrel{\mathclap{\eqref{cw550}}}{\le}
M_*^b N^{\ap{\beta_*}} \|\Ga{\beta_*}{}({\rm id}-\Gamma_\eta)\ta\|_{\Ta{\beta_*}} + M^b\sum_{\beta\ne \beta_*}N^{\ap{\beta}}\|\Gamma_\beta({\rm id}-\Gamma_\eta)\mathsf{v}\|_{\Ta{\beta}} \\ &\stackrel{\mathclap{\eqref{cw78}}}{=} \, M_*^b N^{\ap{\beta_*}} \|\Ga{\beta_*}{}\ta\|_{\Ta{\beta_*}} + M^b\sum_{\beta<\eta,\beta\ne \beta_*}N^{\ap{\beta}}\|\Gamma_\beta\mathsf{v}\|_{\Ta{\beta}},
\end{align*}
which gives \eqref{jj20}.

\medskip

We now turn to the changes in Step \ref{Cutprf_ii}.
%
Appealing to \eqref{cw550}, the second term in \eqref{cw81} is now controlled as follows
\begin{align*}
\left|V(y).(\Gamma_\eta(y)-\Gamma_\eta(x))\mathsf{v}\right| \le M^b N^{\ap{\eta}}\|\Gamma\|_{sk}
\sum_{\beta<\eta}d^{\eta-\beta}(y,x)\|\Gamma_\beta(x)\mathsf{v}\|_{T_\beta},
\end{align*}
which directly leads to the improved version of \eqref{cw80}
\begin{align}\label{cw56}
[C_\eta V]_{{\mathcal D}^{\overline{\alpha}\wedge\eta}(\mathsf{T};\Gamma)}
\le[V]_{{\mathcal D}^{\overline{\alpha}}(\mathsf{T};\Gamma)}+N^{\ap{\eta}}\|\Gamma\|_{sk}
M^b.
\end{align}
%

\medskip

Finally, we turn to the changes in Step \ref{Cutprf_iii}, i.~e.~the treatment of general cutting level $\eta$. According to \eqref{jj20} and \eqref{cw56}
we have
\begin{align*}
[C_{\beta_j} V]_{{\mathcal D}^{\overline{\alpha}\wedge\beta_j}(\mathsf{T};\Gamma)}
\le[\cut{\beta_{j+1}}V]_{{\mathcal D}^{\overline{\alpha}\wedge\beta_{j+1}}(\mathsf{T};\Gamma)}+N^{\ap{\beta_j}}\|\Gamma\|_{sk}
M^b.
\end{align*}
Hence we iteratively obtain \eqref{cw51} for
$\eta=\beta_j$ in view of $N\le 1$. For $\eta\in(\beta_{j-1},\beta_j]$ the inequality \eqref{cw51} is unaffected thanks
to $d(y,x)\le 1$ once again.

\medskip

{\sc Step} \arabic{Cutprf}.\label{Cutprf_v}\refstepcounter{Cutprf} Proof of Remark \ref{wg04}. By the above inductive argument, we may reduce to the case of
$\eta=\overline{\beta}_+$. Rewriting the r.~h.~s.~of \eqref{cw94} as 
$\tilde V.\mathsf{v}-\tilde V.\mathsf{v}_\eta$, and substituting $\mathsf{v}$ by $\Gamma\mathsf{v}$,
we note that by definition \eqref{cw92} we have to show $C_\eta V.\mathsf{v}$ 
$=V.\mathsf{v}-\tilde V.\Gamma_\eta\mathsf{v}$. By definition \eqref{cut2} the l.~h.~s.~turns into
$V.({\rm id}_{\mathsf{T}_+}-\Gamma_\eta)\mathsf{v}$, so that we have to show
$V.\Gamma_\eta \mathsf{v}$ $=\tilde V.\Gamma_\eta\mathsf{v}$. Again by definition \eqref{cw92} 
this assumes the form $\tilde V.\Gamma\Gamma_\eta \mathsf{v}$ $=\tilde V.\Gamma_\eta\mathsf{v}$. This
in turn follows
from $\Gamma\Gamma_\eta=\Gamma_\eta$, which is a consequence of the maximality of $\eta=\overline{\beta}_+$
and of the triangular structure \eqref{j551}.
\end{proof}
\subsection{Lemmas \ref{lem:prod_skel} \& \ref{lem:Multiplication} and Corollary \ref{lem:ExMult}}\label{S:multPr}

In this section, we will prove the assertions of Section \ref{S:mult}. 

\begin{proof}[Proof of Lemma \ref{lem:prod_skel}]
Identity \eqref{Prod9} is an immediate consequence of \eqref{j551}.
Bilinearity and property \eqref{j522} of the operator norm on tensor spaces ensure continuity of $\tGa{}{}$ with respect to $x$, that is $\tGa{}{}\in C^0(\R^2;\call(\tTa{};\tTa{}))$. We now observe that the triangular structure of the skeletons $\Ga{+}{}$ and $\Ga{-}{}$ transmits to $\tGa{}{}$, that is, it holds $\tGa{\tbeta}{\tgamma}=0$ for $\tbeta<\tgamma$ and $\tGa{\tbeta}{\tbeta}=\id_{\tTa{\tbeta}}$. Indeed, if $\tbeta\in\tAa{}$ and $(\betap,\betam)\neq (\gammap,\gammam)$ satisfy $\betap+\betam=\gammap+\gammam=\tbeta$, then $\Ga{\betap}{\gammap}\otimes \Ga{\betam}{\gammam}=0$, since $\betap<\gammap$ or $\betam<\gammam$.
Let us show that the product skeleton $\Ga{}{}$ satisfies the continuity property \eqref{ssss2}. We decompose $\ta=(\ta_{\gammap,\gammam})_{\gammap,\gammam}\in\tTa{}$ according to \eqref{j1009} and fix $\tbeta\in\tAa{}$ and $x,y\in\R^2$ with $d(y,x)\le 1$.
Writing $[\Ga{}{}]:=\tGa{}{}(y)-\tGa{}{}(x)$ and similarly for $\Ga{\betap}{\gammap}$ and $\Ga{\betam}{\gammam}$, we have
\begin{align}
\|&([\tGa{}{}]\tta)_{\tbeta}\|_{\tTa{\tbeta}}  = \Big\|\sum_{\tgamma}[\tGa{\tbeta}{\tgamma}] \tta_{\tgamma}\Big\|_{\tTa{\tbeta}}\nonumber\\
&\stackrel{\eqref{Prod9}}{=}\sum_{\betap+\betam=\beta}\Big\|\sum_{\gammap,\gammam} \big([\Ga{\betap}{\gammap}]\otimes [\Ga{\betam}{\gammam}]  + [\Ga{\betap}{\gammap}]\otimes \Ga{\betam}{\gammam}(x) \nonumber \\
& \qquad \qquad \qquad \qquad \qquad \qquad + \Ga{\betap}{\gammap}(x)\otimes [\Ga{\betam}{\gammam}]\big) \tta_{\gammap,\gammam}\Big\|_{\Ta{\betap}\otimes\Ta{\betam}} \nonumber \nonumber \\
&\le \sum_{\betap+\betam=\beta}\Big\|\sum_{\gammap,\gammam} \big([\Ga{\betap}{\gammap}]\otimes [\Ga{\betam}{\gammam}]\big)\tta_{\gammap,\gammam}\Big\|_{\Ta{\betap}\otimes\Ta{\betam}} \nonumber \\
&\quad + \sum_{\betap+\betam=\beta}\Big\|\sum_{\gammap,\gammam} \big([\Ga{\betap}{\gammap}]\otimes \Ga{\betam}{\gammam}(x)\big)\tta_{\gammap,\gammam}\Big\|_{\Ta{\betap}\otimes\Ta{\betam}} \nonumber \\
&\quad + \sum_{\betap+\betam=\beta}\Big\|\sum_{\gammap,\gammam} \big(\Ga{\betap}{\gammap}(x)\otimes [\Ga{\betam}{\gammam}]\big) \tta_{\gammap,\gammam}\Big\|_{\Ta{\betap}\otimes\Ta{\betam}}. \label{j1007}
\end{align}
In order to estimate the first term on the right-hand side, we apply \eqref{j522} for fixed $(\betap,\betam)$ with
\begin{align}
\begin{split}
 \aop \ta_{+}&:=\sum_{\gammap<\betap}(\Ga{\betap}{\gammap}(y)-\Ga{\betap}{\gammap}(x))\ta_{\gammap}, \\
 \bop \ta_{-}&:=\sum_{\gammam<\betam}(\Ga{\betam}{\gammam}(y)-\Ga{\betam}{\gammam}(x))\ta_{\gammam}, \\
 \taop \ta_{+}&:= \|\Ga{+}{}\|_{sk}\Big(d^{\betap-\gammap}(y,x)\Ga{\gammap}{}(x)\ta_{+}\Big)_{\gammap<\betap}, \\
 \tbop \ta_{-}&:= \|\Ga{-}{}\|_{sk}\Big(d^{\betam-\gammam}(y,x)\Ga{\gammam}{}(x)\ta_{-}\Big)_{\gammam<\betam},
\end{split}\label{j1501}
\end{align}
and spaces $\Ta{\betap}\mix \Ta{\betam}$ and $\big(\bigoplus_{\gammap<\betap} \Ta{\gammap}\big)\mix \big(\bigoplus_{\gammam<\betam}\Ta{\gammam}\big)$, which by \eqref{ssss2} satisfy \eqref{j525} and \eqref{j526}, to the effect of
\begin{align*}
\sum_{\betap+\betam=\beta}&\Big\|\sum_{\gammap,\gammam}\big([\Ga{\betap}{\gammap}]\otimes [\Ga{\betam}{\gammam}]\big) \tta_{\gammap,\gammam}\Big\|_{\Ta{\betap}\otimes\Ta{\betam}} \\
&\stackrel{\eqref{j522}}{\le} 
  \|\Ga{+}{}\|_{sk}\|\Ga{-}{}\|_{sk}\sum_{\betap+\betam=\beta}\sum_{\gammap<\betap,\gammam< \betam}d^{\betap-\gammap}(y,x) d^{\betam-\gammam}(y,x) \\
  &\qquad \qquad \qquad \times \Big\|\sum_{\epsp, \epsm}\big(\Ga{\gammap}{\epsp}(x)\otimes \Ga{\gammam}{\epsm}(x)\big)\tta_{\epsp,\epsm}\Big\|_{\Ta{\gammap}\otimes\Ta{\gammam}}\\
&\stackrel{\eqref{Prod9}}{\le}  \|\Ga{+}{}\|_{sk}\|\Ga{-}{}\|_{sk}\sum_{\tgamma<\tbeta}d^{\tbeta-\tgamma}(y,x)  \|\Ga{\tgamma}{}(x)\tta\|_{\Ta{\gamma}}.
\end{align*}
It remains to estimate the last two lines in \eqref{j1007}. Appealing to symmetry, we restrict ourselves to the last line and apply \eqref{j522} for fixed $(\betap,\betam)$ with $\bop$ and $\tbop$ as in \eqref{j1501},
\begin{align*}
 \aop \ta_{+}&:=\taop \ta_{+}:=\sum_{\gammap}\Ga{\betap}{\gammap}(x)\ta_{\gammap},
\end{align*}
and spaces $\Ta{\betap}\mix \Ta{\betam}$ and $\bigoplus_{\gammam<\betam} \Ta{\betap}\mix \Ta{\gammam}$, which trivially satisfies \eqref{j525} and as before by \eqref{ssss2} satisfies \eqref{j526}, to the effect of
\begin{align}
\sum_{\betap+\betam=\beta}&\Big\|\sum_{\gammap,\gammam}\big(\Ga{\betap}{\gammap}(x)\otimes [\Ga{\betam}{\gammam}]\big) \tta_{\gammap,\gammam}\Big\|_{\Ta{\betap}\otimes\Ta{\betam}} \nonumber \\
&\stackrel{\mathclap{\eqref{j522}}}{\le} 
  \|\Ga{-}{}\|_{sk}\sum_{\betap+\betam=\beta}\sum_{\gammam< \betam}d^{\betam-\gammam}(y,x) \nonumber \\
  &\qquad \qquad \qquad \times \Big\|\sum_{\gammap, \epsm}\big(\Ga{\betap}{\gammap}(x)\otimes \Ga{\gammam}{\epsm}(x)\big)\tta_{\gammap,\epsm}\Big\|_{\Ta{\betap}\otimes\Ta{\gammam}}. \nonumber
\end{align}
Writing $\betam-\gammam=\tbeta-(\betap+\gammam)$ and observing that trivially it holds
\begin{align*}
&\setc{(\betap,\gammam)\in \Aa{+}\times \Aa{-}}{\exists \betam\in\Aa{-} \text{ with } \betap+\betam=\beta \text{ and } \gammam<\betam} \\
&\quad  \subset \setc{(\betap,\gammam)\in \Aa{+}\times \Aa{-}}{\exists \gamma<\beta \text{ with } \betap+\gammam=\gamma},
\end{align*}
we may continue the string of inequalities with
\begin{align}
&\le  \|\Ga{-}{}\|_{sk}\sum_{\tgamma< \tbeta}\sum_{\betap+\gammam=\tgamma}d^{\tbeta-\tgamma}(y,x) \nonumber \\
  &\qquad \qquad \qquad \times \Big\|\sum_{\gammap, \epsm}\big(\Ga{\betap}{\gammap}(x)\otimes \Ga{\gammam}{\epsm}(x)\big)\tta_{\gammap,\epsm}\Big\|_{\Ta{\betap}\otimes\Ta{\gammam}}\nonumber \\
&\stackrel{\mathclap{\eqref{Prod9}}}{=} \|\Ga{-}{}\|_{sk}\sum_{\tgamma<\tbeta} d^{\tbeta-\tgamma}(y,x)\|\tGa{\tgamma}{}(x))\tta\|_{\tTa{\tgamma}}. \nonumber 
\end{align}
Combining the above estimates, we obtain that $\Ga{}{}$ is a skeleton on $(\Aa{},\Ta{})$ satisfying \eqref{j1200}.

\medskip

Finally, we turn to the proof of \eqref{neededlater1}. In view of \eqref{wg16}, it suffices to show for $\tta\in\Ta{}$
\begin{align}
 |V_{{+}}(x) \otimes & V_{{-}}(x).\tta  | \leq \|V_+\|_{\Ta{+};\Ga{+}{}}\|V_-\|_{\Ta{-};\Ga{-}{}}\sum_{\tbeta} N^{\ap{\beta}}\|\tGa{\beta}{}(x)\tta\|_{\Ta{\beta}}. \label{M1}
\end{align}
This is achieved by property \eqref{j522} of the tensor-space norm: Set 
\begin{align*}
 \aop \ta_{+}&:=V_{{+}}(x).\ta_{+}, \quad \taop \ta_{+}:=  \|V_+\|_{\Ta{+};\Ga{+}{}}\vp{N^{\ap{\betap}_+}\Ga{\betap}{}(x)\ta_{+}}_{\betap\in\Aa{+}}, \\
 \bop \ta_{-}&:=V_{{-}}(x).\ta_{-}, \quad \tbop \ta_{-}:=  \|V_-\|_{\Ta{-};\Ga{-}{}}\vp{N^{\ap{\betam}_-}\Ga{\betam}{}(x)\ta_{-}}_{\betam\in\Aa{-}}.
\end{align*}
Then \eqref{j525} and \eqref{j526} are satisfied by the boundedness of $V_{{+}}$ and $V_{{-}}$, respectively, and the outcome \eqref{j522} turns into \eqref{M1} by definition \eqref{Prod9} and property \eqref{wg10} combined with $N\le 1$.
\end{proof}

%
%
%
%
%
\begin{proof}[Proof of Lemma \ref{lem:Multiplication}]
First, we argue that $\oGa{}{}$ is indeed a skeleton on $(\oAa{},\oTa{})$: The triangular representation is fulfilled in view of $\kappa-\alpha>\betamo$ and the continuity property \eqref{ssss2} is fulfilled on level $\kappa-\alpha$ in virtue of $V_{u_{-}}\in \cald^{\alphamo}(\Tminus; \Gamma_{-})$, so that \eqref{prod5} multiplied by $1/N_*$ turns into \eqref{ssss2} by using again $\kappa-\alpha>\betamo$ to argue that the maximum appearing in \eqref{prod5} is not effective. Clearly, this argument also shows the estimate \eqref{j1004}.

Now, in order to show $V_{u_+\diamond u_-}\in \cald^{\kappa}(\oTa{};\oGa{}{})$ and for the sake of readability, let us introduce the abbreviations
\begin{align*}
 \Mpb&:=N^{\min_{\beta \in \Aa{+}'}\ap{\beta}}\bd{\overline{V}_{u_{+}}}{\Tplus}{\Ga{+}{}}, && \Mmb:=N^{\min_{\beta \in \Aa{-}}\ap{\beta}}\bd{V_{u_{-}}}{\Tminus}{\Ga{-}{}}, \\
 \Mpc&:=[V_{u_{+}}]_{\cald^{\alphapo}(\Tplus;\Ga{+}{})}, && \Mmc:=[V_{u_{-}}]_{\cald^{\alphamo}(\Tminus;\Ga{-}{})}.
\end{align*}
We first verify the form boundedness, cf.\@ \eqref{prod4}, of $V_{u_{+}\diamond u_{-}}$.
Note that by definition \eqref{MS1} we have for any $x\in\R^2$ and $(\mathsf{u}_{-},\tta)\in\oTa{}$
\begin{align*}
 V_{u_{+}\diamond u_{-}}(x).
 \begin{pmatrix}
  \mathsf{u}_{-} \\
  \tta
 \end{pmatrix}
  =u_{+}(x)(\mathsf{u}_{-}-V_{u_{-}}(x).\tta_{-}) + V_{u_{+}}(x)\otimes V_{u_{-}}(x).\tta,
\end{align*}
where $\ta_{-}:=(\ta_{0,\betam})_{\betam\in\Aa{-}}$, cf.\@ \eqref{j1009}. 
Thus, \eqref{MS2} follows in view of \eqref{neededlater1}, \eqref{j980}, $\oTa{\kappa-\alpha}=\frac{1}{N_*}\R$ and $\ap{\kappa-\alpha}=0$.

\medskip

We now turn to the form continuity statement in \eqref{MS3}. Towards this end, we fix points $x,y$ with $d(y,x) \le 1$ and use the definition \eqref{MS1} together with the discrete product rule to find the identity
\begin{align}
&\big (V_{u_{+} \diamond u_{-}}(y)-V_{u_{+} \diamond u_{-}}(x) \big).
\begin{pmatrix}
\mathsf{u_{-}}\\
\tta
\end{pmatrix}
\nonumber\\
&\quad =\big (u_{+}(y)-u_{+}(x)\big)\big (\mathsf{u}_{-}-V_{u_{-}}(x).\ta_{-} \big) \nonumber \\
&\quad -\big (u_{+}(y)-u_{+}(x)\big)\big (V_{u_{-}}(y)-V_{u_{-}}(x)\big).\ta_{-} \nonumber \\
&\quad + \overline{V}_{u_{+}}(x) \otimes (V_{u_{-}}(y)-V_{u_{-}}(x)).\tta \nonumber \\ 
&\quad + (V_{u_{+}}(y)-V_{u_{+}}(x)) \otimes V_{u_{-}}(y) .\tta\label{M5}.
\end{align}
The contribution in the first line of the right hand side is estimated by
\begin{align*}
N_*[u_{+}]_\alpha d^{\alpha}&(y,x)\frac{1}{N_*}\left |\mathsf{u}_{-}-V_{u_{-}}(x).\ta_{-}\right |,
\end{align*}
which falls by $\Ta{\kappa-\alpha}=\frac{1}{N_*}\R$ and \eqref{j980} under the right-hand side of \eqref{MS3}. For the second line on the right hand side of \eqref{M5} we find
\begin{align*}
[u_{+}]_\alpha d^{\alpha}(y,x)|(V_{u_{-}}(y)-V_{u_{-}}(x)).\ta_{-}|.
\end{align*}
 Hence, by definition \eqref{prod5} of form continuity of $V_{u_-}$ and our abbreviations $\ta_{-}=(\ta_{0,\betam})_{\betam\in\Aa{-}}\in \Tminus$, we estimate this by
\begin{align*}
[u_{+}]_\alpha & \Mmc\sum_{\betam}d^{\alpha+(\alphamo-\betam) \vee 0}(y,x)\|\Ga{\betam}{}(x)\ta_{-}\|_{\Ta{\betam}} \\
&\le [u_{+}]_\alpha \Mmc\sum_{\betam}d^{\vp{\kappa-\betam}\vee 0}(y,x)\|\Ga{\betam}{}(x)\ta_{-}\|_{\Ta{\betam}} \\
&\stackrel{\mathclap{\eqref{Prod9}}}{\le} [u_{+}]_\alpha \Mmc\sum_{\beta}d^{\vp{\kappa-\beta}\vee 0}(y,x)\|\Ga{\tbeta}{}(x)\tta\|_{\Ta{\beta}},
\end{align*}
where in the first inequality, we have used $\alpha+(\alphamo-\betam) \vee 0 \stackrel{\alpha\ge 0}{\ge} (\alpha+\alphamo-\betam) \vee 0 \stackrel{\eqref{j982}}{\ge} (\kappa-\betam) \vee 0$. Thus, in view of the definition \eqref{S301} of $M_{u_+\diamond u_-}^c$, this term falls under the right-hand side of \eqref{MS3}.
We now estimate the contribution from the third line of the right hand side in \eqref{M5} by applying \eqref{j522} to
\begin{align*}
 \aop \ta_{+}&:=\overline{V}_{u_{+}}(x).\ta_{+}, \\
 \bop \ta_{-}&:=(V_{u_{-}}(y)-V_{u_{-}}(x)).\ta_{-}, \\
 \taop \ta_{+}&:= \Mpb\vp{\Ga{\betap}{}(x)\ta_{+}}_{\betap\in\Aa{+}'}, \\
 \tbop \ta_{-}&:= \Mmc\vp{d^{\vp{\alphamo-\betam} \vee 0}(y,x)\Ga{\betam}{}(x)\ta_{-}}_{\betam\in\Aa{-}}.
\end{align*}
 The reduction assertion \eqref{new} with $\alpha$ playing the role of $\eta$ together with $N\le 1$ ensure that the input condition \eqref{j525} is satisfied with $\oplus_{\betap\in\Aa{+}}\Ta{\betap}$ playing the role of $\Ta{\epsp}$, whereas \eqref{j526} with $\oplus_{\betam\in\Aa{-}}\Ta{\betam}$ playing the role of $\Ta{\epsm}$ follows from the form continuity of $V_{u_{-}}$. Hence, for $\ta=(\ta_{\gammap,\gammam})_{\gammap,\gamma}\in\Ta{}$, cf.\@ \eqref{j1009},
 we learn from \eqref{j522}
\begin{align*}
\big| \overline{V}_{u_{+}}&(x) \otimes (V_{u_{-}}(y)-V_{u_{-}}(x)).\tta \big|\\
& \leq \Mpb \Mmc \sum_{\betap>0,\beta_{-}} d^{\vp{\overline{\alpha}_{-}-\betam} \vee 0}(y,x) \\
&\qquad \qquad \times \Big\|\sum_{\gammap,\gammam}\big(\Ga{\beta_{+}}{\gamma_{+}}(x)\otimes \Ga{\beta_{-}}{\gamma_{-}}(x)\big)\tta_{\gamma_{+},\gamma_{-}}\Big\|_{\Ta{\betap}\mix\Ta{\betam}} \\
& \leq \Mpb \Mmc \sum_{\tbeta}\sum_{\substack{\betap,\betam \\ \beta_{+}+\beta_{-}=\tbeta}} d^{\vp{\kappa-\tbeta} \vee 0}(y,x) \\
&\qquad \qquad \times \Big\|\sum_{\gammap,\gammam}\big(\Ga{\beta_{+}}{\gamma_{+}}(x)\otimes \Ga{\beta_{-}}{\gamma_{-}}(x)\big)\tta_{\gamma_{+},\gamma_{-}}\Big\|_{\Ta{\betap}\mix\Ta{\betam}} \\
& \stackrel{\eqref{Prod9}}{=} \Mpb \Mmc \sum_{\tbeta} d^{\vp{\kappa-\tbeta} \vee 0}(y,x) \|\tGa{\tbeta}{}(x)\tta\|_{\tTa{\tbeta}},
\end{align*}
where for the second inequality, we have used $-\betam= \betap-\tbeta \stackrel{\eqref{Prod1}}{\ge} \alpha-\beta$ and $(\alphamo+\alpha-\tbeta)\vee 0\stackrel{\eqref{j982}}{\ge} (\kappa-\beta)\vee 0$. Again, the right-hand side falls under \eqref{MS3}. The estimate for the last line in \eqref{M5} follows analogously:
Namely, we choose
\begin{align*}
 \aop \ta_{+}&:=(V_{u_{+}}(x)-V_{u_{+}}(y)).\ta_{+}, \\
 \bop \ta_{-}&:=V_{u_-}(y).\ta_{-}, \\
 \taop \ta_{+}&:= \Mpc\vp{d^{\vp{\overline{\alpha}_{+}-\beta_{+}} \vee 0}(y,x)\Ga{\betap}{}(x)\ta_{+}}_{\betap\in\Aa{+}}, \\
 \tbop \ta_{-}&:=(\Mmb+\Mmc)\vp{\Ga{\betam}{}(x)\ta_{-}}_{\betam\in\Aa{-}}.
\end{align*}
Then the input condition \eqref{j525} is satisfied by the continuity of $V_{u_{+}}$. Due to $N\le 1$ and  $d(y,x)\le 1$, the input condition \eqref{j526} is fulfilled in virtue of
\begin{align*}
 |V_{u_-}(y).\ta_{-}|&\le |V_{u_-}(x).\ta_{-}| + |(V_{u_-}(x)-V_{u_-}(x)).\ta_{-}| \\
 &\le (\Mmb+\Mmc)\sum_{\betam\in\Aa{-}}\|\Ga{\betam}{}(x)\ta_{-}\|_{\Ta{\betam}}.
\end{align*}
Thus, from \eqref{j522} we obtain
\begin{align*}
\big| (V_{u_{+}}&(y)-V_{u_{+}}(x)) \otimes V_{u_-}(y).\tta\big |\\
& \leq \Mpc (\Mmb+\Mmc) \sum_{\beta_{+},\beta_{-}} d^{\vp{\overline{\alpha}_{+}-\betap} \vee 0}(y,x) \\
&\qquad \qquad \times \Big\|\sum_{\gammap,\gammam}\big(\Ga{\beta_{+}}{\gamma_{+}}(x)\otimes \Ga{\beta_{-}}{\gamma_{-}}(x)\big)\tta_{\gamma_{+},\gamma_{-}}\Big\|_{\Ta{\betap}\mix\Ta{\betam}} \\
& \leq \Mpc (\Mmb+\Mmc)\sum_{\tbeta}\sum_{\substack{\betap,\betam \\ \beta_{+}+\beta_{-}=\tbeta}} d^{\vp{\kappa-\tbeta} \vee 0}(y,x) \\
&\qquad \qquad \times \Big\|\sum_{\gammap,\gammam}\big(\Ga{\beta_{+}}{\gamma_{+}}(x)\otimes \Ga{\beta_{-}}{\gamma_{-}}(x)\big)\tta_{\gamma_{+},\gamma_{-}}\Big\|_{\Ta{\betap}\mix\Ta{\betam}} \\
& \stackrel{\mathclap{\eqref{Prod9}}}{=} \Mpc (\Mmb+\Mmc)\sum_{\tbeta} d^{\vp{\kappa-\tbeta} \vee 0}(y,x) \|\sum_{\gamma}\tGa{\tbeta}{\tgamma}(x)\tta_{\tgamma}\|_{\tTa{\tbeta}},
\end{align*}
where in the second inequality, we have used $-\betap=\betam-\tbeta \stackrel{\eqref{j1000}}{\ge} \betamu - \tbeta$ and $(\alphapo+\betamu-\tbeta) \vee 0 \stackrel{\eqref{j982}}{\ge} (\kappa-\beta)\vee 0$. Therefore, also this contribution falls under the right hand side of \eqref{MS3}, which concludes the proof.
\end{proof}
\begin{proof}[Proof of Corollary \ref{lem:ExMult}]
We begin with the proof of \eqref{j772}, which we will obtain from the abstract reconstruction result, Proposition \ref{rec_lem},
via the multiplication results in Lemma \ref{lem:Multiplication}.
Towards this end, we first note that by \eqref{j770}, \eqref{j771} and by construction of $(\oAa{},\oTa{})$ and $\oGa{}{}$, condition \eqref{cw04} is fulfilled for the distribution $(u_-,\tf_+\diam\tf_-)\in \cald'(\R^2;\oTa{})$. Moreover, by Lemma \ref{lem:Multiplication}, $V_{u_{+} \diamond u_{-}}$ has the continuity and boundedness properties \eqref{MS2} and \eqref{MS3}, thus \eqref{cw51} in Remark \ref{lem:ff} implies
\begin{align*}
 [\cut{\kappa}V_{u_{+} \diamond u_{-}}]_{\cald^{\kappa}(\oTa{};\oGa{}{})}\le [V_{u_{+} \diamond u_{-}}]_{\cald^{\kappa}(\oTa{};\oGa{}{})} + (|\oAa{}|-1)N^{\min_{\beta\ge \kappa}\ap{\beta}}\|\oGa{}{}\|_{sk}M_{u_+\diamond u_-}^b.
\end{align*}
With these observations at hand, we are now justified in applying Proposition \ref{rec_lem} to obtain a distribution
\begin{equation*}
u_{+} \diamond u_{-}:=\mathcal{R}\cut{\kappa}V_{u_{+} \diamond u_{-}}
\end{equation*} 
such that \eqref{cw40} holds, which translates directly to \eqref{j772}.

\medskip

We now turn our attention to the sub-optimal bound \eqref{S870}.  First observe that since $\kappa>\kappa-\alpha$, the definition \eqref{cutind} ensures that for all $(\mathsf{u}_{-},\tta) \in \Ta{}$, the following identity holds:
\begin{equation}\label{S1102}
C_{\kappa}V_{u_{+} \diamond u_{-}}.
\begin{pmatrix}
\mathsf{u}_{-} \\
\tta\\ 
\end{pmatrix}
=u_{+}\mathsf{u}_{-}+C_{\kappa}\big(\overline{V}_{u_{+}} \otimes V_{u_{-}}\big).\tta.
\end{equation}
Hence, we may recast \eqref{j772} as 
\begin{align*}
&\|(u_{+} \diamond u_{-})_{T}-u_{+}(u_{-})_{T}-C_{\kappa}( \overline{V}_{u_{+}} \otimes V_{u_{-}}).(\tf_{+} \diamond \tf_{-})_{T} \| \nonumber\\
&\quad \lesssim M_{u_{+} \diamond u_{-},\kappa}\big (T^{\frac{1}{4}} \big )^{\kappa}.
\end{align*}
Moreover, observe that for each $\eta<\kappa$, we may appeal to \eqref{S8300} to deduce
\begin{align*}
\|(C_{\eta}-C_{\kappa})&\big(\overline{V}_{u_{+}}\otimes V_{u_{-}}\big).(v_{{+}} \diamond v_{{-}})_{T}\|  \nonumber\\
&\leq \|\overline{V}_{u_{+}} \otimes V_{u_{-}}\|_{\Ta{};\Ga{}{}}\sum_{\beta \in [\eta,\kappa)}N^{\ap{\beta}}\sup_{x\in\R^2}\|\Ga{\beta}{}(x)(v_{+} \diamond v_{-})_{T}\|_{\Ta{\beta}} \nonumber \\
& \stackrel{\eqref{neededlater1},\eqref{j770}}{\lesssim} N^{\min_{\beta\ge \eta}\ap{\beta}}\|\overline{V}_{u_{+}}\|_{\Ta{+};\Ga{+}{}}\|V_{u_{-}}\|_{\Ta{};\Ga{-}{}} (T^{\frac{1}{4}})^{\eta}.
\end{align*}
Using the triangle inequality, we may combine the two last estimates to obtain \eqref{S870}.
\end{proof}
\subsection{Proposition \ref{int lem}}

\begin{proof}[Proof of Proposition \ref{int lem}]
 \newcounter{PDE_P} 
\refstepcounter{PDE_P} 

The proof follows closely the one of Lemma 5 in \cite{OtW16}. For the convenience of the reader, we include the argument here.

\medskip

{\sc Step} \arabic{PDE_P}.\label{pde_scale_St}\refstepcounter{PDE_P} We claim that for all base points $x$ and scales $T^\frac14$, $R$ and $L$ with $R\ll L$ it holds
\begin{align}\label{p46}
 \inf_{\ell}\|U_{T}(x,\cdot)-\ell\|_{B_R(x)} &\lesssim \left(\frac{R}{L}\right)^2\inf_{\ell}\|U_{T}(x,\cdot) - \ell\|_{B_L(x)}\\ 
 &+ L^2 \overline{M}\sum_{\beta\in \Aa{}}\Tc{\beta-2}L^{\kappa-\beta}\nonumber,
\end{align}
where the infimum runs over all affine functions $\ell$, by which we mean functions of the form $\ell(y)=\nu y_{1}+c$ for some constants $\nu$ and $c$.  Towards this end, we define a decomposition $U_T(x,\cdot)=u_{<}(\cdot) + u_{>}(\cdot)$ by setting $u_{>}$ to be the (decaying and in particular non-periodic) solution to
\begin{align*}
 (\partial_2 - a(x)\partial_1^2)u_{>} = I(B_L(x)) \left((\partial_2-a(x)\partial_1^2)U_T(x,\cdot)-c_>\right),
\end{align*}
where $I(B_L(x))$ is the characteristic function of $B_L(x)$, and where $c_{>}$ is (near) optimal in the estimate \eqref{KS1}.  Observe that on $B_L(x)$ it holds
\begin{align}\label{KS131}
 (\partial_2 - a(x)\partial_1^2)u_{<} =c_>.
\end{align}
By standard estimates for the heat equation and \eqref{KS1} we have
\begin{align}\label{KS400}
 \|u_{>}\|_{B_L(x)}&\lesssim L^2 \|(\partial_2 - a(x)\partial_1^2)U_T(x,\cdot)-c_>\|_{B_L(x)}\\
 &\le L^2  \overline{M}\sum_{\beta\in\Aa{}}(T^{\frac{1}{4}})^{\beta-2}L^{\kappa-\beta}\nonumber, 
\end{align} 
together with
\begin{equation}\label{KS5}
 \|\{\partial_1^2,\partial_2\}u_{<}\|_{B_R(x)} \lesssim  L^{-2} \|u_{<}- \ell_{>}\|_{B_L(x)}
\end{equation}
for any affine $\ell_{>}$, where we used that $R\le L$. In fact, \eqref{KS5} is slightly non-standard due to the presence of a constant $c_{>}$ on the right-hand side of \eqref{KS131}. However, as observed in \cite{OtW16}, this can be reduced to the case $c_{>}=0$: First of all, we note that replacing $u_{<}$ by $u_{<}-\ell_{<}$, we may assume that $\ell_{<}=0$. Testing \eqref{KS131} with a cut-off function on $B_{L}$ that is smooth on scale $L$, we learn that $|c_{>}|\le L^2\|u_{<}\|_{B_L}$. We then may replace $u_{<}$ by $u_{<}+c_{>}y_2$ which reduces the further estimate to the standard case $c_{>}=0$. Next we define a concrete affine function $\ell_{<}$ via $\ell_{<}(y):=u_{<}(x)+\partial_1 u_{<}(x)(y-x)_{1}$ and observe that using Taylor's formula and \eqref{KS5} gives for any $\ell_{>}$
\begin{align*}
 \|u_{<}-\ell_{<}\|_{B_R(x)}&\lesssim R^2\|\{\partial_1^2,\partial_2\} u_{<}\|_{B_R(x)} \\
 & \stackrel{\mathclap{\eqref{KS5}}}{\lesssim} \left(\frac{R}{L}\right)^2 \|u_{<}-\ell_{>}\|_{B_L(x)} \\
 & \le \left(\frac{R}{L}\right)^2 \|U_T(x,\cdot)-\ell_{>}\|_{B_L(x)} + \|u_{>}\|_{B_L(x)}.
\end{align*}
Combining this observation with \eqref{KS400} gives
\begin{align*}
 \|U_T(x,\cdot)-&\ell_{<}\|_{B_R(x)} \le \|u_{>}\|_{B_R(x)} + \|u_{<}-\ell_{<}\|_{B_R(x)} \\
 & \lesssim \left(\frac{R}{L}\right)^2 \|U_T(x,\cdot)-\ell_{>}\|_{B_L(x)} + \|u_{>}\|_{B_L(x)} \\
 & \lesssim \left(\frac{R}{L}\right)^2 \|U_T(x,\cdot)-\ell_{>}\|_{B_L(x)} + L^2 \overline{M}\sum_{\beta\in\Aa{}}(T^{\frac{1}{4}})^{\beta-2}L^{\kappa-\beta},
\end{align*}
which implies \eqref{p46}.

\medskip

{\sc Step} \arabic{PDE_P}.\label{pde_conv_St}\refstepcounter{PDE_P} We claim that for all base points $x$ and all scales $T^\frac14$ and $L$ it holds
\begin{align}\label{p49}
 \|U_T(x,\cdot)-U(x,\cdot)\|_{B_L(x)} \lesssim M_{U} \Tc{\kappa}+\overline{\overline{M}}\sum_{\beta} L^{\beta}\Tc{\kappa-\beta}.
\end{align}
We give an argument that is also suitable for Step 5, when we approximate by semi-group convolution $(\cdot)_\tau$. 
We fix a point $x,y$ and convolution parameters $t>\tau\ge0$ and write
\begin{align*}
\lefteqn{\partial_tU_t(x,y)=\int U(x,z)\partial_t\psi_t(y-z){\rm d}z}\nonumber\\
&=\int U(y,z)\partial_t\psi_t(y-z){\rm d}z+\int (U(x,z)-U(y,z))\partial_t\psi_t(y-z){\rm d}z\nonumber\\
&\stackrel{\eqref{1.10}}{=}\int U_\tau(y,z)\partial_t\psi_{t-\tau}(y-z){\rm d}z
+\int (U(x,z)-U(y,z))\partial_t\psi_t(y-z){\rm d}z.
\end{align*}
Since by \eqref{wg50} we have in particular $\int\ell(z)\partial_t\psi_t(y-z){\rm d}z=0$
for any affine function $\ell$, we may rewrite the above identity as
\begin{align*}
\lefteqn{\partial_tU_t(x,y)
=\int (U_\tau(y,z)-U_\tau(y,y)-\nu^{\tau}(y)(z-y)_1)\partial_t\psi_{t-\tau}(y-z){\rm d}z}\nonumber\\
&+\int (U(x,z)-U(y,z)-U(x,y)+U(y,y)-\gamma(x,y)(z-y)_1)\partial_t\psi_t(y-z){\rm d}z,
\end{align*}
(where the reader should think of $\tau=0$ till Step 5)
so that by \eqref{KS2} and the definition of $M_{U_\tau}$ we obtain the inequality
\begin{align*}
|\partial_tU_t(x,y)|
&\le M_{U_\tau}\int d^{\kappa}(z,y)|\partial_t\psi_{t-\tau}(y-z)|{\rm d}z\nonumber\\
&+\overline{\overline{M}}\sum_{\beta\in\Aa{}}d^{\beta}(x,y)\int d^{\kappa-\beta}(z,y)|\partial_t\psi_t(y-z)|{\rm d}z.
\end{align*}
Hence in view of \eqref{wg50} and \eqref{1.13}, we obtain the estimate
\begin{align*}
|\partial_tU_t(x,y)|
\lesssim M_{U_\tau}((t-\tau)^\frac{1}{4})^{\kappa-4}
+\overline{\overline{M}}\sum_{\beta\in\Aa{}}d^{\beta}(y,x)(t^\frac{1}{4})^{\kappa-\beta-4}.
\end{align*}
Integrating over $t\in(\tau,T+\tau)$, using once more \eqref{1.10},
this yields because of $\kappa-\beta>0$ (and $\kappa>0$) that
\begin{align*}
\lefteqn{|((U_\tau)_T-U_\tau)(x,y)|}\nonumber\\
&\lesssim M_{U_\tau}(T^\frac{1}{4})^{\kappa}
+\overline{\overline{M}}\sum_{\beta\in\Aa{}}d^{\beta}(y,x)(((T+\tau)^\frac{1}{4})^{\kappa-\beta}-(\tau^\frac{1}{4})^{\kappa-\beta}).
\end{align*}
Since $((T+\tau)^\frac{1}{4})^{\kappa-\beta}-(\tau^\frac{1}{4})^{\kappa-\beta}$ $\lesssim (T^\frac{1}{4})^{\kappa-\beta}$, this
simplifies to the desired
\begin{align}\label{wg51}
|((U_\tau)_T-U_\tau)(x,y)|
\lesssim M_{U_\tau}(T^\frac{1}{4})^{\kappa}
+\overline{\overline{M}}\sum_{\beta\in\Aa{}}d^{\beta}(y,x)(T^\frac{1}{4})^{\kappa-\beta},
\end{align}
and implies in particular \eqref{p49} in the special case $\tau=0$.

\medskip

{\sc Step} \arabic{PDE_P}.\label{pde_normeq_St}\refstepcounter{PDE_P} We claim the norm equivalence
\begin{align}\label{3.8}
 M_{U} \sim M_{U}',
\end{align}
where we have set
\begin{align}\label{3.6}
 M_{U}':=\sup_{x\in\R^2} \sup_{R\leq 4} R^{-\kappa}\inf_\ell \|U(x,\cdot)-\ell\|_{B_R(x)},
\end{align}
and where $\sim$ means that both inequalities with $\lesssim$ and $\gtrsim$ are true.
We first argue that the $\ell$ in \eqref{3.6} may be chosen to be independent of $R$, that is,
\begin{align}\label{3.7}
\sup_{x}\inf_{\ell}\sup_{R\le 4}R^{-\kappa} \|U(x,\cdot)-\ell\|_{B_R(x)}\lesssim M'_{U}.
\end{align}
Indeed, fix $x$, say $x=0$, and let $\ell_R(y)=\nu_Ry_1+c_R$ be (near) optimal in \eqref{3.6}. Then by
definition of $M'_{U}$ and the triangle inequality, 
\begin{equation*}
R^{-\kappa}\|\ell_{2R}-\ell_{R}\|_{B_R(0)}\lesssim M'_{U}.
\end{equation*}
This implies $R^{-(\kappa-1)}|\nu_{2R}-\nu_{R}|+R^{-\kappa}|c_{2R}-c_R|\lesssim M'_{U}$.  Since $\kappa > 1$, telescoping gives $R^{-(\kappa-1)}|\nu_{R}-\nu_{R'}|+R^{-\kappa}|c_{R}-c_{R'}|\lesssim M'_{U}$
for all $R'\le R$ and thus the existence of $\nu,c\in\mathbb{R}$ such that 
\begin{equation*}
R^{-(\kappa-1)}|\nu_{R}-\nu|+R^{-\kappa}|c_{R}-c|\lesssim M'_{U},
\end{equation*}
so that $\ell(y):=\nu y_1+c$ satisfies
\begin{align}\label{1.21}
R^{-\kappa}\|\ell_R-\ell\|_{B_R(0)}\lesssim M'_{U}.
\end{align}
Hence we may pass from \eqref{3.6}
to \eqref{3.7} by the triangle inequality.

\medskip

It is clear from \eqref{3.7} that necessarily for any $x$,
the optimal $\ell$ must be of the form $\ell(y) =U(x,x)+\nu(x)(y-x)_1$.  This establishes the
main part of \eqref{3.8}, namely the modelledness 
\begin{align}
\left |U(x,y)-U(x,x)-\nu(x)(y-x)_{1}\right | \lesssim M'_{U} d^{\kappa}(y,x)\label{p43} 
\end{align} 
for any base point $x$
and any $y$ of distance at most $4$.
Since $B_4(x)$ covers a periodic cell, we may use
\eqref{p43} for $y=x+(1,0)$ so that by periodicity of $y\mapsto U(x,y)$
we extract $|\nu (x)|\lesssim M'_{U}$, which by $\kappa > 1$ implies $|\nu(x)(x-y)_1|\lesssim M'_{U}d^{\kappa}(x,y)$ for all $y\not\in B_4(x)$. Hence
once again by periodicity of
$y\mapsto U(x,y)$, \eqref{p43} holds also for $y\not\in B_4(x)$.

\medskip

{\sc Step} \arabic{PDE_P}.\label{pde_conc_St}\refstepcounter{PDE_P} We now give the argument for \eqref{KS3} and \eqref{wg52} under the additional 
(purely qualitative hypothesis) that there exists $\nu$ such that $M_U<\infty$, of which we free ourselves in Step \ref{intLemQH}.  Combining Steps \ref{pde_scale_St} and \ref{pde_conv_St}, we obtain by the triangle inequality for each base point $x$ and all scales $R$, $L$ and $T^\frac14$ with $R\ll L$
\begin{align*}
 \inf_\ell \|U(x,\cdot)-\ell\|_{B_R(x)}&\lesssim  \bigg(\frac{R}{L}\bigg)^2 \inf_\ell \|U(x,\cdot)-\ell\|_{B_L(x)}+M_{U}(T^{\frac{1}{4}})^{\kappa}  \\
  \quad &+ L^2 \overline{M}\sum_{\beta\in\Aa{}}\Tc{\beta-2}L^{\kappa-\beta}+\overline{\overline{M}}\sum_{\beta\in\Aa{}}L^{\beta}(T^{\frac{1}{4}})^{\kappa-\beta}.
\end{align*}
Multiplying by $R^{-\kappa}$ and using the definition of $M_{U}$ gives
\begin{align*}
 R^{-\kappa}&\inf_\ell \|U(x,\cdot)-\ell\|_{B_R(x)}\lesssim M_{U} \left(\frac{L}{R}\right)^{\kappa-2} +M_{U}\left (\frac{T^{\frac{1}{4}}}{R} \right)^{\kappa}\\
 &+ R^{-\kappa}L^2 \overline{M}\sum_{\beta\in\Aa{}}(T^{\frac{1}{4}})^{\beta-2}L^{\kappa-\beta}+R^{-\kappa}\overline{\overline{M}}\sum_{\beta\in\Aa{}}L^{\beta}(T^{\frac{1}{4}})^{\kappa-\beta}.
\end{align*}
Now we link the scales $L$ and $T^\frac14$ to $R$ by introducing a small $\eps>0$ and choosing $L=\frac{1}{\eps}R$ and $T^\frac14 = \eps R$.  Using Step 3, we find
\begin{align*}
M_{U} \lesssim M_{U}(\eps^{-(\kappa-2)}+\eps^{\kappa}) + \sum_{\beta\in\Aa{}} \left (\overline{M}\eps^{-\kappa+2\beta-4}+\overline{\overline{M}}\eps^{\kappa-2\beta} \right ). 
\end{align*} 
Taking into account $\kappa \in (1,2)$, we may choose $\eps$ small enough to ensure \eqref{KS3}, where we used the qualitative assumption that $M_U<\infty$.

The first inequality in \eqref{wg52} instantly follows from choosing $y=x+(1,0)$ and appealing to the periodicity
of $U$ in the $y$-variable.

\medskip

{\sc Step} \arabic{PDE_P}.\label{intLemQH}\refstepcounter{PDE_P} In this step, we give the argument for \eqref{KS3} without the additional
qualitative hypothesis of Step \ref{pde_conc_St} (which was not needed for \eqref{wg52}). 
To this purpose, we consider $U_\tau$ for $0<\tau\le\frac{1}{2}$; 
since $U_\tau$ is bounded and smooth in the $y$-variable, $\nu^{\tau}(x):=\frac{\partial}{\partial y_2}U_\tau(x,x)$ 
is such that $M_{U_\tau}<\infty$. By the semi-group property \eqref{1.10}, and because of $\beta-2\le 0$, 
our hypothesis \eqref{KS1} also holds with $U$ replaced by $U_\tau$ with the same constant $\overline{M}$, 
albeit in the restricted range $0<T\le\frac{1}{2}$. The latter is no problem for the argument in Step \ref{pde_conc_St}, since 
we set $T^\frac{1}{4}=\epsilon R$ with $R\le 4$ and $\epsilon\ll 1$. 
By estimate \eqref{wg51} in Step \ref{pde_conv_St} we have for all $T\ge 0$
\begin{align*}
\|(U_\tau)_T(x,\cdot)-U_\tau(x,\cdot)\|_{B_L(x)}\lesssim M_{U_\tau}(T^\frac{1}{4})^{\kappa}
+\overline{\overline{M}}\sum_{\beta\in\Aa{}}L^{\beta}(T^\frac{1}{4})^{\kappa-\beta}.
\end{align*}
By the buckling argument of Step \ref{pde_conc_St} we thus obtain the uniform estimate
\begin{align*}
M_{U_\tau}+\|\nu^\tau\|\lesssim\overline{M}+\overline{\overline{M}}.
\end{align*}
We now fix $x\in\mathbb{R}^2$; for any sequence of $\tau$'s there exists a subsequence $\tau_n\downarrow 0$ 
such that the bounded sequence $\nu^{\tau_n}(x)$ converges to some $\nu(x)\in\mathbb{R}$. 
By the continuity of $U$ in the $y$-variable, we have
$U_{\tau_n}(x,y)$ $\rightarrow U(x,y)$ for any $y\in\mathbb{R}^2$. Hence we may pass to the limit in
$|U_{\tau_n}(x,y)$ $-U_{\tau_n}(x,x)-\nu^{\tau_n}(x)(x-y)_1|$ $\lesssim(\overline{M}+\overline{\overline{M}})$
$d^{\kappa}(y,x)$, thus obtaining \eqref{KS3}.

\medskip

{\sc Step} \arabic{PDE_P}.\label{nu}\refstepcounter{PDE_P}
Finally, we turn to the estimate \eqref{KS4}.  By the definition of $M_{U}$, for any $x$ and $y$ we have
\begin{align*}
\left|U(x,y)-U(x,x)-\nu(x)(y-x)_{1}\right | \leq M_{U}d^{\kappa}(y,x).
\end{align*}
Using $z$ in place of $y$ gives
\begin{align*}
\left|U(x,z)-U(x,x)-\nu(x)(z-x)_{1}\right | \leq M_{U}d^{\kappa}(z,x).
\end{align*}
Combining these two estimates and using the triangle inequality for the absolute value, we obtain for all $x,y,z$
\begin{align*}
\left|U(x,z)-U(x,y)-\nu(x)(z-y)_{1}\right | \leq M_{U}\big (d^{\kappa}(y,x)+d^{\kappa}(z,x)\big ).
\end{align*}
In particular, setting $x=y$ gives
\begin{align*}
\left|U(y,z)-U(y,y)-\nu(y)(z-y)_{1}\right | \leq M_{U}d^{\kappa}(z,y).
\end{align*}
Combining the last two estimates by the triangle inequality for both the absolute value and the parabolic metric yields
\begin{align}\label{nu5}
&\left|(U(y,z)-U(y,y))-(U(x,z)-U(x,y))-(\nu(y)-\nu(x))(z-y)_{1}\right | \\
&\quad \lesssim M_{U}\big (d^{\kappa}(y,x)+d^{\kappa}(z,y)\big ).\nonumber
\end{align}
We now combine \eqref{nu5} with the three-point continuity condition \eqref{KS2} and the triangle inequality to obtain
\begin{align*}
&\left|((\nu(y)-\nu(x)+\gamma(x,y))(z-y)_{1}\right | \\
&\quad \lesssim M_{U}\big (d^{\kappa}(y,x)+d^{\kappa}(z,y)\big )+\overline{\overline{M}}\sum_{\beta\in\Aa{}} d^{\beta}(y,x)d^{\kappa-\beta}(z,y).
\end{align*}
Hence, given any two points $x,y$, setting $z=(y_{1}+d(y,x),y_{2})$ and observing that $(y-z)_{1}=d(z,y)=d(y,x)$, we obtain 
\begin{align*}
&\left|\nu(y)-\nu(x)+\gamma(x,y)\right | \\
&\quad \lesssim (M_{U}+\overline{\overline{M}})d^{\kappa-1}(y,x), \nonumber
\end{align*}
which implies \eqref{KS4}.

\medskip

{\sc Step} \arabic{PDE_P}.\label{intLemRemPf}\refstepcounter{PDE_P} We finally give the argument for Remark \ref{rem-wg53}.
We first address uniqueness: If $\tilde\nu$ were another function for which the expression
on the l.~h.~s.~of \eqref{KS3} is finite, we would have for every point $x$ that
$\sup_{y\not=x}d^{-\kappa}(y,x)|(\tilde\nu-\nu)(x)(y-x)_1|<\infty$.
Choosing $y=x+(M,0)$ and letting $M\downarrow 0$ we obtain $(\tilde\nu-\nu)(x)=0$
thanks to $\kappa > 1$. Uniqueness now implies that periodicity of $U$ in $x$
is transmitted to $\nu$.

\medskip

We now turn to H\"older continuity. By the periodicity of $U$ in the $y$-variable we obtain
from setting $z=y+(1,0)$ in \eqref{KS2} that
\begin{align*}
|\gamma(x,y)|\le\overline{\overline{M}}\sum_{\beta\in\Aa{}} d^{\beta}(y,x).
\end{align*}
Using this in conjunction with \eqref{KS4} yields the desired
\begin{align*}
|\nu(y)-\nu(x)|\lesssim(\overline{M}+\overline{\overline{M}})
\big(d^{\kappa-1}(y,x)+\sum_{\beta\in\Aa{}} d^{\beta}(y,x)\big).
\end{align*}

\end{proof}
\section{Proofs of Concrete Results}\label{S:concr}
\subsection{ Lemma \ref{tensor_lemma} and Lemma \ref{est_wwf_lemma}}

\begin{proof}[Proof of Lemma \ref{tensor_lemma}]
  Let us verify the crossnorm property \eqref{j501a}. By definition \eqref{j552} we have
  \begin{align*}
   \|g\otimes h\|_{C^{k',k}}&=\max_{a_0',a_0,i',i}\left|\partial^{i'}_{a_0'}\partial^{i}_{a_0}g(a_0')h(a_0)\right| =\max_{a_0',a_0,i',i}\left|\partial^{i'}_{a_0'}g(a_0')\right|\left|\partial^{i}_{a_0}h(a_0)\right| \\
   &=\max_{a_0',i'}\left|\partial^{i'}_{a_0'}g(a_0')\right| \max_{a_0,i}\left|\partial^{i}_{a_0}h(a_0)\right|= \|g\|_{C^{k'}}\|h\|_{C^{k}}.
  \end{align*}
  Let now $u=\sum_{n=1}^N g_n\otimes h_n$ be a general element of the algebraic tensor space of $C^{k'}(\interval{})$ and $C^{k}(\interval{})$, thus in particular a $C^{k',k}(\interval{2})$-function of the two variables $(a_0',a_0)\in \interval{2}$. Assume that
  \begin{align*}
   \aop&\in\call(C^{k'}(\interval{}); C^{l'}(\interval{})), \quad \bop\in\call(C^{k}(\interval{}); C^{l}(\interval{})), \\
   \taop&\in\call(C^{k'}(\interval{}); C^{m'}(\interval{})), \quad \tbop\in\call(C^{k}(\interval{}); C^{m}(\interval{})),
  \end{align*}
  and admit the estimates \eqref{j525} and \eqref{j526}.
  We observe that for all $i'\le l'$, $i\le l$, $a_0'\in\interval{}$ and $a_0\in\interval{}$ it holds
  \begin{align*}
   \partial_{a_0'}^{i'}\partial_{a_0}^{i}(\aop\otimes \bop)u(a_0',a_0)&\stackrel{\eqref{j509}}{=}\partial_{a_0'}^{i'}\partial_{a_0}^{i}\sum_{n=1}^N \aop g_n(a_0') \bop h_n(a_0)\\
   &=\partial_{a_0'}^{i'}\aop \left( \sum_{n=1}^N  g_n(a_0') \partial_{a_0}^{i} \bop  h_n(a_0)\right) 
  \end{align*}
  with the understanding that $\aop$ acts on the $a_0'$-variable. Hence, 
  \begin{align*}
   \|(\aop\otimes \bop)u\|_{C^{l',l}}&\stackrel{\eqref{j552}}{=}\max_{i\le l,a_0\in \interval{}}\max_{i'\le l',a_0'\in \interval{}}\left|\partial_{a_0'}^{i'}\aop \left( \sum_{n=1}^N  g_n(a_0') \partial_{a_0}^{i} \bop h_n(a_0)\right)\right| \\
   &\stackrel{\eqref{j525}}{\le} \max_{i\le l,a_0\in \interval{}} \max_{i'\le m',a_0'\in \interval{}}\left|\partial_{a_0'}^{i'}\taop \left( \sum_{n=1}^N  g_n(a_0') \partial_{a_0}^{i} \bop  h_n(a_0)\right)\right| \\
   &=\|(\taop\otimes \bop)u\|_{C^{m', l}}.
  \end{align*}
  By an analogous procedure we obtain from \eqref{j526}
  \begin{align*}
   \|(\taop\otimes \bop)u\|_{C^{m', l}}\le \|(\taop\otimes \tbop)u\|_{C^{m', m}},
  \end{align*}
  and thus the desired \eqref{j522} for $u$ in the algebraic tensor space.
  
 \medskip

  In particular, in the special case $\taop=\|\aop\|_{\call(C^{k'}; C^{l'})}\id_{C^{k'}}$ and $\tbop=\|\bop\|_{\call(C^{k}; C^{l})}\id_{C^{k}}$ we can now conclude $\aop \otimes \bop\in \call(C^{k',k}(\interval{2});C^{l',l}(\interval{2}))$ by a standard density argument based on the observation that any element $u\in C^{k',k}(\interval{2})$ may be approximated by elements in the algebraic tensor space, since in particular polynomials are dense in $C^{k',k}( \interval{2})$. In the general case with arbitrary $\taop$ and $\tbop$ we likewise have $\taop \otimes \tbop\in \call(C^{k',k}(\interval{2});C^{m',m}(\interval{2}))$. Hence, using the continuity of $\aop\otimes \bop$ and $\taop \otimes \tbop$, we may pass to the limit in \eqref{j522}.  For the triple tensor product spaces $C^{k'',k',k}(\interval{3})$, similar arguments apply.
  \qedhere
\end{proof}
We now give the proof of the properties of the model.
\begin{proof}[Proof of Lemma \ref{est_wwf_lemma}]
For each $a_{0}',a_{0} \in I$, define the function $U(\cdot,a_{0}',a_{0}): \R^{2} \times \R^{2} \to \R$ by
\begin{equation}
U(x,y,a_{0}',a_{0}):=w_{2\alpha}(y,a_{0}',a_{0})-\vf(x,a_{0}') \partial_{a_{0}}\vf(y,a_{0}).\label{ad0}
\end{equation}
Our plan is to apply Proposition \ref{int lem} to the function $(x,y) \mapsto \partial_{a_0'}^{k'}\partial_{a_0}^{k}U(x,y,a_{0}',a_{0})$ for each $a_{0}',a_{0} \in I$, $k'=0,1,2,3$ and $k=0,1,2$.  In Steps \ref{Mdprf_1} and \ref{Mdprf_2} we establish the three-point continuity condition \eqref{KS2} and the local splitting condition \eqref{KS1}.  With these inputs at hand, in Step \ref{Mdprf_5} we construct $\omega$ and obtain the estimates \eqref{j781} and \eqref{s2011}.  Finally, in Step \ref{Mdprf_4}, we turn to the estimates \eqref{j790} for the skeleton. 

\medskip

\newcounter{Mdprf} 
\refstepcounter{Mdprf} 

\medskip

{\sc Step} \arabic{Mdprf}.\label{Mdprf_1}\refstepcounter{Mdprf}[Three-point continuity] 
In this step, we claim that for all $x,y,z$ in $\R^{2}$, $k'=0,1,2,3$, $k=0,1,2$, $a_{0}',a_{0} \in I$ and $\tau \geq 0$ it holds 
 \begin{align}
&\big | \partial_{a_{0}'}^{k'}\partial_{a_0}^{k} \big ( U_{\tau}(x,z,a_{0}',a_{0})-U_{\tau}(x,y,a_{0}',a_{0})-U_{\tau}(y,z,a_{0}',a_{0}) \nonumber\\
& \, \,\quad \qquad \qquad \qquad \qquad +U_{\tau}(y,y,a_{0}',a_{0}) \big ) \big |\lesssim N^{2}d^{\alpha}(x,y)d^{\alpha}(z,y), \label{S34}
\end{align}
where $U_{\tau}$ denotes convolution in the second argument only.  Indeed, we note the identity
\begin{align*}
&\partial_{a_{0}'}^{k'}\partial_{a_0}^{k} \big ( U_{\tau}(x,z,a_{0}',a_{0})-U_{\tau}(x,y,a_{0}',a_{0})-U_{\tau}(y,z,a_{0}',a_{0})\\
& \, \,\quad \qquad \qquad \qquad \qquad +U_{\tau}(y,y,a_{0}',a_{0}) \big )\\ 
&= \partial_{a_0'}^{k'}\big (\vf(y,a_{0}')-\vf(x,a_{0}') \big)\partial_{a_0}^{k+1}\big ((\vf)_{\tau}(y,a_{0})-(\vf)_{\tau}(z,a_{0}) \big )
,
\end{align*}
which follows directly from the definition \eqref{ad0}.  For $\tau=0$, the identity above implies \eqref{S34} as a consequence of \eqref{s203}.  Note that it is here that we use the control in $C^{3}$.  For $\tau>0$, we again use \eqref{s203} together with the fact that H\"{o}lder estimates are stable with respect to the convolution operator $(\cdot)_{\tau}$.

\medskip

{\sc Step} \arabic{Mdprf}.\label{Mdprf_2}\refstepcounter{Mdprf}[Local splitting condition] 
We claim that for all base points $x \in \R^{2}$ and scales $L,T^{\frac{1}{4}}\leq 1$
\begin{align}
\inf_{c \in \R}&\|(\partial_{2}-a_{0}\partial_{1}^{2})(\cdot)_{T}\partial_{a_0'}^{k'}\partial_{a_0}^{k}U(x,\cdot,a_{0},a_{0}')-c\|_{B_{L}(x)}\nonumber\\
& \lesssim N^{2}\big ( \Tc{2\alpha-2}+L^{\alpha}\Tc{\alpha-2} \big)\label{SFin65},
\end{align} 
where $k'=0,1,2,3$, $k=0,1,2$, and $a_{0}',a_{0} \in I$.  The claim will be established by induction in $k$, starting with the anchoring $k=0$ which we now address.  Let us fix an $x \in \R^{2}$, an integer $k' \in \{0,1,2,3\}$, and ellipticities $a_{0}',a_{0} \in I$.  In light of definition \eqref{ad0}, together with \eqref{f22} and \eqref{se40}, it follows that 
\begin{align}
&(\partial_{2}-a_{0}\partial_{1}^{2})\partial_{a_0'}^{k'}U(x,\cdot,a_{0}',a_{0})\nonumber\\
&\quad =\partial_{a_0'}^{k'}P(\vvf)(\cdot,a_{0}',a_{0})-\partial_{a_{0}'}^{k'}\vf(x,a_{0}')(\partial_{1}^{2}\vf)(\cdot,a_{0})
\end{align}
in the sense of distributions.  Hence, applying $(\cdot)_{T}$ on both sides of the above and re-arranging, we find
\begin{align}
&(\partial_{2}-a_{0}\partial_{1}^{2})(\cdot)_{T}\partial_{a_0'}^{k'}U(x,\cdot,a_{0}',a_{0})+\partial_{a_0'}^{k'}c(a_{0}',a_{0})\nonumber\\
&\quad=\partial_{a_0'}^{k'}(\vvf)_{T}(\cdot,a_{0}',a_{0})-\partial_{a_0'}^{k'}\vf(\cdot,a_{0}')(\partial_{1}^{2}\vf)_{T}(\cdot,a_{0})\nonumber\\
&\quad+\partial_{a_0'}^{k'}\big (\vf(\cdot,a_{0}')-\vf(x,a_{0}') \big)(\partial_{1}^{2}\vf)_{T}(\cdot,a_{0}),\label{ad5}
\end{align}
where $c(a_{0}',a_{0})=\int_{[0,1)^{2}}(\vvf)(y,a_{0}',a_{0})\dd y$.  Hence, \eqref{SFin65} for $k=0$ is a consequence of \eqref{off1}, \eqref{s203}, and \eqref{p13}. 

\medskip

We now turn to the inductive step.  Assuming that the local splitting condition \eqref{SFin65} holds for some $k \in \{0,1,2\}$ and each $k'=0,1,2,3$, $a_{0}',a_{0} \in I$, we claim that \eqref{SFin65} continues to hold with $k+1$ in place of $k$.  Towards this end, we differentiate the identity \eqref{ad5} $k+1$ times in $a_{0}$ to obtain
\begin{align*}
&(\partial_{2}-a_{0}\partial_{1}^{2})(\cdot)_{T}\partial_{a_0'}^{k'}\partial_{a_0}^{k+1}U(x,\cdot,a_{0}',a_{0})-\partial_{a_0'}^{k'}\partial_{a_0}^{k+1}c(a_{0}',a_{0})\nonumber\\
&\quad=\partial_{a_0'}^{k'}\partial_{a_0}^{k+1}(\vvf)_{T}(\cdot,a_{0}',a_{0})-\partial_{a_0'}^{k'}\vf(\cdot,a_{0}')(\partial_{a_0}^{k+1}\partial_{1}^{2}\vf)_{T}(\cdot,a_{0})\nonumber\\
&\quad\,+\partial_{a_0'}^{k'}\big (\vf(\cdot,a_{0}')-\vf(x,a_{0}') \big)(\partial_{a_0}^{k+1}\partial_{1}^{2}\vf)_{T}(\cdot,a_{0}) \nonumber \\
&\quad\,+(k+1)\partial_{1}^{2}(\cdot)_{T}\partial_{a_0'}^{k'}\partial_{a_0}^{k}U(x,\cdot,a_{0}',a_{0}).
\end{align*}
The first two lines on the r.h.s. of the equality above are estimated by the r.h.s. of \eqref{SFin65} by the same argument as in the base case $k=0$, in light of \eqref{off1}, \eqref{s203}, and \eqref{p13}.  To estimate the last line, we write for each $y \in B_{L}(x)$
\begin{align*}
&\partial_{1}^{2}(\cdot)_{T}\partial_{a_0'}^{k'}\partial_{a_0}^{k}U(x,y,a_{0}',a_{0}) \nonumber\\
& \,=\int \partial_{a_0'}^{k'}\partial_{a_0}^{k}U(x,z,a_{0}',a_{0})\partial_{1}^{2}\psi_{T}(y-z) {\rm d}z \nonumber\\
&\, =\int \partial_{a_0'}^{k'}\partial_{a_0}^{k}\big (U(x,z,a_{0}',a_{0})-U(x,y,a_{0}',a_{0})-U(y,z,a_{0}',a_{0})\\
&\qquad \qquad \qquad \qquad \qquad \quad \, \, \, \, +U(y,y,a_{0}',a_{0}) \big) \partial_{1}^{2}\psi_{T}(y-z) {\rm d}z \nonumber\\
&\, +\int \partial_{a_0'}^{k'}\partial_{a_0}^{k}\big (  U(y,z,a_{0}',a_{0})-U(y,y,a_{0}',a_{0})\big )\partial_{1}^{2}\psi_{T}(y-z) {\rm d}z.
\end{align*}
The first integral on the r.h.s. is controlled by $N^{2}L^{\alpha}(T^{\frac{1}{4}})^{\alpha-2}$ as a consequence of \eqref{S34} (with $\tau=0$) and \eqref{1.13}.  Moreover, we claim that the second integral is estimated by $N^{2}(T^{\frac{1}{4}})^{2\alpha-2}$.  First observe that by \eqref{1.13} and \eqref{he1}, it suffices to find an affine function $\ell(z)$ (depending on $y, a_{0}',a_{0},k',k$) such that for all $z\in \R^{2}$
\begin{align*}
&\big |\partial_{a_0'}^{k'}\partial_{a_0}^{k} \big ( U(y,z,a_{0}',a_{0})-U(y,y,a_{0}',a_{0}) \big )-\ell(z) \big | \\
&\quad \lesssim N^{2}d^{2\alpha}(z,y).
\end{align*}
For this, we apply Proposition \ref{int lem} to the function 
\begin{equation*}
(y,z) \mapsto \partial_{a_0'}^{k'}\partial_{a_0}^{k}U(y,z,a_{0}',a_{0}),
\end{equation*}
with $\alphao:=2\alpha-2$, $\gamma \equiv 0$, and $\overline{M}=\overline{\overline{M}} \lesssim N^2$.  The hypotheses \eqref{KS1} and \eqref{KS2}, follow as a consequence of our inductive assumption \eqref{SFin65} and the three-point continuity condition \eqref{S34} for $\tau=0$.  The output $\nu$ determines the affine function as $\ell(z)=\nu(y)(z-y)_{1}$. 

\medskip

{\sc Step} \arabic{Mdprf}.\label{Mdprf_5}\refstepcounter{Mdprf}[Identification Step]
We now construct $\omega: \R^{2} \to C^{2,1}(I^{2})$ such that \eqref{j781} and \eqref{s2011} hold.  Towards this end, we first claim that defining $\omega^{\tau}: \R^{2} \to C^{3,2}(I^{2})$ by $\omega^{\tau}(x,a_{0}',a_{0})=\frac{\partial}{\partial y_2}U_{\tau}(x,x,a_{0}',a_{0})$, the following estimates hold uniformly in $0<\tau \leq \frac{1}{2}$: for all $x,y \in \R^{2}$
\begin{align}
&\|U_{\tau}(x,y)-U_{\tau}(x,x)-\omega^{\tau}(x)(y-x)_{1}\|_{C^{3,2}}
\lesssim N^{2}d^{2\alpha}(y,x),\label{ad10}\\
&\|\omega^{\tau}(y)-\omega^{\tau}(x)\|_{C^{3,2}} \lesssim N^{2}d^{2\alpha-1}(y,x).\label{ad11}
\end{align}
To see this, we apply Proposition \ref{int lem} as in the end of Step \ref{Mdprf_2} above.  The hypotheses \eqref{KS1} and \eqref{KS2}, follow as a consequence of the local splitting condition \eqref{SFin65} established in Step \ref{Mdprf_2} and the three-point continuity condition \eqref{S34} established in Step \ref{Mdprf_1}, applied with $\tau=0$.  For the output \eqref{ad10}, we appeal to Remark \ref{rem-wg53} in the form of estimate \eqref{KS6}.  Here, we are taking advantage of exchanging the order of differentiation in the sense of $\partial_{a_0'}^{k'}\partial_{a_0}^{k}\omega^{\tau}(x,a_{0}',a_{0})=\frac{\partial}{\partial y_2}\partial_{a_0'}^{k'}\partial_{a_0}^{k}U_{\tau}(x,x,a_{0}',a_{0})$, which enforces a consistency among the functions $\nu^{\tau}$ in Remark \ref{rem-wg53} as we vary the parameters $k',k,a_{0}',a_{0}$.   The estimate \eqref{ad11} now follows from \eqref{ad10} and \eqref{S34} by the same arguments that lead from \eqref{KS2} and \eqref{KS3} to \eqref{KS4} in the proof of Proposition \ref{int lem}, cf. Step \ref{nu}.

\medskip

We now apply the Arzel\`{a}-Ascoli theorem for functions with values in Banach spaces to the sequence $\{\omega^{\tau}\}_{\tau \leq \frac{1}{2}}$.  Note that $\omega^{\tau}$ inherits periodicity from $U_{\tau}$ which allows us to restrict to a bounded domain and also implies via \eqref{ad11} that $\{\omega^{\tau}\}_{\tau \leq \frac{1}{2}}$ is bounded in $C^{0}(\R^{2}; C^{3,2}(I^{2}))$.  Note that the embedding $C^{3,2}(I^{2}) \hookrightarrow C^{2,1}(I^{2})$ is compact.  The uniform estimate \eqref{ad11} also provides the necessary equi-continuity, so there exists $\omega \in C^{2\alpha-1}(\R^{2}; C^{2,1}(I^{2}))$ such that, along a subsequence, $\omega_{\tau} \to \omega$ in $C^{0}(\R^{2}; C^{2,1}(I^{2}))$.  Moreover, by the continuity of the convolution operator $(\cdot)_{\tau}$, it follows that $U_{\tau}(x,y) \to U(x,y)$ in $C^{2,1}(I^{2})$ pointwise in $(x,y)\in \R^{2} \times \R^{2}$.  Thus, sending $\tau \to 0$ in \eqref{ad10} and \eqref{ad11} leads to \eqref{s2011} and \eqref{j781} respectively.

\medskip

{\sc Step} \arabic{Mdprf}.\label{Mdprf_4}\refstepcounter{Mdprf}[Estimate of the Skeleton Norms] 

We now prove the bound \eqref{j790} for the skeleton $\Gamma$.  Namely, we treat separately $\Ga{+}{}$ and $\Ga{-}{}$ and then appeal to Lemma \ref{lem:prod_skel} to complete the estimate.  Starting with $\Ga{+}{}$, cf.\@ \eqref{cw12}, we will analyze each of the increments $\Gamma_{\beta}(y)-\Gamma_{\beta}(x)$ for $\beta \in \Aa{+}$ and show \eqref{ssss2} with a constant which is bounded (in the sense of $\lesssim$) by $1$.  There is nothing to show for the lowest homogeneity $0$.  For homogeneity $\alpha$, we note that $(\Ga{\alpha}{}(y)-\Ga{\alpha}{}(x))\ta_{+} \stackrel{\eqref{cw12}}{=}-(\vf(y)-\vf(x))\na$, which thanks to \eqref{s203} is estimated in $\Ta{\alpha}$ by $d^{\alpha}(y,x)|\na|\stackrel{\eqref{cw12}}{=}d^{\alpha}(y,x)\|\Ga{0}{}(x)\ta_{+}\|_{\Ta{0}}$.  For homogeneity $1$, we note that $(\Ga{1}{}(y)-\Ga{1}{}(x))\ta_{+}\stackrel{\eqref{cw12}}{=}-(v_{1}(y)-v_{1}(x))\na$, which is estimated by $d(y,x) \|\Ga{0}{}(x)\ta_{+}\|_{\Ta{0}}$.  Finally, we turn to homogeneity $2\alpha$. Rearranging terms, we obtain from \eqref{cw12}
\begin{align*}
& (\Ga{2\alpha}{}(y)-\Ga{2\alpha}{}(x) \big )\ta_{+}=-(\vf(y)-\vf(x)) \otimes \big ( \partial_{a_0}\va-\partial_{a_{0}}\vf(x)\na \big )\\ 
&\quad -(\omega(y)-\omega(x) ) \big ( \mathsf{v}_{1}-v_{1}(x)\na \big )\\ 
&\quad -\big (w_{2\alpha}(y)-w_{2\alpha}(x)-\vf(y) \otimes (\partial_{a_0}\vf(y)-\partial_{a_0}\vf(x) )\\ 
&\qquad - \omega(y) (v_{1}(y)-v_{1}(x)) \big ) \na. \nonumber
\end{align*}
We now estimate each line on the right hand side separately in the space $\Ta{2\alpha}$. By the uniform cross-norm property \eqref{j525}-\eqref{j522} for function spaces, cf. Lemma \ref{tensor_lemma}, the scaling in $N$, the grading in terms of order of derivatives of the norms of $\Ta{2\alpha}$ and $\Ta{\alpha}$, cf.\@ \eqref{S8030} leading to the estimate $\|g \otimes \partial_{a_0}h\|_{\Ta{2\alpha}} \leq \|g\|_{\Ta{\alpha}}\|h\|_{\Ta{\alpha}}$, and \eqref{s203}, the first line is bounded in $\Ta{2\alpha}$ by
\begin{equation*}
 \lesssim d^{\alpha}(y,x)\|\va-\vf(x)\na\|_{\Ta{\alpha}} \stackrel{\eqref{cw12}}{=} d^{\alpha}(y,x)\|\Ga{\alpha}{}(x)\ta_{+}\|_{\Ta{\alpha}{}}.
\end{equation*}
For the second line, we appeal to \eqref{j781} and bound in $\Ta{2\alpha}$ by
\begin{equation*}
\lesssim d^{2\alpha-1}(y,x)|\mathsf{v}_{1}-v_{1}(x)\na | \stackrel{\eqref{cw12}}{=} d^{2\alpha-1}(y,x)\|\Ga{1}{}(x)\ta_{+}\|_{\Ta{1}}.
\end{equation*}
Thanks to \eqref{s2011} with roles of $x$ and $y$ exchanged, the final term is then estimated by
\begin{align*}
 \lesssim d^{2\alpha}(y,x)| \na | \stackrel{\eqref{cw12}}{=} d^{2\alpha}(y,x)\|\Ga{0}{}(x)\ta_{+}\|_{\Ta{0}}.
\end{align*}
In summary, we find that \eqref{ssss2} holds for homogeneity $2\alpha$ in the form of
\begin{align*}
\|(\Ga{2\alpha}{}(y)-\Ga{2\alpha}{}(x))\ta_{+}\|_{\Ta{2\alpha}} &\lesssim \sum_{\beta\in \set{0,\alpha,1}}d^{2\alpha-\beta}(x,y)\|\Ga{\beta}{}(x)\ta_{+}\|_{\Ta{\beta}{}},
\end{align*}
so that indeed $\|\Ga{+}{}\|_{sk}\lesssim 1$.

\medskip

Now we turn our attention to $\Gamma_{-}$ and seek to prove $\|\Ga{-}{}\|_{sk}\lesssim 1$.  Observe that $\big ( \Ga{\alpha-2}{}(y)-\Ga{\alpha-2}{}(x)\big )\ta_{-}=0$ by \eqref{cw12a},
while $\big ( \Ga{2\alpha-2}{}(y)-\Ga{2\alpha-2}{}(x) \big )\ta_{-}=-(\vf(y)-\vf(x))\otimes\partial_{a_0}\pva$, which is controlled in $\Ta{2\alpha-2}$ by
\begin{equation}
 d^{\alpha}(y,x)\|\pva\|_{\Ta{\alpha-2}} \leq d^{\alpha}(y,x)\|\Ga{\alpha-2}{}(x)\ta_{-} \|_{\Ta{\alpha-2}}.
\end{equation}
Hence, we find also that $\|\Ga{-}{}\|_{sk}\lesssim 1$.  We now combine this with $\|\Ga{+}{}\|_{sk}\lesssim 1$ to obtain by Lemma \ref{lem:prod_skel}
\begin{equation*}
\|\Ga{}{}\|_{sk} \leq \|\Ga{+}{}\|_{sk}+\|\Ga{-}{}\|_{sk}+\|\Ga{-}{}\|_{sk}\|\Ga{+}{}\|_{sk} \lesssim 1,
\end{equation*}
which completes the proof of \eqref{j790}.
\end{proof}
\subsection{Lemma \ref{cor:prod1}}
\begin{proof}[Proof of Lemma \ref{cor:prod1}]
We begin in Step \ref{Algprf_1} with the estimate \eqref{SFin1}, and will in fact prove a sharper estimate which will be used to establish the inequality \eqref{IOR1} in Step \ref{Algprf_2} below, en route to \eqref{S565}, which is established in Step \ref{Algprf_3}.

\medskip

\newcounter{Algprf} 
\refstepcounter{Algprf} 
{\sc Step} \arabic{Algprf}.\label{Algprf_1}\refstepcounter{Algprf}[Analysis of $V_{\partial_{1}^{2}u}^{\eta-\alpha-2}$] 
Our first claim is that $V_{\partial_{1}^{2}u}^{\eta-\alpha-2}$ is a modelled distribution of order $\eta-\alpha-2$ on the abstract space with only one homogeneity, $(\{\alpha-2\},\frac{1}{N}C^{1}(I))$, endowed with the skeleton $\id$, and more specifically, the following estimates hold
\begin{equation}
\|V_{\partial_{1}^{2}u}^{\eta-\alpha-2}\|_{\frac{1}{N}C^{1}(I);\id}\leq 1, \quad [V_{\partial_{1}^{2}u}^{\eta-\alpha-2} ]_{\mathcal{D}^{\eta-\alpha-2}(\frac{1}{N}C^{1}(I);\id)}\leq N[a]_{\alpha}.\label{S561}
\end{equation}
In particular, \eqref{S561} implies \eqref{SFin1} since the norm on $\Ta{\alpha-2}$ is stronger than the $\frac{1}{N}C^{1}(I)$ norm, cf.\@ \eqref{S8001} and \eqref{wg90}.  For the proof of \eqref{S561}, we recall from \eqref{s115} that $V_{\puf}^{\thresh-\alpha-2}(x)=\delta_{a(x)}$, and clearly we have 
\begin{equation}
|\delta_{a(x)}.\pva| \leq \|\pva\|_{C^{0}} \leq N\|\pva\|_{\frac{1}{N}C^1}\stackrel{\eqref{wg90}}{=}N^{\langle \alpha-2 \rangle}\|\pva\|_{\frac{1}{N}C^1},\label{SFin2}
\end{equation}
so that the boundedness property \eqref{prod4} is fulfilled with a constant $1$, resulting in the first estimate in \eqref{S561}.  To establish the second estimate, we show that the continuity property \eqref{prod5} holds with a constant $N[a]_{\alpha}$.  Namely, for $x,y \in \R^{2}$ with $d(y,x) \leq 1$
\begin{align*}
&\left | (V_{\partial_{1}^{2}u}^{\eta-\alpha-2}(y)-V_{\partial_{1}^{2}u}^{\eta-\alpha-2}(x) ).
\partial_{1}^{2}\va
\right |
\stackrel{\eqref{s115}}{=}\left |(\delta_{a(y)}-\delta_{a(x)}).\partial_{1}^{2}\va \right| \\
&\, \leq [a]_{\alpha}d^{\alpha}(y,x)\|\partial_{1}^{2}\va\|_{C^{1}} \leq N[a]_{\alpha}d^{\eta-2\alpha}(y,x)\|\partial_{1}^{2}\va\|_{\frac{1}{N}C^{1}},
\end{align*}
where in the last step we used $\alpha \geq \eta-2\alpha$ as a consequence of $\eta<1+\alpha<3\alpha$ (using $\alpha>\frac{1}{2}$).  

\medskip

{\sc Step} \arabic{Algprf}.\label{Algprf_2}\refstepcounter{Algprf}[Analysis of $V_{a \diamond \partial_{1}^{2}u}^{\eta-2}$] 
In this step, we claim that the form $V_{a \diamond \partial_{1}^{2}u}^{\eta-2}$ built according to \eqref{s116} has the following continuity property: for all $\pua\in\R$,
$$
\ta=(\pva,\vva) \in C^1(\interval{})\oplus C^{2,1}(\interval{2}),
$$
and all $x,y \in \R^{2}$ with $d(y,x) \leq 1$ it holds
\begin{align}
&\left |(V_{a \diamond \puf}^{\thresh-2}(y)-V_{a \diamond \puf}^{\thresh-2}(x)).
\begin{pmatrix}
\pua \\
\pva\\
\vva
\end{pmatrix} 
\right |\label{IOR1}\\
&\leq [a]_{\alpha}d^{\alpha}(y,x)\big |\pua-V_{\puf}^{\thresh-\alpha-2}(x).\pva \big |\nonumber\\
&\quad +M_{a,\eta-\alpha}d^{\thresh-\alpha}(y,x)\|\Ga{\alpha-2}{}(x)\ta\|_{\frac{1}{N}C^1}\nonumber\\
&\quad+M_{a,\eta-\alpha}d^{\thresh-2\alpha}(y,x)\|
\Ga{2\alpha-2}{}(x)\ta\|_{\frac{1}{N^2}C^{2,1}},\nonumber
\end{align}
where $M_{a,\eta-\alpha}$ is defined by \eqref{IOR30}.  We now turn to the proof, the main ingredient being Lemma \ref{lem:Multiplication}.  Since $V_{a}\in \mathcal{D}^{\eta}(\Ta{+};\Ga{+}{})$ and by definition $V_{a}^{\eta-\alpha}=C_{\eta-\alpha}V_{a}^{\eta}$, Lemma \ref{cutting_lemma} implies that $V_{a}^{\eta-\alpha}$ is a modelled distribution of order $\eta-\alpha$ on the abstract space 
$(\{0, \alpha\}, \R \oplus \frac{1}{N}C^2(\interval{}))$ obtained from $(\Aa{+},\Ta{+})$ by removing homogeneities (and their associated Banach spaces) greater or equal to $1>\eta-\alpha$.  According to \eqref{j961}, the skeleton reduces to 
\begin{align*}
 \begin{pmatrix}
  \id_{\R} & 0 \\
  -\vf & \id_{C^2}
 \end{pmatrix},
\end{align*}
cf.\@ \eqref{cw12}. According to the construction in Section \ref{S:mult}, cf.\@ \eqref{s116}, the form $V_{a \diamond \partial_{1}^{2}u}^{\eta-2}$ acts on the tensor spaces $\R \otimes \frac{1}{N}C^{1}(I)=\frac{1}{N}C^{1}(I)$ and $\frac{1}{N}C^{2}(I) \otimes \frac{1}{N}C^{1}(I)=\frac{1}{N^2}C^{2,1}(\interval{2})$, cf.\@ Lemma \ref{tensor_lemma}, associated to homogeneities $\alpha-2$ and $2\alpha-2$ respectively, extended with a copy of $\R$ for the placeholder $\pua$ attached to homogeneity $\eta-\alpha-2$.  The product skeleton defined via \eqref{Prod9} takes the form
\begin{align*}
 \begin{pmatrix}
  \id_{C^1} & 0 \\
  -\vf\otimes & \id_{C^{2,1}}
 \end{pmatrix}.
\end{align*}
Hence, we may appeal to Lemma \ref{lem:Multiplication}, where $V_{a}^{\eta-\alpha}$ plays the role of $V_{u_{+}}$, $N_*$ is set to $1$ and $V_{\partial_{1}^{2}u}^{\eta-\alpha-2}$ plays the role of $V_{u_{-}}$.  Observe that \eqref{MS3} turns into \eqref{IOR1}.  Indeed, the constant $M_{a,\eta-\alpha}$ defined by \eqref{IOR30} corresponds exactly to $M_{a \diamond \partial_{1}^{2}u}^{c}$ defined by \eqref{S301}, taking into account that $\langle \alpha\rangle=\langle \alpha-2 \rangle=1$, cf. \eqref{wg90}.  

\medskip

{\sc Step} \arabic{Algprf}.\label{Algprf_3}\refstepcounter{Algprf}[Analysis of $V_{\partial_{1}^{2}u}^{\eta-2}$] In the final step, we claim that the estimate \eqref{S565} holds, and we now begin with the proof of the boundedness estimate.  We bound each of the two terms in the definition \eqref{algebraicRel} separately, the first being estimated exactly as in \eqref{SFin2} in Step \ref{Algprf_1} above.  The second term is estimated via \eqref{neededlater1} in Lemma \ref{lem:prod_skel} and \eqref{new} in Lemma \ref{cutting_lemma}, taking into consideration \eqref{S561}, so that for all $x\in\R^2$
\begin{align*}
&\left | \overline{V}_{a}^{\thresh-\alpha}(x) \otimes \delta_{a(x)}.
\begin{pmatrix}
\partial_{a_0}\pva \\
\partial_{1}^{2}\mathsf{w}_{2\alpha}
\end{pmatrix}
 \right | \\
& \quad \leq \|\overline{V}_{a}^{\eta-\alpha}\| N^{2}\left \|\partial_{1}^{2}\mathsf{w}_{2\alpha}-\vf(x) \otimes \partial_{a_0}\pva \right \|_{\frac{1}{N^2}C^{2,1}(\interval{2})}.
\end{align*}
Hence, we find that for each $\mathsf{v}_-=(\partial_1^2\mathsf{v}_{\alpha},\partial_1^2\mathsf{w}_{2\alpha}) \in\mathsf{T}_-$
\begin{equation*}
\big | V_{\partial_{1}^{2}u}^{\eta-2}.\mathsf{v}_- \big | \leq N^{\langle \alpha-2 \rangle}\|\pva\|_{\Ta{\alpha-2}}+\|\overline{V}_{a}^{\eta-\alpha}\|N^{\langle 2\alpha-2 \rangle}\|\Ga{2\alpha-2}{}\mathsf{v}_-\|_{\Ta{2\alpha-2}},
\end{equation*}
cf.\@ \eqref{wg90},\eqref{cw12a},\eqref{S8001}, which yields the first estimate in \eqref{S565}.  

\medskip

To establish the second estimate in \eqref{S565}, we use the following identity, which follows from \eqref{s300} in light of \eqref{functDef} and \eqref{j41}:  for all 
$
\ta_{-}:=(\pva,\partial_{1}^{2}\mathsf{w}_{2\alpha}) \in C^1(\interval{})\oplus C^{2,1}(\interval{2}),
$
it holds
\begin{equation}
V_{\partial_{1}^{2}u}^{\eta-2}.
\begin{pmatrix}
\pva \\
\partial_{1}^{2}\mathsf{w}_{2\alpha}
\end{pmatrix}
=\delta_{a}.\pva-a\pua +
V_{a \diamond \partial_{1}^{2}u}^{\eta-2}.
\begin{pmatrix}
\pua \\
\partial_{a_0}\pva \\
\partial_{1}^{2}\mathsf{w}_{2\alpha}
\end{pmatrix}
,
\end{equation}
where $\pua \in \R$ is free to be chosen.  Given $x,y \in \R^{2}$ with $d(x,y) \leq 1$ we find that
\begin{align*}
&\big ( V_{\partial_{1}^{2}u}^{\eta-2}(y)-V_{\partial_{1}^{2}u}^{\eta-2}(x) \big ).
\begin{pmatrix}
\pva \\
\partial_{1}^{2}\mathsf{w}_{2\alpha}
\end{pmatrix}
\\
&=(\delta_{a(y)}-\delta_{a(x)}).\pva-\big ( a(y)-a(x) \big )\pua \\
&+\big ( V_{a \diamond \partial_{1}^{2}u}^{\eta-2}(y)-V_{a \diamond \partial_{1}^{2}u}^{\eta-2}(x) \big ).
\begin{pmatrix}
\pua \\
\partial_{a_0}\pva \\
\partial_{1}^{2}\mathsf{w}_{2\alpha}
\end{pmatrix}.
\end{align*}
Choosing $\pua:=\partial_{a_0}\delta_{a(x)}.\pva$, we find that the first term is bounded by
\begin{align*}
 [a]_{\alpha}^{2}d^{2\alpha}(y,x)\|\pva\|_{C^{2}(I)}& \le N [a]_{\alpha}^2 d^{\thresh-\alpha}(y,x) \|\pva\|_{\Ta{\alpha-2}},
\end{align*}
where we have used that $\thresh-\alpha\le 2\alpha$ and \eqref{S8001}.  For the second term we appeal to \eqref{IOR1} noting that the first term on the right hand side of \eqref{IOR1} is annihilated by our choice of $\pua$, cf.\@ \eqref{s115}.  Hence, the second term is controlled by
\begin{align*}
&M_{a,\eta-\alpha}d^{\eta-\alpha}(y,x)\|\partial_{a_0}\pva\|_{\frac{1}{N}C^{1}(\interval{})}\\
&+M_{a,\eta-\alpha}d^{\eta-2\alpha}(y,x)\|\partial_{1}^{2}\mathsf{w}_{2\alpha}- \vf(x)\otimes \partial_{a_0}\pva  \|_{\frac{1}{N^2}C^{2,1}(\interval{2})} \\
&\stackrel{\eqref{S8001}, \eqref{cw12a}}{\le} M_{a,\eta-\alpha} \sum_{\beta\in \Aa{-}} d^{\thresh-2-\beta}(y,x) \|\Ga{\beta}{}(x)\ta_{-}\|_{\Ta{\beta}}.
\end{align*}
Combining these two observations gives the second inequality in \eqref{S565}.  
\end{proof}
We now give the proof of the reconstruction output.
\subsection{Lemma \ref{rec_lem_concr}}
\begin{proof} [Proof of Lemma \ref{rec_lem_concr}]

\newcounter{ROprf} 
\refstepcounter{ROprf} 
The main ingredient for the proof of the lemma is Corollary \ref{lem:ExMult}, after an appropriate translation of the hypotheses, which in turn requires two preliminary steps.  Namely, in Step \ref{ROprf_1}, we establish a local description \eqref{cw60} of $\partial_{1}^{2}u$, which will translate to the hypothesis \eqref{j771}.  In Step \ref{ROprf_2}, we prove a consistency of multiplication and reduction, which (effectively) means that the reduction of $V_{a \diamond \partial_{1}^{2}u}^{\eta+\alpha-2}$ on level $\eta-2$ is $V_{a \diamond \partial_{1}^{2}u}^{\eta-2}$, and allows to translate the sub-optimal output \eqref{S870} into \eqref{s30}.  Finally, in Step \ref{ROprf_3} we detail how to apply Corollary \ref{lem:ExMult} to obtain $a \diamond \partial_{1}^{2}u$, together with the estimates \eqref{eq:QualRO} and \eqref{s30}.

\medskip

{\sc Step} \arabic{ROprf}.\label{ROprf_1}\refstepcounter{ROprf}[The local description of $\partial_{1}^{2}u$] 

In this step, we claim that the following estimate holds: for all $T\le 1$,
\begin{align}\label{cw60}
\|\partial_1^2u_T-V_{\partial_1^2u}^{\eta-2}.(v_-)_T\|
\lesssim [V_u^\eta](T^\frac{1}{4})^{\eta-2}.
\end{align}
Here comes the argument: we first claim that for {\it all} points $x,y$ 
\begin{align}\label{cw61}
|(V_u^\eta(y)-V_u^\eta(x)).v_+(y)|\le [V_u^\eta]d^{\eta}(x,y).
\end{align}
Indeed, for points with $d(x,y)\le 1$ this is an immediate consequence of form continuity \eqref{prod5}, applied with exchanged
roles of $x$ and $y$, and of compatibility \eqref{j700}. We now turn to points with $d(x,y)\ge 1$, which as in
the proof of Proposition \ref{rec_lem} we need to consider due to the infinite tails of $\psi_T$. Clearly,
there exists $k\in\mathbb{Z}^2$ such that $d(x-k,y)\le 1$, so that by $\eta>2-\alpha>1$, it is
enough to show
\begin{align}\label{cw63}
|(V_u^\eta(x-k)-V_u^\eta(x)).v_+(y)|\le [V_u^\eta]|k_1|.
\end{align}
By periodicity of $V_u^\eta$ we have
\begin{align}\label{cw62}
(V_u^\eta(x-k)-V_u^\eta(x)).v_+(y)&\stackrel{\eqref{cw14}}{=}V_u^\eta(x).((k+v_+(y))-v_+(y))\nonumber\\
&\stackrel{\eqref{cw10}}{=}V_u^\eta(x).(0,0,k_1,0).
\end{align}
From \eqref{cw62} we learn in particular that $V_u^\eta(x).(0,0,1,0)$ $=(V_u^\eta(y)-V_u^\eta(x)).v_+(y)$ with $y=x-(1,0)$ 
and thus $d(x,y)=1$, so that from the already established part of \eqref{cw61} we obtain
\begin{align*}
|V_u^\eta(x).(0,0,1,0)|\le[V_u^\eta].
\end{align*}
Inserting this into \eqref{cw62} yields \eqref{cw63}, which completes the proof of \eqref{cw61}.

\medskip

We now argue how \eqref{cw61} entails \eqref{cw60}. On the one hand, 
by the functorial definitions \eqref{functDef} and \eqref{S6} (together with
the $\partial_1^2v_0=\partial_1^2v_1=0$) we have
$V_{\partial_1^2u}^{\eta-2}.(v_-)_T$ $=V_u^\eta.(0,v_{\alpha-2},0,v_{2\alpha-2,1})_T$ $=V_u^\eta.\partial_1^2(v_+)_T$.
On the other hand, we have by \eqref{j1003} in Remark \ref{wg03} that $u=V_u^\eta.v_+$ (since our skeleton and model are related by \eqref{j700}) and therefore $\partial_1^2u_T=\partial_1^2(V_u^\eta.v_+)_T$.
Combining both observations and the definition of $(\cdot)_T$, we obtain the representation
\begin{align*}
(\partial_1^2u_T-V_{\partial_1^2u}^{\eta-2}.(v_-)_T)(x)
=\int(V_u^\eta(y)-V_u^\eta(x)).v_+(y)\partial_1^2\psi_T(x-y){\rm d}y.
\end{align*}
Now \eqref{cw60} follows from \eqref{cw61} using \eqref{1.13}.

\medskip

{\sc Step} \arabic{ROprf}.\label{ROprf_2}\refstepcounter{ROprf}[Consistency of multiplication and reduction] 
In this step, we argue that the product $V_{\aufi}^{\eta-2}$ of the reductions of $V_{a}^{\eta-\alpha}=C_{\eta-\alpha}V_{a}^{\eta}$ and $V_{\partial_{1}^{2}u}^{\eta-\alpha-2}$ is the reduction $C_{\eta-2}V_{\aufi}^{\eta+\alpha-2}$ of the product $V_{a \diamond \partial_{1}^{2}u}^{\eta-2}$ of $V_{a}^{\eta}$ and $V_{a}^{\eta}$ to level $\eta-2$.  That is, we claim
\begin{align}
C_{\eta-2}(\overline{V}_a^\eta\otimes V_{\partial_1^2u}^{\eta-2})
=\overline{V}_a^{\eta-\alpha}\otimes V_{\partial_1^2u}^{\eta-\alpha-2}.
\end{align}
Here comes the argument.  By definition of $\overline{V}_{a}^{\eta-\alpha}$ and $V_{\partial_{1}^{2}u}^{\eta-\alpha-2}$, it suffices to show that $C_{\eta-2}(\overline{V}_a^{\eta}\otimes V_{\partial_1^2u}^{\eta-2})$
$=(C_{\eta-\alpha}\overline{V}_a^{\eta})\otimes(C_{\eta-\alpha-2}V_{\partial_1^2u}^{\eta-2})$. Appealing
to the orderings for $\mathsf{A}$, $\mathsf{A}_+$, and $\mathsf{A}_-$, respectively,
\begin{align*}\begin{array}{rcl}
\alpha-2<2\alpha-2&<\eta-2<&\alpha-1<3\alpha-2<2\alpha-1<4\alpha-2,\\
0<\alpha&<\eta-\alpha<&1<2\alpha,\\
\alpha-2&<\eta-\alpha-2<&2\alpha-2,\end{array}
\end{align*}
%
this instance of compatibility of product and reduction can be rephrased in terms of the
single-order reduction operators
\begin{align}\label{cw76}
C_{\alpha-1}C_{3\alpha-2}C_{2\alpha-1}C_{4\alpha-2}(\overline{V}_{+}\otimes V_-)
=(C_{1}C_{2\alpha}\overline{V}_{+})\otimes(C_{2\alpha-2}V_-),
\end{align}
where we set for abbreviation $V_+:=V_a^\eta$ and $V_-:=V_{\partial_1^2u}^{\eta-2}$.

\medskip

We start with the l.~h.~s. of \eqref{cw76}.
In view of $C_{\alpha-1}C_{3\alpha-2}C_{2\alpha-1}C_{4\alpha-2}V.\mathsf{v}$
$=V.({\rm id}-\Gamma_{4\alpha-2})$ $({\rm id}-\Gamma_{2\alpha-1})$
$({\rm id}-\Gamma_{3\alpha-2})$ $({\rm id}-\Gamma_{\alpha-1})\mathsf{v}$, which we obtain from iterating
\eqref{cut2}, our first step
towards \eqref{cw76} is the following string of identities. From the definition $\Gamma=\Gamma_+\otimes\Gamma_-$,
cf.~Lemma \ref{lem:prod_skel},
we obtain for the four factors
\begin{align*} 
{\rm id}-\Gamma_{\alpha-1}&=\id_{+}\otimes\id_{-}-\Ga{1}{}\otimes\Ga{\alpha-2}{} \\
&=({\rm id}_{+}-\Gamma_1)\otimes \Ga{\alpha-2}{} + \id_{+}\otimes (\id_{-}-\Ga{\alpha-2}{}),\\
{\rm id}-\Gamma_{3\alpha-2}&=\id_{+}\otimes\id_{-}-\Ga{\alpha}{}\otimes\Ga{2\alpha-2}{}-\Ga{2\alpha}{}\otimes\Ga{\alpha-2}{} \\
&=({\rm id}_{+}-\Gamma_{2\alpha}) \otimes \Ga{\alpha-2}{} - \Gamma_\alpha\otimes\Gamma_{2\alpha-2} + \id_{+}\otimes (\id_{-}-\Ga{\alpha-2}{}),\\
{\rm id}-\Gamma_{2\alpha-1}&
={\rm id}_{+}\otimes{\rm id}_{-}-\Gamma_{1}\otimes\Gamma_{2\alpha-2}\\
&=(\id_{+}-\Ga{1}{})\otimes\Ga{2\alpha-2}{} + \id_{+}\otimes (\id_{-}-\Ga{2\alpha-2}{}),\\
{\rm id}-\Gamma_{4\alpha-2}&={\rm id}_{+}\otimes{\rm id}_{-}-\Gamma_{2\alpha}\otimes\Gamma_{2\alpha-2} \\
&=(\id_{+}-\Ga{2\alpha}{})\otimes\Ga{2\alpha-2}{} + \id_{+}\otimes (\id_{-}-\Ga{2\alpha-2}{}),
\end{align*}
where ${\rm id}_{\pm}$ denotes the identity on $\mathsf{T}_\pm$ as opposed to the identity ${\rm id}$
on $\mathsf{T}$.
Using the triangular structure of $\Gamma_\pm$, cf.~\eqref{cw12} and \eqref{cw12a}, we obtain successively
\begin{align*}
({\rm id}-\Gamma_{3\alpha-2})({\rm id}-\Gamma_{\alpha-1})
&=({\rm id}_{+}-\Gamma_{2\alpha})({\rm id}_{+}-\Gamma_{1})
\otimes \Ga{\alpha-2}{} - \Gamma_\alpha\otimes\Gamma_{2\alpha-2} \\
&\quad + \id_{+}\otimes(\id_{-}-\Ga{\alpha-2}{}),
\end{align*}
and
\begin{align*}
\lefteqn{({\rm id}-\Gamma_{4\alpha-2})({\rm id}-\Gamma_{2\alpha-1})}\nonumber\\
&=({\rm id}_{+}-\Gamma_{2\alpha})({\rm id}_{+}-\Gamma_{1})\otimes \Ga{2\alpha-2}{} + \id_{+}\otimes (\id_{-}-\Ga{2\alpha-2}{}).
\end{align*}
%
Note that $(\id_{-}-\Ga{2\alpha-2}{})\Ga{\alpha-2}{}=(\id_{-}-\Ga{2\alpha-2}{})$ and on the same token $(\id_{-}-\Ga{2\alpha-2}{})(\id_{-}-\Ga{\alpha-2}{})=0$. Moreover, $({\rm id}_{+}-\Gamma_{2\alpha})({\rm id}_{+}-\Gamma_{1})$ is idempotent, so that
\begin{align}\label{cw77}
\lefteqn{({\rm id}-\Gamma_{4\alpha-2})({\rm id}-\Gamma_{2\alpha-1})
({\rm id}-\Gamma_{3\alpha-2})({\rm id}-\Gamma_{\alpha-1})}\nonumber\\
&=({\rm id}_{+}-\Gamma_{2\alpha})({\rm id}_{+}-\Gamma_{1})\otimes \Ga{2\alpha-2}{}\Ga{\alpha-2}{}\nonumber\\
&\quad -({\rm id}_+-\Gamma_{2\alpha})({\rm id}_+-\Gamma_1)\Gamma_\alpha\otimes\Gamma_{2\alpha-2}\nonumber\\
&\quad +({\rm id}_{+}-\Gamma_{2\alpha})({\rm id}_{+}-\Gamma_{1})\otimes \Ga{2\alpha-2}{}(\id_{-}-\Ga{\alpha-2}{})\nonumber\\
&\quad +({\rm id}_{+}-\Gamma_{2\alpha})({\rm id}_{+}-\Gamma_{1})\otimes (\id_{-}-\Ga{2\alpha-2}{})\nonumber\\
&=({\rm id}_{+}-\Gamma_{2\alpha})({\rm id}_{+}-\Gamma_{1})({\rm id}_{+}-\Gamma_{\alpha})\otimes\Gamma_{2\alpha-2}\nonumber\\
&\quad +({\rm id}_+-\Gamma_{2\alpha})({\rm id}_+-\Gamma_1)\otimes({\rm id}_--\Gamma_{2\alpha-2}).
\end{align}

\medskip

We now turn to the r.~h.~s. of \eqref{cw76}. 
In view of \eqref{j509} it is enough to check \eqref{cw76} for a fixed point $x$ (which we suppress in our notation)
on $\mathsf{v}_+\otimes\mathsf{v}_-$
$\in\mathsf{T}=\mathsf{T}_+\otimes\mathsf{T}_-$. We have from iterating \eqref{cut2}
\begin{align*}
\lefteqn{(C_{1}C_{2\alpha}\overline{V}_+)\otimes(C_{2\alpha-2}V_-).(\mathsf{v}_+\otimes\mathsf{v}_-)}\nonumber\\
&=\big(\overline{V}_+.({\rm id}_+-\Gamma_{2\alpha})({\rm id}_+-\Gamma_{1})\mathsf{v}_+\big)
\big(V_-.({\rm id}_--\Gamma_{2\alpha-2})\mathsf{v}_-\big).
\end{align*}
From \eqref{cw77} we obtain for the l.~h.~s. of \eqref{cw76}
\begin{align*}
\lefteqn{C_{\alpha-1}C_{3\alpha-2}C_{2\alpha-1}C_{4\alpha-2}(\overline{V_+}\otimes V_-).
(\mathsf{v}_+\otimes\mathsf{v}_-)}\nonumber\\
&=\big(\overline{V}_+.({\rm id}_{+}-\Gamma_{2\alpha})({\rm id}_{+}-\Gamma_{1})({\rm id}_{+}-\Gamma_{\alpha})\mathsf{v}_+\big)
\big(V_-.\Gamma_{2\alpha-2}\mathsf{v}_-\big)\nonumber\\
&+\big(\overline{V}_+.({\rm id}_+-\Gamma_{2\alpha})({\rm id}_+-\Gamma_{1})\mathsf{v}_+\big)
\big(V_-.({\rm id}_--\Gamma_{2\alpha-2})\mathsf{v}_-\big).
\end{align*}
Hence \eqref{cw76} follows once we convince ourselves of
$0$ 
$=\overline{V}_{+}.$ $({\rm id}_{+}-\Gamma_{2\alpha})$ $({\rm id}_{+}-\Gamma_{1})$ 
$({\rm id}_{+}-\Gamma_{\alpha})\mathsf{v}_+$
$=C_\alpha C_1 C_{2\alpha}\overline{V}_{+}.\mathsf{v}_+$, which follows from $\overline{V_+}$
$=V_+-C_\alpha C_1 C_{2\alpha}V_+$ and the idempotence of $C_\alpha C_1 C_{2\alpha}$.

\medskip

{\sc Step} \arabic{ROprf}.\label{ROprf_3}\refstepcounter{ROprf}[Application of Corollary \ref{lem:ExMult}] 
In the final step, we argue for the existence of $a \diamond \partial_{1}^{2}u$, together with the estimates \eqref{eq:QualRO} and \eqref{s30}.  With Steps \ref{ROprf_1} and \ref{ROprf_2} now complete, this is essentially a consequence of Corollary \ref{lem:ExMult}, with an appropriate translation of the hypotheses, which we now detail.  For positive homogeneity objects, we use the abstract spaces $(\mathsf{A}_{+},\mathsf{T}_{+})$ specified by \eqref{S1} and \eqref{S8030}, along with the function $v_{+}$ defined by \eqref{j532}.  For negative homogeneity objects, we use the abstract spaces $(\mathsf{A}_{-},\mathsf{T}_{-})$ specified by \eqref{S4} and \eqref{S8001}, along with the distribution $v_{-}$ defined by \eqref{S6}.   The role of $V_{u_{+}}$ is played by $V_{a}^{\eta}$, which belongs to $\mathcal{D}^{\eta}(\Ta{+};\Ga{+}{})$ by assumption.  The role of $V_{u_{-}}$ is played by $V_{\partial_{1}^{2}u}^{\eta-2}$, which belongs to $\mathcal{D}^{\eta-2}(\Ta{-};\Ga{-}{})$ as a consequence of our assumption $V_{u}^{\eta}(\Ta{+};\Ga{+}{}) \in \mathcal{D}^{\eta}(\Ta{+};\Ga{+}{})$.  In fact, one has the inequality
\begin{equation}
[V_{\partial_{1}^{2}u}^{\eta-2}] \leq [V_{u}^{\eta}],\label{fnctIneq}
\end{equation} 
which can be established using the definition \eqref{functDef} together with the functorial derivation (upon application of $\partial_{1}^{2}$) of $\Aa{-},\Ta{-},\Ga{-}{},\langle \cdot \rangle_{-}$ cf. \eqref{S4}, \eqref{S8001}, \eqref{cw12}, \eqref{wg90} from $\Aa{+},\Ta{+},\Ga{+}{},\langle \cdot \rangle_{+}$ cf. \eqref{S1}, \eqref{S8030}, \eqref{cw12}, \eqref{wg12}.  Noting that $\alpha$ is the smallest positive homogeneity in $\Aa{+}$, while $\alpha-2$ is the most negative homogeneity in $\Aa{-}$, the exponent $\kappa$ defined by \eqref{j982} is equal to $\eta+\alpha-2$.  We note that by \eqref{j1220}, $\kappa$ separates the homogeneities of $\Aa{}$ as follows
\begin{align*}
 \alpha-2<2\alpha-2<\alpha-1<3\alpha-1<0<\kappa<2\alpha-1<4\alpha-2.
\end{align*}
The model input $v_{+} \diamond v_{-}$ is played by \eqref{j531}, and we note that \eqref{j770} is a consequence of \eqref{lo01}.  We also note that $u_-=\puf$ satisfies \eqref{j771} with $(\tf_-)_{\betam}\stackrel{\eqref{S6},\eqref{j531}}{=}(\tf_+\diam \tf_+)_{0,\betam}$, provided we set $N_*:=[V_u^\thresh]$ in the definition of $\oTa{\kappa-\alpha}$. Indeed, in view of $\kappa-\alpha=\thresh-2$, which by \eqref{j1220} satisfies \eqref{jj3}, the estimate \eqref{j771} corresponds to \eqref{cw60} established in Step \ref{ROprf_1}.  

\medskip

Hence, we may appeal to Corollary \ref{lem:ExMult} to obtain a unique distribution $a \diamond \partial_{1}^{2}u$ satisfying \eqref{j772}, which we now claim translates exactly into \eqref{eq:QualRO}.  We only need to argue that the constant $M_{\aufi,\eta+\alpha-2}$ in \eqref{S600} corresponds to $M_{u_{+}\diamond u_{-},\kappa}$ of Corollary \ref{lem:ExMult}.  First we note that $\|\oGa{}{}\|_{sk}\lesssim 1$ as a consequence of
\begin{align*}
 \|\oGa{}{}\|_{sk}\stackrel{\eqref{j1004}}{\le} \|\Ga{}{}\|_{sk} + \frac{[V_{\puf}^{\thresh-2}]}{[V_u^\thresh]}\stackrel{\eqref{j790},\eqref{fnctIneq}}{\lesssim} 1.
\end{align*}
Next we note that $\min_{\beta \in \Aa{-}} \langle \beta\rangle= \min_{\beta \in \Aa{+}'}\langle \beta\rangle =1$, cf. \eqref{wg90} and \eqref{wg13}.  Thus, $M_{u_{+} \diamond u_{-}}^{c}$ of \eqref{S301} corresponds to $M_{a,\eta}$ defined by \eqref{SFin35}.  Also, it follows that $M_{u_{+} \diamond u_{-}}^{b}$ corresponds to $\|V_{\partial_{1}^{2}u}^{\eta-2}\| \|V_{a}^{\eta}\|$, which is multiplied by $N^{4}$ in \eqref{S600} because $\min_{\beta\ge \eta+\alpha-2}\ap{\beta}=\min\set{\ap{2\alpha-1}, \ap{4\alpha-2}}$ $=4$, cf.\@ \eqref{wg13}.    From these observations, we see that the constants $M_{\aufi,\eta+\alpha-2}$ and $M_{u_{+}\diamond u_{-},\kappa}$ coincide.

\medskip

The last step is to obtain the sub-optimal bound \eqref{s30}, which will follow from \eqref{S870} with $\eta$ in Corollary \ref{lem:ExMult} playing the role $\eta-2$ in the current lemma.  There are two points to justify here. The first is a consistency condition, which guarantees that the left hand side of \eqref{S870} turns into the left hand side \eqref{s30} upon cutting at level $\eta-2$.  Indeed, this follows from Step \ref{ROprf_2}, cf.\@ \eqref{MS1}.
The second point is to verify that the constant denoted $M_{a \diamond \partial_{1}^{2}u,\eta}$ in Corollary \ref{lem:ExMult} translates to $M_{a \diamond \partial_{1}^{2}u, \eta - 2}$ in the current lemma.
For this, it suffices to note that $\min_{\beta\ge \eta-2}\ap{\beta}=\min\set{\ap{\alpha-1},\ap{3\alpha-2}, \ap{2\alpha-1}, \ap{4\alpha-2}}=3$, cf.\@ \eqref{wg13}.
\end{proof}
\subsection{Lemma \ref{int_lem_concr}}
\begin{proof}[Proof of Lemma \ref{int_lem_concr}]
The proof consists of several steps.  In Step \ref{Intprf_000}, we record in advance a useful auxiliary result, which will be crucial in Step \ref{Intprf_00}.  In Step \ref{Intprf_0}, we introduce the language necessary to apply our abstract integration result, Proposition \ref{int lem}. We verify the local splitting condition \eqref{KS1} in Steps \ref{Intprf_00} and \ref{Intprf_i}, followed by the three-point continuity condition \eqref{KS2} in Step \ref{Intprf_ii}. The output of Proposition \ref{int lem} asserts a modelledness stated in Step \ref{Intprf_iii}, which is used in Step \ref{Intprf_iv} in conjunction with Lemma \ref{cor:prod1} to establish the form continuity \eqref{s16} for $V_{u}^{\thresh}$ in the $\mathcal{D}^{\thresh}(\Tplust; \Ga{+}{})$ norm, as well as \eqref{SFin3}. The form boundedness \eqref{SFin5} is obtained in Step \ref{Intprf_v}.

\medskip

\newcounter{Intprf} 
\refstepcounter{Intprf} 
{\sc Step} \arabic{Intprf}.\label{Intprf_000}\refstepcounter{Intprf}[Structural property] 
We claim that for all $x \in \R^{2}$ and $(\pva,\va \diamond \pva) \in C^{1}(I) \oplus C^{2,1}(I^{2})$ satisfying
\begin{equation}
\delta_{a(x)}.\pva=\delta_{a_0'} \otimes \delta_{a(x)}.\va \diamond \pva=0 \quad \text{for all } \, a_{0}' \in I, \label{SFin20}
\end{equation}
it holds
\begin{equation}
\big ( \overline{V}_{a}^{\eta-\alpha} \otimes V_{\partial_{1}^{2}u}^{\eta-\alpha-2}\big )(x) .
\begin{pmatrix}
\pva \\
\va \diamond \pva
\end{pmatrix}
=0.\label{SFin21}
\end{equation}
Towards this end, we claim that the following boundedness property holds.
\begin{align}
\Big | & \big ( \overline{V}_{a}^{\eta-\alpha}  \otimes V_{\partial_{1}^{2}u}^{\eta-\alpha-2} \big )(x).
\begin{pmatrix}
\pva \\
\va \diamond \pva
\end{pmatrix}
\Big |
\nonumber\\
& \leq N^{\langle \alpha\rangle}\|\overline{V}_{a}^{\eta-\alpha}\| \|\big ( \id_{\Ta{\alpha}} \otimes \delta_{a(x)} \big ).(\va \diamond \pva-\vf(x) \otimes \pva)\|_{\Ta{\alpha}}.\label{bdProp}
\end{align}
To see this, note first that trivially, since $V_{\partial_{1}^{2}u}^{\eta-\alpha-2}=\delta_{a}$, we have for all $\pva \in \Ta{\alpha-2}$
\begin{equation*}
\left |V_{\partial_{1}^{2}u}^{\eta-\alpha-2}(x).\pva \right | \leq \left |\delta_{a(x)}.\pva \right |.
\end{equation*}
Moreover, the inequality \eqref{new} of Lemma \ref{cutting_lemma} implies that for all $(\na, \va) \in \Ta{0} \otimes \Ta{\alpha}$ it holds
\begin{align*}
\left |\overline{V}_{a}^{\eta-\alpha}
\begin{pmatrix}
\na \\
\va
\end{pmatrix}
\right |
\leq \|\overline{V}_{a}^{\eta-\alpha} \|N^{\langle \alpha \rangle}\|\va-\vf(x)\na\|_{\Ta{\alpha}}.
\end{align*}
Thus, \eqref{bdProp} now follows from the cross norm property \eqref{j522} of the spaces $\Ta{\alpha}$ and $\R$.  The vanishing property \eqref{SFin21} now follows from \eqref{bdProp} by noting that for $(\pva,\vva)$ with property \eqref{SFin20}, the r.h.s. of \eqref{bdProp} vanishes.

\medskip

{\sc Step} \arabic{Intprf}.\label{Intprf_0}\refstepcounter{Intprf}[Set up] 

Our plan is to apply Proposition \ref{int lem} with $\eta$ playing the role of $\kappa$, and $U,\gamma \in C^{0}(\R^{2}\times \R^{2})$ given by
\begin{align}
U(x,y)&:=u(y)-
V_{\partial_{1}^{2}u}^{\thresh-2}(x). 
\begin{pmatrix}
\vf(y)\\
\wwf(y)
\end{pmatrix},
\label{I1}\\
\gamma(x,y)&:=
 \big ( V_{\partial_{1}^{2}u}^{\thresh-2}(y)-V_{\partial_{1}^{2}u}^{\thresh-2}(x) \big ).
\begin{pmatrix}
0\\
\of(y)
\end{pmatrix}. \label{j800}
\end{align}

{\sc Step} \arabic{Intprf}.\label{Intprf_00}\refstepcounter{Intprf}[Preparation for Local Splitting]

In order to motivate the definition \eqref{I1} of $U$ and in preparation for Step \ref{Intprf_i}, where we verify the local splitting condition \eqref{KS1} for this choice of $U$, we claim the following identity
\begin{align}\label{s25}
&(\partial_{2}-a(x)\partial_{1}^{2})U_{T}(x,\cdot)\nonumber\\
&\quad =(\cdot)_{T}P(a \diam \partial_{1}^{2}u)-V_{\aufi}^{\thresh-2}(x).
\begin{pmatrix}
(\puf)_{T}\\
(\pvf)_{T}\\
P(\vvf)_{T}
\end{pmatrix}
,
\end{align}
where we recall that $\partial_1$ and $\partial_2$ act on the implicit variable $y$.  Here comes the argument: in light of the definition \eqref{algebraicRel}, applied with the abstract vectors
$
\partial_{1}^{2}\va:=\vf(y), \, \,
\partial_{1}^{2}\mathsf{w}_{2\alpha}:=\wwf(y),
$
we may write $U$ explicitly as
\begin{align*}
U(x,y)=u(y)- \delta_{a(x)}.\vf(y)
-\overline{V}_{a}^{\thresh-\alpha}(x)\otimes V_{\puf}^{\thresh-\alpha-2}(x). 
\begin{pmatrix}
\partial_{a_0}\vf(y)\\
\wwf(y)
\end{pmatrix}.
\end{align*}
To establish \eqref{s25}, we apply the operator $(\partial_{2}-a(x)\partial_{1}^{2})(\cdot)_{T}$ to each of the three contributions of $y \to U(x,y)$ separately, then combine the terms.  For the first contribution, we observe that applying $(\cdot)_{T}$ to the PDE \eqref{ss11} gives
\begin{equation}
(\partial_{2}-a(x)\partial_{1}^{2})(\cdot)_{T}u=(\cdot)_{T}P(a \diamond \partial_{1}^{2}u)-a(x)(\puf)_{T}+Pf_{T}.\label{eq:IOR6}
\end{equation}
For the second contribution, we observe that
\begin{equation}
(\partial_{2}-a(x)\partial_{1}^{2})(\cdot)_T\delta_{a(x)}
\vf(\cdot)
=Pf_{T},\label{eq:IOR7}
\end{equation}
which is a consequence of the PDE \eqref{f21} defining $\vf$.
For the third contribution, we observe that
\begin{align}
&(\partial_{2}-a(x)\partial_{1}^{2})(\cdot)_T \overline{V}_{a}^{\thresh-\alpha}(x)\otimes V_{\puf}^{\thresh-\alpha-2}(x).
\begin{pmatrix}
\partial_{a_0}\vf \\
\wwf
\end{pmatrix}\nonumber\\
&\quad \, =\overline{V}_{a}^{\thresh-\alpha}(x)\otimes V_{\puf}^{\thresh-\alpha-2}(x).
\begin{pmatrix}
(\pvf)_{T} \\
P(\vvf)_{T}
\end{pmatrix}.\label{s27}
\end{align}
Indeed, \eqref{s27} follows from the defining PDEs \eqref{se40} and \eqref{f22} together with the structural property \eqref{SFin21} established in Step \ref{Intprf_000}.  More specifically, we define for each $y$ the abstract elements of $\Ta{\alpha-2}$ and $\Ta{2\alpha-2}$, respectively,
\begin{align*}
\pva(a_0)&:=(a_0-a(x))(\partial_{a_0}\pvf)_{T}(y,a_0),\\
\vva(a_0',a_0)&:=(a_0-a(x))(\pwf)_{T}(y,a_0',a_0),
\end{align*}
and note that \eqref{SFin20} certainly holds, which allows us to replace $\partial_{2}-a(x)\partial_{1}^{2}$ by $\partial_{2}-a_0\partial_{1}^{2}$ when applied to $((\partial_{a_0}\vf)_{T}, (\wwf)_{T})$ in the argument of $\overline{V}_{a}^{\thresh-\alpha}(x)\otimes V_{\puf}^{\thresh-\alpha-2}(x)$.  It remains to combine the three identities \eqref{eq:IOR6}, \eqref{eq:IOR7}, and \eqref{s27} to obtain \eqref{s25}.

\medskip

{\sc Step} \arabic{Intprf}.\label{Intprf_i}\refstepcounter{Intprf}[Local Splitting] 

In this step we verify the local splitting condition \eqref{KS1} for $U$ defined in \eqref{I1}.  More precisely, we claim that for all $x \in \R^{2}$ and all $L,T\le 1$ it holds
\begin{align}
&\inf_{c\in\R}\|(\partial_{2}-a(x)\partial_{1}^{2})U_{T}(x,\cdot)-c\|_{B_{L}(x)} \nonumber \\
& \lesssim M_{\aufi,\eta-2}\Tc{\thresh-2}  +[a]_{\alpha} [V_{u}^{\thresh-\alpha}]L^{\alpha}\Tc{\thresh-\alpha-2}\nonumber\\
& \quad + M_{a,\eta-\alpha}\big (L^{\thresh-\alpha}\Tc{\alpha-2}+ L^{\thresh-2\alpha}\Tc{2\alpha-2} \big), \label{s24}
\end{align}
where we recall the definition of $M_{a,\eta-\alpha}$ in \eqref{IOR30}.

\medskip

For this purpose, we re-write identity \eqref{s25} obtained in Step \ref{Intprf_00} as
\begin{align}
&(\partial_{2}-a(x)\partial_{1}^{2})U_{T}(x,y)-V_{\aufi}^{\thresh-2}(x).
\begin{pmatrix}
 0 \\ 0 \\ c_0
\end{pmatrix}
+c_1\nonumber \\
&\quad =(a \diam \puf)_{T}(y)
-V_{\aufi}^{\thresh-2}(y).
\begin{pmatrix}
(\puf)_{T}(y)\\
(\pvf)_{T}(y)\\
(\vvf)_{T}(y)
\end{pmatrix}
\nonumber \\
&\quad  +\big ( V_{\aufi}^{\thresh-2}(y)-V_{\aufi}^{\thresh-2}(x) \big ).
\begin{pmatrix}
(\puf)_{T}(y)\\
(\pvf)_{T}(y)\\
(\vvf)_{T}(y)
\end{pmatrix},
\nonumber
\end{align}
where $c_0(a_{0}',a_{0}):=\int_{[0,1)^2}(\vvf)(y,a_{0}',a_{0}) \dd y$ and $c_1:=\int_{[0,1)^2}(\auf)(y) \dd y$.
By assumption \eqref{s30}
, the first right-hand side term is estimated by $M_{\aufi,\eta-2}\Tc{\eta-2}$.  To estimate the second right-hand side term, we appeal to the form continuity \eqref{IOR1} of $V_{a \diamond \partial_{1}^{2}u}^{\eta-2}$ contained in \eqref{IOR1} in Step \ref{Algprf_2} of the proof of Lemma \ref{cor:prod1} (with the roles of $x$ and $y$ exchanged) and find the following upper bound for all $y \in B_{L}(x)$  
\begin{align}
&[a]_{\alpha}L^{\alpha}\big |(\puf)_{T}(y)-V_{\puf}^{\thresh-\alpha-2}(y).(\pvf)_{T}(y) \big |\label{IOR8}\\
&+ M_{a,\eta-\alpha} L^{\thresh-\alpha}\|(\pvf)_{T}(y)\|_{\frac{1}{N} C^1} \nonumber \\ 
&+ M_{a,\eta-\alpha}L^{\thresh-2\alpha}\left\|
(\vvf)_{T}(y) - \vf(y)\otimes (\pvf)_T(y)
\right\|_{\frac{1}{N^2}C^{2,1}}. \nonumber 
\end{align}
Recalling that $\Ta{\alpha-2}\hookrightarrow \frac{1}{N}C^1(\interval{})$ and $\Ta{2\alpha-2,2}=\frac{1}{N^2}C^{2,1}(\interval{2})$ cf. \eqref{S8001}, we can appeal to \eqref{p13} and \eqref{off1} to estimate the the last two lines by 
\begin{align*}
M_{a,\eta-\alpha} \big (L^{\thresh-\alpha}\Tc{\alpha-2}+ L^{\thresh-2\alpha}\Tc{2\alpha-2} \big).
\end{align*}
It remains to estimate the first line \eqref{IOR8}, which is a lower-order version of \eqref{cw60}. By even simpler arguments than in \eqref{cw60}, this line is hence estimated by
$$
[a]_{\alpha}[V_{u}^{\thresh-\alpha}]L^{\alpha}(T^{\frac{1}{4}})^{\thresh-\alpha-2}.
$$

\medskip

{\sc Step} \arabic{Intprf}.\label{Intprf_ii}\refstepcounter{Intprf}[Three-Point Continuity] 

The next step is to verify the three-point continuity condition \eqref{KS2} in our setting of \eqref{I1} and \eqref{j800}.  More precisely, we claim that for any three points $x,y,z$ it holds
\begin{align}
&\big |U(x,z)-U(x,y)-U(y,z)+U(y,y)
-\gamma(x,y) (z-y)_1
\big |\nonumber \\
&\quad \lesssim [V_{\partial_{1}^{2}u}^{\eta-2}]\big (d^{\thresh-\alpha}(x,y)d^{\alpha}(z,y)+ d^{\thresh-2\alpha}(x,y)d^{2\alpha}(z,y) \big)\label{s34}.
\end{align}
First observe that by definitions \eqref{I1} and \eqref{j800} of $U$ and $\gamma$ it holds
\begin{align*}
&U(x,z)-U(x,y)-U(y,z)+U(y,y)
-\gamma(x,y)(z-y)_1 
\\
&=\big ( V_{\partial_{1}^{2}u}^{\thresh-2}(y)-V_{\partial_{1}^{2}u}^{\thresh-2}(x) \big ).
\begin{pmatrix}
\vf(z)-\vf(y)\\
\wwf(z)-\wwf(y)
-\of(y)(\xf(z)-\xf(y))
\end{pmatrix}
.
\end{align*}
We now appeal to the form continuity of $V_{\partial_{1}^{2}u}^{\eta-2}$. Using the following choices of abstract vectors
\begin{align*}
\partial_{1}^{2}\va & := \vf(z)-\vf(y), \\
\partial_{1}^{2}\mathsf{w}_{2\alpha} & :=\wwf(z)-\wwf(y)
-\of(y)(\xf(z)-\xf(y)),
\end{align*}
and recalling the definition of $\Ga{-}{}$ in \eqref{cw12a}, the quantity above is estimated by
\begin{align*}
&[V_{\partial_{1}^{2}u}^{\eta-2}]d^{\eta-\alpha}(y,x)\|\vf(z)-\vf(y)\|_{\Ta{\alpha-2}}\\
&+ [V_{\partial_{1}^{2}u}^{\eta-2}]d^{\eta-2\alpha}(y,x)\|\wwf(z)-\wwf(y)
-\of(y)(\xf(z)-\xf(y))\\
&\qquad \qquad \qquad \qquad-\vf(y) \otimes (\partial_{a_0}\vf(z)-\partial_{a_0}\vf(y)) \|_{\Ta{2\alpha-2}},
\end{align*}
which implies \eqref{s34} as a consequence of \eqref{s203} and \eqref{s2011}.

\medskip

{\sc Step} \arabic{Intprf}.\label{Intprf_iii}\refstepcounter{Intprf}[Output of Proposition \ref{int lem} and Remark \ref{rem-wg53}] 

We claim that there exists a H\"older continuous and periodic function $\nu$ such that for all $x,y \in \R^{2}$
\begin{align}
&\left |U(x,y)-U(x,x)-\nu(x)(\xf(y)-\xf(x))\right |\lesssim M d^{\thresh}(y,x),\label{s20}\\
&\left |\nu(y)-\nu(x)
+\gamma(x,y)
\right | \lesssim M d^{\thresh-1}(y,x)\label{s21},
\end{align}
where
\begin{align}
M&:=M_{\aufi,\thresh-2}+[a]_{\alpha}[V_{u}^{\thresh-\alpha}]+ M_{a,\eta-\alpha}+[V_{\partial_{1}^{2}u}^{\eta-2}].
\label{j786}
\end{align}
Indeed, the estimates \eqref{s20} and \eqref{s21} correspond to the outputs \eqref{KS3} and \eqref{KS4}, since the sum of the constants gathered in \eqref{s24} and \eqref{s34} is controlled by $M$.


\medskip

{\sc Step} \arabic{Intprf}.\label{Intprf_iv}\refstepcounter{Intprf}[Form Continuity] 

Using the function $\nu$ obtained in Step \ref{Intprf_iii}, the definition \eqref{s300bis} of $V_{u}^{\eta}$ is now complete, and we may turn to the proof of \eqref{s16}, which in view of definition \eqref{j786} amounts to showing that for all $x,y \in \R^{2}$ with $d(x,y) \leq 1$ and all $\mathsf{v}_{+} \in \Ta{+}$
\begin{align} \label{S91}
\big | \left (V_{u}^{\thresh}(y)-V_{u}^{\thresh}(x)\right ).\ta_{+}\big| &\lesssim
M \sum_{\beta\in\Aa{+}} d^{\eta-\beta}(y,x)\|\Ga{\beta}{}(y)\ta_{+}\|_{\Ta{\beta}}.
\end{align}

For this we first note that for all $\mathsf{v}_{+}=(\na,\va,\mathsf{v}_{1},\wa) \in \Ta{+}$
\begin{align}
\big(V_{u}^{\thresh}&(y)-V_{u}^{\thresh}(x)\big).\mathsf{v}_{+}\nonumber\\
&=\big (U(x,y)-U(x,x)-\nu(x)(v_{1}(y)-v_{1}(x)) \big)\na \nonumber\\
&\, +\left (\nu(y)-\nu(x)+\gamma(x,y) \right) \left (\mathsf{v}_{1}-v_{1}(y)\na \right)\nonumber\\
&\, +\left (V_{\partial_{1}^{2}u}^{\eta-2}(y)-V_{\partial_{1}^{2}u}^{\eta-2}(x) \right).\nonumber\\
& \quad
\begin{pmatrix}
\va-\vf(y)\na\\
\wa-\wwf(y)\na-\omega(y)(\mathsf{v}_{1}-v_{1}(y)\na)
\end{pmatrix}
.\label{jj6}
\end{align}
Indeed, note that
\begin{align}
V_{u}^{\thresh}(x).\mathsf{v}_{+}
=u(x)\na&+\nu(x)\left (\mathsf{v}_{1}-v_{1}(x)\na \right)\nonumber\\
&+V_{\partial_{1}^{2}u}^{\eta-2}(x).
\begin{pmatrix}
\va-\vf(x)\na\\
\wa-\wwf(x)\na
\end{pmatrix}
,\label{jj8}
\end{align}
which follows from comparing the definitions \eqref{algebraicRel} and \eqref{s300bis}.  Applying the discrete product rule and noting that
\begin{equation*}
\gamma(x,y)\left (\mathsf{v}_{1}-v_{1}(y)\na \right)+\left (V_{\partial_{1}^{2}u}^{\eta-2}(y)-V_{\partial_{1}^{2}u}^{\eta-2}(x) \right).
\begin{pmatrix}
0\\
-\omega(y) \left (\mathsf{v}_{1}-v_{1}(y)\na \right)
\end{pmatrix}
\end{equation*}
vanishes by \eqref{j800}, gives \eqref{jj6}. By \eqref{s20} and \eqref{s21} in Step \ref{Intprf_iii}, the first two contributions to \eqref{jj6} are estimated by
\begin{equation*}
M \big (d^{\eta}(y,x)|\na|+ d^{\eta-1}(y,x)|\mathsf{v}_{1}-v_{1}(y)\na |\big ),
\end{equation*}
which falls under the right-hand side of \eqref{S91} in view of \eqref{S8030} and \eqref{cw12}.  The last contribution to \eqref{jj6} is estimated as in Step \ref{Intprf_ii}, using the form continuity of $V_{\partial_{1}^{2}u}^{\eta-2}$ with the abstract vectors
\begin{align*}
\partial_{1}^{2}\va:=\va-\vf(y)\na, \, \,
\partial_{1}^{2}\mathsf{w}_{2\alpha}:=\wa-\wwf(y)\na-\omega(y)(\mathsf{v}_{1}-v_{1}(y)\na),
\end{align*}
which yields
\begin{align*}
& [V_{\partial_{1}^{2}u}^{\eta-2}]d^{\eta-\alpha}(y,x)\|\va-\vf(y)\na\|_{\Ta{\alpha-2}}\\
&+ [V_{\partial_{1}^{2}u}^{\eta-2}]d^{\eta-2\alpha}(y,x)\|\wa-\wwf(y)\na
-\omega(y)(\mathsf{v}_{1}-v_{1}(y)\na)\\
&\qquad \qquad \qquad \qquad-\vf(y) \otimes \partial_{a_0}(\va-\vf(y)\na)\|_{\Ta{2\alpha-2}},
\end{align*}
which again falls under the right-hand side of \eqref{S91} in light of \eqref{cw12} and \eqref{j786}.  

\medskip

We give now the argument for \eqref{SFin3}. Since $V_u^\thresh \in \cald^\thresh(\Tplus;\Ga{+}{})$, we obtain by definition \eqref{prod5} of form continuity and the property \eqref{j700}
\begin{align*}
 |(V_u^\thresh(y)-V_u^\thresh(x)).\tf_+(y)|\le [V_u^\thresh] d^{\thresh}(y,x).
\end{align*}
Inserting $\tf_+(y)=(1,\vf(y),\xf(y),\wwf(y))$ into \eqref{jj6}, the last two lines of \eqref{jj6} vanish and we obtain $(V_u^\thresh(y)-V_u^\thresh(x)).\tf_+(y) = U(x,y)-U(x,x)-\nu(x)(y-x)_1$. Choosing $y:=x+(1,0)$ and appealing to the periodicity of $y\to U(x,y)$, we obtain \eqref{SFin3}.

\medskip

{\sc Step} \arabic{Intprf}.\label{Intprf_v}\refstepcounter{Intprf}[Form Boundedness] 
To show \eqref{SFin5},
observe that \eqref{jj8} implies
\begin{align*}
 |V_{u}^{\thresh}(x).\ta_{+}|&\le \|u\||\na| + \|\nu\||\xa-\xf(x)\na| + \left|V_{\puf}^{\thresh-2}(x)\begin{pmatrix} \va-\vf(x)\na \\ \wa-\wwf(x)\na\end{pmatrix} \right|  \\
 &\le \|u\||\Ga{0}{}(x)\ta_+| + N^{-2}\left (\|\nu\|+\left | V_{\puf}^{\eta-2}(x).\begin{pmatrix} 0 \\ \of(x) \end{pmatrix}\right | \right )N^2|\Ga{1}{}(x)\ta_+| \\
 &\quad + \left|V_{\puf}^{\thresh-2}(x)\begin{pmatrix} \va-\vf(x)\na \\ \wa-\wwf(x)\na-\of(x)(\xa-\xf(x)\na)\end{pmatrix} \right|.
\end{align*}
By definition \eqref{wg16} of $\|V_{\partial_{1}^{2}u}^{\eta-2}\|$, the prescription $\langle 2\alpha-2 \rangle=2$, cf. \eqref{wg13}, and the bound $\|\of\|_{C^0(\R^2,\Ta{2\alpha})}\lesssim 1$, cf. \eqref{s2011}, we obtain
\begin{align*}
 &\left | V_{\puf}^{\eta-2}(x).\begin{pmatrix} 0 \\ \of(x) \end{pmatrix}\right |\lesssim \|V_{\puf}^{\eta-2}\| N^2,\nonumber\\
 &\left|V_{\puf}^{\thresh-2}(x).\begin{pmatrix} \va-\vf(x)\na \\ \wa-\wwf(x)\na-\of(x)(\xa-\xf(x)\na)\end{pmatrix} \right| \\
 &\quad \, \, \le \|V_{\puf}^{\eta-2}\|\big(N^{\langle \alpha-2 \rangle } \|\va-\vf(x)\na\|_{\Ta{\alpha-2}} \\
 &\qquad + N^{\langle 2\alpha-2 \rangle} \|\wa-\wwf(x)\na-\of(x)(\xa-\xf(x)\na) \\
 &\qquad \qquad \quad - \vf(x)\otimes \partial_{a_0}(\va-\vf(x)\na)\|_{\Ta{2\alpha-2}} \big)\\
 &\stackrel{\eqref{wg90},\eqref{cw12},\eqref{S8001}}{=} \|V_{\puf}^{\eta-2}\| \big(N^{\langle \alpha \rangle} \|\Ga{\alpha}{}(x)\ta_+\|_{\Ta{\alpha}} + N^{\langle 2 \alpha \rangle} \|\Ga{2\alpha}{}(x)\ta_+\|_{\Ta{2\alpha}} \big).
\end{align*}
Combining the above estimates we find
\begin{align*}
 |V_{u}^{\thresh}(x).\ta_{+}|&\lesssim \|u\||\Ga{0}{}(x)\ta_+| + (N^{-2}\|\nu\|+\|V_{\puf}^{\eta-2}\|)N^{2}|\Ga{1}{}(x)\ta_+| \\
 &\quad + \|V_{\puf}^{\thresh-2}\|\big(N^{\langle \alpha \rangle} \|\Ga{\alpha}{}(x)\ta_+\|_{\Ta{\alpha}} + N^{\langle 2\alpha \rangle} \|\Ga{2\alpha}{}(x)\ta_+\|_{\Ta{2\alpha}} \big),
\end{align*}
which yields \eqref{SFin5} in view of \eqref{prod4} and \eqref{wg12}.
\end{proof}
\subsection{Theorem \ref{Prop:aPriori} and Corollary \ref{cor:wg}}
\begin{proof}[Proof of Theorem \ref{Prop:aPriori}]

\medskip

\newcounter{MTprf} 
\refstepcounter{MTprf} 
The proof involves several steps, with Steps \ref{MTprf_2}-\ref{MTprfNew_2} being devoted to the existence of $V_{u}^{\eta} \in \mathcal{D}^{\eta}(\Ta{+};\Ga{+}{})$ with the desired three properties as well as the estimates \eqref{S306} and \eqref{SFin4}, and the final Step \ref{MTprf_9} being devoted to the uniqueness.  The existence proof proceeds by an approximation argument.  To start, we establish in Step \ref{MTprf_2} that for each $\tau \leq 1$ there exists a periodic mean-free solution $u^{\tau} \in C^{2+\eta-2\alpha}$ to the following regularized PDE
\begin{equation}\label{MT17}
\partial_{2}u^{\tau}-P(a\partial_{1}^{2}u^{\tau})=Pf_{\tau}+PC_{\eta+\alpha-2}\big ( \overline{V}_{a}^{\eta}\otimes V_{\partial_{1}^{2}u}^{\eta-2} \big ).v_{\tau},
\end{equation}
where $V_{\partial_{1}^{2}u}^{\eta-2}$ is given by \eqref{algebraicRel} and the modified right hand side may be viewed as renormalization, cf.\@ \eqref{MT13} for further motivation.  We also introduce a regularized skeleton
\begin{align}
\Gamma_{+}^{\tau}:=\left(\begin{array}{lccc}
{\rm id}_{\R}                                                           &0                            &0       &0\\
-(v_\alpha)_{\tau}                                                       &{\rm id}_{C^2}                     &0       &0\\
-v_1                                                               &0                            &{\rm id}_{\R} &0\\
-(w_{2\alpha})_{\tau}+v_\alpha\otimes\partial_{a_0}(v_\alpha)_{\tau}+\omega^{\tau} v_1&-v_\alpha\otimes\partial_{a_0}&-\omega^{\tau} &{\rm id}_{C^{2,1}}
\end{array}\right),\label{MT14}
\end{align}
cf. \eqref{cw12}.  In the above definition,
\begin{align}
\omega^{\tau}(x)&:=\partial_{y_{1}}(U_{2\alpha})_{\tau}(x,x),\nonumber\\
U_{2\alpha}(x,y)&:=\wwf(y)-\vf(x)\otimes \partial_{a_{0}}\vf(y),\label{MT35}
\end{align}
where $(U_{2\alpha})_{\tau}$ denotes convolution in the $y$ argument only.  Note that these quantities were used in the proof of Lemma \ref{est_wwf_lemma}, cf. \eqref{ad0} to establish \eqref{j781} and \eqref{s2011} and the specific form of $\omega^{\tau}$ is in line with Remark \ref{rem-wg53}.  We emphasize that the regularized skeleton $\Gamma_{+}^{\tau}$ agrees with $\Gamma_{+}$ away from homogeneities $0$ and $1$, in that the functorial relation between $\Ga{+}{\tau}$ and $\Ga{-}{}$ is preserved
\begin{equation}\label{MT45}
\Gamma_{+}^{\tau}\ta_{+}=\Gamma_{+}\ta_{+} \quad \text{for} \quad \ta_{+}=(0,\va,0,\wa) \in \Ta{+}.
\end{equation}
Step \ref{MTprfNew_3} is devoted to establishing uniform estimates on the regularized model $v_{\tau}$.  In Step \ref{MTprf_3}, we slightly modify the definition \eqref{s300bis} (to account for the regularization parameter $\tau$) and define a form $V_{u^{\tau}}^{\eta}$ with $C_{\alpha}V_{u^{\tau}}^{\eta}=u^{\tau}$ which will serve as an approximation to $V_{u}^{\eta}$.  For notational convenience, we will use $[V_{u^{\tau}}^{\eta}]_{\tau},\|V_{u^{\tau}}^{\eta}\|_{\tau}$ to abbreviate the associated form continuity semi-norm and boundedness norms.  We will then establish the necessary qualitative ingredient $V_{u^{\tau}}^{\eta} \in \mathcal{D}^{\eta}(\Ta{+};\Gamma_{+}^{\tau})$.   In Steps \ref{MTprf_4}-\ref{MTprf_6}, we improve upon the qualitative Step \ref{MTprf_3} and obtain uniform estimates on $V_{u^{\tau}}^{\eta} \in \mathcal{D}^{\eta}(\Ta{+};\Gamma_{+}^{\tau})$.  Steps \ref{MTprf_7}-\ref{MTprfNew_2} are concerned with the limit $\tau \to 0$: Step \ref{MTprf_7} establishes the convergence of the skeleton, Step \ref{MTprf_8} convergence of $V_{u^{\tau}}^{\eta}$, and Step \ref{MTprfNew_2} convergence for $a \diamond \partial_{1}^{2}u^{\tau}$.

\medskip

{\sc Step} \arabic{MTprf}.\label{MTprf_2}\refstepcounter{MTprf}[Solvability of the regularized PDE] 
To this purpose, we rewrite \eqref{MT17} as a perturbation
of a constant-coefficient equation, i.~e.
\begin{align*}
(\partial_2-a_0\partial_1^2)u^\tau
&=P\big(f^\tau
+(a-a_0)\partial_1^2u^\tau\big)
\end{align*}
with periodic $f^\tau$ $:=f_\tau+C_{\eta+\alpha-2}(\overline{V}_a^\eta\otimes V_{\partial_1^2u}^{\eta-2})
.v_\tau$.  Note that the periodicity of $C_{\eta+\alpha-2}\big (\overline{V}_a^\eta\otimes V_{\partial_1^2u}^{\eta-2}\big )$ was established in Section \ref{ss:periodicity}.
As can be seen from Fourier series, $\partial_2-a_0\partial_1^2$ is (formally) invertible on the space of
periodic functions that are mean-free (thus the projection $P$); 
by standard Schauder theory we have the \emph{a priori} estimate
$\|u^\tau\|_{C^{2+\eta-2\alpha}}$ $\lesssim [f^\tau+(a-a_0)\partial_1^2u^\tau]_{\eta-2\alpha}$.
Since $[a]_\alpha\ll 1$ and thus $\|a-a_0\|\ll 1$ for suitable $a_0\in I$ together with $\alpha\ge\eta-2\alpha>0$, 
cf.~\eqref{j1220}, we learn from a standard fixed-point argument
that there exists a periodic mean-free solution $u^\tau\in C^{\eta-2\alpha+2}(\mathbb{R}^2)$ 
provided $f^\tau\in C^{\eta-2\alpha}(\mathbb{R}^2)$.
Rewriting $f^\tau$ $=f_\tau+C_{\eta+\alpha-2}V_{a\diamond\partial_1^2u}^{\eta+\alpha-2}.{0\choose v_\tau}$, 
and using that thanks to $\tau>0$ we have in particular $f_\tau\in C^{\eta-2\alpha}(\mathbb{R}^2)$ and 
$v_\tau\in C^{\eta-2\alpha}(\mathbb{R}^2;\mathsf{T})$, we see that this follows from
$C_{\eta+\alpha-2}V_{a\diamond\partial_1^2u}^{\eta+\alpha-2}\in C^{\eta-2\alpha}(\mathbb{R}^2 ;\overline{\mathsf{T}}^*)$.
This in turn is a consequence of the boundedness and
continuity of $C_{\eta+\alpha-2}V_{a\diamond\partial_1^2u}^{\eta+\alpha-2}$
to order $\eta+\alpha-2$, cf.~\eqref{prod5}, together with $\max_{\beta\in\mathsf{A},\beta<0}\beta=3\alpha-2$ 
and the boundedness (on a periodic cell) of the range of $\overline{\Gamma}$ in ${\mathcal L}(\mathsf{T},\mathsf{T})$.

\medskip

{\sc Step} \arabic{MTprf}.\label{MTprfNew_3}\refstepcounter{MTprf}[Uniform bounds on the regularized model] 
We now claim that the crucial boundedness properties \eqref{lo02} and \eqref{lo01} are preserved in the following sense: for any $0<\tau\le 1$ it holds
\begin{align}
\|\Gamma_{\beta}^{\tau}(x)(v_{+})_{\tau}(y)\|_{\Ta{\beta}} &\lesssim d^{\beta}(y,x) \quad\mbox{for all}\;\beta\in\Aa{+},\;x,y\in\mathbb{R}^2,\label{MT40}\\
\|\Gamma_\beta(x)(v_\tau)_T(x)\|_{\mathsf{T}_\beta}&\lesssim(T^\frac{1}{4})^\beta
\quad\mbox{for all}\;\beta\in\mathsf{A}_{<0},\;T\le 1,\;x\in\mathbb{R}^2.\label{MT12}
\end{align}
Moreover, in analogy with \eqref{j781} and \eqref{j790}, we claim that
\begin{equation}
\|\omega^{\tau}\|_{C^{2\alpha-1}(\R^{2};\Ta{2\alpha})} \lesssim 1, \quad \quad \|\Gamma_{+}^{\tau}\|_{sk} \lesssim 1.\label{MT70}
\end{equation}
Here comes the argument for \eqref{MT12}.  By the semi-group property \eqref{1.10}, this is immediate once we show that \eqref{lo01} implies
the same estimate in the extended range of $T\le 2$. The latter can be seen as follows: Writing
$\Gamma_\beta(x) v_{2T}(x)$ $=\int\psi_T(x-y)\Gamma_\beta(x)v_T(y) {\dd}y$ and
$\Gamma_\beta(x)v_T(y)=\Gamma_\beta(y) v_T(y)+(\Gamma_\beta(x)-\Gamma_\beta(y))v_T(y)$,
we obtain from the continuity \eqref{ssss2} of the skeleton that
\begin{align*}
\lefteqn{\|\Gamma_\beta(x) v_{2T}(x)\|_{\mathsf{T}_\beta}}\nonumber\\
&\le
(1 + \|\Gamma\|_{sk})\int|\psi_T(x-y)|\sum_{\gamma\le\beta}
d^{\beta-\gamma}(x,y)\|\Gamma_\gamma(y) v_T(y)\|_{\mathsf{T}_\gamma} {\dd}y,
\end{align*}
and thus, provided $T\le 1$, by \eqref{lo01} and \eqref{1.13} that
$\|\Gamma_\beta(x)v_{2T}(x)\|_{\mathsf{T}_\beta}$ 
$\lesssim(1 + \|\Gamma\|_{sk})(T^\frac{1}{4})^\beta$. In conjunction with
\eqref{j790} this yields the desired upgrade from $T\le 1$ to $T\le 2$.

\medskip

We now argue for \eqref{MT40}, and note that in light of the definition \eqref{MT14}, for $\beta=0,1$ this is immediate and for $\beta=\alpha$ it suffices to note that the H\"{o}lder estimate \eqref{s203} is stable under convolution.  For $\beta=2\alpha$ as well as the first estimate in \eqref{MT70}, we refer the reader to \eqref{ad10} and \eqref{ad11} in Step \ref{Mdprf_5} of the proof of Lemma \ref{est_wwf_lemma}.  Finally, the uniform bounds on the regularized skeleton follow from adapting the arguments in Step \ref{Mdprf_4} of Lemma \ref{est_wwf_lemma}.

\medskip

{\sc Step} \arabic{MTprf}.\label{MTprf_3}\refstepcounter{MTprf}[Verifying the qualitative hypothesis] 

In this step, for each $\tau \leq 1$, we define a form $V_{u^{\tau}}^{\eta}$ such that $C_{\alpha}V_{u^{\tau}}^{\eta}=u^{\tau}$  and verify the qualitative hypothesis $V_{u^{\tau}}^{\eta} \in \mathcal{D}^{\eta}(\Ta{+};\Gamma_{+}^{\tau})$ (required for Lemma \ref{rec_lem_concr}).  In line with Step \ref{Intprf_0} in the proof of Lemma \ref{int_lem_concr}, we define  
\begin{align}
U^{\tau}(x,y)&:=u^{\tau}(y)-
V_{\partial_{1}^{2}u}^{\thresh-2}(x). 
\begin{pmatrix}
(\vf)_{\tau}(y)\\
(\wwf)_{\tau}(y)
\end{pmatrix},
\label{MT19}\\
\gamma^{\tau}(x,y)&:=
 \big ( V_{\partial_{1}^{2}u}^{\thresh-2}(y)-V_{\partial_{1}^{2}u}^{\thresh-2}(x) \big ).
\begin{pmatrix}
0\\
\of^{\tau}(y)
\end{pmatrix},
\label{MT20}
\end{align}
and in line with Remark \ref{rem-wg53}
\begin{equation}
\nu^{\tau}(x):=\partial_{y_{1}}U^{\tau}(x,x)\label{MT61}. 
\end{equation}
We now modify \eqref{s300bis} to define $V_{u^{\tau}}^{\eta}$.  Namely, for $\ta_{+}:=(\na,\va,\xa,\wa)$ we set
\begin{align}
V_{u^{\tau}}^{\thresh}.\ta_{+}
&:=
u^{\tau}\na+\delta_{a}.\big (\va-(\vf)_{\tau}\na \big ) +\nu^{\tau}(\xa-\xf\na)
\nonumber\\
&\quad +\overline{V}_{a}^{\thresh-\alpha} \otimes \delta_{a}. 
\begin{pmatrix}
\partial_{a_0}\va-\partial_{a_0}(\vf)_{\tau}\na \\
\wa-(\wwf)_{\tau}\na
\end{pmatrix}, \label{MT44}\\
V_{u^{\tau}}^{\eta-\alpha}&:=C_{\eta-\alpha}V_{u^{\tau}}^{\eta}.\label{MT43}
\end{align}
For later purpose, we note that the definitions above together with \eqref{algebraicRel} entail the following consistency properties,
\begin{align}
u^{\tau}&=C_{\alpha}V_{u^{\tau}}^{\eta}=V_{u^{\tau}}.(v_{+})_{\tau}, \label{MT41}\\
V_{\partial_{1}^{2}u}^{\eta-2}.(\va,\wa)
&=V_{u^{\tau}}^{\eta}.(0,\va,0,\wa)\label{MT42},
\end{align}
cf.\@ \eqref{j1003} and \eqref{functDef}.  

\medskip

We now turn to the proof that $V_{u^{\tau}} \in \mathcal{D}^{\eta}(\Ta{+};\Gamma_{+}^{\tau})$ and begin with the following claims
\begin{align}
&\sup_{x \neq y}d^{-\eta}(y,x)\left | U^{\tau}(x,y)-U^{\tau}(x,x)-\nu^{\tau}(x)(y-x)_{1} \right |<\infty,\label{MT15}\\
&\sup_{x \neq y} d^{-(\eta-1)}(y,x) \left |\nu^{\tau}(y)-\nu^{\tau}(x)+\gamma^{\tau}(x,y)\right |<\infty.\label{MT16}
\end{align}
In light of our regularization in $y$, together with $u^{\tau} \in C^{\eta-2\alpha+2}$ by Step \ref{MTprf_2}, the first claim holds as a consequence of Taylor's formula and the periodicity of $U^{\tau}$ in both arguments, taking into account $1 \leq \eta \leq 2$.  The second claim will be deduced from the first, taking into account the following three-point continuity condition: for all $x,y,z \in \R^{2}$ it holds
\begin{align}
&\big |U^{\tau}(x,z)-U^{\tau}(x,y)-U^{\tau}(y,z)+U^{\tau}(y,y)
-\gamma^{\tau}(x,y) (z-y)_1
\big |\nonumber \\
&\quad \lesssim [V_{\partial_{1}^{2}u}^{\eta-2}]\big (d^{\thresh-\alpha}(y,x)d^{\alpha}(y,z)+ d^{\thresh-2\alpha}(y,x)d^{2\alpha}(z,y) \big)\label{MT22}.
\end{align}
Indeed, as in Step \ref{Intprf_ii} of the proof of Lemma \ref{int_lem_concr}, \eqref{MT22} is based on the straightforward identity
\begin{align*}
&U^{\tau}(x,z)-U^{\tau}(x,y)-U^{\tau}(y,z)+U^{\tau}(y,y)
-\gamma^{\tau}(x,y)(z-y)_1 
\\
&\stackrel{\eqref{MT19},\eqref{MT20}}{=}\big ( V_{\partial_{1}^{2}u}^{\thresh-2}(y)-V_{\partial_{1}^{2}u}^{\thresh-2}(x) \big ).\\
&\quad \begin{pmatrix}
(\vf)_{\tau}(z)-(\vf)_{\tau}(y)\\
(\wwf)_{\tau}(z)-(\wwf)_{\tau}(y)
-\of^{\tau}(y)(\xf(z)-\xf(y))
\end{pmatrix}
.
\end{align*}
By definition of $[V_{\partial_{1}^{2}u}^{\eta-2}]$, cf. \eqref{prod5}, \eqref{cw12}, and \eqref{S8001}, the quantity above is estimated by
\begin{align*}
&[V_{\partial_{1}^{2}u}^{\eta-2}] \big ( d^{\eta-\alpha}(y,x)\|(\vf)_{\tau}(z)-(\vf)_{\tau}(y)\|_{\Ta{\alpha}}\\
&\qquad  +d^{\eta-2\alpha}(y,x)\|(\wwf)_{\tau}(z)-(\wwf)_{\tau}(y)-\of^{\tau}(y)(\xf(z)-\xf(y))\\
&\qquad \qquad \qquad \qquad -\vf(y) \otimes \big ( (\partial_{a_0}\vf)_{\tau}(z)
-(\partial_{a_0}\vf)_{\tau}(y)) \|_{\Ta{2\alpha}} \big )\\
&\stackrel{\eqref{MT14}}{=}[V_{\partial_{1}^{2}u}^{\eta-2}] \big (d^{\eta-\alpha}(y,x)\|\Gamma_{\alpha}^{\tau}(y)(v_{+})_{\tau}(z)\|_{\Ta{\alpha}}\\
&\qquad  \qquad+d^{\eta-2\alpha}(y,x)\|\Gamma_{2 \alpha}^{\tau}(y)(v_{+})_{\tau}(z)\|_{\Ta{2\alpha}} \big).
\end{align*}
The three-point continuity \eqref{MT22} now follows from \eqref{MT40}.  Using \eqref{MT15} and \eqref{MT22}, one can deduce the relation \eqref{MT16} by the arguments leading from \eqref{KS2} and \eqref{KS3} to \eqref{KS4}, cf. Step \ref{nu} of the proof of Proposition \ref{int lem}. 

\medskip

With the above ingredients, the remaining arguments for $V_{u^{\tau}}^{\eta} \in \mathcal{D}^{\eta}(\Ta{+};\Gamma_{+}^{\tau})$ are similar to those given in Step \ref{Intprf_iv} of Lemma \ref{int_lem_concr}.  Indeed, the analogue of \eqref{jj6} takes the form
\begin{align*}
&\big(V_{u^{\tau}}^{\thresh}(y)-V_{u^{\tau}}^{\thresh}(x)\big).\mathsf{v}_{+} \nonumber
\\
&=\bigg (U^{\tau}(x,y)-U^{\tau}(x,x)-\nu^{\tau}(x)(v_{1}(y)-v_{1}(x)) \bigg ) \na\nonumber\\
&+\left (\nu^{\tau}(y)-\nu^{\tau}(x)+\gamma^{\tau}(x,y) \right) \left (\mathsf{v}_{1}-v_{1}(y)\na \right)\nonumber\\
&+\left (V_{\partial_{1}^{2}u}^{\eta-2}(y)-V_{\partial_{1}^{2}u}^{\eta-2}(x) \right).
\begin{pmatrix}
\va-(\vf)_{\tau}(y)\na\\
\wa-(\wwf)_{\tau}(y)\na-\omega^{\tau}(y)(\mathsf{v}_{1}-v_{1}(y)\na)
\end{pmatrix}
.  
\end{align*}
Appealing to \eqref{MT15} and \eqref{MT16}, the first two lines are estimated by $d^{\eta}(y,x)|\na| + d^{\eta-1}(y,x)|\Gamma_{1}^{\tau}(y)\mathsf{v}_{+}|$, where it is used that $\Gamma_{1}^{\tau}(y).\ta_{+}\stackrel{\eqref{MT14}}{=}\mathsf{v}_{1}-v_{1}(x)\na$.  As in Step \ref{Intprf_iv} of Lemma \ref{int_lem_concr}, the estimate for the third line is entirely analogous to the arguments above leading to \eqref{MT22}: using the form continuity of $V_{\partial_{1}^{2}u}^{\eta-2}$ it is controlled, up to a factor of $[V_{\partial_{1}^{2}u}^{\eta-2}]$, by the quantity $d^{\eta-\alpha}(y,x)\|\Gamma_{\alpha}^{\tau}(y)\ta_{+}\|_{\Ta{\alpha}}+d^{\eta-2\alpha}(y,x)\|\Gamma_{2\alpha}^{\tau}(y)\ta_{+}\|_{\Ta{2\alpha}}$, which completes the proof that $u^{\tau} \in V_{u^{\tau}}^{\eta}(\Ta{+};\Gamma_{+}^{\tau})$.

\medskip

{\sc Step} \arabic{MTprf}.\label{MTprf_4}\refstepcounter{MTprf}[Application of Lemma \ref{rec_lem_concr}] 
In this step, we claim there exists $a \diamond \partial_{1}^{2}u^{\tau} \in \mathcal{S}'(\R^{2})$ such that for all $T \leq 1$
\begin{align}
&\left\|(a \diamond \partial_{1}^{2}u^{\tau})_T -C_{\eta+\alpha-2}V_{\aufi}^{\thresh+\alpha-2}.
\begin{pmatrix}
(\partial_{1}^{2}u^{\tau})_{T}\\
(v_{\tau})_{T}
\end{pmatrix} \right\| \nonumber\\
&\quad\lesssim M_{a \diamond \partial_{1}^{2}u^{\tau},\eta+\alpha-2}\Tc{\eta+\alpha-2},\label{MT2}
\end{align}
where, in line with \eqref{S600},
\begin{align}
M_{a \diamond \partial_{1}^{2}u^{\tau},\eta+\alpha-2}&:=[a]_{\alpha}[V_{u^{\tau}}^{\eta}]_{\tau}+M_{a,\eta}+N^{4}\|V_{a}^{\eta}\| \|V_{\partial_{1}^{2}u}^{\eta-2}\|,\label{MT7}
\end{align}
and $M_{a,\eta}$ is defined by \eqref{SFin35}.  Moreover, we claim the following sub-optimal estimate: for all $T \leq 1$
\begin{align}
&\left\| (a \diamond \partial_{1}^{2}u^{\tau})_T -V_{\aufi}^{\thresh-2}.
\begin{pmatrix}
(\partial_{1}^{2}u^{\tau})_{T}\\
 ( (\pvf)_{\tau}  )_{T}\\
 ( (\vvf)_{\tau} )_{T}
\end{pmatrix} \right\|
\nonumber\\
& \quad \lesssim M_{a \diamond \partial_{1}^{2}u^{\tau},\eta-2}\Tc{\eta-2}, \label{MT3}
\end{align}
where, in line with \eqref{SFin6},
\begin{align}
M_{a \diamond \partial_{1}^{2}u^{\tau},\eta-2}&:=M_{a \diamond \partial_{1}^{2}u^{\tau},\eta+\alpha-2}+N^{3}\|\overline{V}_{a}^{\eta} \| \|V_{\partial_{1}^{2}u}^{\eta-2}\|.\label{MT8}
\end{align}
The claim essentially follows from Lemma \ref{rec_lem_concr}, with the estimates \eqref{MT2} and \eqref{MT3} corresponding to \eqref{eq:QualRO} and \eqref{s30}.  The minor difference is that $u^{\tau},V_{u^{\tau}}^{\eta}$ play the role of $u,V_{u}^{\eta}$ and in place of the hypothesis $V_{u}^{\eta} \in \mathcal{D}^{\eta}(\Ta{+};\Ga{+}{})$ we use that $V_{u^{\tau}}^{\eta} \in \mathcal{D}^{\eta}(\Ta{+};\Gamma_{+}^{\tau})$, cf. Step \ref{MTprf_3} above.  Note that this is reflected in the definitions \eqref{MT7} and \eqref{MT8} in comparison with \eqref{S600} and \eqref{SFin6}.  

\medskip

We now outline the minor changes to the proof of Lemma \ref{rec_lem_concr} that are required.  Recall that Lemma \ref{rec_lem_concr} essentially follows from Corollary \ref{lem:ExMult}, modulo an appropriate translation of the hypotheses.  In this regard, we continue to use the ($\tau$-independent) forms $V_{a}^{\eta}$ and $V_{\partial_{1}^{2}u}^{\eta-2}$ to play the role of $V_{u_{+}}$ and $V_{u_{-}}$.  However, the roles of $\partial_{1}^{2}u$ and $v$ (corresponding to $u_{-}$ and $v_{+} \diamond v_{-}$, cf. \eqref{j771},\eqref{j770}) are replaced by $\partial_{1}^{2}u^{\tau}$  and $v_{\tau}$.  The hypothesis \eqref{j770} continues to hold in light of \eqref{MT12}.
Hypothesis \eqref{j771} now takes the form
\begin{align}
&\|(\partial_1^2u^{\tau})_T-V_{\partial_{1}^{2}u}^{\eta-2}.\big ((v_{-})_{\tau}\big)_{T} \|
\lesssim [V_{u^{\tau}}^\eta]_{\tau}(T^\frac{1}{4})^{\eta-2}.\label{MT1}
\end{align}
The proof is essentially the same as that of \eqref{cw60}, but we give the details for the convenience of the reader.  We remark in advance that the most essential ingredients are the compatibility conditions \eqref{MT41} and the functorial relations \eqref{MT42} and \eqref{MT45}, and now we turn to the argument.  First note that a periodicity argument identical to the one given in Lemma \ref{rec_lem_concr} yields that for all $x,y \in \R^{2}$ it holds
\begin{align*}
|(V_{u^{\tau}}^\eta(y)-V_{u^{\tau}}^\eta(x)).(v_{+})_{\tau}(y)|\le [V_{u^{\tau}}^\eta]_{\tau}d^{\eta}(x,y).
\end{align*}
Moreover, one verifies that for $x \in \R^{2}$
\begin{align*}
&(\partial^{2}_{1}u^{\tau})_{T}(x)-V_{\partial_{1}^{2}u}^{\eta-2}.\big ( (v_{-})_{\tau} \big )_{T}(x)\nonumber\\
&\, =\int \big (V_{u^{\tau}}^{\eta}(y)-V_{u^{\tau}}^{\eta}(x)\big).(v_{+})_{\tau}(y)\partial_{1}^{2}\psi_{T}(x-y){\rm d}y
\end{align*}
as a consequence of \eqref{MT41}, \eqref{MT42} and of the equality 
$$
V_{\partial_{1}^{2}u}^{\eta-2}.\big ( (v_{-})_{\tau} \big )_{T}=V_{u^{\tau}}^{\eta}.(0,(v_{\alpha-2})_{\tau},0,(v_{2\alpha-2})_{\tau})_{T} =V_{u^{\tau}}^{\eta}.\partial_{1}^{2}(\cdot)_{T}(v_{+})_{\tau}.
$$ 
These two points, together with \eqref{1.13}, imply \eqref{MT1} as desired.  There is one other change to the proof of Lemma \ref{rec_lem_concr} that should be mentioned, regarding the extraction of the sub-optimal bounds \eqref{MT3}.  Namely, the constant $N_{*}$ defining the normed space $\overline{\Ta{}}_{\eta-2}$, cf. \eqref{j1102} is now played by $[V_{u^{\tau}}^{\eta}]_{\tau}$, and for the purpose of estimating the extended skeleton $\overline{\Gamma}$, cf. \eqref{j1004}, one should note that $[V_{\partial_{1}^{2}u}^{\eta-2}]\leq [V_{u^{\tau}}^{\eta}]_{\tau}$, cf. \eqref{fnctIneq}.  Indeed, this follows from the functorial relations \eqref{MT42} and \eqref{MT45}.

\medskip

{\sc Step} \arabic{MTprf}.\label{MTprfNew_1}\refstepcounter{MTprf}[Identification of product distribution] We now claim that
\begin{align}\label{MT13}
a\diam\partial_1^2u^\tau=a\partial_1^2u^\tau+C_{\eta+\alpha-2}(\overline{V}_a^\eta\otimes V_{\partial_1^2u}^{\eta-2}).v_\tau.
\end{align}
By definition \eqref{S880} 
the r.~h.~s. of \eqref{MT13} can be written as $C_{\eta+\alpha-2}V_{a\diamond\partial_1^2u}^{\eta+\alpha-2}
.{\partial_1^2u^\tau\choose v_\tau}$, since the reduction operator $C_{\eta+\alpha-2}$ does not affect
the $(\eta-2)$-component into which $\partial_1^2u^\tau$ is substituted. Hence \eqref{MT13} assumes
the form
\begin{align}\label{MT13bis}
a\diam\partial_1^2u^\tau=C_{\eta+\alpha-2}V_{a\diamond\partial_1^2u}^{\eta+\alpha-2}
.{\partial_1^2u^\tau\choose v_\tau}
\end{align}
and follows immediately from \eqref{MT2} in the limit $T\downarrow 0$, at least formally. 
For the sake of completeness, we now give the details. Because of the qualitative properties of
$C_{\eta+\alpha-2}V_{a\diamond\partial_1^2u}^{\eta+\alpha-2}$ 
$\in C^0(\mathbb{R}^2;\mathsf{\overline{T}}^*)$,
${\partial_1^2u^\tau\choose v_\tau}$ $\in C^0(\mathbb{R}^2;\mathsf{\overline{T}^{*} })$, and thus
$C_{\eta+\alpha-2}V_{a\diamond\partial_1^2u}^{\eta+\alpha-2}.{\partial_1^2u^\tau\choose v_\tau}$
$\in C^0(\mathbb{R}^2)$, we have 
\begin{align*}
\lim_{T\downarrow 0}C_{\eta+\alpha-2}V_{a\diamond\partial_1^2u}^{\eta+\alpha-2}
.{(\partial_1^2u^\tau)_T\choose (v_\tau)_T}&=C_{\eta+\alpha-2}V_{a\diamond\partial_1^2u}^{\eta+\alpha-2}
.{\partial_1^2u^\tau\choose v_\tau},\\
\lim_{T\downarrow 0}\Big(C_{\eta+\alpha-2}V_{a\diamond\partial_1^2u}^{\eta+\alpha-2}
.{\partial_1^2u^\tau\choose v_\tau}\Big)_T&=C_{\eta+\alpha-2}V_{a\diamond\partial_1^2u}^{\eta+\alpha-2}
.{\partial_1^2u^\tau\choose v_\tau},
\end{align*}
as limits in $C^0(\mathbb{R}^2)$, also appealing to (generalized) periodicity for uniformity. 
By the triangle inequality w.~r.~t. $\|\cdot\|$,
these limits combine with \eqref{MT2} to
\begin{align*}
\lim_{T\downarrow 0}\Big(a\diam\partial_1^2u^\tau-C_{\eta+\alpha-2}V_{a\diamond\partial_1^2u}^{\eta+\alpha-2}
.{\partial_1^2u^\tau\choose v_\tau}\Big)_T&=0.
\end{align*}
By \eqref{j760} this strengthens to
\begin{align*}
\Big(a\diam\partial_1^2u^\tau-C_{\eta+\alpha-2}V_{a\diamond\partial_1^2u}^{\eta+\alpha-2}
.{\partial_1^2u^\tau\choose v_\tau}\Big)_T&=0\quad\mbox{for any}\;T>0,
\end{align*}
which in the limit $T\downarrow 0$ yields \eqref{MT13bis} as an identity of distributions .

\medskip

{\sc Step} \arabic{MTprf}.\label{MTprf_5}\refstepcounter{MTprf}[Application of Lemma \ref{int_lem_concr}]
We now claim that the following estimates hold:
\begin{align}
[V_{u^{\tau}}^{\thresh}]_{\tau} &\lesssim M_{a \diamond \partial_{1}^{2}u^{\tau},\eta-2} +[a]_{\alpha}[V_{u^{\tau}}^{\eta-\alpha}]_{\tau}+M_{a,\eta-\alpha}+[V_{\partial_{1}^{2}u}^{\eta-2}]\label{MT4},\\ 
\|\nu^{\tau} \| &\lesssim [V_{u^{\tau}}^{\thresh}]_{\tau},\label{MT5}\\
\|V_{u^{\tau}}^{\eta}\|_{\tau} &\lesssim \|u^{\tau} \|+N^{-2}\|\nu^{\tau}\|+\|V_{\partial_{1}^{2}u}^{\eta-2}\|\label{MT6}. 
\end{align}
The above estimates essentially follow from Lemma \ref{int_lem_concr} and correspond to \eqref{s16},\eqref{SFin3}, and \eqref{SFin5} with $u^{\tau},\nu^{\tau},V_{u^{\tau}}^{\eta-\alpha},V_{u^{\tau}}^{\eta}$ in place of $u,\nu,V_{u}^{\eta-\alpha},V_{u}^{\eta}$ as well as $\Gamma_{+}^{\tau}$ in place of $\Gamma_{+}$ in the continuity and boundedness semi-norms measuring the forms describing $u$.  We now outline the modifications to the proof of Lemma \ref{int_lem_concr} that are required to deduce these estimates.  First observe that in light of the identification \eqref{MT13}, we may re-write \eqref{MT17} in line with \eqref{ss11} as
\begin{equation}
\partial_{2}u^{\tau}-P(a \diamond \partial_{1}^{2}u^{\tau})=Pf_{\tau}.\label{MT23}
\end{equation}

\medskip

Also note that by the periodicity of $u^{\tau}$ and $V_{a}^{\eta}$ (and thus $V^{\eta-2}_{\partial_{1}^{2}u}$ and $a$) in the sense of \eqref{cw14}, the identification \eqref{MT13} implies that $a \diam \partial_{1}^{2}u^{\tau}$ is periodic.  In modifying Step \ref{Intprf_0} of Lemma \ref{int_lem_concr}, we use the functions $U^{\tau},\gamma^{\tau}$ defined by \eqref{MT19} and \eqref{MT20} above.  In Step \ref{Intprf_00}, the identity \eqref{s25} is replaced by
\begin{align*}
&(\partial_{2}-a(x)\partial_{1}^{2})(U^{\tau})_{T}(x,\cdot)
\\
&\quad=(\cdot)_{T}P(a \diam \partial_{1}^{2}u^{\tau})-V_{\aufi}^{\thresh-2}(x).
\begin{pmatrix}
(\partial_{1}^{2}u^{\tau})_{T}\\
\big ( (\pvf)_{\tau} \big )_{T}\\
\big ( P(\vvf)_{\tau} \big )_{T}
\end{pmatrix}
.
\end{align*}
The proof is the same, but with $f_{\tau},(v_{\alpha})_{\tau},(w_{2\alpha})_{\tau}$ in place of $f,v_{\alpha},w_{2\alpha}$.  In Step \ref{Intprf_i}, one deduces from the identity above, as a replacement for \eqref{s24}, that for all $x \in \R^{2}$ and all $L,T\le 1$ it holds
\begin{align*}
&\inf_{c\in\R}\|(\partial_{2}-a(x)\partial_{1}^{2})(U^{\tau})_{T}(x,\cdot)-c\|_{B_{L}(x)} \nonumber \\
& \lesssim M_{a \diamond \partial_{1}^{2}u^{\tau},\eta-2}\Tc{\thresh-2}  +[a]_{\alpha} [V_{u^{\tau}}^{\thresh-\alpha}]_{\tau}L^{\alpha}\Tc{\thresh-\alpha-2}\nonumber\\
& \quad + M_{a,\eta-\alpha}\big (L^{\thresh-\alpha}\Tc{\alpha-2}+ L^{\thresh-2\alpha}\Tc{2\alpha-2} \big).
\end{align*}
The proof is identical, but uses \eqref{MT3} as a replacement for \eqref{s30}, \eqref{MT12} in place of \eqref{p13} \& \eqref{off1}, and also the analogue of \eqref{MT1} on level $\eta-\alpha$.  In Step \ref{Intprf_ii}, in place of \eqref{s34}, we use \eqref{MT22} established above.  In Step \ref{Intprf_iii}, one obtains that for all $x,y \in \R^{2}$
\begin{align}
&\left |U^{\tau}(x,y)-U^{\tau}(x,x)-\nu^{\tau}(x)(\xf(y)-\xf(x))\right |\lesssim M_{\tau} d^{\thresh}(y,x),\label{MT60}\\
&\left |\nu^{\tau}(y)-\nu^{\tau}(x)
+\gamma^{\tau}(x,y)
\right | \lesssim M_{\tau} d^{\thresh-1}(y,x),\label{MT26}
\end{align}
where
\begin{align}
M_{\tau}&:=M_{a \diamond \partial_{1}^{2}u^{\tau},\thresh-2}+[a]_{\alpha}[V_{u^{\tau}}^{\thresh-\alpha}]_{\tau}+ M_{a,\eta-\alpha}+[V_{\partial_{1}^{2}u}^{\eta-2}].\label{MT27}
\end{align}
This is again an application of Proposition \ref{int lem}, keeping in mind that the resulting function $\nu$ (depending on $\tau$) is determined through \eqref{MT60} and hence must agree with $\nu^{\tau}$ defined by \eqref{MT61}.  As in Step \ref{Intprf_iv} of Lemma \ref{int_lem_concr}, one then establishes \eqref{MT4} and \eqref{MT5}. Finally, one follows the same proof in Step \ref{Intprf_v} in order to establish \eqref{MT6}, using the functorial relation between the skeletons, cf.\@ \eqref{MT45}, and also the uniform estimate on $\omega^{\tau}$, cf.\@ \eqref{MT70}.
\medskip

{\sc Step} \arabic{MTprf}.\label{MTprf_6}\refstepcounter{MTprf}[Buckling argument] 
In this step, we claim that the estimates \eqref{S306} and \eqref{SFin4} hold provided the quantities $[V_{u^{\tau}}^{\thresh}]_{\tau}$ and $\|V_{u^{\tau}}^{\thresh}\|_{\tau}$ are written in place of $[V_{u}^{\thresh}]$ and $\|V_{u}^{\thresh}\|$ on the left hand side.  As a first step, we will show that
\begin{align}
\|V_{u^{\tau}}^{\eta}\|_{\tau} \lesssim N^{-2}[V_{u^{\tau}}^{\eta}]_{\tau}+1+\|\overline{V}_{a}^{\eta-\alpha}\| .\label{MT24}
\end{align}
Using that $u^{\tau}$ is periodic and mean-free together with \eqref{MT41}, applying Lemma \ref{cutting_lemma} to $V_{u^{\tau}}^{\eta}$ in the form of inequality \eqref{eq:cutLemmaBd1} and identity \eqref{wg31}, taking note of \eqref{wg12} and $\|\Gamma_{+}^{\tau}\|_{sk} \lesssim 1$, cf. \eqref{MT70}, we find
\begin{equation}
\|u^{\tau}\| \lesssim [u^{\tau}]_{\alpha} \lesssim [V_{u^{\tau}}^{\eta}]_{\tau}+N\|V_{u^{\tau}}^{\eta}\|_{\tau}.\label{MT25}
\end{equation}  
Inserting this into \eqref{MT6} and using $N \ll 1$ we obtain
\begin{align*}
\|V_{u^{\tau}}^{\eta}&\|_{\tau} \lesssim [V_{u^{\tau}}^{\eta}]_{\tau}+N^{-2}\|\nu^{\tau}\|+\|V_{\partial_{1}^{2}u}^{\eta-2}\|\nonumber\\
&\, \, \, \stackrel{\eqref{MT5}}{\lesssim} N^{-2}[V_{u^{\tau}}^{\eta}]_{\tau}+\|V_{\partial_{1}^{2}u}^{\eta-2}\|.
\end{align*}
Hence, inserting \eqref{S565} into the estimate above yields \eqref{MT24}.

\medskip

Next we claim that \eqref{S306} holds with $[V_{u^{\tau}}^{\eta}]_{\tau}$ in place of $[V_{u}^{\eta}]$ on the left hand side.  First observe that applying once more Lemma \ref{cutting_lemma} to $V_{u}^{\eta}$ in the form of estimate \eqref{eq:cutLemmaBd1}, recalling \eqref{MT43} and $\langle 1\rangle=\langle 2\alpha \rangle =2$, cf. \eqref{wg12}, we obtain
\begin{align}
[V_{u^{\tau}}^{\eta-\alpha}]_{\tau} \lesssim [V_{u^{\tau}}^{\eta}]_{\tau}+N^{2}\|V_{u^{\tau}}^{\eta}\|_{\tau}
\, \,\stackrel{\eqref{MT24}}{\lesssim} [V_{u^{\tau}}^{\eta}]_{\tau}+N^{2}\big ( 1 +\|\overline{V}_{a}^{\eta-\alpha}\| \big ).\label{MT75}
\end{align}
Moreover, we note that inserting \eqref{MT7} into \eqref{MT8} and appealing again to \eqref{S565} to estimate the contribution of $\|V_{\partial_{1}^{2}u}^{\eta-2}\|$ gives
\begin{equation*}
M_{a \diamond \partial_{1}^{2}u^{\tau},\eta-2} \lesssim [a]_{\alpha}[V_{u^{\tau}}^{\eta}]_{\tau}+M_{a,\eta}+\big(1 + \|\overline{V}_{a}^{\eta-\alpha}\| \big)\left (N^{3}\|\overline{V}_{a}^{\eta}\|+N^{4}\|V_{a}^{\eta}\| \right).
\end{equation*}
Inserting the previous two inequalities into \eqref{MT4}, appealing to \eqref{S565} to estimate $[V_{\partial_{1}^{2}u}^{\eta-2}]$, and using $[a]_{\alpha} \ll 1$ to buckle yields
\begin{align}
[V_{u^{\tau}}^{\eta}]_{\tau}&\lesssim M_{a,\eta-\alpha}+M_{a,\eta} + N[a]_{\alpha}^2 \nonumber\\
&\, \,+N^{2} \left ([a]_{\alpha}+N\|\overline{V}_{a}^{\eta}\|+N^{2}\|V_{a}^{\eta}\|\right)\big (1 + \|\overline{V}_{a}^{\eta-\alpha}\| \big)\label{SFin14}.
\end{align}
It remains to estimate $M_{a,\eta-\alpha}+M_{a,\eta}$, for which we appeal to Lemma \ref{cor:prod1} to obtain the following inequalities
\begin{align*}
M_{a,\eta-\alpha} &\lesssim N \big ( [a]_{\alpha}([a]_{\alpha}+N\|\overline{V}_{a}^{\eta-\alpha}\| )+[V_{a}^{\eta-\alpha}] \big),\\
M_{a,\eta} &\lesssim (1+N\|\overline{V}_{a}^{\eta}\|+[V_{a}^{\eta}] )(N[a]_{\alpha}^{2} + M_{a,\eta-\alpha})
\nonumber\\
&\quad \quad +N[V_{a}^{\eta}](1 + \|\overline{V}_{a}^{\eta-\alpha}\|) .
\end{align*}
Indeed, the estimates above follow from inserting \eqref{SFin1} into definition \eqref{IOR30}, \eqref{S565} into definition \eqref{SFin35}, then appealing to $[a]_{\alpha} \leq 1$.  Note in particular that the right hand side of the second estimate dominates $M_{a,\eta-\alpha}$.  Hence, inserting the first estimate into the second, then grouping the contributions of $[V_{a}^{\eta}]$ and using $[a]_{\alpha},N \leq 1$ gives
\begin{align*}
M_{a,\eta-\alpha}+M_{a,\eta} &\lesssim  N \big ( [a]_{\alpha}([a]_{\alpha}+N\|\overline{V}_{a}^{\eta-\alpha}\|)+[V_{a}^{\eta-\alpha}]  \big )(1+N\|\overline{V}_{a}^{\eta}\|+[V_{a}^{\eta}] ) \\
&+N[V_{a}^{\eta}]\big(1 + \|\overline{V}_{a}^{\eta-\alpha}\| \big) \\
&\lesssim N \big ([a]_{\alpha}([a]_{\alpha}+N\|\overline{V}_{a}^{\eta-\alpha}\|)+[V_{a}^{\eta-\alpha}] \big)\big (1+N\|\overline{V}_{a}^{\eta}\| \big)\\
&+N[V_{a}^{\eta}]\big (1 + \|\overline{V}_{a}^{\eta-\alpha}\|+[V_{a}^{\eta-\alpha}] \big).
\end{align*}
Inserting this into \eqref{SFin14} completes the proof of \eqref{S306} with $[V_{u^{\tau}}^{\eta}]_{\tau}$ in place of $[V_{u}^{\eta}]$, noting that $N[a]_{\alpha}^{2}$ already appears on the right hand side of the estimate above.

\medskip

Finally, we claim that \eqref{SFin4} holds with $\|V_{u^{\tau}}^{\eta}\|_{\tau}$ in place of $\|V_{u}^{\eta}\|$ on the left hand side.  Indeed, since we have already established that \eqref{S306} holds with $[V_{u^{\tau}}^{\eta}]_{\tau}$ in place of $[V_{u}^{\eta}]$, inserting this information into the estimate
\eqref{MT24} yields the desired estimate.
\medskip

{\sc Step} \arabic{MTprf}.\label{MTprf_7}\refstepcounter{MTprf}[Sending $\tau \to 0$: analysis of the skeleton]
In this step, we establish the following estimates: for each $0<\tau\leq 1$ it holds
\begin{align}
\sup_{x \in \R^{2}}\|(\vf)_{\tau}(x)-\vf(x)\|_{\Ta{\alpha}}&\lesssim (\tau^{\frac{1}{4}})^{\alpha}, \label{MT29}\\
\sup_{x \in \R^{2}}\|(w_{2\alpha})_{\tau}(x)-w_{2\alpha}(x)\|_{\Ta{2\alpha}}& \lesssim (\tau^{\frac{1}{4}})^{\alpha}, \label{MT28}\\
\sup_{x \in \R^{2}}\|\omega^{\tau}(x)-\omega(x) \|_{\Ta{2\alpha}} &\lesssim (\tau^{\frac{1}{4}})^{2\alpha-1}.\label{MT30}
\end{align}
The estimate \eqref{MT29} follows by writing for each $x \in \R^{2}$
\begin{equation}
(\vf)_{\tau}(x)-\vf(x)=\int \psi_{\tau}(x-y)(\vf(y)-\vf(x) )\dd y,
\end{equation}
then estimating the integrand in $\Ta{\alpha}$ by $d^{\alpha}(x,y)$, cf. \eqref{s203}, then using \eqref{1.13}.  To establish the remaining two estimates, we will first establish that
\begin{align}
&\sup_{x \in \R^{2}}\|(w_{2\alpha})_{\tau}(x)-w_{2\alpha}(x)
-\vf(x) \otimes \partial_{a_0}((\vf)_{\tau}-\vf)(x)\|_{\Ta{2\alpha}}\nonumber\\
& \qquad \,  \lesssim (\tau^{\frac{1}{4}})^{2\alpha}.\label{MT31}
\end{align}
Indeed, for each $x \in \R^{2}$ the quantity inside the norm can be written as
\begin{align*}
&\int \psi_{\tau}(x-y) \big (w_{2\alpha}(y)-w_{2\alpha}(x)-\vf(x) \otimes \partial_{a_0}(\vf(y)-\vf(x)) \big ) \dd y\\
&\stackrel{\eqref{he1}}{=}\int \psi_{\tau}(x-y) \big (w_{2\alpha}(y)-w_{2\alpha}(x)-\vf(x) \otimes \partial_{a_0}(\vf(y)-\vf(x))\\
&\qquad \qquad \qquad \qquad-\omega(x)(y-x)_{1}\big ) \dd y.
\end{align*}
Since the integrand is estimated in $\Ta{2\alpha}$ by $d^{2\alpha}(x,y)$, cf.~\eqref{s2011}, \eqref{MT31} follows by \eqref{1.13}.  Note that \eqref{MT31} implies \eqref{MT28} as a consequence of \eqref{MT29} and the definitions \eqref{S8030} of the norms $\Ta{\alpha}$ and $\Ta{2\alpha}$ together with the cross-norm property \eqref{j501a} in the form of 
\begin{equation*}
\|\vf(x) \otimes \partial_{a_0}\big ( (\vf)_{\tau}-\vf \big)(x) \|_{\Ta{2\alpha}} \leq \|\vf(x)\|_{\Ta{\alpha}} \|((\vf)_{\tau}-\vf)(x)\|_{\Ta{\alpha}}.
\end{equation*}
We also use that since $\vf$ has vanishing mean value, \eqref{s203} implies $\|\vf(x)\|_{\Ta{\alpha}} \lesssim 1$.

\medskip

Finally, we turn our attention to \eqref{MT30}.  Indeed, recalling the definition \eqref{MT35} of $U_{2\alpha}$ and of $\omega^{\tau}$, we note that for all $x,y \in \R^{2}$ it holds
\begin{align*}
\|(U_{2\alpha})_{\tau}(x,y)-(U_{2\alpha})_{\tau}(x,x)-\omega^{\tau}(x)(y-x)_{1}\|_{\Ta{2\alpha}} &\stackrel{\eqref{MT40}}{\lesssim}d^{2\alpha}(y,x),\\
\|U_{2\alpha}(x,y)-U_{2\alpha}(x,x)-\omega(x)(y-x)_{1}\|_{\Ta{2\alpha}} &\stackrel{\eqref{s2011}}{\lesssim} d^{2\alpha}(y,x).
\end{align*}
By triangle inequality this yields
\begin{align*}
\|\omega^{\tau}(x)-\omega(x)\|_{\Ta{2\alpha}}|(y-x)_{1}| &\lesssim
\|((U_{2\alpha})_{\tau}-U_{2\alpha})(x,y)\|_{\Ta{2\alpha}}+d^{2\alpha}(y,x)\\
&\,+\|( (U_{2\alpha})_{\tau}-U_{2\alpha})(x,x)\|_{\Ta{2\alpha}}.
\end{align*}
Moreover, note that
\begin{align*}
&(U_{2\alpha})_{\tau}(x,y)-U_{2\alpha}(x,y)\\
&\stackrel{\eqref{MT35}}{=}\big ( (w_{2\alpha})_{\tau}-w_{2\alpha}-\vf \otimes ((\partial_{a_0}\vf)_{\tau}-\partial_{a_0}\vf) \big )(y)\\
&\, \,+(\vf(y)-\vf(x)) \otimes \big((\partial_{a_0}\vf)_{\tau}-\partial_{a_0}\vf \big)(y).
\end{align*}
In light of \eqref{MT31}, \eqref{MT29}, and \eqref{s203} we find that
\begin{equation}
\|((U_{2\alpha})_{\tau}-U_{2\alpha})(x,y)\|_{\Ta{2\alpha}} \lesssim (\tau^{\frac{1}{4}})^{2\alpha}+d^{\alpha}(x,y)(\tau^{\frac{1}{4}})^{\alpha}.
\end{equation}
Choosing $y=x+\tau (1,0)$ in the estimate above yields \eqref{MT30}.

\medskip

{\sc Step} \arabic{MTprf}.\label{MTprf_8}\refstepcounter{MTprf}[Sending $\tau \to 0$: the limiting modelled distribution]
In this step, we construct the limiting solution $u$, its associated form $V_{u}^{\eta}$, and establish the \emph{a priori} bounds \eqref{S306} and \eqref{SFin4}.  We start by establishing uniform bounds for the sequences $\{u^{\tau}\}_{\tau}$ and $\{\nu^{\tau}\}_{\tau}$.
First we claim that $\{u^{\tau}\}_{\tau}$ is uniformly bounded in $C^{\alpha}$.  Indeed, as a consequence of Step \ref{MTprf_6}, we have uniform bounds on $[V_{u^{\tau}}^{\eta}]_{\tau}$ and $\|V_{u^{\tau}}^{\eta}\|_{\tau}$, so the claim follows from \eqref{MT25}.  Next we claim that $\{\nu^{\tau}\}_{\tau}$ is uniformly bounded in $C^{\eta-2\alpha}$.  Towards this end, first observe that
\begin{equation*}
\sup_{\tau}\sup_{x \neq y}d^{-(\eta-1)}(x,y)\left |\nu^{\tau}(y)-\nu^{\tau}(x)-\gamma^{\tau}(x,y)\right |<\infty,
\end{equation*}
as a consequence of the estimate \eqref{MT26} from Step \ref{MTprf_5}. Indeed, the quantity \eqref{MT27} is bounded uniformly in $\tau$ as a result of the uniform bounds on $[V_{u^{\tau}}^{\eta}]_{\tau}$, $\|V_{u^{\tau}}^{\eta}\|_{\tau}$ obtained in Step \ref{MTprf_6}, taking into account the estimate \eqref{MT75} and the definition \eqref{MT61}.  Moreover, using the definition \eqref{MT20}, the form continuity of $V_{\partial_{1}^{2}u}^{\eta-2}$, and the fact that $\{\omega^{\tau}\}_{\tau}$ is bounded uniformly in $C^{0}(\R^{2};\Ta{2\alpha})$, cf.\@ \eqref{MT70}, it follows that 
\begin{equation*}
\sup_{\tau}\sup_{x \neq y}d^{-(\eta-2\alpha)}(x,y)\left | \gamma_{\tau}(x,y) \right |<\infty.
\end{equation*}
These estimates, together with periodicity, cf.\@ \eqref{MT61}, yield that $\{\nu^{\tau}\}_{\tau}$ is uniformly bounded in $C^{\eta-2\alpha}$ as desired.  

\medskip

In light of the discussion in the previous paragraph, together with periodicity to have a compact domain, we may apply Arzel\`{a}-Ascoli to obtain uniform limits $u \in C^{\alpha}$ and $\nu \in C^{\eta-2\alpha}$ of the sequences $\{u^{\tau}\}_{\tau}$ and $\{\nu^{\tau}\}_{\tau}$, along a subsequence which we omit for notational convenience.  Together, these inputs allow to define the form $V_{u}^{\eta}$ according to \eqref{s300bis}.  In particular, using the pointwise convergences $u_{\tau} \to u, \nu_{\tau} \to \nu$ together with \eqref{MT29} and \eqref{MT28}, we obtain the convergence $V_{u_{\tau}}^{\eta}(x).\ta_{+} \to V_{u}^{\eta}(x).\ta_{+}$ holds for $x \in \R^{2}$ and $\ta_{+} \in \Ta{+}$.  Combining this pointwise convergence of the approximate forms with the pointwise convergence of the skeleton $\Gamma_{+}^{\tau}$, cf.\@ \eqref{MT14}, to $\Gamma_{+}$, cf.\@ \eqref{cw12}, as a consequence of \eqref{MT29}-\eqref{MT30}, we find that
\begin{equation*}
 [V_{u}^{\eta}]\leq \liminf_{\tau \to 0}[V_{u_{\tau}}^{\eta}]_{\tau}, \quad \quad \|V_{u}^{\eta}\| \leq \liminf_{\tau \to 0}\|V_{u_{\tau}}^{\eta}\|_{\tau}.
\end{equation*} 
These inequalities, together with Step \ref{MTprf_6}, lead to the estimates \eqref{S306} and \eqref{SFin4}.

\medskip

{\sc Step} \arabic{MTprf}.\label{MTprfNew_2}\refstepcounter{MTprf}[Sending $\tau \to 0$: Limiting product and equation] We postpone the argument that, along a further subsequence, 
$a\diam\partial_1^2u^\tau$ has a distributional limit we name $a\diam\partial_1^2u$ to the end of this
step and first derive the limiting equation \eqref{MT33}. 
Indeed, \eqref{MT23} turns into \eqref{MT33} in the limit $\tau\downarrow0$ (along the subsequence) 
by the (uniform) convergence
of $u^\tau$ to $u$ and the distributional convergence of $(a\diamond\partial_1^2u^\tau,f_\tau)$ to
$(a\diamond\partial_1^2u,f)$, respectively.  Moreover, note that $a \diamond \partial_{1}^{2}u$ inherits periodicity from $a\diam\partial_1^2u^\tau$ in the limit $\tau\downarrow0$.

\medskip

We now argue that \eqref{MT34} holds with $\delta=\eta+\alpha-2$. 
Indeed, this is a consequence of  \eqref{MT2} together with \eqref{S306}
ensuring that the r.~h.~s. constant 
$M_{a\diamond\partial_1^2u^\tau,\eta+\alpha-2}$ defined in \eqref{MT7} is bounded for $\tau\downarrow0$. 
On the one hand, for fixed $T>0$ as $\tau\downarrow 0$,
$(a\diam\partial_1^2u^\tau)_T$ converges to $(a\diam\partial_1^2u)_T$ in $C^0(\mathbb{R}^2)$.
On the other hand, $((\partial_1^2u^\tau)_T,(v_\tau)_T=(v_T)_\tau)$ converges 
in $C^0(\mathbb{R}^2;\overline{\mathsf{T}})$ to $((\partial_1^2u)_T,v_T)$, and thus because of
$C_{\eta+\alpha-2}V_{a\diamond\partial_1^2u}^{\eta+\alpha-2}\in C^0(\mathbb{R}^2;\overline{\mathsf{T}}^*)$
also
$C_{\eta+\alpha-2}V_{a\diamond\partial_1^2u}^{\eta+\alpha-2}.{(\partial_1^2u^\tau)_T\choose(v_\tau)_T}$
converges to
$C_{\eta+\alpha-2}V_{a\diamond\partial_1^2u}^{\eta+\alpha-2}.{(\partial_1^2u)_T\choose v_T}$ in $C^0(\mathbb{R}^2)$.

\medskip

We finally argue that, along a subsequence, $a\diam\partial_1^2u^\tau$ indeed has a distributional limit. We do so by an
application of the compactness argument in Step \ref{rec_comp_St} in the proof of Proposition \ref{rec_lem}.
As an input for the latter, we need to establish
$\limsup_{\tau\downarrow0}\sup_{T\le 1}(T^\frac{1}{4})^{2-\alpha}\|(a\diam\partial_1^2u^\tau)_T\|<\infty$,
which by \eqref{MT2}, and again the boundedness of $M_{a\diamond\partial_1^2u^\tau,\eta+\alpha-2}$, will be 
a consequence of $\limsup_{\tau\downarrow0}\sup_{T\le 1}(T^\frac{1}{4})^{2-\alpha}
\|C_{\eta+\alpha-2}V_{a\diamond\partial_1^2u}^{\eta+\alpha-2}.{\partial_1^2u^\tau\choose v_\tau}_T\|<\infty$. 
For the latter we observe that for any $x$,
\begin{align*}
\lefteqn{\big|C_{\eta+\alpha-2}V_{a\diamond\partial_1^2u}^{\eta+\alpha-2}(x).
{ \big ( \partial_1^2u^\tau \big )_{T}(x)\choose \big ( v_{\tau} \big )_{T}(x) }\big|}\nonumber\\
&\stackrel{\eqref{S8300}}{\le}
\|V_{a\diamond\partial_1^2u}^{\eta+\alpha-2}\|_{\overline{\mathsf{T}};\overline{\mathsf{\Gamma}}}
\Big(\frac{1}{N_*}\|(\partial_1^2u^\tau)_T-V_{\partial_1^2u}^{\eta-2}.((v_-)_\tau)_T\|\\
&+\sum_{\beta<\eta+\alpha-2}
N^{\langle\beta\rangle}\|\Gamma_\beta(x)(v_\tau)_T(x)\|_{\mathsf{T}_\beta}\Big).
\end{align*}
The prefactor $\|V_{a\diamond\partial_1^2u}^{\eta+\alpha-2}\|_{\overline{\mathsf{T}};\overline{\mathsf{\Gamma}}}$
is finite, cf.~Lemma \ref{lem:Multiplication}, and obviously independent of $\tau$. Estimate \eqref{MT1}
together with the definition $N_*=[V_{u^\tau}^\eta]_\tau$ implies that the first r.~h.~s. term is 
$\lesssim (T^\frac{1}{4})^{\eta-2}\le (T^\frac{1}{4})^{\alpha-2}$. By \eqref{MT12}, the remaining r.~h.~s. terms are 
$\lesssim N^{\langle\beta\rangle}(T^\frac{1}{4})^{\beta}$ $\le (T^\frac{1}{4})^{\alpha-2}$.

\medskip

{\sc Step} \arabic{MTprf}.\label{MTprf_9}\refstepcounter{MTprf}[Uniqueness]
In the final step, we argue that the three properties in the statement of the theorem uniquely determine the modelled distribution $V_{u}^{\eta}$.  Indeed, suppose we are given two such modelled distributions $V_{u_{i}}^{\eta}$ for $i=1,2$ with the three listed properties and let $a \diamond \partial_{1}^{2}u_{i}$ be the two associated distributions.  We will first argue that $u_{1}=u_{2}$, where $u_{i}:=C_{\alpha}V_{u_{i}}^{\eta}$.  Towards this end, define $u:=u_{1}-u_{2}$, $a \diamond \partial_{1}^{2}u:=a \diamond \partial_{1}^{2}u_{1}-a \diamond \partial_{1}^{2}u_{2}$, and $R^{T}:=P(a \diamond \partial_{1}^{2}u)_{T}-P(a\partial_{1}^{2}u_{T})$.  Applying $(\cdot)_{T}$ on both sides of \eqref{MT33} for each $i=1,2$ and taking the difference leads to 
\begin{equation}
\partial_{2}u_{T}-P(a\partial_{1}^{2}u_{T})=R^{T}.\label{MT62}
\end{equation}
First note that since the modelled distribution $V_{a}^{\eta}$ is fixed, the algebraic property \eqref{algebraicRel} ensures that $V_{\partial_{1}^{2}u_{1}}^{\eta-2}=V_{\partial_{1}^{2}u_{2}}^{\eta-2}$ and therefore \eqref{MT34} implies that for $T \ll 1$ it holds $\|R^{T}\|=O\big ((T^{\frac{1}{4}})^{\delta} \big )$.  

\medskip

We argue further that for a sufficiently small $\gamma>0$, it holds that $[R^{T}]_{\gamma}=o(1)$.  Using the semi-group property of the kernel, we now argue in favor of the following Lipschitz estimates
\begin{equation}
[(a \diamond \partial_{1}^{2}u)_{T}]_{1}=O \big ((T^{\frac{1}{4}})^{\alpha-4} \big ), \quad [(\partial_{1}^{2}u)_{T}]_{1}=O \big ((T^{\frac{1}{4}})^{\alpha-4} \big ).\label{MT50}
\end{equation}
We give the details only for the first estimate, a similar but easier argument gives the second estimate.  By periodicity of $(a \diamond \partial_{1}^{2}u)_{T}$, it suffices to consider $x \neq y$ with $d(x,y) \leq 1$, and we write
\begin{align*}
&(a \diamond \partial_{1}^{2}u)_{T}(y)-(a \diamond \partial_{1}^{2}u)_{T}(x)\\
&\stackrel{\eqref{1.10}}{=}\int (a \diamond \partial_{1}^{2}u)_{\frac{T}{2}}(z)\big ( \psi_{\frac{T}{2}}(x-z)-\psi_{\frac{T}{2}}(y-z)\big  )\dd z.\\
&=(y-x) \cdot \int \int_{0}^{1} (a \diamond \partial_{1}^{2}u)_{\frac{T}{2}}(z)\big (\partial_{1}\psi_{\frac{T}{2}},\partial_{2}\psi_{\frac{T}{2}})(\theta x + (1-\theta)y -z) \dd \theta \dd z.
\end{align*}
The claim \eqref{MT50} now follows from $\sup_{T \leq 1}(T^{\frac{1}{4}})^{\alpha-2}\|(a \diamond \partial_{1}^{2}u)_{T}\|<\infty$, cf.\@ Step \ref{MTprfNew_2}, the definition \eqref{j1001}, property \eqref{1.13} of the kernel, and the fact that the parabolic distance is larger than the Euclidean distance for $d(x,y) \leq 1$.

\medskip

By \eqref{MT50}, $a \in C^{\alpha}$, and the discrete product rule, we obtain $[R^{T}]_{\alpha}=O \big ((T^{\frac{1}{4}})^{\alpha-4} \big )$.  Combining this with $\|R^{T}\|=O \big ((T^{\frac{1}{4}})^{\delta} \big )$, we find by elementary interpolation that for each $ \theta \in [0,1]$ it holds 
\begin{equation*}
[R^{T}]_{\alpha \theta} \leq \big (2\|R^{T}\| \big )^{1-\theta}[R^{T}]_{\alpha}^{\theta}=O \big ( (T^{\frac{1}{4}})^{(\alpha-4)\theta}(T^{\frac{1}{4}})^{\delta (1-\theta)} \big ). 
\end{equation*}
Choosing $\theta$ sufficiently small and setting $\gamma=\theta \alpha$ implies the desired $[R^{T}]_{\gamma}=o(1)$.

\medskip

We now claim that $\|u_{T}\| \lesssim [R^{T}]_{\gamma}$.  Setting $a_{0}=a(0)$ and freezing-in the coefficients we rewrite \eqref{MT62} as
\begin{equation*}
\partial_{2}u_{T}-a_{0}\partial_{1}^{2}u_{T}=R^{T}+P \big ( (a-a_{0})\partial_{1}^{2}u_{T} \big ).
\end{equation*}
Since $u$ is mean free, we obtain that $\|u_{T}\|+\|\partial_{1}^{2}u_{T}\| \lesssim [u_{T}]_{2+\gamma}$, which may be combined with constant-coefficient Schauder theory to obtain
\begin{align*}
[u_{T}]_{2+\gamma} &\lesssim [R^{T}]_{\gamma}+[a]_{\gamma}\|\partial_{1}^{2}u_{T}\|+\|a-a_{0}\|[\partial_{1}^{2}u]_{\gamma}.\\
& \lesssim [R^{T}]_{\gamma}+[a]_{\gamma}[u_{T}]_{2+\gamma}
\end{align*}
Since by assumption $[a]_{\gamma} \leq [a]_{\alpha} \ll 1$, we may buckle and obtain that $\|u_{T}\| \lesssim [u_{T}]_{2+\gamma} \lesssim [R^{T}]_{\gamma}$. Hence, from the above established $[R^{T}]_{\gamma}=o(1)$, sending $T \to 0$ we find that $\|u\|=0$, which ensures that $u_{1}=u_{2}$.\\

We finally argue in favor of the stronger identification $V_{u_{1}}^{\eta}=V_{u_{2}}^{\eta}$ as modelled distributions.  Indeed, first observe that for each $i=1,2$ and $\ta_{+}=(\na,\va,\mathsf{v}_{1},\mathsf{w}_{2\alpha})$ it holds that
\begin{align}
&V_{u_{i}}^{\eta}(x).\ta_{+}=V_{u_{i}}^{\eta}(x).v_{+}(x) \na + V_{u_{i}}^{\eta}(x).
\begin{pmatrix}
0\\
\va-\vf(x)\na \\
0 \\
\mathsf{w}_{2\alpha}-w_{2\alpha}(x)\na
\end{pmatrix}
\nonumber\\
&\qquad \qquad \qquad \qquad \qquad \, \,\quad  +V_{u_{i}}^{\eta}(x).
\begin{pmatrix}
0\\
0\\
\mathsf{v}_{1}-v_{1}(x)\na \\
0
\end{pmatrix}
\nonumber\\
\stackrel{\eqref{functDef}, \eqref{j1003}}{=}
&u_{i}(x)+V_{\partial_{1}^{2}u_{i}}^{\eta-2}(x).
\begin{pmatrix}
\va-\vf(x)\na \\
\wa-\wwf(x)\na
\end{pmatrix}
+V_{u_{i}}(x).
\begin{pmatrix}
0\\
0\\
1\\
0
\end{pmatrix}
(\mathsf{v}_{1}-v_{1}(x)\na)
.\label{MT65}
\end{align}
By the uniqueness of $u_{i}$ established above and the algebraic relation \eqref{algebraicRel}, it only remains to argue that the quantity $V_{u_{i}}^{\eta}(x).(0,0,1,0)$ is unique.  For this, we note that by the arguments leading to Lemma \ref{int_lem_concr}, there exists a unique function $\nu$ such that
\begin{equation}
\sup_{x \neq y} d^{-\eta}(y,x)\left |U(x,y)-U(x,x)-\nu(x)(y-x)_{1} \right | = O(1),\label{MT63}
\end{equation}
where $U(x,y)=u_{i}(y)-V_{\partial_{1}^{2}u_{i}}^{\eta-2}(x).(\vf(y),w_{2\alpha}(y))$.  By the uniqueness of $u_{i}$ and \eqref{algebraicRel}, we see that $U$ does not depend on $i$.  Hence, to complete the proof, it suffices to show that $\nu(x)=V_{u_{i}}^{\eta}(x).(0,0,1,0)$.  For this, we note that by using \eqref{MT65} with $\ta_{+}=v_{+}(y)$ in the form of
\begin{align*}
V_{u_{i}}^{\eta}(x).\tf_{+}(y)=u_{i}(x)&+V_{\partial_{1}^{2}u}^{\eta-2}(x).
\begin{pmatrix}
\vf(y)-\vf(x)\\
\wwf(y)-\wwf(x)
\end{pmatrix}
\\
&+V_{u_{i}}(x).
\begin{pmatrix}
0\\
0\\
1\\
0
\end{pmatrix}
(v_{1}(y)-v_{1}(x)),
\end{align*}
from which we subtract the same identity with $x$ replaced by $y$, we obtain
\begin{align*}
&\left ( V_{u_{i}}^{\eta}(x).
\begin{pmatrix}
0\\
0\\
1\\
0
\end{pmatrix}
-\nu(x) \right ) (y-x)_{1}=U(x,y)-U(x,x)-\nu(x)(y-x)_{1}\\
& \qquad \qquad \quad \qquad \qquad \qquad \qquad \qquad -(V_{u_{i}}^{\eta}(y)-V_{u_{i}}^{\eta}(x)).v_{+}(y).
\end{align*}
By \eqref{MT63}, \eqref{j700} and the form continuity of $V_{u_{i}}^{\eta}$, the r.h.s is of the order $d^{\eta}(x,y)$, so letting $y$ approach $x$ we complete the claim.
\end{proof}
{\sc Proof of Corollary \ref{cor:wg}}. 
First we claim that
\begin{equation}\label{CorCut}
[a]_{\alpha} \lesssim \epsilon^{-1}N, \quad [V_{a}^{\eta-\alpha}] \lesssim \epsilon^{-1}N^{2}, \quad \|\overline{V}_{a}^{\eta}\| \leq \epsilon^{-1}.
\end{equation}
Indeed, by definition $a=C_{\alpha}V_{a}^{\eta}$, $V_{a}^{\eta-\alpha}=C_{\eta-\alpha}V_{a}^{\eta}$, so applying Lemma \ref{cutting_lemma} in the form of \eqref{eq:cutLemmaBd1}, noting that $\langle 2\alpha \rangle=2$, $\langle 1\rangle=2$, $\langle \alpha\rangle=1$, cf. \eqref{wg12} and \eqref{j790}, we find
\begin{align*}
[a]_{\alpha} &\stackrel{\eqref{wg31}}{\lesssim} [V_{a}^{\eta}]+N\|V_{a}^{\eta}\| \stackrel{\eqref{wg32}}{\leq} \epsilon^{-4}N^{3}+\epsilon^{-1}N \stackrel{N \leq \epsilon^{3},N \leq 1}{\lesssim} \epsilon^{-1}N,\\
[V_{a}^{\eta-\alpha}] &\stackrel{\eqref{wg31}}{\lesssim} [V_{a}^{\eta}]+N^{2}\|V_{a}^{\eta}\|\stackrel{\eqref{wg32}}{\leq} \epsilon^{-4}N^{3}+\epsilon^{-1}N^{2} \stackrel{N \leq \epsilon^{3}}{\lesssim} \epsilon^{-1}N^{2}.
\end{align*}
Also, note that $\|\overline{V}_{a}^{\eta}\| \leq \|V_{a}^{\eta}\| \stackrel{\eqref{wg32}}{\leq} \epsilon^{-1}$ by \eqref{new} of Lemma \ref{cutting_lemma}.

\medskip  

Inserting \eqref{CorCut} and \eqref{wg32} into the estimate \eqref{S306} we find
\begin{align*}
[V_{u}^{\eta}]&\lesssim N\big (\epsilon^{-1}N( \epsilon^{-1}N+N)+\epsilon^{-1}N^{2} \big) \big (1+\epsilon^{-1}N \big)\nonumber\\
&+\epsilon^{-4}N^{4}\big(1+\epsilon^{-1}N^{2} \big )\nonumber\\
&+N^{2}\left (\epsilon^{-1}N+\epsilon^{-1}N^{2}\right) \stackrel{N \leq \epsilon^{2},\epsilon \leq 1}{\lesssim} \epsilon^{-2}N^{3}.
\end{align*}
Similarly, from \eqref{SFin4} we obtain
\begin{align*}
\|V_{u}^{\eta} \|&\lesssim N^{-1}\big (\epsilon^{-1}N(\epsilon^{-1}N+N)+\epsilon^{-1}N^{2} \big) \big (1+\epsilon^{-1}N \big)\nonumber\\
&+\epsilon^{-4}N^{2}\big(1 +\epsilon^{-1}N \big )\nonumber\\
&+\left (1+\epsilon^{-1}N^{}+\epsilon^{-1}N^{2}\right) \stackrel{N \leq \epsilon^{2},\epsilon \leq 1}{\lesssim} 1.
\end{align*}
This completes the proof of the first two estimates in \eqref{wg34}, it only remains to argue that $\|\overline{V}_{u}^{\eta-\alpha}\| \leq 1$.  Indeed, inspecting the proof of Theorem \ref{Prop:aPriori}, cf.\@ Step \ref{MTprf_8}, we find that $V_{u}^{\eta}$ is defined by \eqref{s300bis}.  

From $u=C_{\alpha}V_{u}^{\eta}$ and \eqref{j41} we learn from $\overline{V}_{u}^{\eta-\alpha}=V_{u}^{\eta-\alpha}-C_{\alpha}V_{u}^{\eta}$ that
\begin{equation*}
\overline{V}_{u}^{\eta-\alpha}.\ta_{+}=\delta_{a}.(\va-\vf\na),
\end{equation*}
from which the claim follows by definition of $\|\cdot\|_{\Ta{+};\Ga{+}{}}$, cf. Definition \ref{prodDef1}, and by \eqref{S8030}, \eqref{wg12},\eqref{cw12}.
\bibliographystyle{plain}
\bibliography{bibliography.bib}

\end{document}